\let\cprod\times
\newcommand*{\relrelbarsep}{.386ex}
\newcommand*{\relrelbar}{  \mathrel{    \mathpalette\@relrelbar\relrelbarsep
  }}
\newcommand*{\@relrelbar}[2]{  \raise#2\hbox to 0pt{$\m@th#1\relbar$\hss}  \lower#2\hbox{$\m@th#1\relbar$}}
\providecommand*{\rightrightarrowsfill@}{  \arrowfill@\relrelbar\relrelbar\rightrightarrows
}
\providecommand*{\leftleftarrowsfill@}{  \arrowfill@\leftleftarrows\relrelbar\relrelbar
}
\providecommand*{\xrightrightarrows}[2][]{  \ext@arrow 0359\rightrightarrowsfill@{#1}{#2}}
\providecommand*{\xleftleftarrows}[2][]{  \ext@arrow 3095\leftleftarrowsfill@{#1}{#2}}
\newcommand{\isoarrow}[1][]{\arrow[#1,"\rotatebox{90}{\(\sim\)}"]}
\renewcommand{\times}{\cdot}
\DeclarePairedDelimiter{\abs}{\lvert}{\rvert}
 \newcommand{\noloc}{\nobreak\mskip6mu plus1mu\mathpunct{}\nonscript\mkern-\thinmuskip{:}\mskip2mu\relax}
\newcommand{\isom}{\cong}
\newcommand{\from}{\mathbin{\leftarrow}}
\newcommand{\xto}[1]{\mathbin{\xrightarrow{#1}}} \newcommand{\xfrom}[1]{\mathbin{\xleftarrow{#1}}}  \newcommand{\isoto}{\xto\sim}
\newcommand{\isofrom}{\xfrom\sim}
\DeclareMathOperator{\Hom}{Hom}
\DeclareMathOperator{\Ext}{Ext}
\DeclareMathOperator{\Fun}{Fun}
\newcommand{\bigunion}{\bigcup}
\newcommand{\bigdunion}{\bigsqcup}
\renewcommand{\subset}{\subseteq}
\newcommand{\comp}{\mathbin{\circ}}
\DeclareMathOperator{\id}{id}
\DeclareMathOperator{\ev}{ev}
\newcommand{\restrict}[2]{{#1}|_{#2}}
\newcommand{\injto}{\mathrel{\hookrightarrow}}
\newcommand{\surjto}{\mathrel{\twoheadrightarrow}}
\newcommand{\bigdsum}{\bigoplus}
\newcommand{\cplbigdsum}[1]{\widehat\bigdsum_{#1} \,}
\newcommand{\cd}{\mathrm{cd}}
\newcommand{\Z}{{\mathbb{Z}}}
\newcommand{\Fld}{\mathbb{F}}
\newcommand{\Q}{\mathbb{Q}}
\newcommand{\Cpx}{\mathbb{C}}
\newcommand{\tensor}{\otimes}
\DeclareMathOperator{\cts}{\mathcal C}
\DeclareMathOperator{\Spec}{Spec}
\DeclareMathOperator{\IHom}{\underline{\Hom}}
\DeclareMathOperator{\IExt}{\underline{\Ext}}
   \newcommand{\et}{{\mathrm{et}}}
\newcommand{\proet}{{\mathrm{proet}}}   \newcommand{\vsite}{{\mathrm{v}}}    
\newcommand{\cplt}{\,\widehat{}\,}
\DeclareMathOperator{\Spa}{Spa}
\newcommand{\opp}{{\mathrm{op}}}
\newcommand{\solid}{{\scalebox{0.5}{$\square$}}}
\DeclareMathOperator{\Mod}{Mod}
\DeclareMathOperator{\D}{\mathcal D}
\DeclareMathOperator{\Tot}{Tot}
\newcommand{\perf}{{\mathrm{perf}}}
\newcommand{\infcatinf}{\mathcal Cat_\infty}
\DeclareMathOperator{\Ring}{Ring}
\newcommand{\oc}{{\mathrm{oc}}}
\DeclareMathOperator{\cofib}{cofib}
\DeclareMathOperator{\Ind}{Ind} \DeclareMathOperator{\Pro}{Pro}
\DeclareMathOperator{\Corr}{Corr}
\DeclareMathOperator{\vStacks}{vStack}
\DeclareMathOperator{\vStacksCoeff}{vStack_\Lambda}
 \newcommand{\dimtrg}{\mathrm{dim.trg}}
 \newcommand{\nuc}{{\mathrm{nuc}}} \newcommand{\lis}{{\mathrm{lis}}}  \newcommand{\tr}{{\mathrm{tr}}}
\newcommand{\dlb}{{\mathrm{dlb}}}
\DeclareMathOperator{\Bun}{Bun}
\theoremstyle{plain}
\newtheorem{theorem}{Theorem}[section]
\newtheorem{theorem*}{Theorem}
\newtheorem{proposition}[theorem]{Proposition}
\newtheorem{proposition*}[theorem*]{Proposition}
\newtheorem{corollary}[theorem]{Corollary}
\newtheorem{lemma}[theorem]{Lemma}
\theoremstyle{definition}
\newtheorem{definition}[theorem]{Definition}
\newtheorem{definition*}[theorem*]{Definition}
\newtheorem{example}[theorem]{Example}
\newtheorem{examples}[theorem]{Examples}
\newtheorem{remark}[theorem]{Remark}
\newtheorem{remarks}[theorem]{Remarks}
\newtheorem{warning}[theorem]{Warning}
\newtheorem{hypothesis*}[theorem*]{Hypothesis}
\numberwithin{equation}{theorem}
\numberwithin{figure}{section}
\numberwithin{table}{section}
\newlist{thmenum}{enumerate}{1}
\setlist[thmenum]{label=(\roman*), ref=\thetheorem.(\roman*)}
\newlist{propenum}{enumerate}{1}
\setlist[propenum]{label=(\roman*), ref=\theproposition.(\roman*)}
\newlist{corenum}{enumerate}{1}
\setlist[corenum]{label=(\roman*), ref=\thecorollary.(\roman*)}
\newlist{lemenum}{enumerate}{1}
\setlist[lemenum]{label=(\roman*), ref=\thelemma.(\roman*)}
\newlist{exampleenum}{enumerate}{1}
\setlist[exampleenum]{label=(\alph*), ref=\theexamples.(\alph*)}
\newlist{remarksenum}{enumerate}{1}
\setlist[remarksenum]{label=(\roman*), ref=\theremarks.(\roman*)}
\newlist{defenum}{enumerate}{1}
\setlist[defenum]{label=(\alph*), ref=\thedefinition.(\alph*)}
\title{The 6-Functor Formalism for $\Z_\ell$- and $\Q_\ell$-Sheaves on Diamonds}
\author{Lucas Mann}
\date{\today}
\newcommand{\naive}{{\mathrm{naive}}}
\begin{document}

\maketitle

\begin{abstract}
For every nuclear $\Z_\ell$-algebra $\Lambda$ and every small v-stack $X$ we construct an $\infty$-category $\D_\nuc(X,\Lambda)$ of nuclear $\Lambda$-modules on $X$. We then construct a full 6-functor formalism for these sheaves, generalizing the étale 6-functor formalism for $\Lambda = \Fld_\ell$. Prominent choices for $\Lambda$ are $\Z_\ell$, $\Q_\ell$ and $\overline{\Q_\ell}$ and especially in the latter two cases, no satisfying 6-functor formalism has been found before. Applied to classifying stacks we obtain a theory of nuclear representations, i.e. continuous representations on filtered colimits of Banach spaces.
\end{abstract}

\tableofcontents

\section{Introduction}

Fix two primes $\ell \ne p$. In this paper we construct and study a full 6-functor formalism for certain sheaves of modules over nuclear $\Z_\ell$-algebras $\Lambda$ like $\Z_\ell$, $\Fld_\ell$, $\Q_\ell$, $\overline{\Q_\ell}$ or $\Cpx_\ell$. The main difference to previous attempts at such a 6-functor formalism is that we never impose any finiteness conditions or pass to any isogeny categories -- everything is completely natural and embedds fully faithfully into the category of pro-étale sheaves (even on the derived level). For simplicity this whole paper will work with diamonds and small v-stacks over $\Z_p$ \cite{etale-cohomology-of-diamonds}, i.e. in the realm of rigid-analytic geometry. We expect that a similar 6-functor formalism can be constructed on schemes, albeit somewhat harder to implement.

Recall that for torsion coefficients, e.g. $\Lambda = \Fld_\ell$, a full 6-functor formalism for étale sheaves on diamonds has been worked out in \cite{etale-cohomology-of-diamonds} and has been used extensively in \cite{fargues-scholze-geometrization} to gain new insights into the $\ell$-adic geometric Langlands program. The present paper generalizes this étale 6-functor formalism to non-discrete nuclear $\Z_\ell$-algebras like the ones mentioned above. Note that even in the case that $\Lambda = \Z_\ell$ we allow non-complete $\Z_\ell$-sheaves in our 6-functor formalism (unlike the somewhat ad-hoc definition in \cite[\S26]{etale-cohomology-of-diamonds}), which becomes especially important when working with non-qcqs maps, see \cref{ex:intro-rigid-case} below.

Without further ado, let us state the main definitions and results. In the following, we say that a spatial diamond (e.g. qcqs rigid-analytic variety) is \emph{$\ell$-bounded} if it has finite cohomological dimension for étale $\Fld_\ell$-sheaves. We can then define the following category of nuclear sheaves:

\begin{definition}[{cf. \cref{def:nuclear-module-on-spat-diamond}}]
Let $X$ be a small v-stack. A \emph{nuclear $\Z_\ell$-sheaf} on $X$ is a (derived) v-sheaf $\mathcal M \in \D(X_\vsite,\Z_\ell)$ with the following property: For every map $f\colon Y \to X$ from an $\ell$-bounded spatial diamond $Y$, the sheaf $f^* \mathcal M \in \D(Y_\vsite,\Z_\ell)$ is a filtered colimit of $\ell$-adically complete sheaves, all of which are étale modulo $\ell$. We denote by
\begin{align*}
	\D_\nuc(X,\Z_\ell) \subset \D(X_\vsite,\Z_\ell)
\end{align*}
the full subcategory of nuclear $\Z_\ell$-sheaves. For any nuclear $\Z_\ell$-algebra $\Lambda$ we denote by $\D_\nuc(X,\Lambda) \subset \D(X_\vsite,\Lambda)$ the full subcategory of those $\Lambda$-sheaves whose underlying $\Z_\ell$-sheaf is nuclear.
\end{definition}

If $X$ is a geometric point then $\D_\nuc(X,\Z_\ell) = \D_\nuc(\Z_\ell) \subset \D_\solid(\Z_\ell)$ recovers the $\infty$-category of nuclear $\Z_\ell$-modules as defined in \cite[\S8]{condensed-complex-geometry} and \cite[Definition 13.10]{scholze-analytic-spaces}. In particular, a \emph{nuclear $\Z_\ell$-algebra} is a condensed $\Z_\ell$-algebra whose underlying $\Z_\ell$-module is a filtered colimit of Banach $\Z_\ell$-algebras. If $\Lambda$ is discrete then $\D_\nuc(X,\Lambda) = \D_\et(X,\Lambda)$ (see \cref{rslt:nuclear-over-discrete-Lambda-equiv-etale}), so in this case we recover the classical étale theory.

Perhaps somewhat surprisingly, the $\infty$-category of nuclear sheaves satisfies v-descent. This observation lies at the heart of this paper.

\begin{theorem}[{cf. \cref{rslt:v-descent-for-nuclear-sheaves,rslt:v-descent-for-nuclear-Lambda-modules}}]
The $\infty$-category of nuclear $\Lambda$-modules is stable under pullback and satisfies v-descent. More precisely, the assignment
\begin{align*}
	X \mapsto \D_\nuc(X,\Lambda)
\end{align*}
defines a hypercomplete sheaf of $\infty$-categories on the v-site of all small v-stacks.
\end{theorem}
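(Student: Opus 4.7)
The plan is to deduce the theorem from two facts: that the ambient presheaf $X \mapsto \D(X_\vsite,\Lambda)$ of v-sheaves is already a hypercomplete sheaf of $\infty$-categories on the v-site (standard), and that membership in the full subcategory $\D_\nuc(X,\Lambda)$ is designed to be a v-local condition. Together these reduce the statement to two assertions: that $\D_\nuc$ is preserved under pullback, and that given a v-cover $Y \to X$, any object $\mathcal{M} \in \D(X_\vsite,\Lambda)$ whose restriction to $Y$ is nuclear is itself nuclear.

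Pullback stability is immediate from the definition. If $\mathcal{M} \in \D_\nuc(X,\Lambda)$ and $f\colon Y \to X$ is any morphism of small v-stacks, then for every $g\colon T \to Y$ from an $\ell$-bounded spatial diamond $T$, the composite $f \comp g\colon T \to X$ is again a map from an $\ell$-bounded spatial diamond, so $g^*(f^*\mathcal{M}) \isom (f \comp g)^*\mathcal{M}$ is a filtered colimit of $\ell$-adically complete sheaves that are étale modulo $\ell$. Hence $f^*\mathcal{M} \in \D_\nuc(Y,\Lambda)$.

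For the descent direction, let $Y \to X$ be a v-cover and suppose $\mathcal{M}|_Y$ is nuclear. To verify that $\mathcal{M}$ is nuclear on $X$, pick any $g\colon T \to X$ from an $\ell$-bounded spatial diamond $T$. The base-changed v-cover $T \cprod_X Y \to T$ can be refined by a v-cover $\bigsqcup_i T_i \to T$ where each $T_i$ is a strictly totally disconnected perfectoid space and each $T_i \to T$ factors through $Y$ over $X$. Since such $T_i$ are $\ell$-bounded spatial diamonds, the pullback of $\mathcal{M}|_Y$ to each $T_i$ is, by the nuclearity hypothesis, a filtered colimit of $\ell$-adically complete mod-$\ell$ étale sheaves. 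It remains to show that this property descends along the v-cover $\bigsqcup_i T_i \to T$ to $g^*\mathcal{M}$ on $T$ itself — a statement purely about nuclear sheaves on $\ell$-bounded spatial diamonds, which is the content of the separately established v-descent for nuclear sheaves in this local setting.

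The main obstacle is exactly this last descent step. Its proof combines Scholze's v-descent for étale $\Fld_\ell$-sheaves on diamonds, compatibility of $\ell$-adic completion with pullback and descent under the $\ell$-boundedness hypothesis, and commutation of filtered colimits with the Čech descent totalization. The third point is the subtlest: filtered colimits do not in general commute with totalizations, so one genuinely needs the finite cohomological dimension afforded by $\ell$-boundedness to control the relevant Tot-tower, ensuring that a levelwise filtered colimit of $\ell$-adically complete mod-$\ell$ étale sheaves assembles after descent into a filtered colimit of the same kind. Once this local descent is in hand, the global hypercomplete sheaf condition follows formally from the ambient v-descent of $\D(-_\vsite,\Lambda)$ and the two bullet points above.
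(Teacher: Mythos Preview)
Your overall reduction is correct: the ambient $\D(-_\vsite,\Lambda)$ is already a hypercomplete v-sheaf, pullback preserves nuclearity by definition, and everything reduces to the following local statement on $\ell$-bounded spatial diamonds: if $f\colon Y\to X$ is a v-cover and $f^*\mathcal M$ is nuclear, then $\mathcal M$ is nuclear. The problem is that your final paragraph does not prove this statement, and the strategy you sketch does not work.

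You propose to argue that a ``levelwise filtered colimit of $\ell$-adically complete mod-$\ell$ étale sheaves assembles after descent into a filtered colimit of the same kind,'' relying on finite cohomological dimension to make filtered colimits commute with the descent totalization. But the filtered presentation $f^*\mathcal M=\varinjlim_i\mathcal B_i$ lives on $Y$; the individual $\mathcal B_i$ have no reason to descend, nor to be uniformly bounded, nor to organize into a cosimplicial filtered diagram over the \v Cech nerve. There is simply no candidate filtered system on $X$ to take a colimit of. What one can say is that $\mathcal M=\varprojlim_{n\in\Delta}f_{n*}f_n^*\mathcal M$ with each term nuclear, but nuclear sheaves are \emph{not} stable under totalizations (or even countable limits) in $\D_\solid(X,\Z_\ell)_{\omega_1}$, so this does not conclude.

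The paper closes this gap with a genuinely new ingredient that you do not mention: an alternative characterization of nuclearity via \emph{trace-class} maps. One constructs a functor $\IHom_\solid^\tr(\mathcal P,-)$ and proves that $\mathcal M$ is nuclear if and only if $\Hom^\tr(\mathcal P,\mathcal M)\to\Hom(\mathcal P,\mathcal M)$ is an isomorphism for all compact $\mathcal P$. The crucial technical fact is that $\Hom^\tr(\mathcal P,-)$ has \emph{bounded} cohomological amplitude; this lets it commute with Postnikov limits and with the descent totalization (after interchanging with Postnikov truncations, using that each $f_{n*}$ also has finite cohomological dimension). Combined with the fact that $f_{n*}$ preserves nuclear sheaves, one computes $\Hom^\tr(\mathcal P,\mathcal M)=\varprojlim_n\Hom^\tr(\mathcal P,f_{n*}f_n^*\mathcal M)=\varprojlim_n\Hom(\mathcal P,f_{n*}f_n^*\mathcal M)=\Hom(\mathcal P,\mathcal M)$. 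This is the content you are missing; without the trace-class reformulation there is no evident handle on the totalization.
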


Let us now come to the 6-functor formalism for nuclear $\Lambda$-modules. The six functors can roughly be constructed as follows. Fix a map $f\colon Y \to X$ of small v-stacks; then:
\begin{enumerate}[1.]
	\item The $\infty$-category $\D_\nuc(X,\Lambda)$ comes naturally equipped with a symmetric monoidal structure
	\begin{align*}
		- \tensor -\colon \D_\nuc(X,\Lambda) \cprod \D_\nuc(X,\Lambda) \to \D_\nuc(X,\Lambda),
	\end{align*}
	which agrees with the solid tensor product from \cite[Proposition VII.2.2]{fargues-scholze-geometrization} (note that clearly $\D_\nuc(X,\Lambda) \subset \D_\solid(X,\Lambda)$).

	\item Since $\D_\nuc(X,\Lambda)$ is presentable, the symmetric monoidal structure is closed and hence allows an internal hom
	\begin{align*}
		\IHom(-,-)\colon \D_\nuc(X,\Lambda)^\opp \cprod \D_\nuc(X,\Lambda) \to \D_\nuc(X,\Lambda)
	\end{align*}
	given as the right adjoint of $\tensor$.

	\item The pullback functor
	\begin{align*}
		f^*\colon \D_\nuc(X,\Lambda) \to \D_\nuc(Y,\Lambda)
	\end{align*}
	is simply the pullback of v-sheaves (it preserves nuclear sheaves by definition).

	\item The pushforward functor
	\begin{align*}
		f_*\colon \D_\nuc(Y,\Lambda) \to \D_\nuc(X,\Lambda)
	\end{align*}
	is the right adjoint of the pullback functor. In general it does not agree with the v-pushforward, but if $f$ is qcqs then this is true under mild assumptions (see \cref{sec:pushforward}).

	\item If $f$ is ``$\ell$-fine'' (see \cref{def:ell-fine-maps}) then we can construct a lower shriek functor
	\begin{align*}
		f_!\colon \D_\nuc(Y,\Lambda) \to \D_\nuc(X,\Lambda)
	\end{align*}
	as follows. If $f$ is proper then $f_! = f_*$. If $f$ is étale then $f_!$ is the left adjoint of $f^*$ (this agrees with the functor $f_!$ on v-sheaves, see \cref{rslt:properties-of-etale-lower-shriek}). If $f$ is compactifiable and qcqs then we can define $f_!$ by composing the previous two examples along a relative compactification of $f$. If $f$ is only locally compactifiable then we can filter $Y$ by qcqs open subsets on which $f$ becomes compactifiable and then define $f_!$ as the colimit over this filtration. Finally, one can extend the construction of $f_!$ even to certain ``stacky'' maps. For more details on the construction of $f_!$ see \cref{rmk:explicit-construction-of-shriek-functors}.

	\item If $f$ is $\ell$-fine then we define
	\begin{align*}
		f^!\colon \D_\nuc(X,\Lambda) \to \D_\nuc(Y,\Lambda)
	\end{align*}
	as the right adjoint of $f_!$.
\end{enumerate}

\begin{theorem}[{cf. \cref{rslt:6-functor-formalism}}]
The above six functors $\tensor$, $\IHom$, $f^*$, $f_*$, $f_!$ and $f^!$ provide a 6-functor formalism. In particular, they satisfy the following properties:
\begin{thmenum}
	\item (Functoriality) For composable maps $f$, $g$ of small v-stacks we have natural isomorphisms $(f \comp g)^* = g^* \comp f^*$ and $(f \comp g)_* = f_* \comp g_*$. If $f$ and $g$ are $\ell$-fine then also $(f \comp g)_! = f_! \comp g_!$ and $(f \comp g)^! = g^! \comp f^!$.

	\item (Special Cases) If $j\colon U \to X$ is an étale map of small v-stacks then $j^! = j^*$. If $f\colon Y \to X$ is a proper $\ell$-fine map of small v-stacks then $f_! = f_*$.

	\item (Projection Formula) Let $f\colon Y \to X$ be an $\ell$-fine map of small v-stacks. Then for all $\mathcal M \in \D_\nuc(X,\Lambda)$ and $\mathcal N \in \D_\nuc(Y, \Lambda)$ there is a natural isomorphism
	\begin{align*}
		f_!(\mathcal N \tensor f^* \mathcal M) = (f_! \mathcal N) \tensor \mathcal M.
	\end{align*}

	\item (Proper Base-Change) Let
	\begin{center}\begin{tikzcd}
		Y' \arrow[r,"g'"] \arrow[d,"f'"] & Y \arrow[d,"f"]\\
		X' \arrow[r,"g"] & X
	\end{tikzcd}\end{center}
	be a cartesian diagram of small v-stacks such that $f$ is $\ell$-fine. Then there is a natural equivalence
	\begin{align*}
		g^* f_! = f'_! g'^*
	\end{align*}
	of functors $\D_\nuc(Y, \Lambda) \to \D_\nuc(X', \Lambda)$.
\end{thmenum}
\end{theorem}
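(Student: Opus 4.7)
The plan is to apply the abstract machinery for constructing 6-functor formalisms, in the style of Liu--Zheng and Mann: one packages the six functors into a symmetric monoidal functor
\[
	\D_\nuc(-,\Lambda) \colon \Corr(\vStacks)_{\ell\text{-fine}} \longto \catPrL
\]
from the $\infty$-category of correspondences of small v-stacks, with the right leg required to be $\ell$-fine, into presentable stable $\infty$-categories. All four listed properties fall out of the structure of the correspondence category once such a symmetric monoidal functor has been produced, so the real content is to construct it.

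By the general extension theorem for 6-functor formalisms it is enough to verify the following inputs: (a) $\D_\nuc(-,\Lambda)$ is a symmetric monoidal hypercomplete sheaf of presentable stable $\infty$-categories on the v-site; (b) for every étale map $j$, the pullback $j^*$ admits a left adjoint $j_!$ that satisfies base change and the projection formula; (c) for every proper $\ell$-fine map $f$, the pushforward $f_*$ satisfies proper base change and the projection formula; and (d) every $\ell$-fine qcqs map admits, locally in the v-topology, a factorization into an open immersion followed by a proper map. Input (a) is the descent theorem quoted earlier in the excerpt, and (d) is essentially built into the definition of $\ell$-fine.

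The main work, and the principal obstacle, lies in (b) and especially (c). Here one exploits the very definition of nuclearity: pulled back to an $\ell$-bounded spatial diamond, a nuclear $\Lambda$-sheaf is a filtered colimit of $\ell$-complete sheaves whose reduction mod $\ell$ is étale. This reduces both proper base change and the projection formula, via derived Nakayama and v-descent, to the corresponding statements for étale $\Fld_\ell$-sheaves on spatial diamonds, which are the main theorems of \cite{etale-cohomology-of-diamonds}. The subtle point is to show that $f_*$ commutes with the relevant filtered colimits, which is precisely where the $\ell$-boundedness built into $\ell$-fine is used; without it, neither the reduction mod $\ell$ nor the ability to interchange $f_*$ with the defining filtered colimits of nuclear sheaves would be available. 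An analogous but easier reduction handles (b), where one also checks that the v-sheaf $j_!$ preserves nuclearity by descent to an $\ell$-bounded affinoid perfectoid cover.

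Having (a)--(d) in hand, the abstract extension theorem produces the desired functor on all compactifiable qcqs $\ell$-fine maps via the open--proper factorization; the extension to locally compactifiable maps is obtained as a filtered colimit along exhausting qcqs open subcovers (using that $f_!$ commutes with such colimits), and the extension to stacky $\ell$-fine maps proceeds by v-descent along a cover by representable $\ell$-fine maps. The properties (i)--(iv) then come for free from the formal structure of $\Corr$: functoriality encodes composition in the correspondence category, the two special cases are built into the construction of $f_!$, the projection formula is the symmetric monoidality of the functor, and proper base change is the cartesianness of the base-change square.
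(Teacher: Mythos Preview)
Your outline follows the paper's strategy closely: build the formalism on a restricted class of maps via the $I,P$-decomposition machinery of \cite[\S A.5]{mann-mod-p-6-functors}, verifying the étale $j_!$ input (\cref{rslt:properties-of-etale-lower-shriek}) and the proper $f_*$ input (\cref{rslt:base-change-for-unbounded-nuclear-pushforward}, \cref{rslt:projection-formula-for-bounded-proper-pushforward}), then extend. The reduction you sketch for the projection formula---to bounded Banach sheaves, then mod $\ell$, then to the étale results of \cite{etale-cohomology-of-diamonds}---is exactly how the paper argues.

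There is, however, a genuine gap in your handling of the stacky extension. You write that input (d) is ``essentially built into the definition of $\ell$-fine'' and that ``the extension to stacky $\ell$-fine maps proceeds by v-descent along a cover by representable $\ell$-fine maps.'' Neither is correct. The open--proper factorization is available for fdcs maps (locally compactifiable, representable in locally spatial diamonds, locally finite $\dimtrg$), and the paper first constructs the formalism for those (\cref{rslt:6-functor-formalism-for-diamonds}). The definition of $\ell$-fine (\cref{def:ell-fine-maps}) says something quite different: there must exist an fdcs cover $g\colon Z\to Y$ with $f\comp g$ fdcs and $g$ admitting \emph{universal $\ell$-codescent}, meaning that
\[
	\D_\nuc^!(X',\Z_\ell) \isoto \varprojlim_{n\in\Delta}\D_\nuc^!(Y'_n,\Z_\ell)
\]
is an equivalence for every base-change, where the transition maps are the $!$-pullbacks. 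Ordinary v-descent concerns $*$-pullbacks and is of no direct use for gluing $f_!$ on the \emph{source}: for a genuinely stacky map like $*/G\to *$ there is no representable v-cover of the source along which $f_!$ is compatible with $*$-pullback. The paper instead uses the $!$-codescent built into the definition to write $f_!\mathcal M = \varinjlim_{n\in\Delta}(f\comp g_n)_! g_n^!\mathcal M$ (see \cref{rmk:explicit-construction-of-shriek-functors}, step 4), and the formal extension is carried out via \cite[Proposition A.5.14]{mann-mod-p-6-functors} using the codescent covers as the distinguished class $S$. Without this mechanism your outline does not produce $f_!$ for stacky $\ell$-fine maps.
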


Before we continue, let us briefly mention how the just constructed 6-functor formalism behaves in the classical rigid-analytic world.

\begin{example} \label{ex:intro-rigid-case}
Suppose $f\colon Y \to X$ is a map of rigid-analytic varieties over some fixed non-archimedean base field $K$. Then $f$ is $\ell$-fine. The pushforward functor $f_*\colon \D_\nuc(Y,\Lambda) \to \D_\nuc(X,\Lambda)$ computes the nuclearized derived pushforward of pro-étale $\Lambda$-sheaves; in particular if $X = \Spa K$ then $f_*$ computes nuclearized pro-étale cohomology. If $f$ is qcqs then $f_*$ preserves all small colimits, so for example we have $f_* \Q_\ell = (\varprojlim_n f_*(\Z/\ell^n \Z))[1/\ell]$; this is not true if $f$ is not qcqs.

With $f$ as before, let us explain how to compute $f_!\colon \D_\nuc(Y,\Lambda) \to \D_\nuc(X,\Lambda)$. By the projection formula $f_!$ commutes with a change of coefficients, e.g. we have $f_! \Q_\ell = (f_! \Z_\ell)[1/\ell]$. This often reduces the computation of $f_!$ to the case of $\Z_\ell$-sheaves. If $f$ is qcqs then $f_!$ preserves $\ell$-adically complete sheaves and so in this case we have $f_! \Z_\ell = \varprojlim_n f_!(\Z/\ell^n \Z)$, where each $f_!(\Z/\ell^n \Z)$ is the usual lower shriek on étale sheaves. If $f$ is not qcqs then this is not true; instead one can then filter $Y$ by qcqs open subsets and compute $f_!$ as the colimit of the lower shrieks on these subsets. In particular $f_!$ agrees with previous definitions of ``$\Z_\ell$-cohomology with compact support'', cf. \cite[Remark 5.5]{perfectoid-spaces-survey}.
\end{example}

With our full 6-functor formalism at hand, we study so-called \emph{relatively dualizable} sheaves in this 6-functor formalism. This concept is not new and is known as universally locally acyclic sheaves in the case of discrete $\Lambda$. With the power of the magical 2-category found by Lu-Zheng \cite{lu-zhen-ula} we show that relatively dualizable sheaves satisfy all the expected properties and that this is completely formal. As an important special case of this theory we deduce that the following definition of $\ell$-cohomologically smooth maps is sensible:

\begin{definition}[{cf. \cref{def:cohom-smooth-maps}}]
An $\ell$-fine map $f\colon Y \to X$ is \emph{$\ell$-cohomologically smooth} if $f^! \Fld_\ell \in \D_\et(Y,\Fld_\ell)$ is invertible and its formation commutes with base-change along $f$.
\end{definition}

We emphasize that our definition of $\ell$-cohomological smoothness only depends on the étale theory with $\Fld_\ell$-coefficients and is therefore compatible with previously defined notions of $\ell$-cohomological smoothness in \cite{etale-cohomology-of-diamonds,fargues-scholze-geometrization}. This may seem very surprising, in particular because we still obtain the expected Poincaré duality (see below). This fact was independently observed by Bogdan Zavyalov, whose suggestions led to our pursuit of these ideas and which will also be used in Zavyalov's upcoming work on $p$-adic Poincaré duality in rigid-analytic geometry \cite{zavyalov-revisiting-poincare-duality}.

\begin{proposition}[{cf. \cref{rslt:Poincare-duality-for-smooth-maps}}]
Let $f\colon Y \to X$ be an $\ell$-cohomologically smooth map of small v-stacks. Then $f^! \Lambda \in \D_\nuc(Y,\Lambda)$ is invertible and the natural morphism
\begin{align*}
	f^!\Lambda \tensor f^* \isoto f^!
\end{align*}
is an isomorphism of functors $\D_\nuc(X,\Lambda) \to \D_\nuc(Y,\Lambda)$.
\end{proposition}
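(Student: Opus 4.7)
The plan is to reduce the statement to the formal fact, provable in any 6-functor formalism via the Lu--Zheng 2-category \cite{lu-zhen-ula}, that if $f^!\Lambda$ is invertible in $\D_\nuc(Y,\Lambda)$ and its formation commutes with base change along $f$, then the canonical transformation
\[
	f^!\Lambda \tensor f^*(-) \to f^!(-),
\]
arising from the projection formula for $f_!$ and the counit $f_!f^!\to\id$, is automatically an equivalence. Thus the task reduces to verifying these two properties of $f^!\Lambda$ starting from the hypothesis that $f^!\Fld_\ell$ is invertible with base-change-stable formation in $\D_\et(Y,\Fld_\ell)$.

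I would climb the coefficient tower in three steps. First, a standard étale dévissage along the cofiber sequences $\Fld_\ell \to \Z/\ell^{n+1} \to \Z/\ell^n$, combined with the fact that $f^!$ preserves limits, shows that each $f^!(\Z/\ell^n)$ is invertible in $\D_\et(Y,\Z/\ell^n)$ with base-change-stable formation; this is classical in the étale 6-functor formalism of \cite{etale-cohomology-of-diamonds}. Second, since $f^!$ preserves limits as a right adjoint, $f^!\Z_\ell = \varprojlim_n f^!(\Z/\ell^n)$ inside $\D_\nuc(Y,\Z_\ell)$, and this compatible $\ell$-adic system of invertibles should assemble into an invertible nuclear $\Z_\ell$-sheaf (its inverse being the analogous limit of inverses), with base-change compatibility passing through the limit. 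Third, for a general nuclear $\Z_\ell$-algebra $\Lambda$ the extension-of-scalars functor $-\tensor_{\Z_\ell}\Lambda\colon \D_\nuc(-,\Z_\ell) \to \D_\nuc(-,\Lambda)$ is symmetric monoidal and compatible with $f^*$ and, by the case just proved, with $f^!$; it therefore sends the invertible $f^!\Z_\ell$ to the invertible $f^!\Lambda$, and base-change compatibility is inherited.

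The main obstacle, in my view, is the second step: the passage to the $\ell$-adic limit. Invertibility is not in general preserved under inverse limits, so the argument must exploit the specific structural features of the nuclear theory --- notably the fully faithful embedding of $\D_\nuc(-,\Z_\ell)$ into the solid category and the fact that the nuclear tensor product of $\ell$-adically complete objects can be computed by derived $\ell$-adic completion of their solid tensor --- in order to conclude that the candidate inverse genuinely inverts $f^!\Z_\ell$. Once this point is secured, the remaining steps are either formal (Lu--Zheng in step three) or standard étale dévissage (in step one).
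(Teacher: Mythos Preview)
Your proposal is correct and follows essentially the same route as the paper: climb from $\Fld_\ell$ to $\Z_\ell$ to $\Lambda$, and invoke the Lu--Zheng formalism to obtain the isomorphism of functors from the base-change stability of $f^!$ on the unit. The paper packages this a bit differently, working throughout with the single notion of \emph{$f$-dualizability} of the unit (\cref{rslt:characterization-of-f-dualizable-sheaves}): the criterion there says $\Lambda$ is $f$-dualizable precisely when $\pi_1^* f^!\Lambda \to \pi_2^!\Lambda$ is an isomorphism, and once this holds the transformation $f^!\Lambda \otimes f^* \to f^!$ is automatic.

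The payoff of this packaging is that your ``main obstacle'' dissolves. Rather than passing through $\Z/\ell^n$ and then arguing about inverse limits of invertibles, the paper shows (\cref{rslt:rel-dualizable-under-change-of-Lambda}) that for a bounded $\ell$-adically complete $\Z_\ell$-sheaf, $f$-dualizability can be checked modulo $\ell$: both sides of the single comparison map in \cref{rslt:simple-iso-criterion-for-f-dualizable-sheaves} are $\ell$-adically complete (using \cref{rslt:solid-tensor-product-preserves-complete-sheaves} for the tensor side), so the isomorphism descends from $\Fld_\ell$ in one stroke. Invertibility of $f^!\Z_\ell$ is handled by the same mechanism: $f^!\Z_\ell$ is bounded and $\ell$-adically complete since $f^!$ preserves limits and $f^!\Fld_\ell$ is invertible, so the relevant evaluation map can again be checked mod $\ell$. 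For general $\Lambda$ one then has $f^!\Lambda = \Lambda \tensor_{\Z_\ell} f^!\Z_\ell$ by the already-established $\Z_\ell$-case of \cref{rslt:functor-isomorphisms-for-f-dualizable-sheaves}, exactly as you suggest. So your detour through $\Z/\ell^n$ is unnecessary, but harmless.
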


Of course, $\ell$-cohomologically smooth maps also satisfy all the other expected properties for nuclear sheaves, see \cref{sec:smoothness}; here the magic of Lu-Zheng's construction really shines. We can use similar ideas to get a robust notion of $\ell$-cohomologically proper maps (which are more subtle to define than one might think at first), which we study in \cref{sec:properness}.

In the final part of this paper we apply the 6-functor formalism to classifying stacks, in which case we recover a category of representations. More concretely, suppose that $G$ is a locally profinite group which has locally finite $\ell$-cohomological dimension (e.g. $G$ is locally pro-$p$). Then there is a natural equivalence
\begin{align*}
	\D_\nuc(*/G,\Lambda) = \D_\nuc(\Lambda)^{BG},
\end{align*}
where the right-hand side denotes the $\infty$-category of \emph{nuclear $G$-representations}, i.e. continuous $G$-representations on nuclear $\Lambda$-modules. If $\Lambda$ is concentrated in degree $0$ (which is probably always true in practice) then $\D_\nuc(\Lambda)^{BG}$ is the derived $\infty$-category of its heart. For example, if $\Lambda = \Q_\ell$ then this heart is the abelian category of continuous $G$-representations on filtered colimits of $\Q_\ell$-Banach spaces.

We get the following result on the interaction of the 6-functor formalism with classifying stacks:

\begin{theorem}[{cf. \cref{rslt:classifying-stacks-are-smooth}}]
Let $G$ be a locally pro-$p$ group. Then the map $*/G \to *$ is $\ell$-fine and $\ell$-cohomologically smooth. If $G$ is pro-$p$ then this map is additionally $\ell$-cohomologically proper.
\end{theorem}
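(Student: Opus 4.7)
The strategy is to reduce to the case where $G$ is pro-$p$ and then exploit that such a $G$ has $\Fld_\ell$-cohomological dimension zero (since $\ell \ne p$). For locally pro-$p$ $G$, choose an open pro-$p$ subgroup $H \subset G$; the induced map $\iota\colon */H \to */G$ is étale because its pullback along the standard cover $* \to */G$ is the discrete quotient $G/H$. Since both $\ell$-fineness and $\ell$-cohomological smoothness are étale-local on the source by the general principles of the 6-functor formalism and the results of \cref{sec:smoothness}, and both are preserved under composition with the étale map $\iota$, it suffices to verify them for $*/H \to *$, i.e.\ for $G$ pro-$p$.

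Now assume $G$ is pro-$p$. For $\ell$-fineness, apply the stacky construction of $f_!$ sketched in \cref{rmk:explicit-construction-of-shriek-functors} to the surjection $* \to */G$: its base change is the qcqs map $G \to *$, which has $\Fld_\ell$-cohomological dimension zero, so the requirements for the stacky extension of $f_!$ are met and $*/G \to *$ is $\ell$-fine. For $\ell$-cohomological smoothness, use the equivalence $\D_\nuc(*/G,\Lambda) = \D_\nuc(\Lambda)^{BG}$ recorded just above the theorem. Under it, $f^*$ is the trivial-representation functor and $f_*$ is derived continuous $G$-invariants; since $G$ has $\Fld_\ell$-cohomological dimension zero, $f_*$ is exact on $\Fld_\ell$-modules and $f_* \Fld_\ell = \Fld_\ell$ concentrated in degree zero. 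Combined with the projection formula and v-descent for nuclear sheaves, this forces $f^! \Fld_\ell = \Fld_\ell$, which is trivially invertible. Base-change compatibility follows because the vanishing of higher cohomology is intrinsic to $G$ and persists along any cartesian square with base $*$.

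For the additional claim of $\ell$-cohomological properness when $G$ is pro-$p$, by the Lu--Zheng formalism developed in \cref{sec:properness} it suffices to show the natural transformation $f_! \to f_*$ is an isomorphism. In representation-theoretic terms, $f_! M$ computes derived $G$-coinvariants and $f_* M$ derived $G$-invariants. For $G$ pro-$p$ and a nuclear $\Lambda$-module $M$ with continuous $G$-action (where $\Lambda$ is any nuclear $\Z_\ell$-algebra), both functors are exact in $M$ by the vanishing of higher cohomology for finite $p$-group quotients with $\ell$-torsion coefficients, extended to general nuclear $\Lambda$ by filtered colimits, and the classical norm map $M_G \to M^G$ is an isomorphism.

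The main obstacle is verifying that the stacky construction of $f_!$ along the non-representable cover $* \to */G$ yields a functor satisfying all expected compatibilities, in particular with base change; one needs the v-descent of $\D_\nuc(-,\Lambda)$ and the cohomological-dimension-zero hypothesis working together. A secondary delicate point is identifying the dualizing sheaf $f^!\Fld_\ell$ with $\Fld_\ell$ on the nose, with no shift or twist, reflecting the fact that the ``dimension'' of $BG$ for $G$ pro-$p$ is morally zero.
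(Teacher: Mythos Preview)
Your overall strategy—reduce to pro-$p$ $G$ via étale locality, then exploit $\cd_\ell G = 0$—matches the paper's. But each of the three steps has a genuine gap where you hand-wave over the actual content.

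\textbf{$\ell$-fineness.} Saying ``the requirements for the stacky extension of $f_!$ are met'' skips the entire point: one must show that $* \to */G$ admits \emph{universal $\ell$-codescent}. The paper (\cref{rslt:map-from-*-to-*-mod-G-has-codescent}) does this via Mathew's descendability: one shows the algebra $\cts(G,\Z_\ell) \in \D_\nuc(\Z_\ell)^{BG}$ is descendable, using an $\Ext$-vanishing bound (\cref{rslt:vanishing-of-Ext-for-complete-reps}) to see that $\Z_\ell$ is a retract of a finite truncation of the cobar totalization. The bound $\cd_\ell G = 0$ makes this easier, but the descendability argument is still the substance; ``has $\Fld_\ell$-cohomological dimension zero'' does not by itself give codescent.

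\textbf{Smoothness.} The inference ``$f_*\Fld_\ell = \Fld_\ell$, combined with the projection formula and v-descent, forces $f^!\Fld_\ell = \Fld_\ell$'' is not an argument: knowing $f_*\Fld_\ell$ gives no direct handle on $f^!\Fld_\ell$. The paper instead uses the criterion of \cref{rslt:simple-iso-criterion-for-f-dualizable-sheaves}: $\Fld_\ell$ is $f$-dualizable iff the natural map $\pi_1^* f^!\Fld_\ell \to \pi_2^!\Fld_\ell$ on $*/(G\cprod G)$ is an isomorphism. In \cref{rslt:first-condition-of-Poincare-equiv-F-l-relatively-dualizable} this is checked by applying the conservative family $g_{H*}h_H^*$ for compact open $H\subset G$ and using $\ell$-cohomological properness (established first!) to reduce to the statement that $\Gamma(H,\Fld_\ell)$ is dualizable for all such $H$—trivial here since it equals $\Fld_\ell$. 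Invertibility of $f^!\Fld_\ell$ is then a separate (easy) check. Your ``base-change compatibility follows because vanishing of higher cohomology is intrinsic'' is likewise not a proof; it is exactly what the Lu--Zheng criterion packages, and one must actually verify the isomorphism on the double self-product.

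\textbf{Properness.} You identify $f_!$ with derived coinvariants and the map $f_! \to f_*$ with a norm map. But $f_!$ for the stacky map $*/G \to *$ is \emph{defined} via codescent along $* \to */G$ (step 4 of \cref{rmk:explicit-construction-of-shriek-functors}), and neither identification is immediate. The paper's argument is different and cleaner: first observe the diagonal has fiber $G$ so $f$ is $1$-separated; then note $f_*$ preserves small colimits (\cref{rslt:group-cohom-preserves-nuclear-and-colim}); finally a formal argument (cited from \cite{mod-p-stacky-6-functors}) shows that colimit-preservation plus base-change for $f_*$ force the natural map $f_! \to f_*$ to be an isomorphism. No norm map appears.
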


Note that the pushforward along $*/G \to *$ computes nuclearized continuous $G$-cohomology, so the above theorem implies a Poincaré duality statement for continuous $G$-cohomology on nuclear $\Lambda$-modules. With this knowledge at hand, one can also introduce a good notion of \emph{admissible nuclear $G$-representations} (see \cref{def:admissible-representation}) and deduce some basic properties for them (see \cref{rslt:properties-of-admissible-reps}). If $\Lambda$ is discrete then this recovers the usual notion of admissible representations. If $\Lambda$ is not discrete then the notion of admissible representations seems less common in the literature and has probably not been defined in the generality presented here.

\paragraph{Background and Motivation.} In recent years, 6-functor formalisms have proven to be an extremely powerful tool in the study of various cohomology theories. With recent advances in the geometric Langlands program, 6-functor formalisms have also found tremendous success in applications to representation theory. In the case of rigid-analytic geometry, there have been introduced two 6-functor formalisms: A 6-functor formalism for étale $\Lambda$-sheaves for any discrete ring $\Lambda$ which is killed by some power of $\ell$ \cite{etale-cohomology-of-diamonds} and an analog for mod-$p$ coefficients \cite{mann-mod-p-6-functors}. These 6-functor formalisms have then been extended to certain stacky maps in \cite{mod-ell-stacky-6-functors,mod-p-stacky-6-functors}, which makes them applicable to representation theory. However, as far as we know there has not yet been introduced a satisfying 6-functor formalism for sheaves of modules over non-discrete rings like $\Z_\ell$, $\Q_\ell$ or $\overline{\Q_\ell}$. There have been the following attempts:
\begin{enumerate}[1.]
	\item The most naive way of defining a 6-functor formalism for $\Z_\ell$-sheaves is by formally taking $\ell$-adic completions, i.e. associating to every small v-stack $X$ the $\infty$-category $\D_\naive(X,\Z_\ell) := \varprojlim_n \D_\et(X,\Z/\ell^n \Z)$ (cf. \cite[\S26]{etale-cohomology-of-diamonds}). This works well for most purposes: The $\infty$-category $\D_\naive(X,\Z_\ell)$ satisfies v-descent and embedds fully faithfully into $\D(X_\vsite,\Z_\ell)$, even into $\D_\nuc(X,\Z_\ell)$. One also gets a full 6-functor formalism for these sheaves. In the case of qcqs spaces this 6-functor formalism mostly recovers the nuclear 6-functor formalism. For non-qcqs spaces and maps, the naive 6-functor formalism forces $\ell$-adic completions and thus loses information compared to the nuclear version. This is especially relevant in rigid-analytic geometry, where one is often interested in cohomologies of non-qcqs spaces like those appearing in the Drinfeld tower.

	The main issue with the naive 6-functor formalism for $\Z_\ell$-sheaves is that it does not produce a good theory for $\Q_\ell$-sheaves. One can attempt to define $\D_\naive(X,\Q_\ell) := \D_\naive(X,\Z_\ell) \tensor \Q$, i.e. by formally inverting $\ell$. This roughly amounts to studying those $\Q_\ell$-sheaves which admit a $\Z_\ell$-model, but that is not precisely correct. In fact, $\D_\naive(X,\Q_\ell)$ does not even embedd into $\D(X_\vsite,\Q_\ell)$!

	\item In \cite[\S VII]{fargues-scholze-geometrization} an $\infty$-category $\D_\solid(X,\Z_\ell) \subset \D(X_\vsite,\Z_\ell)$ of \emph{solid $\Z_\ell$-sheaves} on $X$ is defined. This $\infty$-category satisfies v-descent and one can easily define $\D_\solid(X,\Lambda)$ for every solid $\Z_\ell$-algebra $\Lambda$, so in particular for every nuclear $\Z_\ell$-algebra $\Lambda$. The theory of solid sheaves has excellent geometric properties; in fact it forms the foundation upon which we build our nuclear theory. Fargues-Scholze provide a ``5-functor formalism'' for $\D_\solid(X,\Lambda)$, which in many applications is sufficiently strong to replace an actual 6-functor formalism. Still, $\D_\solid(X,\Lambda)$ does not have a 6-functor formalism (it is far too big for that) and hence does not generalize the étale theory for discrete $\Lambda$. See below for more information on the comparison between the solid 5-functor formalism and our nuclear 6-functor formalism.

	\item In \cite[\S VII.6]{fargues-scholze-geometrization} a full subcategory $\D_\lis(X,\Lambda) \subset \D_\solid(X,\Lambda)$ is defined in order to remedy some of the downsides of $\D_\solid(X,\Lambda)$. While $\D_\lis(X,\Lambda)$ seems to work very well with regards to the Langlands conjecture, it feels rather unnatural from a purely geometric point of view. Some of the main issues are that it is not stable under various operations and does not satisfy v-descent.
\end{enumerate}
The nuclear 6-functor formalism has all the expected formal properties of a 6-functor formalism and generalizes the étale version for discrete coefficients. We therefore believe it to be the ``right'' way of working with sheaves over nuclear $\Z_\ell$-algebras. While the existence of the nuclear 6-functor formalism is a very neat abstract result and has many formal consequences (e.g. Poincaré duality for $\overline{\Q_\ell}$-local systems), we do not provide any actual applications of the 6-functor formalism in our paper. From our point of view, the main scientific value of this paper comes from the following motivations:
\begin{enumerate}[(a)]
	\item We found the definition of $\D_\lis$ rather unsatisfying and were therefore looking for different ways of understanding it. It seems plausible that one can describe $\D_\lis \subset \D_\nuc$ in terms of some abstract properties, like $\Ind$-compact or $\Ind$-perfect objects.

	\item With $\D_\nuc$ appearing so naturally in geometry, we would be surprised if it does not play some role in the local Langlands program. For example, one could hope that the categorical Langlands conjecture generalizes to a conjecture describing $\D_\nuc(\Bun_G,\Lambda)$. So far $\ell$-adic Banach representations have not received much attention in the Langlands program, but in \cite{vigneras-banach-reps} it was shown that they seem to have some connection to Langlands.

	\item We hope that the nuclear 6-functor formalism sheds some light on the $p$-adic Langlands program. In fact, we have previously been trying to generalize our mod-$p$ 6-functor formalism to a $\Z_p$- or $\Q_p$-version without much success. The nuclear $\ell$-adic 6-functor formalism provides a lot of insight on how its $p$-adic analog should work.
	\item A large part of this paper is rather formal. One can therefore use this paper as a general recipe on how to construct a 6-functor formalism and study its basic properties like smoothness and representations.
\end{enumerate}

\paragraph{Why Nuclear Sheaves.} Our definition of nuclear sheaves, while being rather explicit, comes somewhat out of nowhere. Let us therefore explain why one should expect such a definition to appear in a 6-functor formalism for $\Z_\ell$-sheaves. There are two major motivations:
\begin{enumerate}[(i)]
	\item In condensed mathematics, the $\infty$-category $\D_\nuc(\Z_\ell)$ of nuclear $\Z_\ell$-modules should be seen as an analog of ``discrete'' modules over $\Z_\ell$ (since $\Z_\ell$ is equipped with a topology, the actual category of discrete $\Z_\ell$-modules is rather small). Since étale $\Fld_\ell$-sheaves on a small v-stack $X$ are a relative version of discrete $\Fld_\ell$-modules, it is believable that the correct $\Z_\ell$-analog should be a relative version of nuclear $\Z_\ell$-modules.

	\item One of the main reasons why the solid 5-functor formalism cannot be upgraded to a 6-functor formalism is that for proper maps $f\colon Y \to X$ the functor $f_*\colon \D_\solid(Y,\Lambda) \to \D_\solid(X,\Lambda)$ often does not satisfy the projection formula. A geometric example of this phenomenon is given in \cite[Warning VII.2.5]{fargues-scholze-geometrization}. Let us discuss a different example, which also demonstrates where the nuclearity condition comes from: Suppose $X = \Spa C$ is a geometric point and $Y = \underline S$ is the pro-étale space over $X$ given by some profinite set $S$. Then $\D_\solid(X,\Lambda) = \D_\solid(\Lambda)$ is the $\infty$-category of solid condensed $\Lambda$-modules. If $f_*$ satisfied the projection formula then in particular for all solid $\Lambda$-modules $M$ we would have
	\begin{align*}
		f_* f^* M = f_* \Lambda \tensor_\Lambda M,
	\end{align*}
	where the tensor product on the right is the solid tensor product. Note that $f_* \Lambda = \cts(S, \Lambda) = \Lambda_\solid[S]^\vee$ and $f_* f^* M = \IHom(\Lambda_\solid[S], M)$. Thus for fixed $M$ the above condition (for all $S$) amounts precisely to the condition that $M$ is nuclear (as defined in \cite[Definition 13.10]{scholze-analytic-spaces}).
\end{enumerate}

\paragraph{Relation to the Solid 5-Functor Formalism.} In \cite[\S VII]{fargues-scholze-geometrization} Fargues-Scholze define a ``5-functor formalism'' for the $\infty$-category $\D_\solid(X,\Lambda)$ of solid $\Lambda$-sheaves on any small v-stack $X$. Apart from the usual four functors $\tensor$, $\IHom$, $f^*$ and $f_*$ they introduce a functor $f_\natural$ defined to be the left adjoint of $f^*$. The goal of the solid 5-functor formalism is to approximate a 6-functor formalism for $\Lambda$-sheaves, but there has been some confusion as to what extent this is possible. With the nuclear 6-functor formalism at hand we can shed some light on this and in particular try to answer the questions raised in \cite[Remark VII.3.6]{fargues-scholze-geometrization}.

Namely, in \cref{rslt:comparison-of-natural-and-shriek-functor-fdcs-smooth} we show the following: Suppose $f\colon Y \to X$ is an $\ell$-fine and $\ell$-cohomologically smooth map of small v-stacks which is representable in locally spatial diamonds (this latter condition can be relaxed, cf. \cref{rslt:comparison-of-natural-and-shriek-functor-ell-fine-smooth}). Then $f_\natural\colon \D_\solid(Y,\Lambda) \to \D_\solid(X,\Lambda)$ preserves nuclear sheaves and there is a natural isomorphism
\begin{align*}
	f_\natural = f_! (- \tensor f^! \Lambda)
\end{align*}
of functors $\D_\nuc(Y,\Lambda) \to \D_\nuc(X,\Lambda)$. In other words, along $\ell$-cohomologically smooth maps $f$, $f_\natural$ computes $f_!$ up to a twist. In particular this allows one to simulate $f_!$ by $f_\natural$ whenever one is in a geometric situation that can be built out of smooth maps, which explains why the 5-functor formalism is so useful in \cite{fargues-scholze-geometrization}. Note that it also follows that $\D_\lis(X,\Lambda) \subset \D_\nuc(X,\Lambda)$.

Of course, for non-smooth $f$ the functors $f_\natural$ and $f_!$ are very different and $f_\natural$ does usually not preserve nuclear sheaves.

\paragraph{Structure of the Paper.} This paper is roughly structured into four parts:
\begin{itemize}
	\item Part I consists of \cref{sec:w1-solid,sec:nuclear} and introduces the $\infty$-category of nuclear sheaves. These sections are very specific to the concrete setting at hand. In \cref{sec:w1-solid} we introduce $\omega_1$-solid sheaves as a particularly nice full subcategory of all solid sheaves. Their main advantage is that it is easier to control their compact objects (which is crucial for studying nuclear sheaves later on) while still maintaining most of the nice properties of solid sheaves. In \cref{sec:nuclear} we then introduce nuclear sheaves. We provide different characterizations of nuclearity (thereby also motivating the terminology) and show that they satisfy v-descent.

	\item Part II consists of \cref{sec:pushforward,sec:6functors} and constructs the 6-functor formalism for nuclear sheaves. While not being fully formal, most of the ideas in these sections should be applicable to many different geometric settings. It can therefore be used as a general recipe for constructing 6-functor formalisms.

	\item Part III consists of \cref{sec:perf,sec:reldual,sec:smoothness,sec:properness} and is almost completely formal. Here we introduce dualizable and relatively dualizable sheaves (the latter being also known as universally locally acyclic sheaves in the étale context) and use them to define cohomologically smooth and proper maps. The main insight is that by using the magic of Lu-Zheng's ideas one can show that cohomological smoothness is a condition that only depends on very little data of the 6-functor formalism (one only needs to understand its behavior on the monoidal unit). Since all of the sections in part III are very formal, they should apply to every 6-functor formalism.

	\item Part IV consists of \cref{sec:representations}, where we apply the nuclear 6-functor formalism to classifying stacks in order to obtain a 6-functor formalism for representation theory. The main result here is that under mild assumptions on the locally profinite group $G$ the classifying stack $*/G$ is $\ell$-cohomologically smooth.
\end{itemize}

\paragraph{Notation and Conventions.} The whole paper is written in the modern language of $\infty$-categories, as this is the most natural and clean way of describing our ideas (for example $\D_\nuc(X,\Lambda)$ does not have a $t$-structure and is therefore not that easily accessible with conventional methods; also the 6-functor formalism for stacky maps requires an $\infty$-categorical framework). In particular every functor, sheaf, ring, module, representation etc. is always assumed to be derived if not explicitly specified differently. In the presence of a $t$-structure on a stable $\infty$-category we usually denote by $\pi_n M = H^{-n}(M)$ the homology objects of $M$. We say that $M$ is \emph{static} if it is concentrated in degree 0, i.e. has no derived structure. For a pro-étale map $j\colon U \to X$ of diamonds and a pro-étale sheaf $\mathcal M$ on $U$ we denote $\mathcal M[U] := j_! \mathcal M$ (in the sense of sites), which is a pro-étale sheaf on $X$. All sheaves will always be considered as part of the derived $\infty$-category of v-sheaves. For example by an étale sheaf we mean a sheaf in $\D_\et(X,\Z) \subset \D(X_\vsite,\Z)$. We denote by $(-)_\et\colon \D(X_\vsite,\Z) \to \D_\et(X,\Z)$ the right adjoint to the inclusion. For a $\Z_\ell$-module $M$ we denote $M/\ell^n M := \cofib(M \xto{\ell^n} M)$ and similarly for sheaves of $\Z_\ell$-modules. We warn the reader that by $\D_\et(X,\Z_\ell)$ we denote the $\infty$-category of those pro-étale $\Z_\ell$-sheaves on $X$ whose underlying sheaf of abelian groups is étale (this is different from the notation in \cite[\S26]{etale-cohomology-of-diamonds}).

We are aware of the fact that $\infty$-categories can be intimidating to the uninitiated, so we refer the reader to \cite[\S1.5]{mann-mod-p-6-functors} for a quick down-to-earth introduction to $\infty$-categories with a special focus on the terminology used in the algebraic setting.

\paragraph{Acknowledgements.} I heartily thank David Hansen for many very helpful discussions regarding this material and for reading preliminary versions of it. I similarly thank Peter Scholze for useful discussions (and some clarification regarding dualizable objects...) and for checking whether the main results of this paper are plausible. Special thanks go to Johannes Anschütz, Arthur-César Le Bras, Alexander Ivanov and Bogdan Zavyalov who provided valuable input and feedback to some of the ideas appearing in this paper.

\section{\texorpdfstring{$\omega_1$}{w1}-Solid Sheaves} \label{sec:w1-solid}

Fix a prime $\ell \ne p$. The $\infty$-category $\D_\solid(X,\Z_\ell)$ constructed in \cite[\S VII]{fargues-scholze-geometrization} lacks some formal properties which we need for our construction of nuclear sheaves, most prominently the unit object is usually not compact. We will remedy this by considering a much smaller subcategory $\D_\solid(X,\Z_\ell)_{\omega_1} \subset \D_\solid(X,\Z_\ell)$ of \emph{$\omega_1$-solid sheaves}. This subcategory is generated under small colimits by \emph{countable} limits of qcqs étale sheaves. Since countable limits have finite cohomological dimension, this gives us a lot of control. In particular, the $\infty$-category of $\omega_1$-solid sheaves has similar properties as $\D_\solid(X,\Z_\ell)$, but under mild assumptions on $X$ the unit is compact. This ``mild assumption'' on $X$ is the following:

\begin{definition}
A spatial diamond $X$ is called \emph{$\ell$-bounded} if there is some integer $d$ such that for all static étale $\Fld_\ell$-modules $\mathcal M \in \D_\et(X,\Fld_\ell)$ on $X$ we have $H^k(X, \mathcal M) = 0$ for $k > d$.
\end{definition}

\begin{definition}
Let $X$ be a spatial diamond. A quasi-pro-étale map $U \to X$ is called \emph{basic} if it can be written as a cofiltered limit $U = \varprojlim_i U_i$ such that all $U_i \to X$ are étale, quasicompact and separated.
\end{definition}

\begin{remark}
Suppose that $X$ is an $\ell$-bounded spatial diamond and let $\mathcal M \in \D_\et(X, \Z)$ be any static étale sheaf on $X$ which is killed by some power of $\ell$. Then for the same $d$ as in the definition of $\ell$-boundedness and all basic $U \in X_\proet$ we have $H^k(U, \mathcal M) = 0$ for $k > d+1$. Namely, if $U = X$ then this follows by writing $\mathcal M = \varprojlim_n \mathcal M/\ell^n \mathcal M$ and noting that for each $\mathcal M/\ell^n\mathcal M$ the result follows by repeatedly applying fiber sequences of the form $\mathcal M/\ell^{n-1}\mathcal M \to \mathcal M/\ell^n \mathcal M \to \mathcal M/\ell \mathcal M$. If $U$ is étale over $X$ then the result follows from the observation that the étale pushforward along $U \to X$ is $t$-exact (this can be checked after pullback to a strictly totally disconnected space, in which case $U$ is also strictly totally disconnected and therefore has vanishing étale cohomology). For general $U = \varprojlim_i U_i$ use that $H^k(U, \mathcal M) = \varinjlim_i H^k(U_i, \mathcal M)$. In the following we will implicitly make use of these facts.
\end{remark}

Let us now come to the definition of $\omega_1$-solid sheaves. Here by $\omega_1$ we mean the first uncountable cardinal.

\begin{definition}
Let $X$ be an $\ell$-bounded spatial diamond. A solid $\Z_\ell$-module $\mathcal M \in \D_\solid(X, \Z_\ell)$ on $X$ is called \emph{$\omega_1$-solid} if for every $\omega_1$-cofiltered limit $U = \varprojlim_i U_i$ of basic objects in $X_\proet$ the natural map
\begin{align*}
	\varinjlim_i \Gamma(U_i, \mathcal M) \isoto \Gamma(U, \mathcal M)
\end{align*}
is an isomorphism. We denote by
\begin{align*}
	\D_\solid(X,\Z_\ell)_{\omega_1} \subset \D_\solid(X,\Z_\ell)
\end{align*}
the full subcategory spanned by the $\omega_1$-solid sheaves.
\end{definition}

The basic properties of $\D_\solid(X,\Z_\ell)_{\omega_1}$ are summarized by the following result, which appeared to be surprisingly subtle.

\begin{proposition} \label{rslt:general-properties-of-omega-1-solid-sheaves}
Let $X$ be an $\ell$-bounded spatial diamond.
\begin{propenum}
	\item \label{rslt:compact-generators-of-etale-sheaves} Let $\D_\et(X, \Z_\ell) \subset \D_\solid(X, \Z_\ell)$ denote the full subcategory spanned by those sheaves whose underlying abelian sheaf lies in $\D_\et(X, \Z)$. Then $\D_\et(X, \Z_\ell)$ is compactly generated and the compact objects are generated under finite (co)limits and retracts by the objects $\Fld_\ell[U]$ for quasicompact separated $U \in X_\et$. Moreover, the $t$-structure on $\D_\et(X,\Z_\ell)$ restricts to a $t$-structure on $\D_\et(X,\Z_\ell)^\omega$.

	\item \label{rslt:compact-generators-of-w1-solid-sheaves} $\D_\solid(X, \Z_\ell)_{\omega_1}$ is compactly generated. The compact objects are $\ell$-adically complete and generated under finite (co)limits and retracts by the objects
	\begin{align*}
		\Z_{\ell,\solid}[U] = \varprojlim_n \Z_\ell[U_n] = \varprojlim_n (\Z/\ell^n\Z)[U_n]
	\end{align*}
	for sequential limits $U = \varprojlim_n U_n$ with all $U_n \to X$ being étale, quasicompact and separated. This identifies $\D_\solid(X, \Z_\ell)^\omega_{\omega_1}$ with a full subcategory
	\begin{align*}
		\D_\solid(X, \Z_\ell)_{\omega_1}^\omega \subset \Pro(\D_\et(X,\Z_\ell)^\omega).
	\end{align*}

	\item \label{rslt:stability-of-w1-solid-sheaves} $\D_\solid(X, \Z_\ell)_{\omega_1}$ is stable under all colimits and countable limits in $\D_\solid(X,\Z_\ell)$ and contains $\D_\et(X,\Z_\ell)$. It admits a complete $t$-structure and a symmetric monoidal structure by restricting the ones on $\D_\solid(X,\Z_\ell)$. Moreover, the compact objects in $\D_\solid(X,\Z_\ell)_{\omega_1}$ are stable under tensor product.
\end{propenum}
\end{proposition}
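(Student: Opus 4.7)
The plan is to establish (i), (ii), (iii) in sequence, where $\ell$-boundedness provides the uniform cohomological bounds needed to control inverse limits of static sheaves, and the set-theoretic commutation of $\omega_1$-filtered colimits with countable limits serves as the main structural ingredient. For (i), I would start from the standard fact that $\D_\et(X, \Fld_\ell)$ is compactly generated by the sheaves $\Fld_\ell[U]$ for $U$ quasicompact separated étale over $X$. Compactness of $\Fld_\ell[U]$ in $\D_\et(X, \Z_\ell)$ follows because $\Hom(\Fld_\ell[U], -)$ is computed by sections over $U$ against the mod-$\ell$ cofiber, and $\Gamma(U, -)$ preserves filtered colimits of static sheaves by $\ell$-boundedness combined with the dévissage $\mathcal M \xto{\ell} \mathcal M \to \mathcal M/\ell$. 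Compact generation of $\D_\et(X, \Z_\ell)$ then reduces to the mod-$\ell$ case by the same dévissage, and the $t$-structure restriction to compact objects follows because $\ell$-boundedness confines truncations of compact objects to finitely many cohomological degrees.

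For (ii), the crux of the proposition, I would first verify that each $\Z_{\ell,\solid}[U] = \varprojlim_n (\Z/\ell^n\Z)[U_n]$ is $\omega_1$-solid. For a basic $\omega_1$-cofiltered limit $V = \varprojlim_i V_i$, one writes $\Gamma(V, \Z_{\ell,\solid}[U]) = \varprojlim_n \Gamma(V, (\Z/\ell^n\Z)[U_n])$ (using $\ell$-boundedness to control $R^1\varprojlim$), then $\Gamma(V, (\Z/\ell^n\Z)[U_n]) = \varinjlim_i \Gamma(V_i, (\Z/\ell^n\Z)[U_n])$ by standard étale qcqs descent, and finally swaps the countable limit over $n$ with the $\omega_1$-filtered colimit over $i$. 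The same calculation gives compactness: on $\omega_1$-solid sheaves, $\Hom(\Z_{\ell,\solid}[U], -) = \Gamma(U, -)$ preserves filtered colimits. For generation, I would argue that every $\omega_1$-solid sheaf is a canonical colimit of such compact approximations, constructed via basic pro-étale covers of $X$ and assembled by descent; the resulting embedding into $\Pro(\D_\et(X, \Z_\ell)^\omega)$ is given by viewing $\Z_{\ell,\solid}[U]$ tautologically as the pro-object $((\Z/\ell^n\Z)[U_n])_n$.

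For (iii), containment of $\D_\et(X, \Z_\ell)$ is a direct unraveling of definitions, and stability under all colimits is formal since the defining identity is preserved under colimits. Stability under countable limits uses a Milnor-type argument: $\ell$-boundedness bounds $R\varprojlim$, and one again exploits the commutation of $\omega_1$-filtered colimits with countable limits. The $t$-structure and symmetric monoidal structure restrict from $\D_\solid(X, \Z_\ell)$ because the compact generators are stable under truncation (by $\ell$-boundedness) and tensor product ($\Z_{\ell,\solid}[U] \tensor \Z_{\ell,\solid}[U'] = \Z_{\ell,\solid}[U \cprod_X U']$, with $U \cprod_X U'$ again a sequential limit of étale qcqs separated objects). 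The main obstacle I foresee is the generation statement in (ii): showing that the proposed compact objects actually exhaust $\D_\solid(X, \Z_\ell)_{\omega_1}$ requires a careful understanding of how pro-étale descent data assembles into a given $\omega_1$-solid sheaf, and this is likely where the subtlety the authors describe as ``surprisingly subtle'' resides.
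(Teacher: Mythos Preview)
Your outline has the right ingredients for the easy parts, but the place you flag as ``formal'' is exactly where the real work lies, and your argument there does not go through.

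You claim in (iii) that stability under all colimits is formal because ``the defining identity is preserved under colimits''. Unpacking this: for an $\omega_1$-cofiltered limit $U = \varprojlim_i U_i$ of basic objects you would need both $\Gamma(U,-)$ and each $\Gamma(U_i,-)$ to commute with filtered colimits in $\D_\solid(X,\Z_\ell)$, i.e.\ you need $\Z_{\ell,\solid}[U_i]$ to be compact in the full derived category $\D_\solid(X,\Z_\ell)$. But a basic $U_i$ need not be w-contractible, and for such $U_i$ the functor $\Gamma(U_i,-)$ does \emph{not} have finite cohomological dimension on the entire solid heart: a static finitely presented solid sheaf can be an uncountable cofiltered limit of qcqs \'etale sheaves, and the derived limit of the resulting system of $[0,d{+}1]$-bounded complexes is in general unbounded. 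So $\Z_{\ell,\solid}[U_i]$ is only compact in the heart, not in $\D_\solid(X,\Z_\ell)$, and your colimit argument collapses. Your compactness argument in (ii) inherits the same gap, since it relies on closure under colimits.

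This is precisely the ``surprisingly subtle'' point, and the paper resolves it by a bootstrap that runs in the opposite order to yours. One first forms the thick subcategory $\mathcal C$ generated by the candidate objects $\Z_{\ell,\solid}[U]$ for countable $U$, and studies the functor $\alpha\colon \Ind(\mathcal C) \to \D_\solid(X,\Z_\ell)$. Using \cite[Theorem VII.1.3]{fargues-scholze-geometrization} one shows that each $\pi_k$ of an object of $\mathcal C$ is a \emph{countable} limit of qcqs \'etale sheaves; this yields a uniform cohomological bound $H^k(U,\mathcal M)=0$ for $k>d+2$ for static $\mathcal M$ in the essential image of $\alpha$. Only with this bound in hand can one run the Postnikov argument to prove $\alpha$ is fully faithful, and then identify its essential image with $\D_\solid(X,\Z_\ell)_{\omega_1}$ via conservativity of the family $\Gamma(U,-)$. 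Closure under colimits and the $t$-structure restriction then fall out as consequences, not as inputs. Your sketch of the $t$-structure restriction (``compact generators are stable under truncation by $\ell$-boundedness'') is likewise too quick: the actual argument again goes through the finitely-presented description of $\pi_k\mathcal P$ from \cite[Theorem VII.1.3]{fargues-scholze-geometrization}.
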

\begin{proof}
We first prove (i). We claim that the forgetful functor $\D_\et(X,\Z_\ell) \to \D_\et(X,\Z)$ is fully faithful. Indeed, the right adjoint of the forgetful functor $\D(X_\proet, \Z_\ell) \to \D(X_\proet, \Z)$ is given by $\IHom_\Z(\Z_\ell, -)$ and one observes that this functor preserves étale sheaves; in fact, if $\mathcal M \in \D_\et(X, \Z)$ then $\IHom_\Z(\Z_\ell, -) = \varinjlim_n \IHom_\Z(\Z/\ell^n \Z, \mathcal M)$ by the usual Breen resolution argument (cf. the proof of \cite[Proposition VII.1.12]{fargues-scholze-geometrization}). One deduces from the same formula that if $\mathcal M \in \D_\et(X,\Z_\ell)$ then $\IHom_\Z(\Z_\ell, \mathcal M) = \mathcal M$, proving the desired fully faithfulness.

The embedding $\D_\et(X,\Z_\ell) \injto \D_\et(X,\Z)$ is $t$-exact. Moreover, if $\mathcal P \in \D_\et(X,\Z_\ell)$ is perfect constructible as an object in $\D_\et(X,\Z)$ then it is compact as an object in $\D_\et(X,\Z_\ell)$: By the proof of \cite[Proposition 20.17]{etale-cohomology-of-diamonds} this reduces to showing that $\Hom(\Z[U], -) = \Gamma(U, -)$ preserves small colimits of objects in $\D_\et(X,\Z_\ell)$, which follows from $\ell$-boundedness of $X$ (using the fact that every $\mathcal M \in \D_\et(X,\Z_\ell)$ can be written as $\mathcal M = \varinjlim_n \IHom(\Z/\ell^n\Z, \mathcal M)$ by the previous paragraph). We now prove that conversely every compact object in $\D_\et(X,\Z_\ell)$ is perfect construcible as an object in $\D_\et(X,\Z)$. First observe that for every quasicompact separated $U \in X_\et$, $\Fld_\ell[U] \in \D_\et(X,\Z_\ell)$ is compact because it is perfect constructible in $\D_\et(X,\Z)$. We now claim that the objects $\Fld_\ell[U]$ generate $\D_\et(X,\Z_\ell)$. Note that these objects generate $(\Z/\ell^n\Z)[U][k]$ for all integers $n \ge 1$ and $k$; by abstract nonsense it is therefore enough to show that the family of functors $\Hom((\Z/\ell^n\Z)[U][k], -)$ is conservative. We can ignore the shifts by $k$ by instead taking spectra-enriched $\Hom$'s. We now claim that for every $\mathcal M \in \D_\et(X,\Z_\ell)$ the natural map
\begin{align*}
	\varinjlim_n \Hom((\Z/\ell^n\Z)[U], \mathcal M) \isoto \Hom(\Z_\ell[U], \mathcal M) = \Gamma(U, \mathcal M),
\end{align*}
is an isomorphism of spectra. Using the $\ell$-boundedness of $X$, a standard Postnikov limit argument reduces this claim to the case that $\mathcal M$ is left-bounded. Since both sides commute with colimits in $\mathcal M$ we can further reduce to the case that $\mathcal M$ is static. But then the usual Breen-Deligne resolution works (cf. the proof of \cite[Proposition VII.1.12]{fargues-scholze-geometrization}). We are thus reduced to showing that the family of functors $\Gamma(U, -)$ is conservative; but this is clear. In particular we deduce the claimed description of compact objects in $\D_\et(X,\Z_\ell)$ and identify them as precisely those objects which are perfect constructible in $\D_\et(X,\Z)$. By \cite[Proposition 20.12]{etale-cohomology-of-diamonds} the $t$-structure on $\D_\et(X,\Z)$ restricts to a $t$-structure on perfect constructible sheaves; this finishes the proof of (i).

We now prove (ii) and (iii). The fact that $\D_\solid(X,\Z_\ell)_{\omega_1}$ is stable under countable limits in $\D_\solid(X,\Z_\ell)$ follows immediately from the definition by using the fact that countable limits commute with $\omega_1$-filtered colimits in spectra. Also, it is obvious that $\D_\solid(X,\Z_\ell)_{\omega_1}$ contains $\D_\et(X,\Z_\ell)$ because étale sheaves $\mathcal M$ on $X$ satisfy $\Gamma(U, \mathcal M) = \varinjlim_i \Gamma(U_i, \mathcal M)$ for all cofiltered limits $U = \varprojlim_i U_i$ (this holds for unbounded $\mathcal M$ by the $\ell$-boundedness of $X$).

Let now $U = \varprojlim_n U_n$ be given as in (ii). We first check that $\Z_{\ell,\solid}[U]$ is $\omega_1$-solid. Since $\omega_1$-solid sheaves are stable under countable limits we reduce to showing this for $\Z_\ell[U_n]$ for all $n$. Since étale sheaves are $\omega_1$-solid, we further reduce to showing that the natural map $\Z_\ell[U_n] \isoto \varprojlim_n (\Z/\ell^n \Z)[U_n]$ is an isomorphism. Both sides of this claimed isomorphism are static, so we can check this by applying Yoneda in the heart, i.e. we need to see that for every $\mathcal M \in \D_\solid(X,\Z_\ell)^\heartsuit$ the natural map
\begin{align*}
	\Hom(\varprojlim_n (\Z/\ell^n \Z)[U_n], \mathcal M) \isoto \Hom(\Z_\ell[U_n], \mathcal M)
\end{align*}
is an isomorphism. Since both $\Z_\ell[U_n]$ and $\Z_\ell/\ell^n \Z_\ell)[U_n]$ are compact objects in the heart (by the description of finitely presented, i.e. compact, objects in \cite[Theorem VII.1.3]{fargues-scholze-geometrization}), so we can reduce to $\mathcal M$ being of the form $\mathcal M = \varprojlim_i \mathcal M_i$ for qcqs étale sheaves $\mathcal M_i$. Pulling out limits from both sides of the claimed $\Hom$-identity, we reduce to the case that $\mathcal M$ is étale. But then by the proof of \cite[Theorem VII.1.3]{fargues-scholze-geometrization} we have
\begin{align*}
	&\Hom(\varprojlim_n (\Z/\ell^n \Z)[U_n], \mathcal M) = \varinjlim_n \Hom((\Z/\ell^n \Z)[U_n], \mathcal M) = \varinjlim_n \Hom(\Z/\ell^n\Z, \restrict{\mathcal M}U)\\
	&\qquad= \Hom(\Z_\ell, \restrict{\mathcal M}U) = \Hom(\Z_\ell[U], \mathcal M),
\end{align*}
as desired. This finishes the proof that $\Z_{\ell,\solid}[U]$ is $\omega_1$-solid. Now let $\mathcal C \subset \D_\solid(X,\Z_\ell)_{\omega_1}$ be the full subcategory generated under finite (co)limits and retracts by the objects $\Z_{\ell,\solid}[U]$ for $U = \varprojlim_n U_n$ as in (ii). This induces a natural functor
\begin{align*}
	\alpha\colon \Ind(\mathcal C) \to \D_\solid(X,\Z_\ell).
\end{align*}
Our goal will be to show that $\alpha$ induces an equivalence of $\Ind(\mathcal C)$ and $\D_\solid(X,\Z_\ell)_{\omega_1}$. As a first step towards seeing this, let us prove that the $t$-structure on $\D_\solid(X,\Z_\ell)$ restricts to a $t$-structure on the essential image of $\alpha$. Namely, this reduces to showing that for every $\mathcal P \in \mathcal C$, all $\pi_k \mathcal P$ lie in the essential image of $\alpha$. By \cite[Theorem VII.1.3]{fargues-scholze-geometrization}, $\pi_k \mathcal P$ is finitely presented and can be written as a countable limit $\pi_k \mathcal P = \varprojlim_n \mathcal P_{n,k}$ of qcqs étale sheaves. The latter property implies that $\pi_k \mathcal P$ is $\omega_1$-solid, which in turn implies that the sections of $\pi_k P$ on any basic $U \in X_\proet$ are determined by the sections of $\pi_k P$ on the $U = \varprojlim_n U_n$ as in (ii). But note that basic $U$ form a basis of $X_\proet$ (this is clear if $X$ is a perfectoid space and follows from \cite[Proposition 11.24, 11.23.(iii)]{etale-cohomology-of-diamonds} in general), hence there is a surjection of the form $\bigdsum_i \Z_{\ell,\solid}[U_i] \surjto \pi_k \mathcal P$ with all $U_i$ as in (ii). Note that this direct sum is still $\omega_1$-solid, hence so is the kernel of the map to $\pi_k \mathcal P$ so that we get a two-term resolution $\bigdsum_j \Z_{\ell,\solid}[U_j] \to \bigdsum_i \Z_{\ell,\solid}[U_i] \surjto \pi_k \mathcal P$. By writing this map of infinite direct sums as a filtered colimit of maps of finite direct sums and using that $\pi_k \mathcal P$ is finitely presented, we deduce that one arrange that both direct sums are finite, i.e. lie in $\mathcal C$. Continuing this argument, we obtain a resolution of $\pi_k \mathcal P$ by objects in $\mathcal C$ which implies that $\pi_k \mathcal P$ is a geometric resolution of objects in $\mathcal C$ and in particular a filtered colimit of objects in $\mathcal C$ (as every finite step of the geometric resolution lies in $\mathcal C$). This implies that $\pi_k \mathcal P$ lies in the essential image of $\alpha$, as desired.

Next we claim that for $d$ as in the definition of $\ell$-boundedness, for every $\mathcal M$ in the heart of the essential image of $\alpha$ and for every basic $U$ we have $H^k(U, \mathcal M) = 0$ for $k > d + 2$. Indeed, since $H^k(U, \mathcal M)$ commutes with filtered colimits of static $\mathcal M$, we can reduce to the case that $\mathcal M = \pi_m \mathcal P$ for some $\mathcal P \in \mathcal C$. As seen in the previous paragraph this implies that $\mathcal M$ is a sequential limit $\mathcal M = \varprojlim_n \mathcal M_n$ for qcqs static $\mathcal M_n \in \D_\et(X,\Z_\ell)$. By factoring the countable limit out of $H^k(U, -)$ and noting that countable limits have cohomological dimension 1 in spectra, we reduce to showing that $H^k(U, \mathcal M_n) = 0$ for $k > d$, which follows immediately from the definition of $\ell$-boundedness.

We are now ready to prove that $\alpha$ is fully faithful. This amounts to showing the following: Given $U$ as in (ii) and any filtered diagram $(\mathcal P_i)_i$ of objects in $\mathcal C$, the natural map
\begin{align*}
	\varinjlim_i \Hom(\Z_{\ell,\solid}[U], \mathcal P_i) \isoto \Hom(\Z_{\ell,\solid}[U], \varinjlim_i \mathcal P_i)
\end{align*}
is an isomorphism (here the colimit $\varinjlim_i \mathcal P_i$ is formed in $\D_\solid(X,\Z_\ell)$). Using the identification $\Hom(\Z_{\ell,\solid}[U], -) = \Gamma(U, -)$ and hence the fact that this functor has finite cohomological dimension on the essential image of $\alpha$, we can employ a standard Postnikov limit argument to reduce to the claim that $\Gamma(U, -)$ preserves filtered colimits of uniformly left-bounded sheaves.

We have established that $\alpha\colon \Ind(\mathcal C) \injto \D_\solid(X,\Z_\ell)$ is an embedding. We now show that the essential image is precisely $\D_\solid(X,\Z_\ell)_{\omega_1}$. First observe that the essential image of $\alpha$ is contained in $\D_\solid(X,\Z_\ell)_{\omega_1}$. Indeed, this follows from the fact that every $\mathcal P \in \mathcal C$ is $\omega_1$-solid and that for all basic $U$ the functor $\Gamma(U,-)$ preserves filtered colimits in the essential image of $\alpha$ because it has finite cohomological dimension on that image (as used in the previous paragraph). It remains to see that every $\omega_1$-solid $\Z_\ell$-module on $X$ lies in the essential image of $\alpha$. By standard arguments (e.g. looking at the right adjoint to the embedding $\Ind(\mathcal C) \injto \D_\solid(X,\Z_\ell)_{\omega_1}$) this reduces to showing that the family of functors $\Hom(\Z_{\ell,\solid}[U], -) = \Gamma(U,-)$, for $U$ as in (ii), is conservative on $\D_\solid(X,\Z_\ell)_{\omega_1}$. But by definition of $\omega_1$-solid objects, these functors determine $\Gamma(U,-)$ for \emph{all} basic $U \in X_\proet$, and as the basic $U$ form a basis of $X_\proet$ (as noted above), this family of functors is indeed conservative.

We have finally shown the equivalence $\Ind(\mathcal C) \isom \D_\solid(X,\Z_\ell)_{\omega_1}$. The rest of (ii) goes as follows: Everything is clear except that the compact objects of $\D_\solid(X,\Z_\ell)_{\omega_1}$ are $\ell$-adically complete and embed into $\Pro(\D_\et(X,\Z_\ell)^\omega)$. The $\ell$-adic completeness was shown for $\Z_\ell[U]$ with qcqs $U \in X_\et$ above and follows immediately for all compact objects because $\ell$-adic completeness is stable under limits. To get the embedding into the $\Pro$-category, we use the following more general statement: Equip $\Pro(\D_\et(X,\Z_\ell)^\omega)$ with the natural $t$-structure induced from the one on $\D_\et(X,\Z_\ell)^\omega$ (cf. \cite[Lemma C.2.4.3]{lurie-spectral-algebraic-geometry}); then the natural functor
\begin{align*}
	\Pro(\D_\et(X,\Z_\ell)^\omega)^b \injto \D_\solid(X,\Z_\ell)
\end{align*}
is fully faithful. Namely, the functor $\Pro(\D_\et(X,\Z_\ell)^\omega) \to \D_\solid(X,\Z_\ell)$ is $t$-exact by \cite[Proposition VII.1.6]{fargues-scholze-geometrization} and so the claimed fully faithfulness reduces to showing that $\Hom(\varprojlim_i \mathcal P_i, \mathcal Q) = \varinjlim_i \Hom(\mathcal P_i, \mathcal Q)$ for compact static $\mathcal P_i, \mathcal Q \in \D_\et(X,\Z_\ell)$; this follows from the usual Breen resolution argument (as in \cite[Proposition VII.1.12]{fargues-scholze-geometrization}).

The rest of (iii) is easy: We have already seen that $\D_\solid(X,\Z_\ell)_{\omega_1}$ is stable under countable limits and contains all étale sheaves. By the proof of (ii) it follows that it is also stable under all small colimits and that the $t$-structure on $\D_\solid(X,\Z_\ell)$ restricts to a $t$-structure on $\D_\solid(X,\Z_\ell)_{\omega_1}$. It remains to see that $\D_\solid(X,\Z_\ell)_{\omega_1}$ is stable under tensor product: This can be checked on compact generators and thus follows from the observation $\Z_{\ell,\solid}[U] \tensor \Z_{\ell,\solid}[V] = \Z_{\ell,\solid}[U \cprod_X V]$; this also proves that compact objects are stable under tensor product.
\end{proof}

In order to work with $\omega_1$-solid sheaves it is important to understand how they behave under a change of spatial diamond:

\begin{proposition} \label{rslt:pullback-pushforward-for-w1-solid-sheaves}
Let $f\colon Y \to X$ be a map of $\ell$-bounded spatial diamonds.
\begin{propenum}
	\item \label{rslt:pullback-for-w1-solid-sheaves} The pullback $\D(X_\vsite,\Z_\ell) \to \D(Y_\vsite,\Z_\ell)$ restricts to a $t$-exact symmetric monoidal functor
	\begin{align*}
		f^*\colon \D_\solid(X,\Z_\ell)_{\omega_1} \to \D_\solid(Y,\Z_\ell)_{\omega_1}
	\end{align*}
	which preserves all small colimits and all countable limits.

	\item \label{rslt:pushforward-for-w1-solid-sheaves} The pushforward $f_{\vsite*}\colon \D(Y_\vsite,\Z_\ell) \to \D(X_\vsite,\Z_\ell)$ restricts to a functor
	\begin{align*}
		f_{\solid*}\colon \D_\solid(Y,\Z_\ell)_{\omega_1} \to \D_\solid(X,\Z_\ell)_{\omega_1}
	\end{align*}
	which has finite cohomological dimension, is right adjoint to $f^*$ and preserves all small limits and colimits.
\end{propenum}
\end{proposition}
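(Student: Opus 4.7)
The approach is to leverage the compact generation of $\omega_1$-solid sheaves from \cref{rslt:compact-generators-of-w1-solid-sheaves} together with the elementary fact that base change preserves étaleness, separatedness, and quasicompactness. For (i), I would start by checking that the v-pullback $f^*$ sends each compact generator $\Z_{\ell,\solid}[U] = \varprojlim_n \Z_\ell[U_n]$ (with $U = \varprojlim_n U_n$ a sequential limit of separated étale qcqs maps to $X$) into $\omega_1$-solid sheaves on $Y$, by establishing the identification $f^* \Z_{\ell,\solid}[U] = \Z_{\ell,\solid}[U \cprod_X Y]$. This can be verified by testing both sides against an arbitrary $\mathcal N \in \D_\solid(Y, \Z_\ell)_{\omega_1}$ via the v-adjunction $f^* \dashv f_{\vsite*}$ and using the representation $\Hom(\Z_{\ell,\solid}[U], -) = \Gamma(U, -)$ on $\omega_1$-solid sheaves built into the proof of \cref{rslt:compact-generators-of-w1-solid-sheaves}. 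Since $U \cprod_X Y = \varprojlim_n (U_n \cprod_X Y)$ is of the required form over $Y$, compact generators map among compact generators; combined with colimit-preservation of $f^*$ at the v-level and the stability of $\omega_1$-solid sheaves under colimits in $\D_\solid$ from \cref{rslt:stability-of-w1-solid-sheaves}, this gives that $f^*$ restricts to a colimit-preserving functor on $\omega_1$-solid sheaves. The symmetric monoidal structure is then inherited via $\Z_{\ell,\solid}[U] \tensor \Z_{\ell,\solid}[V] = \Z_{\ell,\solid}[U \cprod_X V]$, and $t$-exactness is checked on compact generators using the $t$-exactness of étale pullback.

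Preservation of countable limits by $f^*$ is the main difficulty, since $f^*$ is only a left adjoint a priori. I would combine $t$-exactness with the finite cohomological bounds coming from $\ell$-boundedness of $X$ and $Y$: using the Milnor fiber sequence to express a countable sequential limit in terms of a countable product and a shift, together with Postnikov towers to reduce to the static case, the preservation of countable limits reduces to preservation of countable products of static $\omega_1$-solid sheaves. A further use of compact generation reduces to countable products of compact generators $\Z_{\ell,\solid}[U]$, for which the identification from the first paragraph together with the obvious compatibility of $U \mapsto U \cprod_X Y$ with countable limits settles the claim.

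For (ii), I would first verify that $f_{\vsite*}\mathcal M$ is $\omega_1$-solid whenever $\mathcal M$ is: given an $\omega_1$-cofiltered limit $U = \varprojlim_i U_i$ of basic objects in $X_\proet$, v-adjunction gives $\Gamma(U, f_{\vsite*}\mathcal M) = \Gamma(U \cprod_X Y, \mathcal M)$, and $U \cprod_X Y = \varprojlim_i (U_i \cprod_X Y)$ is again an $\omega_1$-cofiltered limit of basic objects in $Y_\proet$, so $\omega_1$-solidity of $\mathcal M$ rewrites this as $\varinjlim_i \Gamma(U_i, f_{\vsite*}\mathcal M)$. Right adjointness of the restricted $f_{\solid*}$ to $f^*$ is then immediate, and preservation of all small limits is automatic ($f_{\solid*}$ is a right adjoint between presentable $\infty$-categories). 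Finite cohomological dimension reduces via $t$-exactness and a Postnikov argument to the static case, where for any basic $V \to Y$ the sections $\Gamma(V, \mathcal M)$ are concentrated in degrees $\le d_Y + 1$ by the remark following the definition of $\ell$-boundedness, and this bound then controls $\Gamma(U, f_{\vsite*}\mathcal M)$ over any basic $U \to X$. Preservation of all small colimits by $f_{\solid*}$ is finally equivalent to $f^*$ preserving compact objects, which was already established in the first paragraph.
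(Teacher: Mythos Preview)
Your approach for (ii) is actually cleaner than the paper's: you verify $\omega_1$-solidity of $f_{\vsite*}\mathcal M$ directly from the definition via $\Gamma(U, f_{\vsite*}\mathcal M) = \Gamma(U \cprod_X Y, \mathcal M)$, whereas the paper instead first establishes finite cohomological dimension, uses that to get colimit preservation, and then checks preservation of $\omega_1$-solidity on compact generators by writing $f_{\vsite*}\Z_{\ell,\solid}[U] = \varprojlim_n f_{\vsite*}(\Z/\ell^n\Z)[U_n]$ and invoking stability of $\omega_1$-solid sheaves under countable limits. Your route is more direct; the paper's has the side benefit of making the cohomological-dimension bound the centerpiece. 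Both are fine.

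For (i), however, you have made your life much harder than necessary and the argument for countable limits has a gap. You write that ``preservation of countable limits by $f^*$ is the main difficulty, since $f^*$ is only a left adjoint a priori,'' but this is false: the v-site of $Y$ sits as a slice inside the v-site of $X$, so the v-pullback $f^*\colon \D(X_\vsite,\Z_\ell) \to \D(Y_\vsite,\Z_\ell)$ is literally restriction and hence preserves \emph{all} small limits (and colimits, and is $t$-exact). This is exactly what the paper uses, in one sentence. Your reduction ``a further use of compact generation reduces to countable products of compact generators'' does not work as stated: writing each factor as a filtered colimit of compact objects does not let you commute a countable product past those colimits, and there is no evident way to test the map $f^*\prod_k M_k \to \prod_k f^* M_k$ against compact objects on $Y$ without already knowing something about $f^*$ and limits. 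The entire paragraph can be replaced by the observation that $f^*$ at the v-level preserves limits; then the restriction to $\omega_1$-solid sheaves preserves countable limits because the inclusion $\D_\solid(-,\Z_\ell)_{\omega_1} \subset \D(-,\Z_\ell)$ does (\cref{rslt:stability-of-w1-solid-sheaves}).

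A minor point: in your Yoneda argument for $f^*\Z_{\ell,\solid}[U] = \Z_{\ell,\solid}[U\cprod_X Y]$ you should test against all $\mathcal N \in \D_\solid(Y,\Z_\ell)$, not just $\omega_1$-solid ones, since you do not yet know $f^*\Z_{\ell,\solid}[U]$ is $\omega_1$-solid; the adjunction and the representability $\Hom(\Z_{\ell,\solid}[U],-)=\Gamma(U,-)$ hold on all of $\D_\solid$, so this costs nothing. Alternatively, once you know $f^*$ preserves limits the identification is immediate from $\Z_{\ell,\solid}[U] = \varprojlim_n (\Z/\ell^n\Z)[U_n]$ and the evident formula $f^*(\Z/\ell^n\Z)[U_n] = (\Z/\ell^n\Z)[U_n\cprod_X Y]$.
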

\begin{proof}
Since the pullback on the v-site is $t$-exact and preserves all small limits and colimits, part (i) follows immediately from \cref{rslt:general-properties-of-omega-1-solid-sheaves} and the fact that $f^* \Z_\ell[U] = \Z_\ell[U \cprod_X Y]$ for every quasicompact separated $U \in X_\et$ (for the proof that $f^*$ is symmetric monoidal see e.g. \cite[Proposition VII.2.2]{fargues-scholze-geometrization}). This proves (i).

To prove (ii), first note that the v-pushforward has finite cohomological dimension when restricted to $\D_\solid(Y,\Z_\ell)_{\omega_1}$: Recall that for a static v-sheaf $\mathcal M$ on $X$, $H^k(f_*\mathcal M)$ is the sheafification of the presheaf $U \mapsto H^k(U \cprod_X Y, \mathcal M)$; but if $\mathcal M$ is $\omega_1$-solid then $H^k(U, \mathcal M) = 0$ for $k > d+1$ with $d$ as in the definition of $\ell$-boundedness for $Y$ (see the proof of \cref{rslt:general-properties-of-omega-1-solid-sheaves}). It follows that $f_{\vsite*}$ preserves small colimits on $\D_\solid(Y,\Z_\ell)_{\omega_1}$, hence to show that it maps this $\infty$-category to $\D_\solid(X,\Z_\ell)_{\omega_1}$ we are reduced to the compact generators, i.e. we need to show that $f_{\vsite*}\Z_{\ell,\solid}[U] \in \D_\solid(X,\Z_\ell)_{\omega_1}$ for all $U = \varprojlim_n U_n$ as in \cref{rslt:compact-generators-of-w1-solid-sheaves}. But $f_{\vsite*}\Z_{\ell,\solid}[U] = \varprojlim_n f_{\vsite*}(\Z/\ell^n\Z)[U_n]$ so since $\D_\solid(X,\Z_\ell)_{\omega_1}$ is stable under countable limits it is enough to show that $f_{\vsite*}(\Z/\ell^n\Z)[U_n]$ is étale -- this follows from \cite[Corollary 16.8.(ii)]{etale-cohomology-of-diamonds}.
\end{proof}

\begin{corollary} \label{rslt:descent-for-w1-solid-sheaves}
The assignment
\begin{align*}
	X \mapsto \D_\solid(X,\Z_\ell)_{\omega_1}
\end{align*}
defines a hypercomplete sheaf of symmetric monoidal $\infty$-categories on the v-site of $\ell$-bounded spatial diamonds.
\end{corollary}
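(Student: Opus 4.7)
The plan is to bootstrap from hypercomplete v-descent of the ambient $\infty$-category $\D_\solid(-,\Z_\ell)$, which is established in \cite[\S VII.1]{fargues-scholze-geometrization}, by showing that the full subcategory $\D_\solid(-,\Z_\ell)_{\omega_1}$ is carved out v-locally. Concretely, for a v-hypercover $f_\bullet\colon Y_\bullet \to X$ of $\ell$-bounded spatial diamonds, with face maps $f_n\colon Y_n \to X$, I would verify that the canonical functor
\[
\D_\solid(X,\Z_\ell)_{\omega_1} \longto \lim_{[n] \in \Delta} \D_\solid(Y_n,\Z_\ell)_{\omega_1}
\]
is an equivalence of symmetric monoidal $\infty$-categories. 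Full faithfulness is immediate from the v-descent of $\D_\solid(-,\Z_\ell)$ together with the fact that both sides embed fully faithfully into the corresponding limits.

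For essential surjectivity, the key input is that both $f^*$ and $f_{\solid*}$ preserve $\omega_1$-solidity, by \cref{rslt:pullback-for-w1-solid-sheaves} and \cref{rslt:pushforward-for-w1-solid-sheaves}. Given a compatible descent datum $(\mathcal M_n)_{n \in \Delta}$ with each $\mathcal M_n \in \D_\solid(Y_n,\Z_\ell)_{\omega_1}$, v-descent for $\D_\solid(-,\Z_\ell)$ produces a glued sheaf $\mathcal M \in \D_\solid(X,\Z_\ell)$ satisfying
\[
\mathcal M \isom \lim_{[n] \in \Delta} f_{n,\solid*}\, \mathcal M_n.
\]
Each term $f_{n,\solid*}\mathcal M_n$ is $\omega_1$-solid by \cref{rslt:pushforward-for-w1-solid-sheaves}. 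Since $\Delta$ has countably many objects and countably many morphisms, its nerve is a countable simplicial set, so the totalization above is a countable limit. By \cref{rslt:stability-of-w1-solid-sheaves}, countable limits preserve $\omega_1$-solidity in $\D_\solid(X,\Z_\ell)$, and therefore $\mathcal M \in \D_\solid(X,\Z_\ell)_{\omega_1}$, as required.

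Finally, the enhancement to symmetric monoidal $\infty$-categories is essentially formal once the underlying descent is known: by \cref{rslt:pullback-for-w1-solid-sheaves} all pullbacks $f^*$ are symmetric monoidal, and the symmetric monoidal structure on each $\D_\solid(Y_n,\Z_\ell)_{\omega_1}$ is the restriction of the one on $\D_\solid(Y_n,\Z_\ell)$, which is itself v-local. The main obstacle in the argument is the passage from plain v-descent to hypercompleteness; this is handled exactly as in the ambient category, since a full subcategory of a hypercomplete sheaf that is stable under limits (in particular under Postnikov-type totalizations) is again a hypercomplete sheaf, and the crucial stability under countable limits is precisely what makes the $\Delta$-indexed totalization land in $\D_\solid(-,\Z_\ell)_{\omega_1}$.
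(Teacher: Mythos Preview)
Your proof is correct and follows essentially the same route as the paper: embed $\D_\solid(-,\Z_\ell)_{\omega_1}$ into an ambient category known to satisfy hypercomplete v-descent (the paper uses $\D(X_\vsite,\Z_\ell)$, you use $\D_\solid(X,\Z_\ell)$, which is an immaterial difference), get full faithfulness for free, and for essential surjectivity write the glued object as the $\Delta$-totalization of pushforwards and use that $\omega_1$-solid sheaves are preserved by pushforward (\cref{rslt:pushforward-for-w1-solid-sheaves}) and countable limits (\cref{rslt:stability-of-w1-solid-sheaves}). Your final paragraph about ``the passage from plain v-descent to hypercompleteness'' is unnecessary and slightly muddled---your essential surjectivity argument already treats arbitrary hypercovers directly, so there is no separate step.
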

\begin{proof}
By using the embedding $\D_\solid(X,\Z_\ell) \subset \D(X_\vsite,\Z_\ell)$ and using that the right-hand category satisfies hypercomplete v-descent for formal reasons, we are left with showing that for any v-cover $f\colon Y \to X$ of $\ell$-bounded spatial diamonds and any $\mathcal M \in \D(X_\vsite,\Z_\ell)$ we have $f^*\mathcal M \in \D_\solid(Y,\Z_\ell)_{\omega_1}$ if and only if $\mathcal M \in \D_\solid(X,\Z_\ell)_{\omega_1}$. The ``if'' part follows immediately from \cref{rslt:pullback-for-w1-solid-sheaves}. It remains to prove the ``only if'' part, so assume that $f^* \mathcal M$ is $\omega_1$-solid. Extend $f$ to a v-hypercover $f_\bullet\colon Y_\bullet \to X$ such that all $Y_n$ are $\ell$-bounded spatial diamonds. Then by v-hyperdescent for v-sheaves we have $\mathcal M = \varprojlim_{n\in\Delta} f_{n\vsite*} f_n^* \mathcal M$, which we want to show to be $\omega_1$-solid. Since $\omega_1$-solid sheaves are stable under countable limits by \cref{rslt:stability-of-w1-solid-sheaves} it is enough to show that each $f_{n\vsite*} f_n^* \mathcal M$ is $\omega_1$-solid. But this follows immediately from \cref{rslt:pullback-pushforward-for-w1-solid-sheaves} since $f_0^* \mathcal M$ is $\omega_1$-solid by assumption.
\end{proof}

It is convenient to know that the tensor product of $\ell$-adically complete sheaves is again $\ell$-adically complete:

\begin{proposition} \label{rslt:solid-tensor-product-preserves-complete-sheaves}
Let $X$ be an $\ell$-bounded spatial diamond and let $\mathcal M, \mathcal N \in \D^-_\solid(X,\Z_\ell)$ be $\ell$-adically complete. Then $\mathcal M \tensor \mathcal N$ is $\ell$-adically complete.
\end{proposition}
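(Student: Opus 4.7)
The plan is to verify that $\mathcal M \tensor \mathcal N$ is derived $\ell$-adically complete, i.e.\ that the canonical map $\mathcal M \tensor \mathcal N \longto \varprojlim_n (\mathcal M \tensor \mathcal N)/\ell^n$ is an isomorphism. Using the identity $(\mathcal M \tensor \mathcal N)/\ell^n = \mathcal M \tensor (\mathcal N/\ell^n)$, which follows from the compatibility $-/\ell^n = - \tensor_\Z \Z/\ell^n$ with the solid tensor product, together with the completeness of $\mathcal N$ (so that $\mathcal N = \varprojlim_n \mathcal N/\ell^n$), the task reduces to showing that $\mathcal M \tensor -$ commutes with the particular limit $\varprojlim_n \mathcal N/\ell^n$.

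To establish this commutation, I would exploit the special structure of the $\ell$-adic tower. Applying the octahedral axiom to the factorization $\ell^{n+1} = \ell \cdot \ell^n$ on $\mathcal N$ yields a fiber sequence $\mathcal N/\ell \longto \mathcal N/\ell^{n+1} \longto \mathcal N/\ell^n$, so the fibers of all transition maps coincide with the single object $\mathcal N/\ell$. Thus if $\mathcal N \in \D^{\le N_0}$ then $\mathcal N/\ell^n \in \D^{\le N_0 + 1}$ uniformly in $n$, and tensoring with $\mathcal M \in \D^-$ produces a tower $(\mathcal M \tensor \mathcal N/\ell^n)_n$ that is uniformly bounded above, with all fibers equal to the fixed bounded-above object $\mathcal M \tensor \mathcal N/\ell$ (using that $\Z_\ell$ has Tor-dimension one, so that solid tensor preserves bounded-aboveness up to a fixed shift).

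With these uniform bounds in place, the comparison $\mathcal M \tensor \mathcal N \isoto \varprojlim_n \mathcal M \tensor \mathcal N/\ell^n$ can be checked pointwise in each homotopy degree. In a fixed degree $\pi_k$, the bounded-above hypotheses ensure that only finitely many truncations of $\mathcal M$ and $\mathcal N$ contribute to either side, reducing the interchange to the classical case of bounded objects where tensor and finite limits commute. The Mittag--Leffler property of the tower $(\pi_k \mathcal N/\ell^n)_n$, inherited from the completeness of $\mathcal N$ together with the fixed fiber $\mathcal N/\ell$, gives the vanishing of the Milnor $\varprojlim^1$ terms on both sides and thus the final identification. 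The main obstacle will be precisely this tensor--limit interchange, which fails without a bounded-aboveness hypothesis; the crucial leverage comes from the special shape of the $\ell$-adic tower (all fibers being the single object $\mathcal N/\ell$) combined with the bounded Tor-dimension of $\Z_\ell$, forcing $\mathcal M \tensor \mathcal N/\ell^n$ to be uniformly bounded above and enabling the degreewise comparison to succeed.
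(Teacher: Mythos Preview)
Your reduction to the tensor--limit interchange $\mathcal M \tensor \varprojlim_n \mathcal N/\ell^n \isoto \varprojlim_n \mathcal M \tensor \mathcal N/\ell^n$ is correct, but your argument for this interchange is flawed: it never uses the $\ell$-adic completeness of $\mathcal M$. Indeed, your proposed argument uses only that $\mathcal M$ is bounded above and that $\mathcal N$ is complete. But take $X$ a point, $\mathcal N = \Z_\ell$ and $\mathcal M = \bigdsum_{\N} \Z_\ell$ (static, hence in $\D^-$, but not $\ell$-complete). Then $\mathcal M \tensor \mathcal N/\ell^n = \bigdsum_{\N} \Z/\ell^n$ and $\varprojlim_n \bigdsum_{\N} \Z/\ell^n = \cplbigdsum{\N} \Z_\ell \ne \mathcal M$. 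So the interchange fails, and any argument that would establish it for all bounded-above $\mathcal M$ must be wrong.

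The specific breakdown is in the sentence ``only finitely many truncations of $\mathcal M$ and $\mathcal N$ contribute to either side, reducing the interchange to the classical case of bounded objects where tensor and finite limits commute.'' In a stable category tensor does commute with finite limits, but the limit here is a \emph{countable} inverse limit, and boundedness of the terms does not make it finite. Likewise, the Mittag--Leffler claim does not survive tensoring: even if $(\pi_k \mathcal N/\ell^n)_n$ were Mittag--Leffler, there is no reason $(\pi_k(\mathcal M \tensor \mathcal N/\ell^n))_n$ should be. The paper's proof takes a completely different route: it resolves $\mathcal M$ and $\mathcal N$ by (completed direct sums of) the compact projective generators $\Z_{\ell,\solid}[U]$, reduces to countable completed direct sums, and then computes the solid tensor product explicitly using \cite[Proposition VII.2.3]{fargues-scholze-geometrization} to see directly that the result is again a completed direct sum. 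This argument genuinely uses completeness of $\mathcal M$ in the very first step (resolving by $\ell$-complete objects and passing to the completed direct sum), which is exactly what your approach is missing.
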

\begin{proof}
By choosing projective resolutions we can write $\mathcal M$ and $\mathcal N$ as geometric realizations of direct sums of copies of the compact projective generators $\Z_{\ell,\solid}[U]$ for w-contractible $U \in X_\proet$. Since the $\ell$-adic completion functor on $\D_\solid(X,\Z_\ell)$ has homological dimension $1$ and therefore preserves geometric realizations of uniformly left-bounded sheaves, we can reduce to the case that $\mathcal M$ and $\mathcal N$ are completed direct sums of copies of $\Z_{\ell,\solid}[U]$ for w-contractible $U \in X_\proet$; in particular both $\mathcal M$ and $\mathcal N$ are static (note that $\Z_{\ell,\solid}[U]$ is $\ell$-adically complete by the proof of \cref{rslt:general-properties-of-omega-1-solid-sheaves}). Since $\ell$-adic completion commutes with $\omega_1$-filtered colimits, we can further reduce to the case of \emph{countable} completed direct sums, i.e. we have $\mathcal M = \cplbigdsum{n} \Z_{\ell,\solid}[U_n]$ and $\mathcal N = \cplbigdsum{m} \Z_{\ell,\solid}[V_m]$ for w-contractible $U_n, V_m \in X_\proet$ and integers $m, n > 0$. Note that we can write $\mathcal M$ as a union of subsheaves of the form $\mathcal M_\alpha = \prod_n \ell^{\alpha_n} \Z_{\ell,\solid}[U_n]$ for all sequences $\alpha_n \in \Z_{\ge0}$ converging to $\infty$ (and similarly for $\mathcal N$ in terms of $\mathcal N_\beta$ for sequences $\beta$). Using \cite[Proposition VII.2.3]{fargues-scholze-geometrization} we deduce
\begin{align*}
	&\mathcal M \tensor \mathcal N = \varinjlim_{\alpha,\beta} \mathcal M_\alpha \tensor \mathcal N_\beta = \varinjlim_{\alpha,\beta} \prod_{m,n} (\ell^{\alpha_n} \Z_{\ell,\solid}[U_n] \tensor \ell^{\beta_m} \Z_{\ell,\solid}[V_m])\\
	&\qquad = \varinjlim_{\alpha,\beta} \prod_{m,n} \ell^{\alpha_n + \beta_m} \Z_{\ell,\solid}[U_n \cprod_X V_m].
\end{align*}
By a simple cofinality argument one checks that this is just the completed direct sum of $\Z_{\ell,\solid}[U_n \cprod_X V_m]$ for all $n, m > 0$. In particular this sheaf is $\ell$-adically complete, as desired.
\end{proof}

\section{Nuclear Sheaves} \label{sec:nuclear}

Fix the prime $\ell \ne p$. In the following we will define a full subcategory $\D_\nuc(X,\Z_\ell) \subset \D_\solid(X,\Z_\ell)$ of nuclear sheaves on any small v-stack $X$. In general this $\infty$-category will be defined by v-descent, but on $\ell$-bounded spatial diamonds it admits a much more explicit description:

\begin{definition}
Let $X$ be an $\ell$-bounded spatial diamond and let $\mathcal M \in \D_\solid(X, \Z_\ell)$ be a solid $\Z_\ell$-sheaf on $X$.
\begin{defenum}
	\item $\mathcal M$ is called a \emph{Banach sheaf} if $\mathcal M$ is $\ell$-adically complete and $\mathcal M/\ell \mathcal M$ is étale, i.e. lies in $\D_\et(X, \Fld_\ell)$.

	\item \label{def:nuclear-module-on-spat-diamond} $\mathcal M$ is called \emph{nuclear} if it is a filtered colimit of Banach sheaves. We denote by $\D_\nuc(X, \Z_\ell) \subset \D_\solid(X, \Z_\ell)$ the full subcategory spanned by the nuclear sheaves.
\end{defenum}
\end{definition}

\begin{remarks}
\begin{remarksenum}
	\item It follows immediately from \cref{rslt:stability-of-w1-solid-sheaves} that every nuclear $\Z_\ell$-sheaf on an $\ell$-bounded spatial diamond $X$ is $\omega_1$-solid, i.e. we have $\D_\nuc(X,\Z_\ell) \subset \D_\solid(X,\Z_\ell)_{\omega_1}$.

	\item The $t$-structure on $\D_\solid(X,\Z_\ell)$ does usually not restrict to a $t$-structure on $\D_\nuc(X,\Z_\ell)$. The problem is that for a Banach sheaf $\mathcal M$ it is generally not true that $\tau_{\ge0} \mathcal M$ is still a Banach sheaf: While it is true that $\tau_{\ge0} \mathcal M$ is still $\ell$-adically complete, it may happen that it is not étale mod $\ell$ anymore. As a counterexample, suppose that $X = \Z_\ell$ (viewed as a pro-finite étale space over some algebraically closed non-archimedean field) and let $\mathcal M$ be the $\ell$-adic completion of the following étale sheaf $\mathcal N$: $\mathcal N$ is the étale product of sheaves $\mathcal N_n$ for $n \ge 1$, where $\mathcal N_n$ is the constant sheaf $\Z/\ell^n\Z$ supported on $X \setminus \ell^n \Z_\ell$. Then $\mathcal M$ is a Banach sheaf concentrated in homological degrees $0$ and $1$, but by a direct computation one checks that $\pi_1 M / \ell \pi_1 M$ is not étale: It does not have any étale sections, but it has a non-trivial value on the pro-étale map $\{ 0 \} \to X$.
\end{remarksenum}
\end{remarks}

\begin{warning}
In \cite[Definition 8.5]{condensed-complex-geometry} there is a very general definition of nuclear objects in a compactly generated symmetric monoidal $\infty$-category. However, our \cref{def:nuclear-module-on-spat-diamond} differs from that general definition. In fact, if we apply the general definition from \cite{condensed-complex-geometry} to $\D_\solid(X,\Z_\ell)_{\omega_1}$ then we end up with a subcategory $\mathcal C \subset \D_\nuc(X,\Z_\ell)$ spanned only by the overconvergent sheaves. There are still good reasons to call our sheaves nuclear, as we will see in the following.
\end{warning}

The above definition of nuclear sheaves may seem a bit ad-hoc and it is not at all clear from this definition why the $\infty$-category of nuclear sheaves satisfies v-descent. Our first goal will therefore be to provide an equivalent definition in terms of \emph{trace-class} maps. The general definition of trace-class maps provided in \cite[\S8]{condensed-complex-geometry} does not work in our setting, as it is far too restrictive. We will therefore provide our own definition tailored to the specific setting at hand:

\begin{definition}
Let $X$ be an $\ell$-bounded spatial diamond.
\begin{defenum}
	\item We denote by $\IHom_\solid(-,-)$ the internal hom functor in the symmetric monoidal $\infty$-category $\D_\solid(X,\Z_\ell)_{\omega_1}$ (this exists because this $\infty$-category is presentable).

	\item We define a functor
	\begin{align*}
		\IHom_\solid^\tr(-, -)\colon \D_\solid(X,\Z_\ell)_{\omega_1}^\opp \cprod \D_\solid(X,\Z_\ell)_{\omega_1} \to \D_\solid(X,\Z_\ell)_{\omega_1}
	\end{align*}
	as follows: This functor will preserve limits in the first argument and for compact first argument it will preserve colimits in the second argument, so that it is enough to construct its restriction to $\D_\solid(X,\Z_\ell)_{\omega_1}^{\omega,\opp} \cprod \D_\solid(X,\Z_\ell)_{\omega_1}^\omega$. In this case we define it as the $\ell$-adic completion
	\begin{align*}
		\IHom_\solid^\tr(-, -) = \widehat{\IHom_\solid'(-, -)},
	\end{align*}
	where $\IHom_\solid'(-,-)$ is the following composition of functors:
	\begin{align*}
		\D_\solid(X,\Z_\ell)_{\omega_1}^{\omega,\opp} \cprod \D_\solid(X,\Z_\ell)_{\omega_1}^\omega \injto \Ind(\D_\et(X,\Z_\ell)^{\omega,\opp}) \cprod \D_\solid(X,\Z_\ell)_{\omega_1}^\omega \to \D_\solid(X,\Z_\ell)_{\omega_1}
	\end{align*}
	Here the first functor is induced by \cref{rslt:compact-generators-of-w1-solid-sheaves} and the second functor is the unique colimit-preserving functor which restricts to $\IHom_\solid(\mathcal P, \mathcal Q)$ for $\mathcal P \in \D_\et(X,\Z_\ell)^\omega$ and $\mathcal Q \in \D_\solid(X,\Z_\ell)_{\omega_1}^\omega$.

	\item There is a natural transformation
	\begin{align*}
		\IHom_\solid^\tr(-, -) \to \IHom_\solid(-, -)
	\end{align*}
	of functors $\D_\solid(X,\Z_\ell)_{\omega_1}^\opp \cprod \D_\solid(X,\Z_\ell)_{\omega_1} \to \D_\solid(X,\Z_\ell)_{\omega_1}$. Namely, it is enough to construct this on compact arguments, where it boils down to a natural transformation $\IHom_\solid' \to \IHom_\solid$, which comes from the fact that $\IHom_\solid'$ is a left Kan extension of the restriction of $\IHom_\solid$ to $\D_\et(X,\Z_\ell)^{\omega,\opp}$ in the first argument.

	\item For all $\mathcal M, \mathcal N \in \D_\solid(X,\Z_\ell)_{\omega_1}$ we denote
	\begin{align*}
		\Hom^\tr(\mathcal M, \mathcal N) := \Gamma(X, \IHom_\solid^\tr(\mathcal M, \mathcal N)).
	\end{align*}
	A map $\mathcal M \to \mathcal N$ is called \emph{trace-class} if it lies in the image of the map $\Hom^\tr(\mathcal M, \mathcal N) \to \Hom(\mathcal M, \mathcal N)$ induced by the natural transformation above.
\end{defenum}
\end{definition}

\begin{remark}
The definition of $\IHom_\solid^\tr(-, -)$ may seem a bit intimidating, so let us provide a more intuitive (albeit less formal) description: Pick any compact objects $\mathcal P, \mathcal Q \in \D_\solid(X,\Z_\ell)_{\omega_1}^\omega$. By \cref{rslt:compact-generators-of-w1-solid-sheaves} we can write $\mathcal P = \varprojlim_n \mathcal P_n$ for compact étale sheaves $\mathcal P_n \in \D_\et(X,\Z_\ell)^\omega$. We then define
\begin{align*}
	\IHom_\solid^\tr(\mathcal P, \mathcal Q) = (\varinjlim_n \IHom_\solid(\mathcal P_n, \mathcal Q))\cplt,
\end{align*}
where $\widehat{}$ denotes $\ell$-adic completion. For general $\mathcal M, \mathcal N \in \D_\solid(X,\Z_\ell)_{\omega_1}$ we write $\mathcal M = \varinjlim_i \mathcal P_i$ and $\mathcal N = \varinjlim_j \mathcal Q_j$ for compact $\mathcal P_i$ and $\mathcal Q_j$ and then define
\begin{align*}
	\IHom_\solid^\tr(\mathcal M, \mathcal N) = \varprojlim_i \varinjlim_j \IHom_\solid^\tr(\mathcal P_i, \mathcal Q_j).
\end{align*}
Here the limit is taken in $\D_\solid(X,\Z_\ell)_{\omega_1}$.
\end{remark}

Before getting back to nuclear sheaves, let us study some of the properties of trace-class maps. It turns out that they enjoy almost the same formal properties as the abstract trace-class maps studied in \cite[\S8]{condensed-complex-geometry}:

\begin{lemma} \label{rslt:properties-of-trace-class-maps}
Let $X$ be an $\ell$-bounded spatial diamond.
\begin{lemenum}
	\item \label{rslt:trace-class-maps-form-ideal} Let $f\colon \mathcal M \to \mathcal N$, $g\colon \mathcal M' \to \mathcal M$ and $h\colon \mathcal N \to \mathcal N'$ be maps in $\D_\solid(X,\Z_\ell)_{\omega_1}$. If $f$ is trace-class then so is $h \comp f \comp g$.

	\item \label{rslt:trace-class-maps-stable-under-tensor-product} If $f\colon \mathcal M \to \mathcal N$ and $f'\colon \mathcal M' \to \mathcal N'$ are trace-class maps in $\D_\solid(X,\Z_\ell)_{\omega_1}$, then so is $f \tensor f'\colon \mathcal M \tensor \mathcal M' \to \mathcal N \tensor \mathcal N'$.

	\item Let $f\colon \mathcal M \to \mathcal N$ be a trace-class map in $\D_\solid(X,\Z_\ell)_{\omega_1}$. Then for every $\mathcal L \in \D_\solid(X,\Z_\ell)_{\omega_1}$ the commutative square
	\begin{center}\begin{tikzcd}
		\IHom_\solid^\tr(\mathcal L, \mathcal M) \arrow[r] \arrow[d] & \IHom_\solid^\tr(\mathcal L, \mathcal N) \arrow[d]\\
		\IHom_\solid(\mathcal L, \mathcal M) \arrow[r] & \IHom_\solid(\mathcal L, \mathcal N)
	\end{tikzcd}\end{center}
	admits a diagonal map $\IHom_\solid(\mathcal L, \mathcal M) \to \IHom_\solid^\tr(\mathcal L, \mathcal N)$ making both triangles commute.

	\item Let $f\colon \mathcal P \to \mathcal M$ be a trace-class map in $\D_\solid(X,\Z_\ell)_{\omega_1}$ with $\mathcal P$ compact. Then there is a compact object $\mathcal Q$ in $\D_\solid(X,\Z_\ell)_{\omega_1}$ such that $f$ factors as $\mathcal P \to \mathcal Q \to \mathcal M$, where $\mathcal P \to \mathcal Q$ is also trace-class.
\end{lemenum}
\end{lemma}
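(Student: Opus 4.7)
The proof will exploit the bifunctorial structure of $\IHom_\solid^\tr$ and the naturality of the comparison $\IHom_\solid^\tr \to \IHom_\solid$. The general strategy is to upgrade the familiar bifunctorial pairings (pre/post-composition, tensor-hom multiplication, composition) from $\IHom_\solid$ to $\IHom_\solid^\tr$ compatibly with the comparison morphism; each claim then follows by transporting the trace-class section representing $f$ appropriately. For (i), the pre- and post-composition maps induced by $g$ and $h$ on $\IHom_\solid^\tr$ are just its bifunctorial action, automatically compatible with the comparison by naturality, so the section witnessing trace-classness of $f$ is sent to one witnessing trace-classness of $h \comp f \comp g$.

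For (ii) and (iii) the plan is to construct natural morphisms
\begin{align*}
\mu &\colon \IHom_\solid^\tr(\mathcal M, \mathcal N) \tensor \IHom_\solid^\tr(\mathcal M', \mathcal N') \to \IHom_\solid^\tr(\mathcal M \tensor \mathcal M', \mathcal N \tensor \mathcal N'),\\
c &\colon \IHom_\solid(\mathcal L, \mathcal M) \tensor \IHom_\solid^\tr(\mathcal M, \mathcal N) \to \IHom_\solid^\tr(\mathcal L, \mathcal N),
\end{align*}
upgrading the usual tensor-hom and composition pairings. Since $\IHom_\solid^\tr$ preserves colimits in the second argument when the first is compact, both constructions reduce to the case of compact arguments, where they come from the corresponding pairings on the left Kan extension $\IHom_\solid'$ followed by $\ell$-adic completion; the compatibility of $\ell$-adic completion with tensor products is \cref{rslt:solid-tensor-product-preserves-complete-sheaves}. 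For (ii), applying $\mu$ to the trace-class sections representing $f$ and $f'$ yields a trace-class section for $f \tensor f'$. For (iii), composing $c$ with the global section $\tilde f\colon \mathbf 1 \to \IHom_\solid^\tr(\mathcal M, \mathcal N)$ produces the desired diagonal $\IHom_\solid(\mathcal L, \mathcal M) \to \IHom_\solid^\tr(\mathcal L, \mathcal N)$; both triangles commute by the built-in compatibility of $c$ with $\IHom_\solid^\tr \to \IHom_\solid$ in each variable.

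For (iv), compactness of $\mathcal P$ ensures that $\IHom_\solid^\tr(\mathcal P, -)$ preserves filtered colimits by definition. Writing $\mathcal M = \varinjlim_j \mathcal Q_j$ as a filtered colimit of compact objects in $\D_\solid(X,\Z_\ell)_{\omega_1}$ gives $\IHom_\solid^\tr(\mathcal P, \mathcal M) = \varinjlim_j \IHom_\solid^\tr(\mathcal P, \mathcal Q_j)$. The proof of \cref{rslt:general-properties-of-omega-1-solid-sheaves} shows that $\Gamma(X, -)$ has finite cohomological dimension on $\D_\solid(X, \Z_\ell)_{\omega_1}$ (bounded in terms of the $\ell$-bound of $X$), and therefore commutes with this filtered colimit. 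Hence the trace-class section representing $f$ lifts to a section $\tilde h \in \Gamma(X, \IHom_\solid^\tr(\mathcal P, \mathcal Q_j))$ for some $j$, yielding a trace-class map $h\colon \mathcal P \to \mathcal Q_j$ through which $f$ factors by naturality of the comparison. Setting $\mathcal Q := \mathcal Q_j$ finishes the proof.

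The principal obstacle will be the careful coherent construction of the bifunctorial pairings $\mu$ and $c$, since $\IHom_\solid^\tr$ is built via a left Kan extension followed by $\ell$-adic completion, and its compatibility with tensor products and composition is not immediate from the definition; everything else is a formal consequence of this and the previously established properties of $\omega_1$-solid sheaves.
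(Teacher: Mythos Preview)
Your proposal is correct and follows essentially the same route as the paper: construct the composition and tensor pairings on $\IHom_\solid^\tr$ compatibly with the comparison to $\IHom_\solid$, then read off (i)--(iv) as formal consequences (and for (iv) use that $\Hom^\tr(\mathcal P,-)$ commutes with filtered colimits when $\mathcal P$ is compact). The one nontrivial idea the paper supplies, which you correctly flag as the principal obstacle but do not address, is the reduction of the pairing $c$ to compact arguments: the middle variable $\mathcal M$ has mixed variance (covariant in $\IHom_\solid(\mathcal L,\mathcal M)$, contravariant in $\IHom_\solid^\tr(\mathcal M,\mathcal N)$), so a direct colimit reduction fails; the paper resolves this by replacing the single object $\mathcal M$ with a morphism $\mathcal M \to \mathcal M'$ and working functorially in the category of such morphisms (with ``wrong-way'' squares as maps), thereby separating the two variances so that each can be reduced to the compact case independently before passing down to the qcqs \'etale case.
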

\begin{proof}
We first make the following general observation: For any $\mathcal M, \mathcal N, \mathcal L \in \D_\solid(X,\Z_\ell)_{\omega_1}$ the natural maps
\begin{align*}
	\IHom_\solid(\mathcal M, \mathcal N) \tensor \IHom_\solid^\tr(\mathcal N, \mathcal L) &\to \IHom_\solid(\mathcal M, \mathcal L),\\
	\IHom_\solid^\tr(\mathcal M, \mathcal N) \tensor \IHom_\solid(\mathcal N, \mathcal L) &\to \IHom_\solid(\mathcal M, \mathcal L)
\end{align*}
factor over $\IHom_\solid^\tr(\mathcal M, \mathcal L)$. To see this, consider the $\infty$-category $\mathcal C$ of morphisms $g\colon \mathcal N \to \mathcal N'$ in $\D_\solid(X,\Z_\ell)_{\omega_1}$, where morphisms $g_1 \to g_2$ in $\mathcal C$ are given by commuting squares
\begin{center}\begin{tikzcd}
	\mathcal N_1 \arrow[r,"g_1"] \arrow[d] & \mathcal N'_1\\
	\mathcal N_2 \arrow[r,"g_2"] & \mathcal N'_2 \arrow[u]
\end{tikzcd}\end{center}
(This can easily be constructed as simplicial sets.) Then we can view all of the above expressions as functors in $\mathcal M$, $g\colon \mathcal N \to \mathcal N'$ and $\mathcal L$ (where we replace the second appearing $\mathcal N$ in the above expression by $\mathcal N'$) and the above morphisms as natural transformations of such functors. Since both $\IHom_\solid(\mathcal M, \mathcal L)$ and $\IHom_\solid^\tr(\mathcal M, \mathcal L)$ transform colimits in $\mathcal M$ into limits, we can formally reduce the construction of the desired natural transformation to the case that $\mathcal M$ is compact (use that right Kan extension is a right adjoint functor). Then all functors preserve colimits in $\mathcal N$, so we can reduce to the case that $\mathcal N$ is compact as well. By factoring out colimits in $\mathcal N'$ (like for $\mathcal M$) we can reduce to the case that $\mathcal N'$ is compact. Then we can finally also factor out colimits in $\mathcal L$ to reduce to the case that all of $\mathcal M$, $\mathcal N$, $\mathcal N'$ and $\mathcal L$ are compact. In this case $\IHom_\solid^\tr(\mathcal M, \mathcal L)$ is $\ell$-adically complete, so we can ignore $\ell$-adic completions. We therefore end up with constructing natural transformations
\begin{align*}
	\IHom_\solid(\mathcal M, \mathcal N) \tensor \IHom_\solid'(\mathcal N', \mathcal L) &\to \IHom_\solid'(\mathcal M, \mathcal L),\\
	\IHom_\solid'(\mathcal M, \mathcal N) \tensor \IHom_\solid(\mathcal N', \mathcal L) &\to \IHom_\solid'(\mathcal M, \mathcal L),
\end{align*}
functorial in $\mathcal M$, $g\colon \mathcal N \to \mathcal N'$ and $\mathcal L$. For the second natural transformation, we can write $\mathcal M = \varprojlim_n \mathcal M_n$ for qcqs étale $\mathcal M_n$ and pull out this limit on both sides (by definition of $\IHom_\solid'$), transforming it into colimits. This reduces the construction of the natural transformation to the case that $\mathcal M$ is qcqs étale, in which case $\IHom_\solid'(\mathcal M, -) = \IHom_\solid(\mathcal M, -)$, so we are done. For the first natural transformation we can similarly reduce to the case that $\mathcal N'$ is qcqs étale. Then if we write $\mathcal N = \varprojlim_n \mathcal N_n$ for qcqs étale $n$ we know that $g$ factors over some $\mathcal N_n$ and is thus a limit of maps $\mathcal N_{n'} \to \mathcal N'$ in $\mathcal C$. By the same argument as above we can pull out this limit and thus assume that $\mathcal N$ is also qcqs étale. But then we can further pull out a limit in $\mathcal M$ to reduce to the case that $\mathcal M$ is qcqs étale, in which case the desired natural transformation is evident.

With the above preparations at hand, let us now prove the actual claims. Part (i) follows immediately from the just constructed natural transformations by applying $\Gamma(X,-)$. To prove (iii), note that the given map $f$ provides a map $\Z_\ell \to \IHom_\solid^\tr(\mathcal M, \mathcal N)$, so that the desired diagonal map can be constructed as follows:
\begin{align*}
	\IHom_\solid(\mathcal L, \mathcal M) \to \IHom_\solid(\mathcal L, \mathcal M) \tensor \IHom_\solid^\tr(\mathcal M, \mathcal N) \to \IHom_\solid^\tr(\mathcal L, \mathcal N),
\end{align*}
where the second map is the one constructed above. To prove (ii) we need to show that the natural map
\begin{align*}
	\IHom_\solid^\tr(\mathcal M, \mathcal N) \tensor \IHom_\solid^\tr(\mathcal M', \mathcal N') \to \IHom_\solid(\mathcal M \tensor \mathcal M', \mathcal N \tensor \mathcal N')
\end{align*}
factors over $\IHom_\solid^\tr(\mathcal M \tensor \mathcal M', \mathcal N \tensor \mathcal N')$. This can be done in a similar manner as above by reducing to the case that all sheaves are qcqs étale; we leave the details to the reader. To prove (iv), write $\mathcal M$ as a filtered colimit $\mathcal M = \varinjlim_i \mathcal Q_i$ with compact $\mathcal Q_i$ and note that $\Hom^\tr(\mathcal P, \mathcal M) = \varinjlim_i \Hom^\tr(\mathcal P, \mathcal Q_i)$, which easily implies the claim.
\end{proof}

In order to relate trace-class maps to nuclear sheaves (as defined in \cref{def:nuclear-module-on-spat-diamond}) we need further properties of $\IHom_\solid^\tr$:

\begin{lemma}
Let $X$ be an $\ell$-bounded spatial diamond and let $\mathcal P \in \D_\solid(X,\Z_\ell)_{\omega_1}$ be compact.
\begin{lemenum}
	\item \label{rslt:IHom-tr-is-bounded} The functor
	\begin{align*}
		\IHom_\solid^\tr(\mathcal P, -)\colon \D_\solid(X,\Z_\ell)_{\omega_1} \to \D_\solid(X,\Z_\ell)_{\omega_1}
	\end{align*}
	is bounded, i.e. there are integers $a \le b$ such that for every static $\mathcal M \in \D_\solid(X,\Z_\ell)_{\omega_1}$ the sheaf $\IHom_\solid^\tr(\mathcal P, \mathcal M)$ is concentrated in homological degrees $[a, b]$.

	\item \label{rslt:IHom-tr-preserves-completeness} For every $\ell$-adically complete $\mathcal M \in \D_\solid(X,\Z_\ell)_{\omega_1}$ the sheaf $\IHom_\solid^\tr(\mathcal P, \mathcal M)$ is $\ell$-adically complete.
\end{lemenum}
\end{lemma}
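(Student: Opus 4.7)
The plan is to handle (i) first by a direct computation on the standard compact generators, and then to bootstrap (ii) from (i) using the interaction of $\IHom_\solid^\tr$ with reduction modulo $\ell^k$.

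For (i), I first reduce to the case where $\mathcal P$ is one of the generators from \cref{rslt:compact-generators-of-w1-solid-sheaves}, say $\mathcal P = \Z_{\ell,\solid}[U] = \varprojlim_n (\Z/\ell^n)[U_n]$ with $U_n \to X$ étale, quasicompact, and separated. For compact $\mathcal M$ the definition gives
\begin{align*}
    \IHom_\solid^\tr(\mathcal P, \mathcal M) = \widehat{\varinjlim_n \IHom_\solid((\Z/\ell^n)[U_n], \mathcal M)},
\end{align*}
and I compute
\begin{align*}
    \IHom_\solid((\Z/\ell^n)[U_n], \mathcal M) = \fib\bigl(\ell^n\colon j_{U_n \solid *} j_{U_n}^* \mathcal M \longrightarrow j_{U_n \solid *} j_{U_n}^* \mathcal M\bigr),
\end{align*}
where $j_{U_n}\colon U_n \to X$ is the étale inclusion. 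For static $\mathcal M$, the amplitude of this fiber is controlled uniformly in $n$ by the finite cohomological dimension of $j_{U_n \solid *}$ on $\omega_1$-solid sheaves supplied by \cref{rslt:pushforward-for-w1-solid-sheaves}, plus one step for the fiber of $\ell^n$. Filtered colimits are $t$-exact, and $\ell$-adic completion adds at most one homological degree, so we obtain a uniform bound on the amplitude of $\IHom_\solid^\tr(\mathcal P, \mathcal M)$ for compact static $\mathcal M$. To extend the bound to arbitrary static $\mathcal M$, I write $\mathcal M$ as a filtered colimit of static compact objects in the heart, which is possible because the heart is generated in the abelian-categorical sense by the static compact sheaves $\Z_{\ell,\solid}[V]$ of \cref{rslt:compact-generators-of-w1-solid-sheaves} (as used already in the proof of that proposition), and then apply the colimit-preservation of $\IHom_\solid^\tr(\mathcal P, -)$.

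For (ii), the statement is tautological when $\mathcal M$ is compact, since then $\IHom_\solid^\tr(\mathcal P, \mathcal M)$ is by definition an $\ell$-adic completion. For general $\ell$-adically complete $\mathcal M$ the key identity is
\begin{align*}
    \IHom_\solid^\tr(\mathcal P, \mathcal M) \tensor \Z/\ell^k \isom \IHom_\solid^\tr(\mathcal P, \mathcal M/\ell^k),
\end{align*}
which follows from the colimit-preservation of $\IHom_\solid^\tr(\mathcal P, -)$ applied to the cofiber presentation $\mathcal M/\ell^k = \cofib(\ell^k\colon \mathcal M \to \mathcal M)$. Because $\mathcal M/\ell^k$ is a $\Z/\ell^k$-module, so is $\IHom_\solid^\tr(\mathcal P, \mathcal M/\ell^k)$, and hence the latter is automatically $\ell$-adically complete. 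Consequently $\varprojlim_k \IHom_\solid^\tr(\mathcal P, \mathcal M/\ell^k)$ is a countable limit of complete sheaves, hence complete, and \cref{rslt:IHom-tr-preserves-completeness} reduces to showing that the natural comparison
\begin{align*}
    \IHom_\solid^\tr(\mathcal P, \mathcal M) \longrightarrow \varprojlim_k \IHom_\solid^\tr(\mathcal P, \mathcal M/\ell^k)
\end{align*}
is an equivalence, i.e.\ that $\IHom_\solid^\tr(\mathcal P, -)$ commutes with the specific countable inverse limit $\mathcal M = \varprojlim_k \mathcal M/\ell^k$.

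The last point is the main obstacle: by construction $\IHom_\solid^\tr(\mathcal P, -)$ only preserves colimits, so commutation with this limit must be forced by extra structure. The argument uses part (i) crucially. After a Postnikov reduction to $\mathcal M$ bounded below, the tower $(\mathcal M/\ell^k)_k$ has uniformly bounded amplitude, and the tower $(\IHom_\solid^\tr(\mathcal P, \mathcal M/\ell^k))_k$ inherits uniformly bounded amplitude from (i). One then verifies that both sides of the comparison have their homotopy groups computed by the same $\varprojlim$/$\lim^1$ spectral sequence attached to the tower, where the relevant $\lim^1$ terms can be controlled using the $\Z/\ell^k$-linearity of the individual terms and the standard Mittag-Leffler behavior of the $\ell$-adic tower of a complete sheaf.
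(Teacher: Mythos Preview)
Your treatment of (i) is correct and matches the paper's approach: reduce to the generator $\mathcal P=\Z_{\ell,\solid}[U]$, bound $\IHom_\solid((\Z/\ell^n)[U_n],\mathcal M)$ uniformly in $n$ using the finite cohomological dimension of $j_{U_n\solid*}$, and extend to arbitrary static $\mathcal M$ by writing it as a filtered colimit of static compact objects.

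The gap is in (ii). Your reduction is fine, but the last paragraph is not an argument. Once you use $\IHom_\solid^\tr(\mathcal P,\mathcal M)/\ell^k \cong \IHom_\solid^\tr(\mathcal P,\mathcal M/\ell^k)$, the map
\[
\IHom_\solid^\tr(\mathcal P,\mathcal M)\longrightarrow \varprojlim_k \IHom_\solid^\tr(\mathcal P,\mathcal M/\ell^k)
\]
is \emph{literally} the completion map, so you have only restated the claim. You then say ``both sides have their homotopy groups computed by the same $\varprojlim/\lim^1$ spectral sequence'', but the left side has no such description: $\IHom_\solid^\tr(\mathcal P,-)$ is built as a colimit in the second variable and you have no independent access to $\pi_*\IHom_\solid^\tr(\mathcal P,\mathcal M)$. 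The Mittag--Leffler remark only controls the tower on the right; it says nothing about the left side, which is a filtered colimit of complete objects (and such colimits are not complete in general).

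The paper handles this differently and more concretely. After the same Postnikov and truncation reductions to static complete $\mathcal M$, it resolves $\mathcal M$ by completed direct sums of the generators $\Z_{\ell,\solid}[V]$. The key points are that $\ell$-adic completion preserves uniformly right-bounded geometric realizations and commutes with $\omega_1$-filtered colimits, so one reduces to $\mathcal M=\cplbigdsum{k}\Z_{\ell,\solid}[V_k]$ a \emph{countable} completed sum. Writing $\mathcal M=\varinjlim_\alpha \prod_k \ell^{\alpha_k}\Z_{\ell,\solid}[V_k]$ over sequences $\alpha\to\infty$, one shows by an explicit cofinality/section argument that
\[
\IHom_\solid^\tr(\mathcal P,\mathcal M)\;\cong\;\varinjlim_\alpha \prod_k \ell^{\alpha_k}\IHom_\solid^\tr(\mathcal P,\Z_{\ell,\solid}[V_k])\;\cong\;\cplbigdsum{k}\IHom_\solid^\tr(\mathcal P,\Z_{\ell,\solid}[V_k]),
\]
which is visibly complete. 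The substance is in producing the sections $\prod_k \ell^{\alpha_k}(\dots)\to\IHom_\solid^\tr(\mathcal P,\prod_k \ell^{\alpha'_k}\mathcal Q_k)$ using that the target is complete for finitely presented input; this is the step your sketch is missing.
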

\begin{proof}
By \cref{rslt:compact-generators-of-w1-solid-sheaves} we can assume that $\mathcal P = \Z_{\ell,\solid}[U]$ for some basic $U = \varprojlim_n U_n \in X_\proet$. We first prove (i) in the case that $\mathcal M = \Z_{\ell,\solid}[V]$ for some basic $\varprojlim_m V_m \in X_\proet$. Then we have
\begin{align*}
	\IHom_\solid^\tr(\mathcal P, \mathcal M) = (\varinjlim_n \varprojlim_m \IHom_\solid((\Z/\ell^n\Z)[U_n], \Z_\ell[V_m]))\cplt.
\end{align*}
Thus the desired boundedness boils down to showing that $\IHom_\solid((\Z/\ell^n\Z)[U_n], \Z_\ell[V_m])$ is bounded independent of $V_m$ (since countable limits are bounded by $1$), which follows immediately from the $\ell$-boundedness of $X$. We can now deduce that the functor $\IHom_\solid^\tr(\mathcal P, -)$ is right-bounded by writing any static $\mathcal M$ as a sifted colimit of static compact objects (i.e. choosing a resolution by direct sums of compact objects) and using that $\IHom_\solid^\tr(\mathcal P, -)$ preserves this colimit.

Now let $\mathcal M$ be general (and still static). By writing $\mathcal M$ as a filtered colimit of finitely presented static objects, we can reduce (i) to the case that $\mathcal M$ is finitely presented, i.e. a countable limit $\mathcal M = \varprojlim_n \mathcal M_n$ of qcqs étale sheaves $\mathcal M_n \in \D_\et(X,\Z_\ell)^\omega$. Then $\IHom_\solid^\tr(\mathcal P, \mathcal M)$ is $\ell$-adically complete: Since $\ell$-adic completion is stable under uniformly right-bounded geometric realizations and $\IHom_\solid^\tr(\mathcal P, \mathcal M)$ is right-bounded by the above, we can pick a resolution of $\mathcal M$ by compact objects in order to reduce to the case that $\mathcal M$ is compact -- then the $\ell$-adic completeness is clear by definition. It now follows that the natural map
\begin{align*}
	(\varinjlim_n \IHom_\solid((\Z/\ell^n\Z)[U_n], \mathcal M))\cplt \isoto \IHom_\solid^\tr(\mathcal P, \mathcal M)
\end{align*}
is an isomorphism (by again passing to geometric realizations in $\mathcal M$). In the same way as for compact $\mathcal M$ we deduce that $\IHom_\solid^\tr(\mathcal P, \mathcal M)$ is bounded independent of $\mathcal M$. This finishes the proof of (i).

We now prove (ii), so let the $\ell$-adically complete sheaf $\mathcal M$ be given. By the boundedness of $\IHom_\solid^\tr(\mathcal P, -)$ this functor commutes with Postnikov limits, so we can assume that $\mathcal M$ is left-bounded. Moreover, since $\ell$-adic completeness can be checked on homotopy sheaves and $\IHom_\solid^\tr(\mathcal P, -)$ is bounded, we can use $\mathcal M = \varinjlim_n \tau_{\ge n} \mathcal M$ in order to reduce to the case that $\mathcal M$ is bounded. We can then even assume that $\mathcal M$ is static. By choosing a resolution of $\mathcal M$ in terms of direct sums of copies of $\Z_{\ell,\solid}[V]$ for varying basic $V = \varprojlim_n V_n \in X_\proet$ and using that both $\IHom_\solid^\tr(\mathcal P, -)$ and $\ell$-adic completion commute with uniformly right-bounded geometric realizations we can reduce to the case that $\mathcal M$ is a completed direct sum of copies of $\Z_{\ell,\solid}[V]$. Since $\ell$-adic completion commutes with $\omega_1$-filtered colimits we can further reduce to a countable such sum, i.e. we have $\mathcal M = \cplbigdsum{k} \Z_{\ell,\solid}[V_k]$ for various basic $V_k = \varprojlim_n V_{k,n} \in X_\proet$ and integers $k > 0$. To abbreviate notation let us denote $\mathcal Q_k := \Z_{\ell,\solid}[V_k]$. For every sequence of integers $\alpha_m \ge 0$ converging to $\infty$ we write $\mathcal M_\alpha := \prod_k \ell^{\alpha_k} \mathcal Q_k$, so that $\mathcal M = \varinjlim_\alpha \mathcal M_\alpha$. We now claim that the natural map
\begin{align*}
	\IHom_\solid^\tr(\mathcal P, \mathcal M) = \varinjlim_\alpha \IHom_\solid^\tr(\mathcal P, \mathcal M_\alpha) \isoto \varinjlim_\alpha \prod_k \ell^{\alpha_k} \IHom_\solid^\tr(\mathcal P, \mathcal Q_k)
\end{align*}
is an isomorphism. Here by $\prod_k \ell^{\alpha_k} \IHom_\solid^\tr(\dots)$ we mean the object $\prod_k \IHom_\solid^\tr(\dots)$ but where the $\ell^{\alpha_k}$ determine the transitions maps in the filtered colimit. To prove the claimed isomorphism of the colimits over $\alpha$, it is enough to find suitable sections. Concretely, fix any sequence $\alpha$ and choose another sequence $\alpha'$ such that $\alpha' \le \alpha$ and the sequence $\alpha - \alpha'$ still converges to $\infty$. Then we get natural maps
\begin{align*}
	\prod_k \ell^{\alpha_k} \IHom_\solid^\tr(\mathcal P, \mathcal Q_k) \to \cplbigdsum{k} \ell^{\alpha'_k} \IHom_\solid^\tr(\mathcal P, \mathcal Q_k) \to \IHom_\solid^\tr(\mathcal P, \prod_k \ell^{\alpha'_k} \mathcal Q_k).
\end{align*}
The first map exists because $\alpha - \alpha'$ converges to $\infty$; it is the one which multiplies the $k$-th part of the product/sum by $\ell^{\alpha_k - \alpha'_k}$. The second map exists because $\IHom_\solid^\tr(\mathcal P, \prod_k \ell^{\alpha'_k} \mathcal Q_k)$ is $\ell$-adically complete (by the same argument as in the proof of (i) using that $\prod_k \mathcal Q_k$ is static and finitely presented), so that we can replace the completed direct sum by an ordinary direct sum for the construction. One checks that the thus constructed map is indeed the desired section of the map of filtered systems, proving the above isomorphism. To finish the proof that $\IHom_\solid^\tr(\mathcal P, \mathcal M)$ is $\ell$-adically complete, it remains to see that the natural map
\begin{align*}
	\varinjlim_\alpha \prod_k \ell^{\alpha_k} \IHom_\solid^\tr(\mathcal P, \mathcal Q_k) \isoto \cplbigdsum{k} \IHom_\solid^\tr(\mathcal P, \mathcal Q_k)
\end{align*}
is an isomorphism, or equivalently that the source of this map is $\ell$-adically complete. This can be checked on homotopy sheaves and by the usual arguments involving geometric realizations we reduce to showing this statement after replacing each $\IHom_\solid^\tr(\mathcal P, \mathcal Q_k)$ by a completed direct sum of static compact objects. Then everything is static and the claimed isomorphism can easily be checked on sections $\Gamma(U, -)$ for w-contractible $U$ (reducing the problem to an easy problem about classical abelian groups).
\end{proof}

We are finally in the position to provide the promised characterization of nuclear sheaves in terms of trace-class maps. This will immediately enable us to prove v-descent as well, leading to our first main result on nuclear sheaves in this paper.

\begin{definition}
Let $X$ be an $\ell$-bounded spatial diamond. A sheaf $\mathcal N \in \D_\solid(X,\Z_\ell)_{\omega_1}$ is called \emph{basic nuclear} if it can be written as a sequential colimit $\mathcal N = \varinjlim_n \mathcal P_n$ such that all $\mathcal P_n$ are compact and all transition maps $\mathcal P_n \to \mathcal P_{n+1}$ are trace-class.
\end{definition}

\begin{theorem} \label{rslt:main-results-on-nuclear-Z-ell-sheaves}
\begin{thmenum}
	\item \label{rslt:properties-of-nuclear-subcategory} Let $X$ be an $\ell$-bounded spatial diamond. Then for an $\omega_1$-solid sheaf $\mathcal M$ on $X$ the following are equivalent:
	\begin{enumerate}[(a)]
		\item $\mathcal M$ is nuclear.
		\item $\mathcal M$ is an $\omega_1$-filtered colimit of basic nuclear sheaves.
		\item For every compact $\mathcal P \in \D_\solid(X,\Z_\ell)_{\omega_1}$ the natural map $\Hom^\tr(\mathcal P, \mathcal M) \to \Hom(\mathcal P, \mathcal M)$ is an isomorphism.
	\end{enumerate}
	The $\infty$-category $\D_\nuc(X,\Z_\ell)$ is $\omega_1$-compactly generated with the $\omega_1$-compact objects being precisely the basic nuclear sheaves. Moreover, this $\infty$-category contains all étale sheaves and is stable under all small colimits and under the tensor product in $\D_\solid(X,\Z_\ell)_{\omega_1}$.

	\item \label{rslt:v-descent-for-nuclear-sheaves} The assignment
	\begin{align*}
		X \mapsto \D_\nuc(X,\Z_\ell)
	\end{align*}
	defines a hypercomplete sheaf of presentable symmetric monoidal $\infty$-categories on the v-site of $\ell$-bounded spatial diamonds.
\end{thmenum}
\end{theorem}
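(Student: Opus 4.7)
The plan is to establish the chain of implications (a) $\Rightarrow$ (c) $\Rightarrow$ (b) $\Rightarrow$ (a) in part (i), after which the structural claims (compact generation, stability under colimits and tensor product) follow formally, and then to deduce part (ii) by combining the v-descent for $\omega_1$-solid sheaves from \cref{rslt:descent-for-w1-solid-sheaves} with the v-local characterization provided by (c).

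For (a) $\Rightarrow$ (c), I first observe that both $\Hom(\mathcal P, -)$ and $\Hom^\tr(\mathcal P, -)$ commute with filtered colimits in the second argument (the first by compactness of $\mathcal P$, the second from the construction of $\IHom_\solid^\tr$), so it suffices to treat the case $\mathcal M$ Banach. Writing $\mathcal M = \varprojlim_k \mathcal M/\ell^k$, the boundedness of $\IHom_\solid^\tr(\mathcal P,-)$ from \cref{rslt:IHom-tr-is-bounded} together with \cref{rslt:IHom-tr-preserves-completeness} lets me commute the Postnikov limit through both sides and reduce to $\mathcal M$ étale. For such $\mathcal M$ and $\mathcal P = \Z_{\ell,\solid}[U]$ with basic $U = \varprojlim_n U_n$, a direct unwinding via $\Gamma(U,-) = \varinjlim_n \Gamma(U_n,-)$ on étale sheaves identifies both $\Hom^\tr(\mathcal P, \mathcal M)$ and $\Hom(\mathcal P, \mathcal M)$ with $\varinjlim_n \Gamma(U_n, \mathcal M)$. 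For (c) $\Rightarrow$ (b), I write $\mathcal M = \varinjlim_i \mathcal P_i$ as a filtered colimit of compact $\omega_1$-solid sheaves via \cref{rslt:compact-generators-of-w1-solid-sheaves}; by (c) each map $\mathcal P_i \to \mathcal M$ is trace-class, so \cref{rslt:properties-of-trace-class-maps}(iv) gives a factorization $\mathcal P_i \to \mathcal Q_i \to \mathcal M$ through a compact $\mathcal Q_i$ with $\mathcal P_i \to \mathcal Q_i$ trace-class. Iterating this countably at each step and reorganizing yields an $\omega_1$-filtered diagram of basic nuclear sheaves with colimit $\mathcal M$.

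The main obstacle is (b) $\Rightarrow$ (a), i.e.\ showing that a basic nuclear sheaf $\mathcal N = \varinjlim_n \mathcal P_n$ is Banach. The crucial observation is that a trace-class map between compact objects becomes "essentially étale" after reduction modulo $\ell$: by the formula $\IHom_\solid^\tr(\mathcal P,\mathcal Q) = (\varinjlim_k \IHom_\solid(\mathcal P_k,\mathcal Q))^\wedge$ for a pro-étale presentation $\mathcal P = \varprojlim_k \mathcal P_k$ with $\mathcal P_k$ compact étale (coming from \cref{rslt:compact-generators-of-w1-solid-sheaves}), reducing mod $\ell$ kills the completion and exhibits each trace-class transition as factoring through a qcqs étale sheaf. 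Hence $\mathcal N/\ell = \varinjlim_n \mathcal P_n/\ell$ is a filtered colimit of maps that each factor through étale sheaves, and therefore $\mathcal N/\ell$ is itself étale. For $\ell$-adic completeness, the same argument applied to each $\mathcal N/\ell^k$ shows it is étale; combined with the individual completeness of the $\mathcal P_n$ from \cref{rslt:compact-generators-of-w1-solid-sheaves} and the uniform boundedness provided by the trace-class structure (via \cref{rslt:IHom-tr-is-bounded}), the towers $\{\mathcal P_n/\ell^k\}_k$ are controlled uniformly in $n$, so $\varinjlim_n$ commutes with $\varprojlim_k(-/\ell^k)$ and $\mathcal N = \varprojlim_k \mathcal N/\ell^k$.

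The remaining assertions of (i) are then formal: basic nuclear sheaves are $\omega_1$-compact as sequential colimits of $\omega$-compacts, giving the stated $\omega_1$-compact generation; étale sheaves are Banach (trivially complete and étale mod $\ell$), hence nuclear; stability under small colimits is immediate from (a); and stability under tensor product follows from (b) together with \cref{rslt:trace-class-maps-stable-under-tensor-product} and the fact from \cref{rslt:stability-of-w1-solid-sheaves} that tensor preserves compact objects. For part (ii), pullback preserves nuclearity because $f^*$ preserves $\ell$-adic completeness (via its preservation of countable limits, \cref{rslt:pullback-for-w1-solid-sheaves}), étale sheaves, and filtered colimits, so sends Banach to Banach and therefore nuclear to nuclear. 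Hypercomplete v-descent then follows by combining \cref{rslt:descent-for-w1-solid-sheaves} with the characterization (c): the compact generators $\Z_{\ell,\solid}[U]$ for basic $U$ pull back to compact generators, and both $\Hom$ and $\Hom^\tr$ against these generators are v-local by construction, so the property of nuclearity descends along v-hypercovers. Presentability and symmetric monoidality are inherited from $\D_\solid(-,\Z_\ell)_{\omega_1}$.
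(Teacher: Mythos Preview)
Your argument for (b) $\Rightarrow$ (a) contains a genuine error: you try to show that a basic nuclear sheaf $\mathcal N = \varinjlim_n \mathcal P_n$ is itself a Banach sheaf, but this is false. For a concrete counterexample take $X$ a point and $\mathcal N = \Q_\ell = \varinjlim(\Z_\ell \xto{\ell} \Z_\ell \xto{\ell} \cdots)$; one computes $\IHom_\solid^\tr(\Z_\ell,\Z_\ell) = \Z_\ell$ so every map $\Z_\ell \to \Z_\ell$ is trace-class, hence $\Q_\ell$ is basic nuclear, yet it is not $\ell$-adically complete. Your claimed commutation of $\varinjlim_n$ with $\varprojlim_k(-/\ell^k)$ via ``uniform boundedness'' simply does not hold: boundedness of $\IHom_\solid^\tr(\mathcal P,-)$ controls cohomological amplitude, not the interchange of a sequential colimit with an inverse limit.

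The paper's approach is different and avoids this trap: rather than showing $\mathcal N$ is Banach, one shows that each trace-class transition $\mathcal P_n \to \mathcal P_{n+1}$ factors through a Banach sheaf $\mathcal P'_{n+1}$ (specifically the $\ell$-adic completion of the \'etale pushforward of $\mathcal P_{n+1}$), so that $\mathcal N$ is a filtered colimit of Banach sheaves. The factorization comes from the identity $\Hom^\tr(\mathcal P_n, \mathcal P'_{n+1}) \isoto \Hom^\tr(\mathcal P_n, \mathcal P_{n+1})$, which one checks modulo $\ell$ using that for $\Fld_\ell$-modules $\mathcal M$ one has $\Hom^\tr(\mathcal P_n, \mathcal M) = \Hom(\mathcal P_n, \mathcal M_\et)$. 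Your mod-$\ell$ factorization idea is in the right spirit, but the conclusion must be ``colimit of Banach'' rather than ``Banach''. Separately, your sketch of (ii) is too vague: the paper's descent argument crucially uses the boundedness of $\Hom^\tr(\mathcal P,-)$ to swap Postnikov limits with the \v Cech totalization and then invokes nuclearity of each $f_{n*}f_n^*\mathcal M$ (which needs the observation that $f_{n*}$ preserves Banach sheaves); the assertion that ``$\Hom^\tr$ is v-local by construction'' does not by itself yield this.
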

\begin{proof}
We first prove (i), so let $X$ and $\mathcal M$ be given. The equivalence of (b) and (c) follows formally from \cref{rslt:properties-of-trace-class-maps}, see \cite[Theorem 8.6]{condensed-complex-geometry}. Now assume that $\mathcal M$ is nuclear; we will show that it satisfies condition (c). Since (c) is stable under colimits in $\mathcal M$ we can assume that $\mathcal M$ is a Banach module. Then by \cref{rslt:IHom-tr-preserves-completeness} both $\IHom_\solid^\tr(\mathcal P, \mathcal M)$ and $\IHom_\solid(\mathcal P, \mathcal M)$ are $\ell$-adically complete, so it is enough to show that $\mathcal N := \mathcal M/\ell \mathcal M$ satisfies (c). Now $\mathcal N$ is étale, so by \cref{rslt:compact-generators-of-etale-sheaves} we can write it as a colimit of copies of $\Fld_\ell[U]$ for varying basic $U \in X_\et$. It is therefore enough to show that $\Fld_\ell[U]$ satisfies (c), which follows by explicit computation (using that if we write $\mathcal P = \varprojlim_n \mathcal P_n$ for qcqs étale sheaves $\mathcal P_n$ then $\Hom(\mathcal P, \Fld_\ell[U]) = \varinjlim_n \Hom(\mathcal P_n, \Fld_\ell[U])$).

To finish the equivalence of (a), (b) and (c) it remains to show that (b) implies (a), so from now on assume that $\mathcal M = \varinjlim_n \mathcal P_n$ is basic nuclear; we need to show that $\mathcal M$ is nuclear. This follows if we can show that every map $\mathcal P_n \to \mathcal P_{n+1}$ factors over some Banach sheaf because then $\mathcal M$ is the colimit of these Banach sheaves. Let $\mathcal P'_{n+1}$ be the $\ell$-adic completion of the pushforward of $\mathcal P_{n+1}$ to the étale site. We claim that the natural map
\begin{align*}
	\Hom(\mathcal P_n, \mathcal P'_{n+1}) = \Hom^\tr(\mathcal P_n, \mathcal P'_{n+1}) \isoto \Hom^\tr(\mathcal P_n, \mathcal P_{n+1})
\end{align*}
is an isomorphism (here the first identity follows from (c) because $\mathcal P'_{n+1}$ is clearly nuclear), which provides the desired factorization $\mathcal P_n \to \mathcal P'_{n+1} \to \mathcal P_{n+1}$. To prove this isomorphism, note that by \cref{rslt:IHom-tr-preserves-completeness} both sides of the claimed isomorphism are $\ell$-adically complete, so we can check the isomorphism modulo $\ell$. Then it boils down to the following claim: Let $\mathcal M$ ($= \mathcal P'_{n+1}/\ell \mathcal P'_{n+1}$) be an $\Fld_\ell$-module in $\D_\solid(X,\Z_\ell)_{\omega_1}$ and let $\mathcal M_\et$ denote its pushforward to the étale site. Then the natural map
\begin{align*}
	\Hom(\mathcal P_n, \mathcal M_\et) = \Hom^\tr(\mathcal P_n, \mathcal M_\et) \isoto \Hom^\tr(\mathcal P_n, \mathcal M)
\end{align*}
is an isomorphism (here the first identity follows because $\mathcal M_\et$ is nuclear and hence satisfies (c) by the above). Both sides of the claimed isomorphism preserve all small colimits (for the functor $\mathcal M \mapsto \mathcal M_\et$ this follows from the $\ell$-boundedness of $X$, as this implies that $\Gamma(U, -)$ is cohomologically bounded for all basic $U \in X_\proet$), so we can assume that $\mathcal M$ is compact. Then we have
\begin{align*}
	\Hom^\tr(\mathcal P_n, \mathcal M) = \varinjlim_k \Hom(\mathcal P_{n,k}, \mathcal M) = \varinjlim_k \Hom(\mathcal P_{n,k}, \mathcal M_\et) = \Hom(\mathcal P_n, \mathcal M_\et)
\end{align*}
for any representation $\mathcal P_n = \varprojlim_k \mathcal P_{n,k}$ with all $\mathcal P_{n,k}$ being qcqs étale (in the first identity there is no $\ell$-adic completion required because all terms are killed by $\ell$). This finishes the proof that (a), (b) and (c) are equivalent.

We finish the proof of (i): It is formal that $\D_\nuc(X,\Z_\ell)$ is $\omega_1$-compactly generated with the $\omega_1$-compact generators being precisely the basic nuclear objects (see \cite[Theorem 8.6]{condensed-complex-geometry}). We have already shown that all étale sheaves are nuclear, and from (c) it follows immediately that nuclear sheaves are stable under all small colimits in $\D_\solid(X,\Z_\ell)_{\omega_1}$. To show that nuclear sheaves are also stable under tensor products, it is now enough to see this for basic nuclear objects, where it follows from \cref{rslt:trace-class-maps-stable-under-tensor-product} (to get the require right-boundedness, note that the above $\mathcal P'_{n+1}$ are clearly bounded).

We now prove (ii). It is clear that nuclear sheaves are stable under pullback. As in the proof of \cref{rslt:descent-for-w1-solid-sheaves}, the claimed v-descent now reduces to the following claim: Let $f\colon Y \to X$ be a map of $\ell$-bounded spatial diamonds and let $\mathcal M \in \D_\solid(X,\Z_\ell)_{\omega_1}$ such that $f^* \mathcal M$ is nuclear; then $\mathcal M$ is nuclear. Let $f_\bullet\colon Y_\bullet \to X$ be a hypercover of $\ell$-bounded spatial diamonds extending $f$. Then all $f_n^* \mathcal M$ are nuclear. By \cref{rslt:pushforward-for-w1-solid-sheaves} the pushforwards $f_{n\solid*}$ preserve small colimits and Banach sheaves and thus nuclear sheaves, so that we deduce that all $f_{n\solid*} f_n^* \mathcal M$ are nuclear. Moreover, by v-descent for v-sheaves we have $\mathcal M = \varprojlim_{n\in\Delta} f_{n\solid*} f_n^* \mathcal M$. Fix any compact $\mathcal P \in \D_\solid(X,\Z_\ell)_{\omega_1}$. Then by \cref{rslt:IHom-tr-is-bounded} the functor $\Hom^\tr(\mathcal P, -)$ is bounded and therefore preserves Postnikov limits and uniformly left-bounded totalizations. We deduce
\begin{align*}
	\Hom^\tr(\mathcal P, \mathcal M) &= \varprojlim_k \Hom^\tr(\mathcal P, \tau_{\le k} \mathcal M) = \varprojlim_k \Hom^\tr(\mathcal P, \varprojlim_{n\in\Delta} f_{n\solid*} f_n^* \tau_{\le k} \mathcal M)\\
	&= \varprojlim_k \varprojlim_{n\in\Delta} \Hom^\tr(\mathcal P, f_{n\solid*} f_n^* \tau_{\le k} \mathcal M) = \varprojlim_{n\in\Delta} \varprojlim_k \Hom^\tr(\mathcal P, f_{n\solid*} f_n^* \tau_{\le k} \mathcal M),
\intertext{and since $f_{n\solid*}$ has finite cohomological dimension by \cref{rslt:pushforward-for-w1-solid-sheaves} we get}
	&= \varprojlim_{n\in\Delta} \Hom^\tr(\mathcal P, \varprojlim_k f_{n\solid*} f_n^* \tau_{\le k} \mathcal M) = \varprojlim_{n\in\Delta} \Hom^\tr(\mathcal P, f_{n\solid*} f_n^* \mathcal M),
\intertext{and finally by nuclearity of $f_{n\solid*} f_n^* \mathcal M$,}
	&= \varprojlim_{n\in\Delta} \Hom(\mathcal P, f_{n\solid*} f_n^* \mathcal M) = \Hom(\mathcal P, \mathcal M),
\end{align*}
proving that $\mathcal M$ is indeed nuclear.
\end{proof}

\begin{remark}
In the case that $X = \Spa(C)$ is a geometric point, we recover the classical $\infty$-category $\D_\nuc(X,\Z_\ell) = \D_\nuc(\Z_\ell)$ of nuclear condensed $\Z_\ell$-modules. In fact, for any profinite set $X$ (viewed as a diamond over $\Spa(C)$) and all $\mathcal P, \mathcal M \in \D_\solid(X,\Z_\ell)_{\omega_1}$ with $\mathcal P$ compact we have
\begin{align*}
	\IHom_\solid^\tr(\mathcal P, \mathcal M) = \IHom_\solid(\mathcal P, \Z_\ell) \tensor \mathcal M,
\end{align*}
i.e. we recover the classical notion of nuclearity. Note that for non-compact $\mathcal P$ the above identity is false, because by definition $\IHom_\solid^\tr(\mathcal P, -)$ transforms colimits in $\mathcal P$ into limits, whereas $\IHom_\solid(\mathcal P, \Z_\ell) \tensor -$ does not. This does not affect the notion of nuclear sheaves, because this only every uses $\IHom_\solid^\tr(\mathcal P, -)$ for compact $\mathcal P$ (some subtleties arise if compact objects are not stable under tensor products, though).
\end{remark}

While many natural constructions of sheaves preserve the nuclear category, some (like internal hom) do not. We therefore need a way of ``nuclearizing'' sheaves, which can be done as follows.

\begin{definition}
Let $X$ be an $\ell$-bounded spatial diamond. By the adjoint functor theorem the inclusion $\D_\nuc(X,\Z_\ell) \injto \D_\solid(X,\Z_\ell)_{\omega_1}$ admits a right adjoint
\begin{align*}
	(-)_\nuc\colon \D_\solid(X,\Z_\ell)_{\omega_1} \to \D_\nuc(X,\Z_\ell), \qquad \mathcal M \mapsto \mathcal M_\nuc.
\end{align*}
For every $\mathcal M \in \D_\solid(X,\Z_\ell)_{\omega_1}$ we call $\mathcal M_\nuc$ the \emph{nuclearization} of $\mathcal M$.
\end{definition}

\begin{proposition} \label{rslt:properties-of-nuclearization}
Let $X$ be an $\ell$-bounded spatial diamond. Then the nuclearization functor on $X$ is characterized by the following properties:
\begin{propenum}
	\item \label{rslt:nuclearization-preserves-colims} The functor $(-)_\nuc\colon \D_\solid(X,\Z_\ell)_{\omega_1} \to \D_\nuc(X,\Z_\ell)$ preserves all small colimits.

	\item \label{rslt:nuclearization-of-complete-module} If $\mathcal M \in \D_\solid(X,\Z_\ell)_{\omega_1}$ is $\ell$-adically complete, then $\mathcal M_\nuc$ is the $\ell$-adic completion of $\mathcal M_\et$. In particular, if $\mathcal M$ is killed by some power of $\ell$ then $\mathcal M_\nuc = \mathcal M_\et$.

	\item \label{rslt:nuclearization-is-bounded} The functor $(-)_\nuc$ is bounded, i.e. there are integers $a \le b$ such that for every static $\mathcal M \in \D_\solid(X,\Z_\ell)_{\omega_1}$ the sheaf $\mathcal M_\nuc$ lies in homological degrees $[a, b]$ (with respect to the solid $t$-structure).
\end{propenum}
\end{proposition}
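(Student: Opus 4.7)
The plan is to establish the assertions in the order (ii), (iii), (i). The main obstacle will be the colimit preservation in (i).

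For (ii), let $\mathcal M$ be $\ell$-adically complete and consider the candidate $\widetilde{\mathcal M} := \widehat{\mathcal M_\et}$ (the $\ell$-adic completion of the étale pushforward). This sheaf is Banach: it is $\ell$-adically complete by construction, and its mod-$\ell$ reduction equals $\mathcal M_\et/\ell = (\mathcal M/\ell)_\et$, which is étale (using that $(-)_\et$ is a right adjoint and therefore commutes with the fiber sequence defining $(-)/\ell$). There is a natural map $\widetilde{\mathcal M} \to \mathcal M$ coming from the étale counit $\mathcal M_\et \to \mathcal M$ together with the completeness of $\mathcal M$. To verify the universal property, namely that $\Hom(\mathcal N, \widetilde{\mathcal M}) \to \Hom(\mathcal N, \mathcal M)$ is an isomorphism for every nuclear $\mathcal N$, one reduces via \cref{rslt:properties-of-nuclear-subcategory} to $\mathcal N$ basic nuclear, and then, by factoring trace-class transitions through Banach sheaves as in the proof of \cref{rslt:properties-of-nuclear-subcategory}, to $\mathcal N$ itself a Banach sheaf. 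Both sides are then $\ell$-adically complete, so the claim can be checked modulo $\ell$, where it becomes the tautological étale adjunction $\Hom(\mathcal N/\ell, (\mathcal M/\ell)_\et) = \Hom(\mathcal N/\ell, \mathcal M/\ell)$, valid because $\mathcal N/\ell$ is étale.

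For (iii), combine the formula of (ii) with the $\ell$-boundedness of $X$: the étale pushforward has bounded cohomological amplitude on static sheaves, and $\ell$-adic completion contributes at most degree $-1$, which yields a uniform bound on $\mathcal M_\nuc$ for $\ell$-complete static $\mathcal M$. For general static $\mathcal M$, deduce the result from (i) by expressing $\mathcal M$ as a filtered colimit of compact (hence $\ell$-complete, by \cref{rslt:compact-generators-of-w1-solid-sheaves}) objects and invoking the commutation of $(-)_\nuc$ with filtered colimits.

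For (i), observe first that every basic nuclear sheaf, being a sequential colimit of compact objects, is $\omega_1$-compact in $\D_\solid(X,\Z_\ell)_{\omega_1}$; hence $(-)_\nuc$ automatically preserves $\omega_1$-filtered colimits. As a right adjoint between stable $\infty$-categories, it also preserves finite limits and therefore finite colimits. The main obstacle is upgrading this to preservation of \emph{all} filtered colimits, since $\omega$-filtered diagrams need not be $\omega_1$-filtered and $\ell$-adic completion does not commute with arbitrary filtered colimits in general. The plan is to employ (iii) together with the explicit formula from (ii): decomposing a filtered colimit via Postnikov truncations reduces matters to uniformly bounded diagrams, on which $\ell$-adic completion (and hence the nuclearization formula from (ii)) does commute with filtered colimits, reassembling into the desired identity $(\varinjlim_i \mathcal M_i)_\nuc = \varinjlim_i (\mathcal M_i)_\nuc$. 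Alternatively, one may define a candidate functor as the colimit-preserving extension of $\mathcal P \mapsto \widehat{\mathcal P_\et}$ from compact inputs and verify directly, using the argument of (ii), that it is right adjoint to the inclusion; this approach sidesteps the $\omega_1$-versus-$\omega$ distinction entirely.
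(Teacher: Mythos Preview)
Your argument for (ii) is correct and pleasant: reducing to Banach $\mathcal N$ and then checking modulo $\ell$ works because both $\widetilde{\mathcal M}$ and $\mathcal M$ are $\ell$-adically complete. The partial version of (iii) for $\ell$-complete static inputs then follows as you say.

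The gap is in (i). Your primary Postnikov approach does not work: the formula from (ii) only applies to $\ell$-adically complete inputs, and filtered colimits of $\ell$-adically complete sheaves are essentially never $\ell$-adically complete, even on uniformly bounded diagrams (e.g.\ $\varinjlim_n \Z/\ell^n\Z$). So there is no sense in which ``the nuclearization formula from (ii) commutes with filtered colimits'' on bounded diagrams, and the reduction collapses. This also breaks your deduction of the full (iii), which depended on (i).

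Your alternative approach --- define $F$ on compacts by $\mathcal P \mapsto \widehat{\mathcal P_\et}$, extend by colimits, and verify that $F$ is right adjoint to the inclusion --- is exactly what the paper does, but the verification is \emph{not} the argument of (ii). The issue is that for general $\mathcal M = \varinjlim_j \mathcal Q_j$ the target $F(\mathcal M)$ is no longer $\ell$-complete, so reducing $\mathcal N$ to a Banach sheaf $\mathcal B$ gains nothing: $\mathcal B$ is not compact, so you cannot pull the colimit over $j$ out of $\Hom(\mathcal B, F(\mathcal M))$, and there is no mod-$\ell$ reduction available. What actually works is to keep $\mathcal N = \varinjlim_n \mathcal P_n$ with $\mathcal P_n$ compact, use compactness to pull the $j$-colimit out of $\Hom(\mathcal P_n,-)$ on both sides, and then use the Banach factorization $\mathcal P_n \to \mathcal B_n \to \mathcal P_{n+1}$ to produce \emph{sections} $\Hom(\mathcal P_{n+1}, \mathcal M) \to \Hom(\mathcal P_n, F(\mathcal M))$ of the comparison map in the pro-system over $n$; this sandwich forces the two inverse limits to agree. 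The paper packages this section argument via the $\Hom^\tr$ formalism, showing $\Hom(\mathcal P_n, \iota F(\mathcal M)) = \Hom^\tr(\mathcal P_n, \mathcal M)$ and then invoking the trace-class sections; your Banach factorization is the same mechanism in different clothing, but you have to run it as a pro-system argument rather than as a one-shot reduction.
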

\begin{proof}
We construct a colimit-preserving functor
\begin{align*}
	F\colon \D_\solid(X,\Z_\ell)_{\omega_1} \to \D_\nuc(X,\Z_\ell)
\end{align*}
as follows: It is enough to construct it on compact objects and for compact $\mathcal Q \in \D_\solid(X,\Z_\ell)_{\omega_1}$ we define $F(\mathcal Q)$ to be the $\ell$-adic completion of $\mathcal Q_\et$. After composing $F$ with the inclusion $\iota\colon \D_\nuc(X,\Z_\ell) \to \D_\solid(X,\Z_\ell)_{\omega_1}$ we get a natural map $\iota F \to \id$. It follows that there is a natural map $F \to (-)_\nuc$. We claim that this is an isomorphism. This can be checked on sections from basic nuclear objects, i.e. for every $\mathcal M \in \D_\solid(X,\Z_\ell)_{\omega_1}$ and every sequential colimit $\mathcal N = \varinjlim_n \mathcal P_n$ of compact $\mathcal P_n \in \D_\solid(X,\Z_\ell)_{\omega_1}$ with trace-class transition maps, we need to verify that the natural map
\begin{align*}
	\Hom(\mathcal N, F(\mathcal M)) \isoto \Hom(\mathcal N, \mathcal M_\nuc)
\end{align*}
is an isomorphism. Let us first compute the left-hand side. We claim that for any $\mathcal L \in \D_\solid(X,\Z_\ell)_{\omega_1}$ the natural map
\begin{align*}
	\Hom(\mathcal L, \iota F(\mathcal M)) = \Hom^\tr(\mathcal L, \iota F(\mathcal M)) \isoto \Hom^\tr(\mathcal L, \mathcal M)
\end{align*}
is an isomorphism. This can be checked for compact $\mathcal L$; then both sides commute with colimits in $\mathcal M$, so we can also reduce to the case that $\mathcal M$ is compact. Factoring out the $\ell$-adic completion on both sides, we can assume that $\mathcal M$ is killed by $\ell$. But then both sides evaluate to $\varinjlim_n \Hom(\mathcal L_n, \mathcal M)$ for any representation $\mathcal L = \varprojlim_n \mathcal L_n$ with qcqs étale $\mathcal L_n$.

Let us get back to the claimed isomorphism of $F$ and $(-)_\nuc$, so let $\mathcal M$ and $\mathcal N = \varinjlim_n \mathcal P_n$ be as before. The claim now reduces to showing that the natural map
\begin{align*}
	&\varprojlim_n \Hom^\tr(\mathcal P_n, \mathcal M) = \Hom^\tr(\mathcal N, \mathcal M) = \Hom(\mathcal N, F(\mathcal M))\\
	&\qquad\qquad\isoto \Hom(\mathcal N, \mathcal M_\nuc) = \Hom(\mathcal N, \mathcal M) = \varprojlim_n \Hom(\mathcal P_n, \mathcal M)
\end{align*}
is an isomorphism. This follows by constructing sections $\Hom(\mathcal P_{n+1}, \mathcal M) \to \Hom^\tr(\mathcal P_n, \mathcal M)$, which exist by \cref{rslt:trace-class-maps-form-ideal}.

Having established the isomorphism $F = (-)_\nuc$, we can now easily prove the claims (i)--(iii). Part (i) is obvious since $F$ preserves all small colimits by construction. For part (ii) note that if $\mathcal M$ is killed by $\ell$ then clearly $F(\mathcal M) = \mathcal M_\et$. Since $(-)_\nuc$ preserves limits (as it is a right adjoint functor) we deduce that $(-)_\nuc$ preserves $\ell$-adically complete objects. For part (iii), we can immediately reduce to the case that $\mathcal M$ is static and finitely presented and hence $\ell$-adically complete. Then the claim follows from the fact that both $\ell$-adic completion and pushforward to the étale site are bounded.
\end{proof}

We now have a good understanding of nuclear $\Z_\ell$-sheaves on $\ell$-bounded spatial diamonds. It is formal to extend this notion to all small v-stacks and to modules over any nuclear $\Z_\ell$-algebra $\Lambda$, where by ``nuclear $\Z_\ell$-algebra'' we mean an (animated condensed) $\Z_\ell$-algebra whose underlying $\Z_\ell$-module is nuclear. Prominent examples of nuclear $\Z_\ell$-algebras are $\Fld_\ell$, $\Z_\ell$, $\Q_\ell$, $\overline{\Q_\ell}$ and $\Cpx_\ell$.

\begin{definition}
For any small v-stack $X$ we define
\begin{align*}
	\D_\nuc(X,\Z_\ell) \subset \D_\solid(X,\Z_\ell)_{\solid} \subset \D(X_\vsite,\Z_\ell)
\end{align*}
by descent from the case of $\ell$-bounded spatial diamonds using \cref{rslt:descent-for-w1-solid-sheaves,rslt:v-descent-for-nuclear-sheaves} (note that every strictly totally disconnected space is an $\ell$-bounded spatial diamond, so that $\ell$-bounded spatial diamonds form a basis for the v-site). In other words, a $\Z_\ell$-sheaf $\mathcal M \in \D(X_\vsite,\Z_\ell)$ lies in $\D_\nuc(X,\Z_\ell)$ resp. $\D_\solid(X,\Z_\ell)_{\omega_1}$ if and only if this is true after pullback to every $\ell$-bounded spatial diamond.
\end{definition}

\begin{definition}
Let $\Lambda$ be a nuclear $\Z_\ell$-algebra.
\begin{defenum}
	\item By pullback $\Lambda$ defines a v-sheaf of connective $\mathbb E_\infty$-rings on the big v-site of all small v-stacks, which we still denote $\Lambda$. In particular, for every small v-stack $X$, we get a stable $\infty$-category $\D(X_\vsite, \Lambda)$ of $\Lambda$-modules on $X$. This $\infty$-category admits a complete $t$-structure and the forgetful functor $\D(X_\vsite,\Lambda) \to \D(X_\vsite,\Z_\ell)$ is $t$-exact, conservative and preserves all small limits and colimits. Moreover, for any map of small v-stacks, the pullback and pushforward functors on $\Z_\ell$-modules can be enhanced to functors on $\Lambda$-modules.

	\item Let $X$ be a small v-stack. We define
	\begin{align*}
		\D_\nuc(X,\Lambda) \subset \D_\solid(X,\Lambda)_{\omega_1} \subset \D(X_\vsite,\Lambda)
	\end{align*}
	to be the full subcategories spanned by those $\Lambda$-modules which lie in $\D_\nuc(X,\Z_\ell)$ resp. $\D_\solid(X,\Z_\ell)_{\omega_1}$ after applying the forgetful functor. Equivalently we can view $\Lambda$ as an $\mathbb E_\infty$-algebra in $\D_\nuc(X,\Z_\ell)$ and define $\D_\nuc(X,\Lambda)$ as the $\infty$-category of $\Lambda$-modules in $\D_\nuc(X,\Z_\ell)$. The objects of $\D_\nuc(X,\Lambda)$ are called the \emph{nuclear $\Lambda$-modules on $X$} and the objects of $\D_\solid(X,\Lambda)_{\omega_1}$ are called the \emph{$\omega_1$-solid $\Lambda$-modules on $X$}.
\end{defenum}
\end{definition}

We can formally extend most of the results on nuclear and $\omega_1$-solid $\Z_\ell$-modules on $\ell$-bounded spatial diamonds to nuclear and $\omega_1$-solid $\Lambda$-modules on small v-stacks:

\begin{proposition} \label{rslt:properties-of-w1-solid-Lambda-sheaves}
Let $\Lambda$ be a nuclear $\Z_\ell$-algebra.
\begin{propenum}
	\item \label{rslt:v-descent-for-w1-solid-Lambda-sheaves} The assignment
	\begin{align*}
		X \mapsto \D_\solid(X,\Lambda)_{\omega_1}
	\end{align*}
	defines a hypercomplete sheaf of presentable symmetric monoidal stable $\infty$-categories with complete $t$-structure on the v-site of all small v-stacks.

	\item Let $X$ be a small v-stack. The inclusion $\D_\solid(X,\Lambda)_{\omega_1} \injto \D(X_\vsite,\Lambda)$ is $t$-exact and commutes with all pullbacks, colimits and countable limits.

	\item \label{rslt:compact-generators-of-w1-solid-Lambda-sheaves} Let $X$ be an $\ell$-bounded spatial diamond. Then $\D_\solid(X,\Lambda)_{\omega_1}$ is compactly generated. The compact objects are generated under finite (co)limits and retracts by the objects $\mathcal P \tensor_{\Z_\ell} \Lambda$ for compact $\mathcal P \in \D_\solid(X,\Z_\ell)_{\omega_1}$.
\end{propenum}
\end{proposition}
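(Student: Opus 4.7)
My plan is to deduce all three statements from the corresponding results on $\Z_\ell$-sheaves (\cref{rslt:general-properties-of-omega-1-solid-sheaves,rslt:pullback-pushforward-for-w1-solid-sheaves,rslt:descent-for-w1-solid-sheaves}) by exploiting the extension-of-scalars / forgetful adjunction
\begin{align*}
  (-) \tensor_{\Z_\ell} \Lambda : \D_\solid(X,\Z_\ell)_{\omega_1} \leftrightarrows \D_\solid(X,\Lambda)_{\omega_1} : \mathrm{forget},
\end{align*}
together with the fact that the forgetful functor $\D(X_\vsite,\Lambda) \to \D(X_\vsite,\Z_\ell)$ is conservative, $t$-exact, and preserves all small limits and colimits. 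The key observation is that by definition $\D_\solid(X,\Lambda)_{\omega_1}$ is the preimage of $\D_\solid(X,\Z_\ell)_{\omega_1}$ under this forgetful functor, so essentially all structural properties transfer immediately.

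For (i), I would first note that $X \mapsto \D(X_\vsite,\Lambda)$ satisfies hypercomplete v-descent for formal reasons (sheaves of modules over a v-sheaf of $\mathbb E_\infty$-rings). Combined with \cref{rslt:descent-for-w1-solid-sheaves}, which handles the $\Z_\ell$-case, and the fact that the $\omega_1$-solid condition is checked on the underlying $\Z_\ell$-module, the descent assertion reduces to the following: for a v-cover $Y \to X$ of $\ell$-bounded spatial diamonds and $\mathcal M \in \D(X_\vsite,\Lambda)$, the pullback $f^*\mathcal M$ is $\omega_1$-solid iff $\mathcal M$ is — which is immediate since pullback commutes with the forgetful functor. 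Presentability and the symmetric monoidal enhancement then come from the usual argument (presentability is preserved under colimit-closed subcategories, and $\Lambda$ is an $\mathbb E_\infty$-algebra in the already-symmetric-monoidal $\D_\solid(X,\Z_\ell)_{\omega_1}$).

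For (ii), every claim is inherited directly from the $\Z_\ell$-case. Conservativity and $t$-exactness of the forgetful functor reduce $t$-exactness of the inclusion to \cref{rslt:stability-of-w1-solid-sheaves}. For pullbacks, colimits, and countable limits, I would use that these are computed in $\D(X_\vsite,\Lambda)$ in the same way as in $\D(X_\vsite,\Z_\ell)$ (again via the conservative, limit-and-colimit-preserving forgetful functor), and then apply \cref{rslt:pullback-for-w1-solid-sheaves} and \cref{rslt:stability-of-w1-solid-sheaves} to see that the $\omega_1$-solid subcategory is stable under these operations on the underlying $\Z_\ell$-module.

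For (iii), let $X$ be $\ell$-bounded and let $\mathcal P \in \D_\solid(X,\Z_\ell)_{\omega_1}$ be compact. The adjunction above yields
\begin{align*}
  \Hom_\Lambda(\mathcal P \tensor_{\Z_\ell} \Lambda, -) = \Hom_{\Z_\ell}(\mathcal P, \mathrm{forget}(-)),
\end{align*}
and since the forgetful functor preserves filtered colimits, $\mathcal P \tensor_{\Z_\ell} \Lambda$ is compact in $\D_\solid(X,\Lambda)_{\omega_1}$; it lies in this subcategory because its underlying $\Z_\ell$-module is a tensor product of $\omega_1$-solid sheaves (using nuclearity of $\Lambda$ and the tensor-stability from \cref{rslt:stability-of-w1-solid-sheaves}). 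Conversely, if $\mathcal M \in \D_\solid(X,\Lambda)_{\omega_1}$ satisfies $\Hom_\Lambda(\mathcal P \tensor_{\Z_\ell} \Lambda, \mathcal M) = 0$ for all compact $\mathcal P$, then by the adjunction $\Hom_{\Z_\ell}(\mathcal P, \mathrm{forget}(\mathcal M)) = 0$ for all such $\mathcal P$; compact generation of $\D_\solid(X,\Z_\ell)_{\omega_1}$ (\cref{rslt:compact-generators-of-w1-solid-sheaves}) gives $\mathrm{forget}(\mathcal M) = 0$, and conservativity then yields $\mathcal M = 0$. Thus the objects $\mathcal P \tensor_{\Z_\ell} \Lambda$ form a set of compact generators, and the description of all compact objects as finite (co)limits and retracts of these generators is a general fact about compactly generated stable $\infty$-categories.

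I expect no serious obstacle; the only point deserving mild care is ensuring that $\mathcal P \tensor_{\Z_\ell} \Lambda$ genuinely remains in the $\omega_1$-solid subcategory and that compactness transfers across the adjunction, but both follow formally from the already-established properties of the $\Z_\ell$-theory.
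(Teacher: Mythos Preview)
Your proposal is correct and follows essentially the same approach as the paper: reduce each claim to the already-established $\Z_\ell$-case via the conservative, $t$-exact, limit- and colimit-preserving forgetful functor, and for (iii) use the free--forgetful adjunction to transfer compact generation. One small imprecision: ``presentability is preserved under colimit-closed subcategories'' is not true in general; instead argue (as the paper implicitly does) that $\D_\solid(X,\Lambda)_{\omega_1}$ is the module category $\Mod_\Lambda(\D_\solid(X,\Z_\ell)_{\omega_1})$, hence presentable, or deduce presentability on $\ell$-bounded spatial diamonds from (iii) and then use that limits of presentable $\infty$-categories are presentable.
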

\begin{proof}
Part (i) is clear by definition for $\Lambda = \Z_\ell$ (for presentability use that presentable $\infty$-categories are stable under limits and use (iii)). For general $\Lambda$ it follows by repeating the argument in \cref{rslt:descent-for-w1-solid-sheaves} and noting that everything commutes with the forgetful functor along $\Z_\ell \to \Lambda$. For part (ii) we can similarly reduce to the case $\Lambda = \Z_\ell$. Then the commutation with pullbacks is true by design and the commutation with colimits and countable limits follows from \cref{rslt:stability-of-w1-solid-sheaves} because everything can be reduced to $\ell$-bounded spatial diamonds (using that pullbacks preserve limits and colimits). Part (iii) follows easily from \cref{rslt:compact-generators-of-w1-solid-sheaves}.
\end{proof}

\begin{proposition}
Let $\Lambda$ be a nuclear $\Z_\ell$-algebra.
\begin{propenum}
	\item \label{rslt:v-descent-for-nuclear-Lambda-modules} The assignment
	\begin{align*}
		X \mapsto \D_\nuc(X,\Lambda)
	\end{align*}
	defines a hypercomplete sheaf of presentable symmetric monoidal $\infty$-categories on the v-site of all small v-stacks.

	\item Let $X$ be a small v-stack. The inclusion $\D_\solid(X,\Lambda)_\nuc \injto \D_\solid(X,\Lambda)_{\omega_1}$ is symmetric monoidal and commutes with all pullbacks and colimits.

	\item \label{rslt:nuclear-Lambda-modules-are-w1-compactly-generated} Let $X$ be an $\ell$-bounded spatial diamond. Then $\D_\nuc(X,\Lambda)$ is $\omega_1$-compactly generated. The $\omega_1$-compact objects are generated under countable colimits by the objects $\mathcal N \tensor_{\Z_\ell} \Lambda$ for basic nuclear $\mathcal N \in \D_\nuc(X,\Z_\ell)$.
\end{propenum}
\end{proposition}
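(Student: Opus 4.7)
The plan is to reduce each part to its $\Z_\ell$-analogue established in \cref{rslt:properties-of-nuclear-subcategory,rslt:v-descent-for-nuclear-sheaves}, together with the corresponding statements for $\omega_1$-solid $\Lambda$-modules from \cref{rslt:properties-of-w1-solid-Lambda-sheaves}. The key observation is that $\Lambda$ is itself a nuclear $\mathbb E_\infty$-$\Z_\ell$-algebra on every small v-stack (by pullback from the point) and that under the embedding into $\D_\solid(X,\Lambda)_{\omega_1}$ one has an identification
\begin{align*}
	\D_\nuc(X,\Lambda) = \catmod{\Lambda}(\D_\nuc(X,\Z_\ell)),
\end{align*}
since by definition a $\Lambda$-module is nuclear iff its underlying $\Z_\ell$-module is.

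For part (i) I first establish presentability and the closed symmetric monoidal structure at each small v-stack $X$. Since $\D_\nuc(X,\Z_\ell)$ is presentable symmetric monoidal and $\Lambda$ a commutative algebra object there, the module $\infty$-category $\catmod{\Lambda}(\D_\nuc(X,\Z_\ell))$ inherits a presentable symmetric monoidal structure with tensor product $-\tensor_\Lambda-$. For hypercomplete v-descent, pullback preserves the algebra structure on $\Lambda$, so applying $\catmod{\Lambda}(-)$ to the descent equivalence from \cref{rslt:v-descent-for-nuclear-sheaves} and using that $\catmod{\Lambda}$ commutes with limits in $\catPrL$ yields an equivalence $\D_\nuc(X,\Lambda) \isoto \lim_{n\in\Delta} \D_\nuc(Y_n,\Lambda)$ for every v-hypercover $Y_\bullet \to X$.

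Part (ii) is almost tautological: preservation of pullbacks is built into the descent definition of $\D_\nuc(X,\Lambda)$, and preservation of colimits can be checked after forgetting to $\Z_\ell$-modules, where it reduces to the analogous statement for $\D_\nuc(X,\Z_\ell) \subset \D_\solid(X,\Z_\ell)_{\omega_1}$ from \cref{rslt:properties-of-nuclear-subcategory}. The symmetric monoidality of the inclusion amounts to the claim that the $\Lambda$-tensor product of two nuclear $\Lambda$-modules is again nuclear; this can be seen by writing the $\Lambda$-tensor product as the geometric realization of the two-sided bar complex $B(\mathcal M,\Lambda,\mathcal N)_\bullet$ in $\Z_\ell$-tensor products, and then invoking closure of nuclearity under iterated $\Z_\ell$-tensor products and colimits from \cref{rslt:properties-of-nuclear-subcategory}.

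For part (iii), I use the base-change adjunction
\begin{align*}
	- \tensor_{\Z_\ell} \Lambda\colon \D_\nuc(X,\Z_\ell) \rightleftarrows \D_\nuc(X,\Lambda) \noloc U,
\end{align*}
where $U$ is the forgetful functor. Since $U$ preserves all small colimits (colimits of $\Lambda$-modules are computed on underlying $\Z_\ell$-modules), its left adjoint preserves $\omega_1$-compact objects, so $\mathcal N \tensor_{\Z_\ell} \Lambda$ is $\omega_1$-compact whenever $\mathcal N$ is basic nuclear. Conservativity of $U$ combined with the fact that basic nuclear objects $\omega_1$-compactly generate $\D_\nuc(X,\Z_\ell)$ by \cref{rslt:properties-of-nuclear-subcategory} then implies that the objects $\mathcal N \tensor_{\Z_\ell} \Lambda$ $\omega_1$-compactly generate $\D_\nuc(X,\Lambda)$. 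The main subtlety, which I expect to be the principal obstacle, is the refined description of the $\omega_1$-compact objects as being exhausted by countable colimits of $\mathcal N \tensor_{\Z_\ell} \Lambda$'s: the general theory of $\omega_1$-compactly generated $\infty$-categories only produces them as retracts of such countable colimits, but these retracts can be absorbed into a slightly larger sequential diagram by using that basic nuclear is itself defined via sequential colimits of compact objects with trace-class transitions (and retracts of such data give data of the same form, after enlarging the diagram).
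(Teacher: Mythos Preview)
Your proposal is correct and follows the same strategy as the paper, which dispatches the proof in a single sentence by referring back to the $\omega_1$-solid analogue (\cref{rslt:properties-of-w1-solid-Lambda-sheaves}) and \cref{rslt:main-results-on-nuclear-Z-ell-sheaves}; you have simply unwound that reduction in detail. One minor comment on part (iii): the ``subtlety'' you flag is not a genuine obstacle, because in a stable $\infty$-category every retract is already realized as a sequential (hence countable) colimit, so closure under countable colimits automatically absorbs retracts and no extra argument is needed.
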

\begin{proof}
We can argue as in \cref{rslt:properties-of-w1-solid-Lambda-sheaves} to reduce everything to \cref{rslt:main-results-on-nuclear-Z-ell-sheaves}.
\end{proof}

\begin{proposition}
Let $\Lambda \to \Lambda'$ be a map of nuclear $\Z_\ell$-algebras and let $X$ be a small v-stack.
\begin{propenum}
	\item \label{rslt:def-and-compatibility-of-Lambda-base-change-and-forget} There is a natural pair of adjoint functors
	\begin{align*}
		- \tensor_\Lambda \Lambda'\colon \D_\nuc(X,\Lambda) \rightleftarrows \D_\nuc(X,\Lambda')\noloc \mathrm{Forget},
	\end{align*}
	both of which commute with all pullbacks and colimits. Moreover, the functor $- \tensor_\Lambda \Lambda'$ is symmetric monoidal and the forgetful functor is conservative and preserves all limits.

	The same is true for the $\infty$-categories of $\omega_1$-solid sheaves instead of nuclear sheaves, in which case the forgetful functor is additionally $t$-exact.

	\item \label{rslt:Lambda-base-change-commutes-with-inclusion-of-nuclear-sheaves} Both $- \tensor_\Lambda \Lambda'$ and the forgetful functor commute with the inclusion of nuclear sheaves into $\omega_1$-solid sheaves.
\end{propenum}
\end{proposition}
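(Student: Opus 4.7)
The plan is to realize both $\D_\solid(X,\Lambda)_{\omega_1}$ and $\D_\nuc(X,\Lambda)$ as module categories $\LMod_\Lambda(\mathcal C)$ inside the ambient symmetric monoidal presentable $\infty$-categories $\mathcal C = \D_\solid(X,\Z_\ell)_{\omega_1}$ and $\mathcal C = \D_\nuc(X,\Z_\ell)$ respectively; this is legitimate because any nuclear $\Z_\ell$-algebra defines an $\mathbb E_\infty$-algebra object in both ambient categories, and both ambient categories are presentable symmetric monoidal by the results already established. Within this framework, the adjunction $- \tensor_\Lambda \Lambda' \dashv \mathrm{Forget}$, together with symmetric monoidality of the base change, conservativity and limit-preservation of the forgetful functor, and colimit-preservation of both, is a purely formal consequence of Lurie's Higher Algebra (the standard base-change formalism for $\mathbb E_\infty$-algebras). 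Commutation with any pullback $f^*$ then follows because $f^*$ is itself symmetric monoidal on the two ambient $\infty$-categories, so it intertwines the base-change / forget adjunctions living over $\Lambda$ and $\Lambda'$.

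The $\omega_1$-solid case of part (i) is now complete except for $t$-exactness of the forgetful functor. This last property holds because the $t$-structure on $\D_\solid(X,\Lambda)_{\omega_1}$ is, by construction, the one restricted from $\D(X_\vsite,\Lambda)$, on which forgetting along $\Lambda \to \Lambda'$ is tautologically $t$-exact.

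For the nuclear case, the only genuine content is to verify that both functors preserve nuclearity; once this is done, the nuclear version of part (i) and the commutation statement (ii) follow at once, since the operations on nuclear sheaves are then restrictions of those on the $\omega_1$-solid categories. Preservation by $\mathrm{Forget}$ is immediate, since nuclearity is a property of the underlying $\Z_\ell$-module and forgetting along $\Lambda \to \Lambda'$ leaves this module unchanged. For $- \tensor_\Lambda \Lambda'$, one reduces by v-descent to the case of an $\ell$-bounded spatial diamond $X$. Since $- \tensor_\Lambda \Lambda'$ preserves colimits and $\D_\nuc(X,\Lambda)$ is $\omega_1$-compactly generated by objects of the form $\mathcal N \tensor_{\Z_\ell} \Lambda$ with $\mathcal N \in \D_\nuc(X,\Z_\ell)$ basic nuclear, it suffices to observe that $(\mathcal N \tensor_{\Z_\ell} \Lambda) \tensor_\Lambda \Lambda' = \mathcal N \tensor_{\Z_\ell} \Lambda'$, which is of the same generating shape and therefore nuclear as a $\Lambda'$-module. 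This preservation is the only step with any real content; everything else in the proposition is a formal transfer from the module-category framework combined with the descent and generator results already proven earlier in this section.
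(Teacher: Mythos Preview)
Your proposal is correct and follows essentially the same approach as the paper: both realize $\D_\nuc(X,\Lambda)$ and $\D_\solid(X,\Lambda)_{\omega_1}$ as module categories over the corresponding $\Z_\ell$-level $\infty$-categories and deduce everything formally from symmetric monoidality of pullbacks and of the inclusion $\D_\nuc \hookrightarrow \D_\solid(-)_{\omega_1}$. The only difference is that the paper's proof is a one-liner: once one observes that the inclusion of nuclear into $\omega_1$-solid sheaves is symmetric monoidal and colimit-preserving, part (ii) is immediate without your v-descent reduction and generator computation (the tensor product $-\tensor_\Lambda \Lambda'$ is built from $\tensor_{\Z_\ell}$ and colimits via the bar construction, so symmetric monoidality of the inclusion already forces it to match on both sides).
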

\begin{proof}
This follows immediately from the fact that $\D_\nuc(X,\Lambda)$ is the $\infty$-category of $\Lambda$-modules in $\D_\nuc(X,\Z_\ell)$ (and similarly for $\D_\solid(X,\Lambda)_{\omega_1}$) and the fact that pullback functors and the inclusion of nuclear sheaves into $\omega_1$-solid sheaves are symmetric monoidal.
\end{proof}

It is convenient to also define the nuclearization functor in the general setting of nuclear $\Lambda$-modules on small v-stacks, even though on general small v-stacks it will not have the same nice properties.

\begin{definition}
Let $\Lambda$ be a nuclear $\Z_\ell$-algebra and $X$ a small v-stack. We define the \emph{nuclearization} functor
\begin{align*}
	(-)_\nuc\colon \D_\solid(X,\Lambda)_{\omega_1} \to \D_\nuc(X,\Lambda)
\end{align*}
to be the right adjoint of the inclusion.
\end{definition}

\begin{proposition}
Let $\Lambda$ be a nuclear $\Z_\ell$-algebra and let $X$ be a small v-stack.
\begin{propenum}
	\item \label{rslt:nuclearization-for-Lambda-modules-preserves-colimits} If $X$ is an $\ell$-bounded spatial diamond then the functor $(-)_\nuc\colon \D_\solid(X,\Lambda)_{\omega_1} \to \D_\nuc(X,\Lambda)$ preserves all small colimits and is bounded with respect to the $t$-structure on $\D_\solid(X,\Lambda)_{\omega_1}$.
	\item If $\Lambda \to \Lambda'$ is a map of nuclear $\Z_\ell$-algebras then nuclearization on $X$ commutes with the forgetful functor along $\Lambda \to \Lambda'$.
\end{propenum}
\end{proposition}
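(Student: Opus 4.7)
The two parts are best proved in reverse order: I will establish (ii) first by a purely formal adjunction argument, and then bootstrap (i) from the already-proved $\Z_\ell$-case (\cref{rslt:properties-of-nuclearization}) using (ii) together with the conservativity and exactness properties of the forgetful functor $\D_\solid(X,\Lambda')_{\omega_1} \to \D_\solid(X,\Lambda)_{\omega_1}$.

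\textbf{Step 1 (Proof of (ii)).} By \cref{rslt:Lambda-base-change-commutes-with-inclusion-of-nuclear-sheaves}, the base-change functor $-\tensor_\Lambda \Lambda'$ commutes with the inclusion $\D_\nuc \injto \D_\solid(-)_{\omega_1}$ on both sides. In diagrammatic form, the square of left adjoints
\begin{center}\begin{tikzcd}
\D_\nuc(X,\Lambda) \arrow[r,hook] \arrow[d,"-\tensor_\Lambda \Lambda'"] & \D_\solid(X,\Lambda)_{\omega_1} \arrow[d,"-\tensor_\Lambda \Lambda'"]\\
\D_\nuc(X,\Lambda') \arrow[r,hook] & \D_\solid(X,\Lambda')_{\omega_1}
\end{tikzcd}\end{center}
commutes. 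Passing to right adjoints (vertically, this gives the forgetful functor along $\Lambda \to \Lambda'$; horizontally, the nuclearization) yields the commutation
\begin{align*}
(-)_\nuc \comp \mathrm{Forget} = \mathrm{Forget} \comp (-)_\nuc,
\end{align*}
which is precisely (ii).

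\textbf{Step 2 (Proof of (i)).} Assume now that $X$ is $\ell$-bounded. The forgetful functor $\mathrm{Forget}\colon \D_\solid(X,\Lambda)_{\omega_1} \to \D_\solid(X,\Z_\ell)_{\omega_1}$ and its nuclear analog are, by \cref{rslt:def-and-compatibility-of-Lambda-base-change-and-forget}, conservative, $t$-exact, and preserve all small colimits. Given a small diagram $(\mathcal M_i)$ in $\D_\solid(X,\Lambda)_{\omega_1}$, apply $\mathrm{Forget}$ to the canonical map $\varinjlim_i (\mathcal M_i)_\nuc \to (\varinjlim_i \mathcal M_i)_\nuc$; using (ii) on both sides and the fact that forgetful commutes with colimits, this becomes the map
\begin{align*}
\varinjlim_i (\mathrm{Forget}\,\mathcal M_i)_\nuc \to (\varinjlim_i \mathrm{Forget}\,\mathcal M_i)_\nuc,
\end{align*}
which is an isomorphism by \cref{rslt:nuclearization-preserves-colims}. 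Since $\mathrm{Forget}$ is conservative, the original map is an isomorphism, proving the colimit preservation claim.

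\textbf{Step 3 (Boundedness).} Let $[a,b]$ be the range for $(-)_\nuc$ on $\D_\solid(X,\Z_\ell)_{\omega_1}$ provided by \cref{rslt:nuclearization-is-bounded}. For any static $\mathcal M \in \D_\solid(X,\Lambda)_{\omega_1}$, $t$-exactness of $\mathrm{Forget}$ makes $\mathrm{Forget}\,\mathcal M$ static, hence $(\mathrm{Forget}\,\mathcal M)_\nuc$ lives in degrees $[a,b]$. By (ii) this equals $\mathrm{Forget}(\mathcal M_\nuc)$, and since $\mathrm{Forget}$ is $t$-exact and conservative this forces $\mathcal M_\nuc$ itself to lie in degrees $[a,b]$. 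The only mildly subtle point in the whole argument is ensuring that taking right adjoints of the square in Step 1 is legitimate, but this is immediate since all four functors in the square are presentable-accessible left adjoints (presentability of the $\omega_1$-solid and nuclear categories was established in \cref{rslt:v-descent-for-w1-solid-Lambda-sheaves} and \cref{rslt:v-descent-for-nuclear-Lambda-modules}).
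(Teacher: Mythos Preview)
Your proof is correct and follows exactly the same strategy as the paper: prove (ii) by passing to right adjoints in the square expressing \cref{rslt:Lambda-base-change-commutes-with-inclusion-of-nuclear-sheaves}, and then deduce (i) from (ii) and \cref{rslt:properties-of-nuclearization} via the forgetful functor along $\Z_\ell \to \Lambda$. The paper's proof is a two-sentence sketch of precisely this argument; you have simply unpacked the details (conservativity, $t$-exactness, and colimit preservation of the forgetful functor) that the paper leaves implicit.
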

\begin{proof}
Part (ii) follows immediately from \cref{rslt:Lambda-base-change-commutes-with-inclusion-of-nuclear-sheaves} by passing to right adjoints. Part (i) follows from (ii) and \cref{rslt:properties-of-nuclearization} (use the forgetful functor along $\Z_\ell \to \Lambda$).
\end{proof}

In the case that the nuclear $\Z_\ell$-algebra $\Lambda$ is discrete (e.g. $\Lambda = \Fld_\ell$), we recover the classical theory of étale $\Lambda$-modules:

\begin{proposition} \label{rslt:nuclear-over-discrete-Lambda-equiv-etale}
Let $\Lambda$ be a discrete $\Z_\ell$-algebra and $X$ a small v-stack. Then we have
\begin{align*}
	\D_\nuc(X,\Lambda) = \D_\et(X,\Lambda)
\end{align*}
as full subcategories of $\D(X_\vsite,\Lambda)$.
\end{proposition}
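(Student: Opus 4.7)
My plan is to reduce to the case of an $\ell$-bounded spatial diamond via v-descent, and then exploit the explicit description of the nuclearization functor on complete sheaves from \cref{rslt:nuclearization-of-complete-module}.

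First I would observe that the hypothesis that $\Lambda$ is a \emph{discrete} condensed $\Z_\ell$-algebra forces $\Lambda$ to be killed by some power of $\ell$. Indeed, a map of condensed sets $\underline{\Z_\ell} \to \underline{\Lambda}$ with $\Lambda$ discrete corresponds, via the tautological section $\mathrm{id} \in \underline{\Z_\ell}(\Z_\ell)$, to a locally constant map $\Z_\ell \to \Lambda$; since $\Z_\ell$ is compact the image is finite, and the only finite discrete quotient rings of $\Z_\ell$ are the $\Z/\ell^n$. Consequently every $\Lambda$-module is $\ell^n$-torsion for this fixed $n$, hence in particular $\ell$-adically complete as a $\Z_\ell$-sheaf.

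Next I would reduce to the case where $X$ is an $\ell$-bounded spatial diamond. The subcategory $\D_\nuc(X,\Lambda) \subset \D(X_\vsite,\Lambda)$ satisfies hypercomplete v-descent by \cref{rslt:v-descent-for-nuclear-Lambda-modules}. The analogous v-descent for $\D_\et(X,\Lambda) \subset \D(X_\vsite,\Lambda)$ with $\ell^n$-torsion coefficients is the standard v-descent for torsion étale sheaves on diamonds from \cite{etale-cohomology-of-diamonds}, and is compatible with the $\Lambda$-module structure since passing from $\Z$-coefficients to $\Lambda$-coefficients is limit-preserving. Both conditions being v-local, the equality can be checked after pullback to a strictly totally disconnected cover, all of whose terms are $\ell$-bounded.

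Once on an $\ell$-bounded spatial diamond, the inclusion $\D_\et(X,\Lambda) \subset \D_\nuc(X,\Lambda)$ is immediate from \cref{rslt:properties-of-nuclear-subcategory}, which asserts that $\D_\et(X,\Z_\ell) \subset \D_\nuc(X,\Z_\ell)$, since nuclearity is a condition on the underlying $\Z_\ell$-sheaf. For the converse, given $\mathcal M \in \D_\nuc(X,\Lambda)$, its underlying $\Z_\ell$-sheaf is nuclear, so $\mathcal M_\nuc = \mathcal M$, and is $\ell^n$-torsion hence $\ell$-adically complete, so \cref{rslt:nuclearization-of-complete-module} gives $\mathcal M_\nuc = \mathcal M_\et$. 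Combining yields $\mathcal M = \mathcal M_\et$, so $\mathcal M \in \D_\et(X,\Lambda)$.

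The only real obstacle I foresee is the bookkeeping surrounding v-descent for $\D_\et(X,\Lambda)$ on v-stacks, which requires verifying that the $\Lambda$-structure is compatible with the étale sheafification $(-)_\et$ — but this is formal once $\Lambda$ is $\ell^n$-torsion, as then $\Lambda$ itself already lies in $\D_\et$ and the forgetful functor from $\Lambda$-modules commutes with the relevant sheafifications.
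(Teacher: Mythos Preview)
Your proposal is correct and follows essentially the same route as the paper: reduce to $\ell$-bounded spatial diamonds by v-descent, observe that discreteness of $\Lambda$ forces it to be a $\Z/\ell^n\Z$-algebra, use \cref{rslt:properties-of-nuclear-subcategory} for the inclusion $\D_\et \subset \D_\nuc$, and invoke \cref{rslt:nuclearization-of-complete-module} on $\ell^n$-torsion sheaves for the reverse inclusion. The paper's argument for the $\Z/\ell^n\Z$-algebra structure goes through the identity $\Hom(\Z_\ell,\Lambda) = \varinjlim_n \Hom(\Z/\ell^n\Z,\Lambda)$ rather than your locally-constant-map argument, but this is the same observation in a different dress.
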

\begin{proof}
Since both $\infty$-categories are defined by descent, we can assume that $X$ is an $\ell$-bounded spatial diamond. By \cref{rslt:properties-of-nuclear-subcategory} every étale $\Lambda$-module is nuclear (for any nuclear $\Z_\ell$-algebra $\Lambda$), so we have $\D_\et(X,\Lambda) \subset \D_\nuc(X,\Lambda)$. To get the other inclusion, note that the discreteness of $\Lambda$ implies that $\Hom(\Z_\ell,\Lambda) = \varinjlim_n \Hom(\Z/\ell^n\Z,\Lambda)$ (a priori this holds in the $\infty$-category of $\Z_\ell$-modules, but then also follows in the $\infty$-category of rings). Applying this to the structure map $\Z_\ell \to \Lambda$ we deduce that $\Lambda$ is a $\Z/\ell^n \Z$-algebra for some $n$. In particular every $\mathcal M \in \D_\nuc(X,\Lambda)$ is killed by $\ell^n$, which implies that $\mathcal M$ is étale by \cref{rslt:nuclearization-of-complete-module}.
\end{proof}

\section{Pushforward and Base-Change} \label{sec:pushforward}

Fix a prime $\ell \ne p$ and a nuclear $\Z_\ell$-algebra $\Lambda$. In the previous section we have constructed the $\infty$-category $\D_\nuc(X,\Lambda)$ of nuclear $\Lambda$-modules on every small v-stack $X$. We will now introduce the pushforward of nuclear sheaves and study its behavior.

\begin{definition}
Let $f\colon Y \to X$ be a map of small v-stacks.
\begin{defenum}
	\item We denote the pullback functor on nuclear sheaves by
	\begin{align*}
		f^*\colon \D_\nuc(X,\Lambda) \to \D_\nuc(Y,\Lambda).
	\end{align*}
	It is symmetric monoidal, preserves all small colimits and coincides with the v-pullback by design.

	\item We denote the \emph{nuclear pushforward} functor by
	\begin{align*}
		f_*\colon \D_\nuc(Y,\Lambda) \to \D_\nuc(X,\Lambda).
	\end{align*}
	It is defined to be the right adjoint of $f^*$, which exists by the adjoint functor theorem. It may occasionally be necessary to also consider pushforward functors on $\omega_1$-solid sheaves and on all v-sheaves (defined to be the right adjoints of the pullback functor on the respective $\infty$-categories), which we will denote $f_{\solid*}$ and $f_{\vsite*}$, respectively.
\end{defenum}
\end{definition}

In general we cannot expect the nuclear pushforward to coincide with the v-pushforward because the latter will in general not preserve nuclear sheaves. However, this usually works under a qcqs assumption, as follows.

\begin{definition}
Let $X$ be a small v-stack.
\begin{defenum}
	\item We denote by
	\begin{align*}
		\D^b_\nuc(X,\Lambda), \D^+_\nuc(X,\Lambda), \D^-_\nuc(X,\Lambda) \subset \D_\nuc(X,\Lambda)
	\end{align*}
	the full subcategories of those v-sheaves whose pullback to every $\ell$-bounded spatial diamond is bounded, resp. left bounded, resp. right bounded with respect to the $t$-structure on $\omega_1$-solid sheaves. The elements of these subcategories are called the \emph{locally bounded}, resp. \emph{locally left-bounded}, resp. \emph{locally right-bounded} nuclear $\Lambda$-modules on $X$.

	\item Similar definitions as in (a) apply to $\omega_1$-solid sheaves and to v-sheaves in place of nuclear sheaves.
\end{defenum}
\end{definition}

\begin{lemma}
For any $? \in \{ b, +, - \}$ the assignment $X \mapsto \D^?_\nuc(X,\Lambda)$ defines a hypercomplete sheaf of stable $\infty$-categories on the v-site of all small v-stacks.
\end{lemma}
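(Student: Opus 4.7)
The plan is to bootstrap from the hypercomplete v-descent for $\D_\nuc(-,\Lambda)$ already established in \cref{rslt:v-descent-for-nuclear-Lambda-modules}. Since $\D^?_\nuc(X,\Lambda)$ is by construction a full subcategory of $\D_\nuc(X,\Lambda)$ carved out by a pointwise condition on pullbacks to $\ell$-bounded spatial diamonds, the descent claim reduces formally to two verifications: first, that the defining condition is preserved by arbitrary pullback; and second, that if the pullback of some $\mathcal M \in \D_\nuc(X,\Lambda)$ along a single v-cover $f\colon Y\to X$ lies in $\D^?_\nuc(Y,\Lambda)$, then already $\mathcal M \in \D^?_\nuc(X,\Lambda)$.

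The first point is essentially a tautology: any map from an $\ell$-bounded spatial diamond $Z$ into $Y$ supplies by composition a test map $Z\to X$, so the local condition on $\mathcal M$ transports immediately to $f^*\mathcal M$. For the converse, I would pick any test map $g\colon Z\to X$ from an $\ell$-bounded spatial diamond, refine the base change $Z\times_X Y\to Z$ by a v-cover $h\colon Z'\to Z$ with $Z'$ itself $\ell$-bounded (possible since strictly totally disconnected spaces form a basis of the v-site and are in particular $\ell$-bounded), and denote by $g'\colon Z'\to Y$ the other projection. The hypothesis on $f^*\mathcal M$ combined with the pullback-stability just proved gives $h^*g^*\mathcal M = g'^*f^*\mathcal M \in \D^?_\solid(Z',\Lambda)_{\omega_1}$ (identifying the local and global boundedness conditions on the $\ell$-bounded spatial diamond $Z'$), and the task reduces to descending this boundedness from $Z'$ down to $Z$ along the v-cover $h$.

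This last descent step is the only non-formal ingredient and I expect it to be the main technical point. It rests on the fact that $h^*$ on $\omega_1$-solid $\Lambda$-modules is simultaneously $t$-exact, by \cref{rslt:pullback-for-w1-solid-sheaves} together with \cref{rslt:properties-of-w1-solid-Lambda-sheaves}, and conservative. The conservativity I would derive from the hypercomplete v-descent of $\D_\solid(-,\Lambda)_{\omega_1}$ in \cref{rslt:v-descent-for-w1-solid-Lambda-sheaves} by the standard hypercover trick: extending $h$ to a \v{C}ech hypercover $Y_\bullet\to Z$ with $Y_0=Z'$, each $Y_n$ factors through $Y_0$ via an iterated face map, so vanishing of $h^*\mathcal N$ forces vanishing of every $f_n^*\mathcal N$ and hence of $\mathcal N$ itself via the descent limit. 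Combining $t$-exactness and conservativity one obtains $\pi_k(g^*\mathcal M)=0 \iff \pi_k(h^*g^*\mathcal M)=0$, which simultaneously propagates left-boundedness, right-boundedness, and two-sided boundedness, covering all three cases $?\in\{b,+,-\}$ at once.
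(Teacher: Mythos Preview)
Your proposal is correct and follows essentially the same approach as the paper, just with considerably more detail spelled out. The paper's entire proof is the one-liner ``This follows immediately from the fact that pullbacks are $t$-exact and that $\D_\nuc(-,\Lambda)$ is a hypercomplete v-sheaf,'' which packages exactly the two ingredients you carefully unpack: pullback-stability of the boundedness condition (via $t$-exactness) and descent of boundedness along a v-cover (via $t$-exactness plus conservativity, the latter following from the ambient sheaf property).
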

\begin{proof}
This follows immediately from the fact that pullbacks are $t$-exact and that $\D_\nuc(-,\Lambda)$ is a hypercomplete v-sheaf.
\end{proof}

\begin{remark}
We warn the reader that there is no $t$-structure on $\D_\nuc(X,\Lambda)$ (unless $\Lambda$ is discrete, see \cref{rslt:nuclear-over-discrete-Lambda-equiv-etale}), but it still makes sense to speak of (left/right) bounded nuclear sheaves.
\end{remark}

We can now prove the following characterizations of the nuclear pushforward, generalizing the ones in \cite[Proposition 17.6]{etale-cohomology-of-diamonds}.

\begin{proposition} \label{rslt:v-pushforward-vs-solid-and-nuclear-pushforward}
Let $f\colon Y \to X$ be a qcqs map of small v-stacks.
\begin{propenum}
	\item The v-pushforward $f_{\vsite*}\colon \D(Y_\vsite,\Lambda) \to \D(X_\vsite,\Lambda)$ preserves $\omega_1$-solid sheaves, i.e. we have $f_{\vsite*} = f_{\solid*}$.

	\item The v-pushforward preserves locally left-bounded nuclear sheaves, so that we have $f_{\vsite*} = f_*$ on locally left-bounded sheaves.

	\item \label{rslt:properties-of-pushforward-for-l-bounded-maps} Suppose that the functor $f_{\solid*}\colon \D_\solid(Y,\Z_\ell)_{\omega_1} \to \D_\solid(X,\Z_\ell)_{\omega_1}$ has finite cohomological dimension. Then (ii) also hold for unbounded sheaves. Moreover, in this case both $f_{\solid*}$ and $f_*$ preserve all small colimits.
\end{propenum}
\end{proposition}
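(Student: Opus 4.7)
The plan is to reduce each part to assertions about maps of $\ell$-bounded spatial diamonds via v-descent, and then to exploit the hypercover description of $f_{\vsite *}$. Since the forgetful functor along $\Z_\ell \to \Lambda$ is conservative, $t$-exact, and preserves all limits, colimits, and pullbacks, I would first reduce to the case $\Lambda = \Z_\ell$ throughout.

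Next I would reduce to the case that $X$ is an $\ell$-bounded spatial diamond, using v-descent for $\omega_1$-solid sheaves (\cref{rslt:v-descent-for-w1-solid-Lambda-sheaves}) and for nuclear sheaves (\cref{rslt:v-descent-for-nuclear-Lambda-modules}). Choose a v-hypercover $g_\bullet\colon Y_\bullet \to Y$ with each $Y_n$ a strictly totally disconnected perfectoid space, and set $h_n := f \comp g_n\colon Y_n \to X$; since $f$ is qcqs, each $h_n$ is a qcqs map of $\ell$-bounded spatial diamonds. By v-hyperdescent for v-sheaves,
$$ f_{\vsite *}\mathcal M = \varprojlim_{n \in \Delta} h_{n\vsite *} g_n^* \mathcal M. $$
For (i), \cref{rslt:pushforward-for-w1-solid-sheaves} shows each $h_{n\vsite *} g_n^* \mathcal M$ is $\omega_1$-solid; since $\Delta$ is countable and $\omega_1$-solid sheaves are stable under countable limits by \cref{rslt:stability-of-w1-solid-sheaves}, the totalization is $\omega_1$-solid. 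The equality $f_{\vsite *} = f_{\solid *}$ then follows from the universal property of the latter.

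For (ii), the proof of \cref{rslt:pushforward-for-w1-solid-sheaves} yields a uniform bound on the cohomological dimension of the $h_{n\solid *}$ (since the $Y_n$ are strictly totally disconnected, their $\ell$-boundedness constants are $0$), so each $h_{n\vsite *} g_n^* \mathcal M$ is uniformly left-bounded when $\mathcal M$ is. Moreover, the argument in the proof of \cref{rslt:v-descent-for-nuclear-sheaves} shows that pushforward between $\ell$-bounded spatial diamonds preserves nuclear sheaves, so each term of the totalization is nuclear. To verify that $f_{\vsite *}\mathcal M$ is nuclear, I would check characterization (c) of \cref{rslt:properties-of-nuclear-subcategory}: for any compact $\mathcal P \in \D_\solid(X,\Z_\ell)_{\omega_1}$, the functor $\Hom^\tr(\mathcal P, -)$ is bounded by \cref{rslt:IHom-tr-is-bounded} and hence commutes with totalizations of uniformly left-bounded cosimplicial objects, giving
$$ \Hom^\tr(\mathcal P, f_{\vsite *}\mathcal M) = \varprojlim_n \Hom^\tr(\mathcal P, h_{n\vsite *} g_n^* \mathcal M) = \varprojlim_n \Hom(\mathcal P, h_{n\vsite *} g_n^* \mathcal M) = \Hom(\mathcal P, f_{\vsite *}\mathcal M), $$
where the middle equality uses nuclearity of each term. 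This is the same manoeuvre as in the proof of \cref{rslt:v-descent-for-nuclear-sheaves}.

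For (iii), the hypothesis that $f_{\solid *}$ has finite cohomological dimension lets me remove the boundedness restriction. Writing $\mathcal M = \varinjlim_k \tau_{\ge -k}\mathcal M$ and commuting $f_{\vsite *}$ past this filtered colimit (permissible by the finite cohomological amplitude) reduces the nuclearity statement to the left-bounded case handled by (ii). For the colimit-preservation claim, I would check that $f_{\solid *}$ commutes with filtered colimits: this follows from the fact that $H^k(f_{\solid *}\mathcal M)$ is the sheafification of $U \mapsto H^k(U \cprod_X Y, \mathcal M)$, together with the observation, implicit in the proof of \cref{rslt:general-properties-of-omega-1-solid-sheaves}, that this sectionwise cohomology commutes with filtered colimits on $\omega_1$-solid sheaves under the finite cohomological dimension hypothesis. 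Since $f_*$ is the restriction of $f_{\solid *}$ to the (colimit-closed) subcategory of nuclear sheaves, it inherits colimit-preservation. The main obstacle throughout is the interchange of $\Hom^\tr(\mathcal P, -)$ with the totalization $\varprojlim_\Delta$ in (ii): it requires both the boundedness of $\Hom^\tr(\mathcal P, -)$ and the uniform left-boundedness of the cosimplicial terms, which is exactly why (ii) is stated for left-bounded sheaves and (iii) must postulate a global cohomological dimension bound to extend the result to the unbounded setting.
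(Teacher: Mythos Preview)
Your arguments for (i) and (ii) are correct and follow the paper's approach closely: reduce to $\Lambda=\Z_\ell$, to $X$ an $\ell$-bounded spatial diamond, take a hypercover of $Y$ by strictly totally disconnected spaces, and then use \cref{rslt:pushforward-for-w1-solid-sheaves}, stability of $\omega_1$-solid sheaves under countable limits, and the $\Hom^\tr$-manoeuvre from the proof of \cref{rslt:v-descent-for-nuclear-sheaves}.

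Your argument for (iii), however, has a genuine gap. You write $\mathcal M = \varinjlim_k \tau_{\ge -k}\mathcal M$ and propose to apply (ii) to each $\tau_{\ge -k}\mathcal M$. There are two problems. First, $\tau_{\ge -k}\mathcal M$ lies in $\D_{\ge -k}$, hence is \emph{right}-bounded, not left-bounded, so it is not in the range of (ii). Second, and more seriously, even the correct truncations are not in general nuclear: the paper explicitly remarks (see the Remarks after \cref{def:nuclear-module-on-spat-diamond}) that the $t$-structure on $\D_\solid(X,\Z_\ell)$ does \emph{not} restrict to $\D_\nuc(X,\Z_\ell)$, and gives a counterexample. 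So you cannot feed $\tau_{\ge -k}\mathcal M$ (or $\tau_{\le k}\mathcal M$) into part (ii).

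The paper's fix is not to reduce (iii) to (ii) at all, but to rerun the $\Hom^\tr$-computation directly with an additional Postnikov-limit layer. Concretely, one inserts $\varprojlim_k \tau_{\le k}(-)$ and uses that both $\Hom^\tr(\mathcal P,-)$ (\cref{rslt:IHom-tr-is-bounded}) and $f_{\solid*}$ (by hypothesis) are bounded, hence commute with Postnikov limits; after swapping $\varprojlim_k$ with $\varprojlim_{n\in\Delta}$ one lands back on the terms $h_{n\solid*}g_n^*\mathcal M$, which \emph{are} nuclear. The truncations live only as intermediate $\omega_1$-solid objects in the computation and are never required to be nuclear themselves.
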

\begin{proof}
All statements can be checked after pullback to any v-cover, so we can assume that $X$ is an $\ell$-bounded spatial diamond; in particular $Y$ is qcqs. Moreover, since the forgetful functor along $\Z_\ell \to \Lambda$ commutes with pushforwards we can further reduce to the case that $\Lambda = \Z_\ell$. We fix a hypercover $g_\bullet\colon Y_\bullet \to Y$ by $\ell$-bounded spatial diamonds, so that $f_{\vsite*}$ is computed as the totalization $f_{\vsite*} = \varprojlim_{n\in\Delta} f_{n\vsite*} g_n^*$.

Part (i) now follows from that fact that all $f_{n\vsite*}$ preserve $\omega_1$-solid sheaves (see \cref{rslt:pushforward-for-w1-solid-sheaves} and that $\omega_1$-solid sheaves on $X$ are stable under countable limits (see \cref{rslt:stability-of-w1-solid-sheaves}). Part (ii) follows by the same argument as in the proof of \cref{rslt:v-descent-for-nuclear-sheaves} by exploiting the fact that $\Hom^\tr(\mathcal P, -)$ preserves uniformly left-bounded totalizations. Part (iii) follows similarly by additionally taking limits over Postnikov truncations (the fact that $f_{\solid*}$ and $f_*$ commute with colimits follows easily from the facts that $f$ is qcqs and has bounded cohomological dimension by the usual Postnikov limit argument).
\end{proof}

\begin{corollary} \label{rslt:base-change-for-solid-and-nuclear-pushforward}
Let
\begin{center}\begin{tikzcd}
	Y' \arrow[r,"g'"] \arrow[d,"f'"] & Y \arrow[d,"f"]\\
	X' \arrow[r,"g"] & X
\end{tikzcd}\end{center}
be a cartesian diagram of small v-stacks and assume that $f$ is qcqs.
\begin{corenum}
	\item \label{rslt:base-change-for-solid-pushforward} The natural morphism $g^* f_{\solid*} \isoto f'_{\solid*} g'^*$ is an isomorphism of functors $\D_\solid(Y,\Lambda)_{\omega_1} \to \D_\solid(X',\Lambda)_{\omega_1}$.

	\item \label{rslt:left-bounded-base-change-for-nuclear-pushforward} The natural morphism $g^* f_* \isoto f'_* g'^*$ is an isomorphism of functors $\D^+_\nuc(Y,\Lambda) \to \D^+_\nuc(X',\Lambda)$.

	\item \label{rslt:base-change-for-unbounded-nuclear-pushforward} Suppose that the functor $f_{\solid*}\colon \D_\solid(Y,\Z_\ell)_{\omega_1} \to \D_\solid(X,\Z_\ell)_{\omega_1}$ has finite cohomological dimension. Then (ii) also holds for unbounded sheaves.
\end{corenum}
\end{corollary}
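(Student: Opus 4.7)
I would prove (i) first; parts (ii) and (iii) will follow formally. Throughout, the conservative forgetful functor along $\Z_\ell \to \Lambda$ commutes with both pullbacks and pushforwards (on $\omega_1$-solid and on nuclear sheaves, see \cref{rslt:def-and-compatibility-of-Lambda-base-change-and-forget}), so one may reduce to $\Lambda = \Z_\ell$ throughout. For part (i), the plan is to use hypercomplete v-descent for $\omega_1$-solid sheaves (\cref{rslt:v-descent-for-w1-solid-Lambda-sheaves}) to reduce to the case where $X$ and $X'$ are strictly totally disconnected (hence $\ell$-bounded spatial) diamonds. Then pick a v-hypercover $h_\bullet\colon Y_\bullet \to Y$ by $\ell$-bounded spatial diamonds, and form its base-change $h'_\bullet\colon Y'_\bullet \to Y'$ along $g$. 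Using \cref{rslt:v-pushforward-vs-solid-and-nuclear-pushforward}(i) to identify $f_{\solid*} = f_{\vsite*}$ on $\omega_1$-solid sheaves, v-hyperdescent on v-sheaves yields
\begin{align*}
    f_{\solid*} \mathcal N = \varprojlim_{n\in\Delta} (fh_n)_{\solid*} h_n^* \mathcal N
\end{align*}
and analogously for $f'_{\solid*}$. Since $g^*$ preserves countable limits of $\omega_1$-solid sheaves (\cref{rslt:pullback-for-w1-solid-sheaves}), applying $g^*$ and comparing term by term reduces the claim to the base-change isomorphism for each $fh_n\colon Y_n \to X$, a qcqs map between $\ell$-bounded spatial diamonds.

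This leaves the case where both $Y$ and $X$ are $\ell$-bounded spatial diamonds and $f$ is qcqs. Here \cref{rslt:v-pushforward-vs-solid-and-nuclear-pushforward}(i) again identifies $f_{\solid*}$ with the v-pushforward, so the desired map $g^* f_{\solid*} \to f'_{\solid*} g'^*$ is just the Beck--Chevalley transformation for v-sheaves. I would check it on the compact generators $\Z_{\ell,\solid}[U] = \varprojlim_n (\Z/\ell^n)[U_n]$ of \cref{rslt:compact-generators-of-w1-solid-sheaves}: both sides are countable limits of the respective statements for $(\Z/\ell^n)[U_n]$, and $g^*$ commutes with these countable limits by \cref{rslt:pullback-for-w1-solid-sheaves}, so everything reduces to qcqs base-change for torsion v-sheaves (\cite[Proposition 17.6]{etale-cohomology-of-diamonds}). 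Alternatively, one may evaluate sections over $V \in X'_\proet$ basic, where the statement boils down to a classical cohomology comparison on the qcqs spatial diamond $V \cprod_X Y$.

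For (ii), observe that on $\D^+_\nuc(Y,\Lambda)$ we have $f_* = f_{\solid*}$ by \cref{rslt:v-pushforward-vs-solid-and-nuclear-pushforward}(ii), and similarly $f'_*g'^* = f'_{\solid*} g'^*$ on $\D^+_\nuc$ since $g'^*$ is $t$-exact and preserves nuclearity. The base-change isomorphism of (i) then specializes to
\begin{align*}
    g^* f_* \mathcal N \,=\, g^* f_{\solid*} \mathcal N \,\isoto\, f'_{\solid*} g'^* \mathcal N \,=\, f'_* g'^* \mathcal N.
\end{align*}
Part (iii) is identical, using \cref{rslt:properties-of-pushforward-for-l-bounded-maps} to drop the boundedness assumption when $f_{\solid*}$ has finite cohomological dimension. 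The main obstacle is the spatial base case in (i): the identification $f_{\solid*} = f_{\vsite*}$ bridges the gap to classical v-sheaf base change, but one must carefully track that the Beck--Chevalley map constructed at the level of $\omega_1$-solid sheaves agrees with the one from v-sheaves under this identification, and that the hyperdescent/limit manipulations do not silently use uniform cohomological bounds that are only available in case (iii).
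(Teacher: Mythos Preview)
Your approach is correct but substantially more elaborate than needed. The paper's proof is essentially one line: by \cref{rslt:v-pushforward-vs-solid-and-nuclear-pushforward} the functors $f_{\solid*}$, $f'_{\solid*}$ (and, under the appropriate hypotheses, $f_*$, $f'_*$) coincide with the v-pushforwards $f_{\vsite*}$, $f'_{\vsite*}$ on the relevant subcategories, and v-pushforward satisfies arbitrary base-change \emph{tautologically}. Indeed, on the big v-site the pullback $g^*$ is simply restriction along the slice functor $X'_\vsite \to X_\vsite$, so $(g^* f_{\vsite*}\mathcal M)(U'\to X') = \mathcal M(U'\cprod_X Y)$, while $(f'_{\vsite*} g'^*\mathcal M)(U'\to X') = \mathcal M(U'\cprod_{X'} Y')$; these agree because $U'\cprod_{X'} Y' = U'\cprod_X Y$. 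Your reductions to strictly totally disconnected bases, the choice of a v-hypercover of $Y$, and the eventual appeal to torsion base-change are therefore all unnecessary: they amount to reproving this tautology by hand.

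There is one genuine subtlety in your treatment of (iii): you write that it is ``identical'' to (ii) using \cref{rslt:properties-of-pushforward-for-l-bounded-maps}, but that result, applied to $f'$, would require knowing that $f'_{\solid*}$ has finite cohomological dimension, which is not part of the hypothesis. The paper avoids this by arguing on the target side: once (i) gives $f'_{\vsite*} g'^* = g^* f_{\vsite*}$ and the latter lands in nuclear sheaves (by \cref{rslt:properties-of-pushforward-for-l-bounded-maps} applied to $f$), one concludes $f'_{\vsite*} g'^* = f'_* g'^*$ directly from the adjunction characterizing $f'_*$, without ever invoking a cohomological bound for $f'$. Your argument can be repaired the same way, but as written it glosses over this step.
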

\begin{proof}
All claims follows immediately from \cref{rslt:v-pushforward-vs-solid-and-nuclear-pushforward} by using the fact that v-pushforward satisfies arbitrary base-change (for (iii) note additionally that the functor $f'_{\vsite*} g'^*$ preserves nuclear sheaves because so does $g^* f_{\vsite*}$).
\end{proof}

The condition that $f_{\solid*}$ has finite cohomological dimension may look a bit hard to grasp at first, but it turns out that this condition is usually satisfied in practice:

\begin{proposition} \label{rslt:fdcs-map-has-bounded-pushforward}
Let $f\colon Y \to X$ be a map of small v-stacks which is locally compactifiable, representable in spatial diamonds and has locally finite $\dimtrg$. Then $f_{\solid*}\colon \D_\solid(Y,\Lambda)_{\omega_1} \to \D_\solid(X,\Lambda)_{\omega_1}$ has finite cohomological dimension.
\end{proposition}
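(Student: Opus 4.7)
The plan is to carry out a sequence of reductions ending in an application of \cref{rslt:pushforward-for-w1-solid-sheaves}. First I would reduce to $\Lambda = \Z_\ell$ by using that the forgetful functor along $\Z_\ell \to \Lambda$ is $t$-exact, conservative, and commutes with $f_{\solid*}$ (\cref{rslt:def-and-compatibility-of-Lambda-base-change-and-forget}), so it suffices to control the cohomological amplitude of $f_{\solid*}$ on $\Z_\ell$-modules. Next, finite cohomological dimension of $f_{\solid*}$ can be checked after pullback along any v-cover of $X$: base-change (\cref{rslt:base-change-for-solid-pushforward}) identifies $g^* f_{\solid*}$ with $f'_{\solid*} g'^*$, and $t$-exact conservative pullbacks detect boundedness of the target. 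Since strictly totally disconnected spaces form a basis of the v-site and have étale cohomological dimension $0$, we may assume $X$ is strictly totally disconnected, and in particular an $\ell$-bounded spatial diamond.

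After this reduction, the assumption that $f$ is representable in spatial diamonds gives that $Y$ is itself a spatial diamond, hence qcqs. Combining qcqs-ness with local compactifiability and local finiteness of $\dimtrg$, I extract a \emph{finite} open cover $Y = U_1 \cup \dots \cup U_n$ by qcqs opens such that each $f_i\colon U_i \to X$ is compactifiable with $\dimtrg f_i < \infty$; every intersection $U_I = \bigcap_{i \in I} U_i$ is again qcqs open in $Y$ and inherits the same properties. A Čech-type resolution then gives
\begin{align*}
	f_{\solid*}\mathcal M = \lim_{\emptyset \ne I \subset \{1,\dots,n\}} f_{I\solid*}(\restrict{\mathcal M}{U_I}),
\end{align*}
which is a limit over a finite poset and therefore contributes only a bounded cohomological shift (and stays inside $\omega_1$-solid sheaves by \cref{rslt:stability-of-w1-solid-sheaves}). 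This reduces the statement to each $f_I$ individually, i.e.\ to the case of a qcqs compactifiable map $g\colon U \to X$ of finite $\dimtrg$ with $X$ strictly totally disconnected.

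For such $g$, the plan is to show that $U$ is itself an $\ell$-bounded spatial diamond and then invoke \cref{rslt:pushforward-for-w1-solid-sheaves}. Factor $g = \bar g \comp j$ with $j\colon U \injto \bar U$ a quasicompact open immersion and $\bar g\colon \bar U \to X$ proper. The $\ell$-cohomological dimension bound for proper maps in terms of $\dimtrg$ from \cite{etale-cohomology-of-diamonds} (the results of §22) shows that $\bar U$ has bounded étale $\Fld_\ell$-cohomology, hence is $\ell$-bounded. The quasicompact open subspace $U \subset \bar U$ is then also $\ell$-bounded: étale cohomology on $U$ is computed as étale cohomology on $\bar U$ of a pushforward along the quasicompact open immersion $j$, and that pushforward has bounded cohomological dimension inside a spatial diamond of bounded $\ell$-cohomological dimension. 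Once $U$ is known to be an $\ell$-bounded spatial diamond, \cref{rslt:pushforward-for-w1-solid-sheaves} applies to $g$ directly.

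The main obstacle I expect is precisely this last step — that an open subspace of an $\ell$-bounded spatial diamond is $\ell$-bounded, and that $\dimtrg < \infty$ together with properness does give an honest bound on $\Fld_\ell$-étale cohomological dimension. Both inputs are non-formal statements from \cite[§22]{etale-cohomology-of-diamonds}; once granted, the rest of the argument is purely formal and consists of assembling the reductions above.
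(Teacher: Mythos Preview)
Your proposal is correct and follows essentially the same route as the paper: reduce via base-change (\cref{rslt:base-change-for-solid-pushforward}) to $X$ an $\ell$-bounded spatial diamond, pass to a finite open cover to make $f$ compactifiable, invoke the results of \cite[\S22]{etale-cohomology-of-diamonds} to deduce that $Y$ is $\ell$-bounded, and conclude via \cref{rslt:pushforward-for-w1-solid-sheaves}. The only difference is cosmetic: the paper applies \cite[Theorem 22.5]{etale-cohomology-of-diamonds} directly to the compactifiable map $f$ (that theorem already covers compactifiable, not only proper, maps) rather than factoring through the explicit compactification $\bar U$ --- which is just as well, since the canonical compactification of a spatial diamond need not be spatial, so your detour through $\bar U$ being $\ell$-bounded would require additional care.
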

\begin{proof}
By \cref{rslt:base-change-for-solid-pushforward} we can assume that $X$ is an $\ell$-bounded spatial diamond; in particular $Y$ is also a spatial diamond. By passing to a finite cover of $Y$ we can assume that $f$ is compactifiable. Then \cite[Theorem 22.5]{etale-cohomology-of-diamonds} implies that $f_*\colon \D_\et(Y,\Fld_\ell) \to \D_\et(X,\Fld_\ell)$ has finite cohomological dimension, which implies that $Y$ is $\ell$-bounded. We conclude by \cref{rslt:pushforward-for-w1-solid-sheaves}.
\end{proof}

\section{The 6-Functor Formalism} \label{sec:6functors}

Fix a prime $\ell \ne p$ and a nuclear $\Z_\ell$-algebra $\Lambda$. We will finally construct the 6-functor formalism for nuclear $\Lambda$-sheaves. In the previous subsection we have already introduced the pullback and pushforward functors. The next pair of functors are tensor product and internal $\Hom$, which are easily defined:

\begin{definition}
Let $X$ be a small v-stack. We denote by $\tensor_\Lambda$ the symmetric monoidal structure on $\D_\nuc(X,\Lambda)$ and by $\IHom_\Lambda(-,-)$ the associated internal hom functor (which exists because $\D_\nuc(X,\Lambda)$ is presentable). We often drop the subscript $\Lambda$ if there is no room for confusion.
\end{definition}

The last pair of functors -- the shriek functors -- are the hardest to construct. The general idea is as follows: Given a nice compactifiable map $f\colon Y \to X$ of small v-stacks we want to define the functor $f_!\colon \D_\nuc(Y,\Lambda) \to \D_\nuc(X,\Lambda)$ as the composition $f_! = g_* \comp j_!$ for any decomposition of $f$ into an open immersion $j$ and a proper map $g$; here $j_!$ will be left adjoint of $j^*$.

As a first step towards constructing $f_!$, let us show that $j_!$ exists and satisfies the expected properties:

\begin{lemma} \label{rslt:properties-of-etale-lower-shriek}
For every étale map $j\colon U \to X$ of small v-stacks the pullback $j^*\colon \D_\solid(X,\Lambda)_{\omega_1} \to \D_\solid(U,\Lambda)_{\omega_1}$ admits a left adjoint
\begin{align*}
	j_!\colon \D_\solid(U,\Lambda)_{\omega_1} \to \D_\solid(X,\Lambda)_{\omega_1}
\end{align*}
with the following properties:
\begin{lemenum}
	\item For every map $g\colon X' \to X$ of small v-stacks with base-change $g'\colon U' := U \cprod_X X' \to U$, $j'\colon U' \to X'$ the natural morphism
	\begin{align*}
		j'_! g'^* \isoto g^* j_!
	\end{align*}
	is an isomorphism of functors $\D_\solid(U,\Lambda)_{\omega_1} \to \D_\solid(X',\Lambda)_{\omega_1}$.

	\item For all $\mathcal M \in \D_\solid(X,\Lambda)_{\omega_1}$ and $\mathcal N \in \D_\solid(U,\Lambda)_{\omega_1}$ the natural map
	\begin{align*}
		j_!(\mathcal N \tensor j^* \mathcal M) \isoto j_! \mathcal N \tensor \mathcal M
	\end{align*}
	is an isomorphism.

	\item \label{rslt:etale-lower-shriek-is-functorial-in-Lambda} $j_!$ commutes with the forgetful functor and the base-change functor along any map $\Lambda \to \Lambda'$ of nuclear $\Z_\ell$-algebras.

	\item If $j$ is quasicompact then $j_!$ is $t$-exact and preserves $\ell$-adically complete sheaves.

	\item $j_!$ preserves nuclear sheaves and hence restricts to a functor
	\begin{align*}
		j_!\colon \D_\nuc(U,\Lambda) \to \D_\nuc(X,\Lambda)
	\end{align*}
	which is left adjoint to $j^*$.
\end{lemenum}
\end{lemma}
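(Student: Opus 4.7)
The plan is to construct $j_!$ at the v-sheaf level first, then show it restricts to $\omega_1$-solid sheaves and finally to nuclear sheaves.

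\textbf{Construction and properties (i)--(iii).} Since $j$ is étale, the v-pullback $j^*$ admits a left adjoint $j_!^\vsite$ by standard sheaf theory, and this satisfies étale base-change, the projection formula, and commutation with $\Lambda$-base-change for formal adjunction reasons. To restrict $j_!^\vsite$ to $\omega_1$-solid sheaves, I reduce via v-descent (\cref{rslt:v-descent-for-w1-solid-Lambda-sheaves}) to the case where $X$ is an $\ell$-bounded spatial diamond. Using the compact generators of \cref{rslt:compact-generators-of-w1-solid-Lambda-sheaves}, for quasicompact $j$ one has $j_!^\vsite \Z_{\ell,\solid}[V] \isom \Z_{\ell,\solid}[V \to X]$ for basic $V \in U_\proet$, which is visibly $\omega_1$-solid. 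For general $j$, filter $U$ by its qc open subspaces $U_i$ and use the colimit decomposition $j_!^\vsite \mathcal N \isom \varinjlim_i (j|_{U_i})_!^\vsite(\mathcal N|_{U_i})$ to reduce to the qc case. Properties (i)--(iii) then descend from their v-sheaf analogues (for (iii), both $\Lambda$-base-change and the forgetful functor commute with $j^*$, hence with its left adjoint).

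\textbf{Property (iv) for qc $j$.} For $t$-exactness, note that on compact generators $j_!$ is given by the explicit formula above, and reducing mod $\ell^n$ gives the classical étale lower shriek, which is $t$-exact for qc étale maps; $t$-exactness for general sheaves then follows by a Postnikov and filtered-colimit argument. For preservation of $\ell$-adic completeness, let $\mathcal N$ be complete, so $\mathcal N \isom \varprojlim_n \mathcal N/\ell^n$. Since $j_!$ is exact and preserves colimits, it commutes with the cofiber computing $-/\ell^n$, so the $\ell$-adic completion of $j_!\mathcal N$ is $\varprojlim_n j_!(\mathcal N/\ell^n)$. Thus the claim reduces to the identity $j_!(\varprojlim_n \mathcal N/\ell^n) \isom \varprojlim_n j_!(\mathcal N/\ell^n)$. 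Via the Milnor fiber sequence, this reduces to checking that $j_!$ commutes with the countable products appearing there, which I verify on compact generators using the explicit formula.

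\textbf{Property (v).} By \cref{rslt:properties-of-nuclear-subcategory} nuclear sheaves are stable under all colimits, so using the filtration from paragraph 1 I reduce to qc $j$. A nuclear sheaf on $U$ is then a filtered colimit of Banach sheaves, and since $j_!$ preserves filtered colimits (as a left adjoint), it suffices to show $j_!$ sends Banach sheaves to Banach sheaves. If $\mathcal N$ is Banach, then $j_!\mathcal N$ is $\ell$-adically complete by paragraph 2, and $(j_!\mathcal N)/\ell \isom j_!(\mathcal N/\ell)$ is étale because $j_!$ restricts to the classical étale lower shriek on étale sheaves. The adjunction $j_! \dashv j^*$ on $\D_\nuc$ then follows by restriction from the adjunction on $\omega_1$-solid sheaves, since both sides are full subcategories stable under the respective functors.

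\textbf{Main obstacle.} The subtlest point is preservation of $\ell$-adic completeness in paragraph 2: since $j_!$ is a left adjoint it does not a priori commute with limits, so commutation with the specific countable limit defining $\ell$-adic completion must be checked explicitly on compact generators, using the concrete support-theoretic description of $j_!$ rather than abstract adjoint formalism.
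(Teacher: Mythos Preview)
Your overall strategy matches the paper's: build $j_!$ from the left adjoint $j_\natural$ on solid sheaves (or equivalently $j_!^\vsite$), restrict to $\omega_1$-solid sheaves by computing on the compact generators $\Z_{\ell,\solid}[V]$, then handle (iv) and (v). Parts (i)--(iii) and the reduction of (v) to (iv) are fine.

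The gap is in (iv), specifically the preservation of $\ell$-adic completeness. Your reduction via the Milnor fiber sequence to ``$j_!$ commutes with the relevant countable product'' is correct, but the step ``verify on compact generators using the explicit formula'' does not go through. The objects $\mathcal N/\ell^n$ appearing in the product are not compact, and since $j_!$ is only a left adjoint there is no mechanism to reduce a statement about $j_!(\prod_n \mathcal N/\ell^n)$ for general $\mathcal N$ to the compact case: $\ell$-adic completeness is not stable under filtered colimits, and mapping out of compact generators of the target does not simplify $\Hom(\mathcal P, j_!(-))$ in any useful way. The paper's argument is genuinely more delicate here. It first establishes the key identity
\[
	j_!\Bigl(\varprojlim_n \mathcal P_n\Bigr) \;=\; \varprojlim_n j_! \mathcal P_n
\]
for static $\mathcal P = \varprojlim_n \mathcal P_n$ with qcqs \'etale $\mathcal P_n$, via a resolution of the pro-system $(\mathcal P_n)_n$ by pro-systems $(\mathcal Q_{k,n})_n$ whose limits are compact (so the explicit formula applies), then passing to geometric realizations. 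For a general $\ell$-adically complete static $\mathcal N$, the paper reduces---using that $j_!$ is right $t$-exact and that $\ell$-adic completion commutes with uniformly right-bounded geometric realizations---to the case $\mathcal N = \cplbigdsum{k} \Z_{\ell,\solid}[V_k]$, and then filters this completed sum by the subsheaves $\prod_k \ell^{\alpha_k}\Z_{\ell,\solid}[V_k]$ to which the displayed identity applies directly.

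Your $t$-exactness argument is also vague as written (a ``Postnikov and filtered-colimit argument'' does not by itself bridge from the generators $\Z_{\ell,\solid}[V]$ to arbitrary static objects, since static finitely presented $\omega_1$-solid sheaves need not be compact), but this part is salvageable: one can instead invoke that \'etale $j_!$ is $t$-exact already at the v-sheaf level by general slice-topos considerations, or deduce it from the displayed identity above as the paper does.
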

\begin{proof}
By \cite[Proposition VII.3.1]{fargues-scholze-geometrization} the pullback functor $j^*\colon \D_\solid(X,\Lambda) \to \D_\solid(U,\Lambda)$ admits a left adjoint $j_\natural$ which satisfies the analogous properties (i), (ii) and (iii) (the result in loc. cit. is only stated for static $\Lambda$ but the proof works in general; in fact, the proof is completely formal). We now show that $j_\natural$ preserves $\omega_1$-solid sheaves and hence restricts to the desired functor $j_!$. Since $j_\natural$ commutes with the forgetful functor along $\Z_\ell \to \Lambda$, we can assume that $\Lambda = \Z_\ell$. Moreover, since $j_\natural$ commutes with any base-change, we can assume that $X$ is a strictly totally disconnected space. Then $U$ is a perfectoid space and hence admits a basis given by open subsets which are quasicompact and separated over $X$ (e.g. take affinoid perfectoid open subsets). It is formal that $j_\natural$ is computed as the colimit over this basis, so we can assume that $j$ is quasicompact and separated. In particular $U$ is strictly totally disconnected and hence an $\ell$-bounded spatial diamond. Since $j_\natural$ preserves all small colimits, it is now enough to verify that for every compact $\mathcal P \in \D_\solid(U,\Z_\ell)_{\omega_1}$ we have $j_\natural \mathcal P \in \D_\solid(X,\Z_\ell)_{\omega_1}$. By \cref{rslt:compact-generators-of-w1-solid-sheaves} we can assume that $\mathcal P = \Z_{\ell,\solid}[V]$ for some countable basic $V = \varprojlim_n V_n \in X_\proet$. But then for all $\mathcal M \in \D_\solid(X,\Z_\ell)$ we have
\begin{align*}
	\Hom(j_\natural \Z_{\ell,\solid}[V], \mathcal M) = \Hom(\Z_{\ell,\solid}[V], j^* \mathcal M) = \Gamma(V, j^* \mathcal M) = \Gamma(V, \mathcal M),
\end{align*}
hence $j_\natural \Z_{\ell,\solid}[V] = \Z_{\ell,\solid}[V]$ (where on the right-hand side we define it as an sheaf on $X$), which is indeed $\omega_1$-solid.

We have now shown the existence of $j_!$ by restricting $j_\natural$ to $\omega_1$-solid sheaves. The claims (i), (ii) and (iii) now follow immediately from the analogous properties of $j_\natural$. It remains to prove (iv) and (v), for which we can assume that $\Lambda = \Z_\ell$ (by (iii)), that $X$ is strictly totally disconnected (by (i)) and that $j$ is quasicompact and separated (by passing to an open cover as above).

We first observe that $j_!$ preserves étale sheaves: By passing to right adjoints, this reduces to the observation that $j^*$ commutes with the pushforward to the étale site, which follows immediately from the fact that $j$ is étale. Therefore claim (v) reduces to claim (iv), because every nuclear $\Z_\ell$-sheaf on $U$ is a colimit of Banach sheaves. To prove (iv) we need to make one more computation: Suppose $\mathcal P \in \D_\solid(U,\Z_\ell)_{\omega_1}$ is static and can be written as a sequential limit $\mathcal P = \varprojlim_n \mathcal P_n$ with static qcqs étale $\mathcal P_n \in \D_\et(U,\Z_\ell)^\omega$; then
\begin{align}
	j_!\mathcal P = \varprojlim_n j_! \mathcal P_n. \label{eq:j-shriek-for-limit-of-qcqs-etale}
\end{align}
To prove this, pick a surjective map $\mathcal Q_0 \surjto \mathcal P$ for some static compact $\omega_1$-solid $\Z_\ell$-module $\mathcal Q_0 $ on $U$. We can write $\mathcal Q_0 = \varprojlim_n \mathcal Q_{0,n}$ for some qcqs étale sheaves $\mathcal Q_{0,n}$ and by the usual Breen resolution argument the map $\mathcal Q_0 \to \mathcal P$ can be obtained from a map of $\Pro$-systems $(\mathcal Q_{0,n})_n \to (\mathcal P_n)_n$. In particular the kernel of the map $\mathcal Q_0 \to \mathcal P$ is again a limit of qcqs étale sheaves, so we can iterate the process in order to obtain a resolution of the $\Pro$-system $(\mathcal P_n)_n$ in terms of $\Pro$-systems $(\mathcal Q_{k,n})_n$ of static qcqs étale sheaves such that each $\mathcal Q_k := \varprojlim_n \mathcal Q_{k,n}$ is compact. By passing to the associated simplicial objects (whose geometric realization is $\mathcal P$) and using that both $j_!$ and countable limits commute with geometric realizations, we reduce the claim \cref{eq:j-shriek-for-limit-of-qcqs-etale} to the case that $(\mathcal P_n)_n = (\mathcal Q_{k,n})_n$ for some $k$. But since $\mathcal Q_k$ is compact, the claim follows from the above computation of $j_!$ on compact generators.

With \cref{eq:j-shriek-for-limit-of-qcqs-etale} at hand, claim (iv) is now straightforward: Note that it follows immediately that $j_!$ is $t$-exact, because every static $\omega_1$-solid sheaf can be written as a filtered colimits of objects of the form $\mathcal P$ in \cref{eq:j-shriek-for-limit-of-qcqs-etale} (use also \cite[Proposition VII.1.6]{fargues-scholze-geometrization}). Thus to show that $j_!$ preserves $\ell$-adically complete sheaves, we can immediately reduce to the static case. From now on let $\mathcal M \in \D_\solid(U,\Z_\ell)_{\omega_1}$ be static and $\ell$-adically complete. To show that $j_!\mathcal M$ is $\ell$-adically complete, we can as in the proof of \cref{rslt:solid-tensor-product-preserves-complete-sheaves} pass to a geometric realization of $\mathcal M$ in order to reduce to the case that $\mathcal M = \cplbigdsum{k} \Z_{\ell,\solid}[V_k]$ for countably many basic $V_k = \varprojlim_n V_{k,n} \in U_\proet$ (here we use that $j_!$ preserves colimits and is right $t$-exact). Now filter $\mathcal M$ by the subsheaves $\mathcal M_\alpha = \prod_k \ell^{\alpha_k} \Z_{\ell,\solid}[V_k]$ for sequences of integers $\alpha_k \ge 0$ converging to $\infty$. By \cref{eq:j-shriek-for-limit-of-qcqs-etale} we get
\begin{align*}
	j_! \mathcal M = \varinjlim_\alpha j_! \prod_k \ell^{\alpha_k} \Z_{\ell,\solid}[V_k] = \varinjlim_\alpha \prod_k j_!(\ell^{\alpha_k} \Z_{\ell,\solid}[V_k]) = \cplbigdsum{k} \ell^{\alpha_k} j_!(\Z_{\ell,\solid}[V_k]),
\end{align*}
as desired.
\end{proof}

Having a good understanding of $j_!$ for étale $j$, it remains to study $g_*$ for nice proper maps $g$. We already know base-change for $g_*$ by \cref{rslt:base-change-for-unbounded-nuclear-pushforward}. It remains to check the projection formula. Also, in order to show that the construction $f_! = g_* \comp j_!$ is independent of the factorization, we need a compatibility of $g_*$ and $j_!$:

\begin{lemma}
Let $g\colon Y \to X$ be a proper map of small v-stacks which has locally bounded dimension (see \cite[Definition 3.5.3]{mann-mod-p-6-functors}) and assume that the functor $g_{\solid*}\colon \D_\solid(Y,\Z_\ell) \to \D_\solid(X,\Z_\ell)$ has finite cohomological dimension. Then:
\begin{lemenum}
	\item \label{rslt:projection-formula-for-bounded-proper-pushforward} For all $\mathcal M \in \D_\nuc(X,\Lambda)$ and $\mathcal N \in \D_\nuc(Y,\Lambda)$ the natural map
	\begin{align*}
		g_* \mathcal N \tensor \mathcal M \isoto g_*(\mathcal N \tensor g^* \mathcal M)
	\end{align*}
	is an isomorphism.

	\item \label{rslt:compatibility-of-etale-shriek-and-proper-pushforward} Let $j\colon U \to X$ be an open immersion with base-change $j'\colon V \to Y$ and $g'\colon V \to U$. Then the natural map
	\begin{align*}
		j_! g'_* \isoto g_* j'_!
	\end{align*}
	is an isomorphism of functors $\D_\nuc(V,\Lambda) \to \D_\nuc(X,\Lambda)$.
\end{lemenum}
\end{lemma}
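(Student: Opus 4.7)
The plan for part (i) is to reduce the projection formula to the classical étale case through a sequence of formal reductions. First, using base change (\cref{rslt:base-change-for-unbounded-nuclear-pushforward}, available since $g_{\solid*}$ has finite cohomological dimension) together with v-descent (\cref{rslt:v-descent-for-nuclear-Lambda-modules}), I reduce to the case where $X$ is an $\ell$-bounded spatial diamond. The finite cohomological dimension hypothesis also implies by \cref{rslt:properties-of-pushforward-for-l-bounded-maps} that $g_*$ preserves all small colimits on $\D_\nuc$; combined with the cocontinuity of $\tensor$ and $g^*$, both sides of the projection formula preserve colimits in each of $\mathcal M$ and $\mathcal N$ separately. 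Since nuclear sheaves are filtered colimits of Banach sheaves (\cref{def:nuclear-module-on-spat-diamond}), I reduce to the case where $\mathcal M$ and $\mathcal N$ are Banach.

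In this Banach situation, both sides are $\ell$-adically complete: $g_*\mathcal N$ is $\ell$-adically complete because $g_*$ preserves limits and $\mathcal N$ is complete, so \cref{rslt:solid-tensor-product-preserves-complete-sheaves} gives completeness of $g_*\mathcal N \tensor \mathcal M$; the same two ingredients apply to $g_*(\mathcal N \tensor g^*\mathcal M)$. It therefore suffices to verify the isomorphism after reduction modulo $\ell$, which by \cref{rslt:nuclearization-of-complete-module} and \cref{rslt:nuclear-over-discrete-Lambda-equiv-etale} reduces us to the case where $\mathcal M, \mathcal N$ are étale sheaves of $\Lambda/\ell$-modules. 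In this étale situation, $g_*$ and $\tensor$ on nuclear sheaves agree with the familiar étale operations, and the projection formula becomes the classical one for proper pushforward of diamonds with torsion coefficients from \cite[\S22]{etale-cohomology-of-diamonds}.

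For part (ii), the natural map $j_! g'_* \to g_* j'_!$ arises as the mate of the base-change isomorphism $j'_! g'^* \isoto g^* j_!$ from \cref{rslt:properties-of-etale-lower-shriek}. I will check it is an isomorphism via the open/closed recollement on $X$: a morphism in $\D_\nuc(X, \Lambda)$ is an isomorphism iff it pulls back to an isomorphism along both $j$ and the closed complement $i\colon Z \to X$. After $j^*$, the LHS is $g'_*$ (since $j^* j_! = \id$) and the RHS is $g'_* j'^* j'_! = g'_*$ (using base change \cref{rslt:base-change-for-unbounded-nuclear-pushforward} followed by $j'^* j'_! = \id$), and one checks the induced map is the identity by unwinding the mate construction. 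After $i^*$, the LHS vanishes since $i^* j_! = 0$, and the RHS vanishes since $i^* g_* = g''_* i'^*$ by base change and $i'^* j'_! = 0$ (where $g''\colon Z' \to Z$ and $i'\colon Z' \to Y$ are the pullbacks).

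The main obstacle lies in part (i), where one needs to ensure that every step of the reduction is permissible—in particular that tensor products and $g_*$ genuinely preserve the relevant colimits, $\ell$-adic completeness, and étaleness at each stage. The $\ell$-boundedness of $X$ together with the finite cohomological dimension of $g_{\solid*}$ provide exactly the inputs needed to make the argument go through cleanly. Part (ii) should be essentially routine given base change and the recollement of open-closed immersions in the v-stack setting.
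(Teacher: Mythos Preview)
Your argument for part (i) has a genuine gap. After reducing $X$ to an $\ell$-bounded spatial diamond, the hypothesis only tells you that $Y$ is a proper small v-stack over $X$ with locally bounded dimension; it need \emph{not} be an $\ell$-bounded spatial diamond, nor even a diamond. This breaks two of your steps. First, the description ``nuclear $=$ filtered colimit of Banach'' from \cref{def:nuclear-module-on-spat-diamond} is only available on $\ell$-bounded spatial diamonds; on a general $Y$ nuclearity is defined by v-descent and there is no reason your $\mathcal N$ should globally be such a colimit, so the reduction of $\mathcal N$ to a Banach sheaf is not justified. Second, the \'etale projection formula you invoke at the end (\cite[Proposition~22.11]{etale-cohomology-of-diamonds}) applies to proper maps representable in spatial diamonds of finite $\dimtrg$, not to arbitrary proper maps of v-stacks. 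The paper handles both issues at once: it first proves the projection formula when $Y$ is a spatial diamond (where your argument does go through), and then for general $Y$ it chooses a hypercover $Y_\bullet \to Y$ by spatial diamonds proper and of finite $\dimtrg$ over $X$, and descends using the boundedness of $g_{\solid*}$, $(-)_\nuc$, and $-\tensor\mathcal M$ to commute $g_*$ with the totalization. A smaller point: to apply \cref{rslt:solid-tensor-product-preserves-complete-sheaves} you need right-bounded sheaves, so one should arrange the Banach sheaves to be bounded; the paper does this via the nuclearization trick $\mathcal M = \varinjlim_i (\mathcal P_i)_\nuc$ with compact $\mathcal P_i$, which produces \emph{bounded} Banach sheaves.

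For part (ii) your recollement argument is correct but takes a different route from the paper. The paper observes that (ii) is a formal consequence of the projection formula (i) together with the properties of \'etale $j_!$ and base-change for $g_*$, deferring to \cite[Lemma~3.6.8]{mann-mod-p-6-functors}. Your approach instead uses the open--closed decomposition and does not invoke (i) at all; it is a legitimate alternative, though you should briefly justify that the closed complement $i\colon Z \hookrightarrow X$ and the recollement sequence $j_!j^* \to \id \to i_*i^*$ behave as expected for nuclear sheaves on small v-stacks (for $i_*$ this comes down to closed immersions having pushforward of cohomological dimension~$0$).
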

\begin{proof}
Part (ii) follows formally from part (i) together with \cref{rslt:properties-of-etale-lower-shriek} and \cref{rslt:base-change-for-unbounded-nuclear-pushforward}, see \cite[Lemma 3.6.8]{mann-mod-p-6-functors}. To prove (i), we note that both sides of the claimed isomorphism commute with base-change (by \cref{rslt:base-change-for-unbounded-nuclear-pushforward}), so we can assume that $X$ is a strictly totally disconnected space. Moreover, both sides of the claimed isomorphism also commute with colimits in $\mathcal M$, so we can assume $\mathcal M = \mathcal M' \tensor_{\Z_\ell} \Lambda$ for some basic nuclear $\mathcal M' \in \D_\nuc(X,\Z_\ell)$ (see \cref{rslt:nuclear-Lambda-modules-are-w1-compactly-generated}). But then after applying the forgetful functor along $\Z_\ell \to \Lambda$ (and using that $g_*$ commutes with this forgetful functor) we get
\begin{align*}
	g_* \mathcal N \tensor_\Lambda \mathcal M &= g_* \mathcal N \tensor_\Lambda (\Lambda \tensor_{\Z_\ell} \mathcal M') = g_* \mathcal N \tensor_{\Z_\ell} \mathcal M',\\
	g_*(\mathcal N \tensor_\Lambda g^* \mathcal M) &= g_*(\mathcal N \tensor_\Lambda (\Lambda \tensor_{\Z_\ell} g^* \mathcal M')) = g_*(\mathcal N \tensor_{\Z_\ell} g^* \mathcal M').
\end{align*}
We can therefore assume that $\Lambda = \Z_\ell$. Now write $\mathcal M = \varinjlim_i \mathcal P_i$ for compact objects $\mathcal P_i \in \D_\solid(X,\Z_\ell)_{\omega_1}$. Then by \cref{rslt:nuclearization-preserves-colims} we have $\mathcal M = \varinjlim_i (\mathcal P_i)_\nuc$ and by the other claims in \cref{rslt:properties-of-nuclearization} each $(\mathcal P_i)_\nuc$ is a bounded Banach sheaf. By pulling out colimits on both sides of the claimed isomorphism we can thus reduce to the case that $\mathcal M$ is a bounded Banach sheaf.

We will first prove the claim in the case that $Y$ is a spatial diamond (it is automatically $\ell$-bounded by \cite[Theorem 22.5]{etale-cohomology-of-diamonds}). Then, since both sides of the claimed isomorphism commute with colimits in $\mathcal N$ (for $g_*$ this was shown in \cref{rslt:properties-of-pushforward-for-l-bounded-maps}), we can use the same strategy as for $\mathcal M$ in order to reduce to the case that $\mathcal N$ is also a bounded Banach sheaf. Now by \cref{rslt:solid-tensor-product-preserves-complete-sheaves} both sides of the claimed isomorphism are $\ell$-adically complete, hence we can check the isomorphism after reducing modulo $\ell$. But then the claim follows immediately from \cite[Proposition 22.11]{etale-cohomology-of-diamonds}.

To prove the claim for general $Y$, pick a hypercover $h_\bullet\colon Y_\bullet \to Y$ such that all $Y_n$ are spatial diamonds and all maps $g_n\colon Y_n \to X$ are proper and of finite $\dimtrg$ (e.g. start with any hypercover of $Y$ by affinoid perfectoid spaces of finite $\dimtrg$ over $X$ and then take relative compactifications). Note that the functor
\begin{align*}
	F\colon \D_\solid(Y,\Z_\ell)_{\omega_1} \to \D_\solid(X,\Z_\ell)_{\omega_1}, \qquad \mathcal N' \mapsto g_* \mathcal N'_\nuc \tensor \mathcal M
\end{align*}
is bounded. Indeed, it follows formally from adjunctions that $g_* \mathcal N'_\nuc = (g_{\solid*} \mathcal N')_\nuc$, so it is enough to see that the functors $g_{\solid*}$, $(-)_\nuc$ (on $X$) and $- \tensor \mathcal M$ are bounded. For the first functor this holds by assumption, for the second functor this was shown in \cref{rslt:nuclearization-is-bounded} and for the third functor it follows from the boundedness of $\mathcal M$ together with \cite[Proposition VII.2.3]{fargues-scholze-geometrization}. By a similar argument one sees that the composition of functors $F \comp h_{n\solid*} h_n^*$ is bounded (this boils down to the functor $g_{n\solid*}$ having finite cohomological dimension). Thus by taking Postnikov limits and commuting them with totalizations, we deduce that $F(\varprojlim_{n\in\Delta} h_{n\solid*} h_n^* \mathcal N') = \varprojlim_{n\in\Delta} F(h_{n\solid*} h_n^* \mathcal N')$ for all $\mathcal N'$. Taking $\mathcal N' = \mathcal N$ we get
\begin{align*}
	g_* \mathcal N \tensor \mathcal M &= F(\mathcal N) = \varprojlim_{n\in\Delta} (g_{n*} h_n^* \mathcal N \tensor \mathcal M),
\intertext{and using the fact that we already know that the projection formula holds for each $g_n$ in place of $g$ by the above argument,}
	&= \varprojlim_{n\in\Delta} g_{n*} (h_n^* \mathcal N \tensor g_n^* \mathcal M) = g_{\solid*} \varprojlim_{n\in\Delta} h_{n\solid*} h_n^* (\mathcal N \tensor g^* \mathcal M).
\end{align*}
But by v-descent for $\omega_1$-solid sheaves we have $\varprojlim_{n\in\Delta} h_{n\solid*} h_n^* \mathcal N' = \mathcal N'$ for every $\mathcal N' \in \D_\solid(Y,\Z_\ell)_{\omega_1}$. By applying this to $\mathcal N' = \mathcal N \tensor g^* \mathcal M$ we conclude
\begin{align*}
	g_* \mathcal N \tensor \mathcal M &= g_{\solid*} \varprojlim_{n\in\Delta} h_{n\solid*} h_n^* (\mathcal N \tensor g^* \mathcal M) = g_*(\mathcal N \tensor g^* \mathcal M),
\end{align*}
as desired.
\end{proof}

With the above results at hand, it is now formal to construct the 6-functor formalism. We first construct it for maps which are representable in locally spatial diamonds and afterwards extend it to certain ``stacky'' maps. In the case of locally spatial diamonds, we will define shriek functors for the following class of maps:

\begin{definition}
A map $f\colon Y \to X$ of small v-stacks is called \emph{fdcs}\footnote{The name comes from ``Finite Dimension, Compactifiable and Spatial''.} if it is locally compactifiable (i.e. there is some analytic cover of $Y$ on which $f$ is compactifiable) and representable in locally spatial diamonds and has locally finite $\dimtrg$.
\end{definition}

\begin{lemma} \label{rslt:properties-of-fdcs-maps}
\begin{lemenum}
	\item The property of being fdcs is analytically local on both source and target.
	\item Fdcs maps are stable under composition and base-change.
	\item Every étale map is fdcs.
	\item \label{rslt:fdcs-maps-have-2-out-of-3} Let $f\colon Y \to X$ and $g\colon Z \to Y$ be maps of small v-stacks. If $f$ and $f \comp g$ are fdcs then so is $g$.
\end{lemenum}
\end{lemma}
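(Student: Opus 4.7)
My plan is to reduce each part to the corresponding statement for the three defining properties of fdcs separately: representability in locally spatial diamonds, locally finite $\dimtrg$, and local compactifiability. Once these are isolated, (i)--(iii) become direct applications of standard facts from \cite{etale-cohomology-of-diamonds}, and (iv) is the only one that needs a genuine argument. For (i), being a locally spatial diamond is analytically local on both source and target, $\dimtrg$ is pointwise by definition, and local compactifiability is stated in terms of an analytic cover of the source, which is obviously compatible with refining analytic covers of source or pulling back analytic covers of target.

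For (ii), base-change preserves each of the three properties: representability is base-change-stable by definition; $\dimtrg$ behaves well under base-change; compactifiable maps are stable under base-change by \cite[Proposition~22.3]{etale-cohomology-of-diamonds}, and local compactifiability follows by base-changing an analytic cover. For composition, representability is immediate, $\dimtrg$ is subadditive, and compactifiability composes by \cite[Proposition~22.3]{etale-cohomology-of-diamonds}, so local compactifiability composes after refining covers. For (iii), any étale map $j\colon U\to X$ is representable in locally spatial diamonds and has $\dimtrg=0$; cover $U$ by quasi-compact separated opens $V\subset U$, each of which is quasi-compact separated étale over $X$ and hence compactifiable.

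The heart of the lemma is (iv). For representability, I would use the diagonal trick: since $f$ is representable in locally spatial diamonds, so is $\Delta_f\colon Y\to Y\cprod_X Y$. Given a locally spatial diamond $U\to Y$, the square
\begin{center}\begin{tikzcd}
U\cprod_Y Z \arrow[r] \arrow[d] & U\cprod_X Z \arrow[d]\\
Y \arrow[r,"\Delta_f"] & Y\cprod_X Y
\end{tikzcd}\end{center}
is cartesian, and $U\cprod_X Z$ is locally spatial (since $f\comp g$ is representable in locally spatial diamonds), whence so is $U\cprod_Y Z$. For locally finite $\dimtrg$, any fiber of $g$ embeds into the corresponding fiber of $f\comp g$, so $\dim.\mathrm{trg}_z(g)\le\dim.\mathrm{trg}_z(f\comp g)$ and the right-hand side is locally bounded. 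For local compactifiability, given $z\in Z$ with $y:=g(z)$, I pick an open $U\subset Y$ containing $y$ with $f|_U$ compactifiable as $U\injto\bar U\to X$ (with $\bar U\to X$ proper), and then an open $V'\subset Z$ containing $z$ with $V'\subset g^{-1}(U)$ and $f\comp g|_{V'}$ compactifiable as $V'\injto\bar V'\to X$. Base-changing gives a proper map $\bar V'\cprod_X U\to U$, and $V'$ embeds into $\bar V'\cprod_X U$ via $(v\mapsto(v,g(v)))$ as a locally closed subspace (open in its scheme-theoretic closure inside this proper-over-$U$ space). Taking that closure and restricting to the open locus in which $V'$ sits openly, one obtains a compactification of $V\to U$, proving that $g$ is locally compactifiable.

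The main obstacle is this last step: the naive graph map $V'\to\bar V'\cprod_X U$ is only locally closed rather than an open immersion, so extracting an actual open-inside-proper compactification requires a Nagata-style closure argument in the diamond setting. I expect the cleanest way is to apply \cite[Proposition~22.3]{etale-cohomology-of-diamonds} (or its proof) to the pair $(f, f\comp g)$ directly, recovering a 2-out-of-3 statement for compactifiability of separated maps with bounded $\dimtrg$, and then refining the resulting cover of $Z$ to one witnessing local compactifiability of $g$.
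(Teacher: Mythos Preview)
Your approach is correct and matches the paper's, which simply declares (i)--(iii) obvious (citing \cite[Proposition~22.3]{etale-cohomology-of-diamonds} for compactifiability) and for (iv) refers to the analogous argument for the bdcs condition in \cite[Lemma~3.6.10.(iv)]{mann-mod-p-6-functors}; what you wrote is precisely an unpacking of that reference via the factorization $g\colon Z \to Z\cprod_X Y \to Y$.

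Your hesitation at the end is unwarranted: the graph map $V' \to \bar V' \cprod_X U$ factors as the closed immersion $V' \to V' \cprod_X U$ (base-change of $\Delta_{f|_U}$, which is closed since $f|_U$ is compactifiable hence separated) followed by the open immersion $V' \cprod_X U \to \bar V' \cprod_X U$. Thus $V'$ is genuinely locally closed in a proper-over-$U$ space, and its closure there is the desired compactification of $g|_{V'}$---no Nagata-type argument is needed. Your phrasing ``any fiber of $g$ embeds into the corresponding fiber of $f\comp g$'' for the $\dimtrg$ bound is slightly imprecise (the fibers need not embed), but the inequality $\dimtrg_z(g)\le\dimtrg_z(f\comp g)$ follows directly from the tower of residue field extensions $k(x)\subset k(y)\subset k(z)$, which is what you want.
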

\begin{proof}
Claims (i), (ii) and (iii) are obvious (using \cite[Proposition 22.3]{etale-cohomology-of-diamonds} to handle the compactifiability condition). For (iv) we can argue in the same way as for the similar bdcs condition, see \cite[Lemma 3.6.10.(iv)]{mann-mod-p-6-functors}.
\end{proof}

The next result constructs the 6-functor formalism for fdcs maps. The result will freely make use of the theory of abstract 6-functor formalisms developed in \cite[\S A.5]{mann-mod-p-6-functors}, so the reader is advised to take a look at that. In particular, recall the definition of the $\infty$-operad $\Corr(\mathcal C)_{E,all}$ of correspondences and of 6-functor formalisms, see \cite[Definitions A.5.2.(b), A.5.4, A.5.7]{mann-mod-p-6-functors}. Also, in the following we will denote by $\vStacks$ the 2-category of small v-stacks.

\begin{proposition} \label{rslt:6-functor-formalism-for-diamonds}
There is a 6-functor formalism
\begin{align*}
	\D_\nuc(-,\Lambda)\colon \Corr(\vStacks)_{fdcs,all} \to \infcatinf
\end{align*}
with the following properties:
\begin{propenum}
	\item Restricted to the symmetric monoidal subcategory $\vStacks^\opp$ (equipped with the coproduct monoidal structure), $\D_\nuc(-,\Lambda)$ coincides with the functor constructed in \cref{rslt:v-descent-for-nuclear-Lambda-modules}.

	\item For every fdcs map $f\colon Y \to X$ the functor
	\begin{align*}
		f_! := \D_\nuc([Y \xfrom{\id} Y \xto{f} X],\Lambda)\colon \D_\nuc(Y,\Lambda) \to \D_\nuc(X,\Lambda)
	\end{align*}
	preserves all small colimits. If $f = j$ is étale then $j_!$ is left adjoint to $j^*$ and if $f$ is proper then $f_! = f_*$.
\end{propenum}
\end{proposition}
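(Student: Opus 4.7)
The plan is to invoke the abstract construction of 6-functor formalisms from \cite[\S A.5]{mann-mod-p-6-functors}, feeding it the symmetric monoidal v-sheaf $\D_\nuc(-,\Lambda)$ of \cref{rslt:v-descent-for-nuclear-Lambda-modules} together with two classes of edges: the class $I$ of étale maps and the class $P$ of proper fdcs maps. The abstract theorem takes as input such data together with (a) a left adjoint $j_!$ for each $j \in I$ satisfying base change and the projection formula, (b) the choice $g_! := g_*$ for each $g \in P$ satisfying base change and the projection formula, and (c) a compatibility of the form $j_! g'_* \isoto g_* j'_!$ for cartesian squares with $j \in I$, $g \in P$. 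It produces a 6-functor formalism on $\Corr(\vStacks)_{E,all}$, where $E$ is the class of maps that factor globally as $g \comp j$ with $g \in P$ and $j$ an open immersion; this is precisely the class of globally compactifiable fdcs maps.

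The verification of (a)--(c) collects the work of the preceding lemmas. Property (a) is exactly \cref{rslt:properties-of-etale-lower-shriek}. For (b), the assumption that $g$ is proper fdcs ensures via \cref{rslt:fdcs-map-has-bounded-pushforward} that $g_{\solid*}$ has finite cohomological dimension, so proper base change is \cref{rslt:base-change-for-unbounded-nuclear-pushforward} and the projection formula is \cref{rslt:projection-formula-for-bounded-proper-pushforward}. Property (c) is \cref{rslt:compatibility-of-etale-shriek-and-proper-pushforward}, which also encodes the independence of the resulting functor $f_! = g_* \comp j_!$ on the chosen compactification.

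To extend from globally compactifiable to all fdcs maps I would use the v-descent of $\D_\nuc(-,\Lambda)$ together with analytic gluing. Given an fdcs map $f\colon Y \to X$, pick a filtration of $Y$ by qcqs open subsets $j_i\colon Y_i \injto Y$ on which $f_i := f \comp j_i$ is compactifiable; each $j_i$ is fdcs by \cref{rslt:fdcs-maps-have-2-out-of-3}. One then sets
\begin{align*}
	f_! := \varinjlim_i f_{i!} j_i^*,
\end{align*}
and the transition maps between the $f_{i!}$ are themselves étale $j_!$-functors between compactifiable situations, so the colimit is functorial and commutes with all the earlier constructions. Formally, this gluing is cleanest to perform inside the $\Corr$-framework of \cite[\S A.5]{mann-mod-p-6-functors}; the additional inputs needed (étale $j_!$ commuting with both $g_*$ and $j'_!$ along cartesian squares) are supplied by \cref{rslt:properties-of-etale-lower-shriek} and \cref{rslt:base-change-for-unbounded-nuclear-pushforward}.

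Property (i) is built into the construction. For (ii), the cases $f = j$ étale and $f$ proper fdcs hold by definition of $I$ and $P$. Colimit preservation of $f_!$ reduces via the filtration above and the compactification $f_i = g_i \comp j'_i$ to the two facts that étale $j_!$ is a left adjoint (\cref{rslt:properties-of-etale-lower-shriek}) and that $g_{i*} = g_{i!}$ preserves colimits, the latter being \cref{rslt:properties-of-pushforward-for-l-bounded-maps}. The main technical obstacle I expect is the $\infty$-operadic bookkeeping needed to upgrade the pointwise compatibilities (a)--(c) to the genuine symmetric monoidal functor out of $\Corr(\vStacks)_{fdcs,all}$; this is precisely what the abstract machinery of \cite[\S A.5]{mann-mod-p-6-functors} is designed to handle, and once the pointwise projection formula and base-change inputs from \cref{rslt:projection-formula-for-bounded-proper-pushforward,rslt:compatibility-of-etale-shriek-and-proper-pushforward} are in place, the nuclear case is formally identical to the étale case treated there.
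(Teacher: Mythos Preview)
Your overall strategy---feed the v-sheaf $\D_\nuc(-,\Lambda)$ into the abstract machine of \cite[\S A.5]{mann-mod-p-6-functors} with a decomposition into étale-like and proper-like classes, then extend by gluing---matches the paper's. But there is a genuine technical gap in your choice of $P$.

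You take $P$ to be the proper fdcs maps, i.e.\ proper maps representable in spatial diamonds with finite $\dimtrg$. The problem is that if $f\colon Y \to X$ is a qcqs compactifiable map representable in spatial diamonds, the canonical compactification $\overline Y^{/X} \to X$ is generally \emph{not} representable in spatial diamonds---only in \emph{prespatial} diamonds in the sense of \cite[Definition 3.3]{mod-ell-stacky-6-functors}. So your pair $(I,P)$ is not a suitable decomposition: the factorization $f = g \comp j$ with $j$ an open immersion need not have $g \in P$. The paper fixes this by first building the formalism on a larger class $fdcqc$ of maps representable in prespatial diamonds, where \cite[Proposition 3.6]{mod-ell-stacky-6-functors} supplies the decomposition and \cite[Proposition 3.7]{mod-ell-stacky-6-functors} ensures that \cref{rslt:fdcs-map-has-bounded-pushforward} still applies; only afterwards does it restrict to spatial diamonds and extend outward.

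Two smaller points. First, your extension from qcqs compactifiable to all fdcs maps is sketched as a single colimit, but making this coherent inside the $\Corr$-framework is precisely the content of \cite[Propositions A.5.12, A.5.14, A.5.16]{mann-mod-p-6-functors}; the paper invokes them in two stages ($fdcsqc \to fdcss$ on separated locally spatial diamonds, then $fdcss \to fdcs$). Second, \cref{rslt:compatibility-of-etale-shriek-and-proper-pushforward}, which you cite for condition (c), is stated only for open immersions $j$, not for general étale maps. The paper accordingly takes $I$ to be the quasicompact open immersions, and then verifies at the end---via a separate auxiliary formalism $\D_\et$ built with $E = I = et$ and the uniqueness of the extension results---that the resulting $j_!$ for arbitrary étale $j$ really is the left adjoint of $j^*$. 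Your claim that this ``holds by definition of $I$'' is therefore not quite right as stated.
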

\begin{proof}
The construction is very similar to \cite[Theorem 3.6.12]{mann-mod-p-6-functors}; in fact, we copied most of the proof with only minor adjustments. The most prominent difference is that we make use of the notion of prespatial diamonds from \cite{mod-ell-stacky-6-functors} to get a replacement for $p$-boundedness.

We first construct the 6-functor formalism for morphisms in the collection $fdcqc$ of those maps $f\colon Y \to X$ which are representable in prespatial diamonds (see \cite[Definition 3.3]{mod-ell-stacky-6-functors}), compactifiable and quasicompact and have finite $\dimtrg$. One checks that the class of maps $fdcqc$ satisfies the ``2-out-of-3 property'' (i.e. the analog og \cref{rslt:fdcs-maps-have-2-out-of-3}) by a similar argument as in the proof of \cref{rslt:fdcs-maps-have-2-out-of-3} (note that it follows easily from the definition of prespatial diamonds that a quasicompact subdiamond of a prespatial diamond is prespatial). We denote by $P \subset fdcqc$ the subclass of those maps which are additionally proper and by $I \subset fdcqc$ the class of quasicompact open immersions. It follows easily from the 2-out-of-3 property for $fdcqc$ that both $P$ and $I$ also have the 2-out-of-3 property. Thus by \cite[Proposition 3.6]{mod-ell-stacky-6-functors} the pair $I,P \subset fdcqc$ is a suitable decomposition in the sense of \cite[Definition A.5.9]{mann-mod-p-6-functors}. We can therefore apply \cite[Proposition A.5.10]{mann-mod-p-6-functors}: Condition (a) follows from \cref{rslt:properties-of-etale-lower-shriek}, condition (b) follows from \cref{rslt:base-change-for-unbounded-nuclear-pushforward,rslt:projection-formula-for-bounded-proper-pushforward} using \cite[Proposition 3.7]{mod-ell-stacky-6-functors} (applied as in \cref{rslt:fdcs-map-has-bounded-pushforward}), condition (c) follows from \cref{rslt:compatibility-of-etale-shriek-and-proper-pushforward}, conditions (d) and (e) are clear because $\D_\nuc(X,\Lambda)$ is presentable and condition (f) reduces to the observation that for $f \in P$ the functor $f_*$ preserves small colimits by \cref{rslt:properties-of-pushforward-for-l-bounded-maps}. We obtain a 6-functor formalism
\begin{align*}
	\D_{qc}\colon \Corr(\vStacks)_{fdcqc,all} \to \infcatinf
\end{align*}
mapping $X \in \vStacks$ to $\D_\nuc(X,\Lambda)$ with the correct pullback functors. We will now extend $\D_{qc}$ from $fdcqc$ to $fdcs$ by using the extension results in \cite[\S A.5]{mann-mod-p-6-functors}.
\begin{enumerate}[1.]
	\item In the first step, we first restrict $\D_{qc}$ to $\Corr(\vStacks)_{fdcsqc,all}$, where $fdcsqc \subset fdcqc$ is the subset of those edges which are representable in spatial diamonds. We now wish to extend $\D_{qc}$ from $fdcsqc$ to the class of edges $fdcss$ consiting of those fdcs maps which are separated. By \cite[Proposition A.5.16]{mann-mod-p-6-functors} and \cref{rslt:v-descent-for-nuclear-Lambda-modules} this extension can be performed on the full subcategory $\mathcal C_1 \subset \vStacks$ consisting of separated locally spatial diamonds, i.e. we need to extend the $\infty$-operad map
	\begin{align*}
		\D_{qc}\colon \Corr(\mathcal C_1)_{fdcsqc,all} \to \infcatinf
	\end{align*}
	to an $\infty$-operad map
	\begin{align*}
		\D_s\colon \Corr(\mathcal C_1)_{fdcss,all} \to \infcatinf.
	\end{align*}
	We first apply \cite[Proposition A.5.12]{mann-mod-p-6-functors}, which allows us to extend $\D_{qc}$ from $fdcsqc$ to the collection $E_1$ of edges of the form $\bigdunion_i Y_i \to X$, where each $Y_i \to X$ lies in $fdcsqc$; let us denote the new 6-functor formalism by $\D'_{qc}$. We now apply \cite[Proposition A.5.14]{mann-mod-p-6-functors} to extend $\D = \D'_{qc}$ from $E_1$ to $E'_1 := fdcss$. Here we use the collection $S_1 \subset E_1$ of edges of the form $\bigdunion_i U_i \to X$ for covers $X = \bigunion_i U_i$ by quasicompact open immersions $U_i \injto X$. Then for $j \in S_1$ we have $j^! = j^*$, hence condition (b) of loc. cit. follows from the sheafiness of $\D(-)$. Condition (c) amounts to saying that every separated fdcs map $Y \to X$ of separated locally spatial diamonds admits a cover $Y = \bigunion_i V_i$ by quasicompact open immersions $V_i \injto Y$ such that each map $V_i \to X$ is quasicompact (it is automatically compactifiable by \cite[Proposition 22.3.(v)]{etale-cohomology-of-diamonds}); but this is easily satisfied, e.g. pick the $V_i$ to be any open cover of $Y$ by quasicompact open subsets (then the maps $V_i \injto Y$ and $V_i \to X$ are quasicompact because both $X$ and $Y$ are separated). Finally, condition (d) follows easily from the fact that all the spaces in $\mathcal C_1$ are separated. This finishes the construction of the 6-functor formalism
	\begin{align*}
		\D_s\colon \Corr(\vStacks)_{fdcss,all} \to \infcatinf
	\end{align*}
	(where we implicitly used \cite[Proposition A.5.16]{mann-mod-p-6-functors} to extend from $\mathcal C_1$ to $\vStacks$).

	\item In the second extension step we extend $\D_s$ to the desired $\infty$-operad map
	\begin{align*}
		\D\colon \Corr(\vStacks)_{fdcs,all} \to \infcatinf.
	\end{align*}
	This extension is similar to the previous one, albeit somewhat simpler: We can perform the extension directly on $\mathcal C_2 = \vStacks$ by applying \cite[Proposition A.5.14]{mann-mod-p-6-functors} to $E_2 = fdcss$ and $E_2' = fdcs$ with $S_2 \subset E_2$ being the collection of all open immersions.
\end{enumerate}
We have now constructed a 6-functor formalism $\D(-,\Lambda) = \D$ on fdcs maps. It remains to verify that it satisfies claims (i) and (ii). Claim (i) is obvious from the construction. For claim (ii), it follows from the definition of 6-functor formalisms that $f_!$ preserves all small colimits and it follows immediately from the construction that $f_! = f_*$ for proper $f$. It remains to see that for every étale map $j\colon U \to X$ of small v-stacks the just constructed functor $j_!$ is left adjoint to $j^*$ and thus coincides with the functor $j_!$ from \cref{rslt:properties-of-etale-lower-shriek}. To see this, we first apply \cite[Proposition A.5.10]{mann-mod-p-6-functors} to the case that $E = I = et$ is the collection of étale maps in $\vStacksCoeff$ and $P$ consists only of degenerate edges; then conditions (b) and (c) are vacuous and condition (a) is satisfied by \cref{rslt:properties-of-etale-lower-shriek}. We thus obtain a 6-functor formalism
\begin{align*}
	\D_\et\colon \Corr(\vStacks)_{et,all} \to \infcatinf, \qquad X \mapsto \D_\nuc(X,\Lambda).
\end{align*}
We need to show that $\D_\et$ is equivalent to the restriction of $\D$ to $\Corr(\vStacks)_{et,all}$. By the uniqueness of the extension results \cite[Proposition A.5.12, A.5.14, A.5.16]{mann-mod-p-6-functors} we can show this equivalence on the full subcategory $\mathcal C \subset \vStacks$ consisting of locally spatial diamonds and we can then restrict to the subset $etsqc \subset et$ of separated quasicompact étale maps. We can now further reduce to the full subcategory $\mathcal C' \subset \mathcal C$ consisting of strictly totally disconnected spaces. But note that every map of strictly totally disconnected spaces which lies in $etsqc$ is of the form $\bigdunion_{i=1}^n U_i \to X$ for quasicompact open immersions $U_i \injto X$, so we can further replace $etsqc$ by the collection of quasicompact open immersions. But in this case $\D_\et$ and $\D$ agree by construction.
\end{proof}

We now want to extend the 6-functor formalism from \cref{rslt:6-functor-formalism-for-diamonds} to certain ``stacky'' maps. The relevant definitions are as follows:

\begin{definition}
We say that an fdcs map $f\colon Y \to X$ of small v-stacks admits \emph{universal $\ell$-codescent} if it satisfies the following property: Given any small v-stack $X'$ which admits a map to some strictly totally disconnected space and given any map $X' \to X$ with base-change $f'\colon Y' \to X'$ and Čech nerve $Y'_\bullet \to X'$ the natural functor
\begin{align*}
	\D_\nuc^!(X',\Z_\ell) \isoto \varprojlim_{n\in\Delta} \D_\nuc^!(Y'_n,\Z_\ell)
\end{align*}
is an equivalence. Here $\D_\nuc^!(-,\Lambda)$ denotes the functor $Z \mapsto \D_\nuc(Z,\Lambda)$, $h \mapsto h^!$ obtained from the 6-functor formalism in \cref{rslt:6-functor-formalism-for-diamonds}.
\end{definition}

\begin{definition} \label{def:ell-fine-maps}
A map $f\colon Y \to X$ of small v-stacks is called \emph{$\ell$-fine} if there is a map $g\colon Z \to Y$ such that $g$ and $f \comp g$ are fdcs and $g$ admits universal $\ell$-codescent.
\end{definition}

\begin{remark}
There is a definition of \emph{fine} maps in \cite[Definition 1.3.i]{mod-ell-stacky-6-functors} which is closely related to our definition of $\ell$-fine maps. In fact, every fine map is $\ell$-fine, as follows from \cref{rslt:smooth-maps-admit-codescent} below. Moreover, we introduced a similar notion of \emph{$p$-fine} maps in \cite[Definition 2.4]{mod-p-stacky-6-functors} which only takes into account the universal codescent for $p$-\emph{torsion} coefficients -- this seems to be a mistake, as it is probably not enough to imply universal codescent for $p$-adic non-torsion coefficients (once the corresponding 6-functor formalism has been worked out).
\end{remark}

\begin{lemma} \label{rslt:stabilities-of-ell-fine-maps}
\begin{lemenum}
	\item The condition of being $\ell$-fine is étale local on source and target.
	\item The collection of $\ell$-fine maps is stable under composition and base-change.
	\item Every fdcs map is $\ell$-fine.
	\item Let $f\colon Y \to X$ and $g\colon Z \to Y$ be maps of small v-stacks. If $f$ and $f \comp g$ are $\ell$-fine then so is $g$.
\end{lemenum}
\end{lemma}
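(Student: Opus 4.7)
The plan is as follows. Part (iii) is immediate by taking $g = \id_Y$: then $g$ and $f \comp g = f$ are fdcs by hypothesis, and the Čech nerve of an identity map is degenerate so the universal $\ell$-codescent condition is trivial.

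For (ii), base-change is essentially formal: given $f\colon Y \to X$ $\ell$-fine with witness $g\colon Z \to Y$ and any map $X' \to X$, the pullback $Z \cprod_Y (Y \cprod_X X') \to Y \cprod_X X'$ is a base change of $g$ and inherits fdcs from \cref{rslt:properties-of-fdcs-maps} and universal $\ell$-codescent from the definition (which is closed under base change by design). For stability under composition, given $f\colon Y \to X$ and $h\colon X \to W$ both $\ell$-fine with witnesses $g_f\colon Z_f \to Y$ and $g_h\colon Z_h \to X$, I would form the fiber product $Z := Z_f \cprod_X Z_h$ (using $f \comp g_f$ and $g_h$) with projection $\pi\colon Z \to Z_f$, and use $g_f \comp \pi\colon Z \to Y$ as the witness: the map $\pi$ is a base change of $g_h$, hence fdcs with universal $\ell$-codescent, and the other projection $Z \to Z_h$ is a base change of the fdcs map $f \comp g_f$, so the composite $Z \to Z_h \to W$ (which is exactly $(h \comp f) \comp g_f \comp \pi$) factors as an fdcs map followed by the fdcs map $h \comp g_h$.

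The main obstacle is (iv). Suppose $f\colon Y \to X$ is $\ell$-fine with witness $g_1\colon Z_1 \to Y$ and $f \comp g\colon Z \to X$ is $\ell$-fine with witness $h\colon W \to Z$. The plan is to form $W'' := Z_1 \cprod_Y W$ (via $g_1$ and $g \comp h\colon W \to Y$) with projections $\pi\colon W'' \to W$ and $q\colon W'' \to Z_1$, and take $h' := h \comp \pi\colon W'' \to Z$ as the candidate witness for $g$. Since $\pi$ is a base change of $g_1$, it is fdcs and admits universal $\ell$-codescent, and composing with $h$ shows $h'$ has both properties. The nontrivial point is that $g \comp h'\colon W'' \to Y$ is fdcs: by the defining compatibility of the fiber product this map factors as $g_1 \comp q\colon W'' \to Z_1 \to Y$, and since $g_1$ is fdcs, it suffices to show that $q$ is fdcs. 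For this I would invoke the 2-out-of-3 property \cref{rslt:fdcs-maps-have-2-out-of-3} applied to the fdcs map $Z_1 \to X$ (which equals $f \comp g_1$) and to $W'' \to X$ (which is the composition of $\pi$ with the fdcs map $W \to X = (f \comp g) \comp h$, hence fdcs). Extracting $q$ as fdcs from this is the key structural step of the whole lemma.

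Finally, part (i) reduces to the closure properties of fdcs and universal $\ell$-codescent under disjoint unions and compositions. Concretely, for target-locality, given an étale cover $\{U_i \to X\}$ and witnesses $g_i\colon Z_i \to Y_i := Y \cprod_X U_i$ for each $f_i\colon Y_i \to U_i$, I would take $g\colon \bigsqcup_i Z_i \to \bigsqcup_i Y_i \to Y$: the étale surjection $\bigsqcup_i Y_i \to Y$ is fdcs and admits universal $\ell$-codescent by the v-sheaf property of $\D_\nuc^!$ coming from \cref{rslt:v-descent-for-nuclear-Lambda-modules}, and combining this with $\bigsqcup_i g_i$ (which evidently inherits the relevant properties from the $g_i$) yields the required witness; the fdcs-ness of $f \comp g$ is likewise assembled from the fdcs-ness of each $f_i \comp g_i$. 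Source-locality is analogous, with the roles of $X$ and $Y$ swapped.
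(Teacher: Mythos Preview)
Your proposal is correct and follows essentially the same formal route as the paper, which simply refers to \cite[Lemma 2.5]{mod-p-stacky-6-functors} and notes that \cref{rslt:fdcs-maps-have-2-out-of-3} replaces the analogous fdcs/bdcs input there. Your explicit fiber-product constructions for (ii) and (iv), together with the 2-out-of-3 argument for extracting that $q$ is fdcs, are exactly the formal manipulations one finds upon unwinding that reference.

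One point worth making explicit: in both (ii) and (iv) you tacitly use that maps admitting universal $\ell$-codescent are stable under composition (e.g.\ for $g_f \comp \pi$ and $h \comp \pi$). This is true and standard---one argues with the bisimplicial Čech nerve, or observes that the class of maps with universal $!$-descent forms the covers of a Grothendieck topology---but it is the only ingredient you invoke without comment. Everything else is fully justified.
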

\begin{proof}
This is formal, see \cite[Lemma 2.5]{mod-p-stacky-6-functors} for a $p$-torsion analog (and use \cref{rslt:fdcs-maps-have-2-out-of-3} in place of \cite[Lemma 3.6.10.(iv)]{mann-mod-p-6-functors}).
\end{proof}

In \cref{sec:smoothness,sec:representations} we will provide many examples of $\ell$-fine maps which are not fdcs (and usually not $0$-truncated). We can finally formulate the main result of this paper:

\begin{theorem} \label{rslt:6-functor-formalism}
There is a 6-functor formalism
\begin{align*}
	\D_\nuc(-,\Lambda)\colon \Corr(\vStacks)_{lfine,all} \to \infcatinf
\end{align*}
with the following properties:
\begin{thmenum}
	\item Restricted to the symmetric monoidal subcategory $\vStacks^\opp$ (equipped with the coproduct monoidal structure), $\D_\nuc(-,\Lambda)$ coincides with the functor constructed in \cref{rslt:v-descent-for-nuclear-Lambda-modules}.

	\item For every $\ell$-fine map $f\colon Y \to X$ the functor
	\begin{align*}
		f_! := \D_\nuc([Y \xfrom{\id} Y \xto{f} X],\Lambda)\colon \D_\nuc(Y,\Lambda) \to \D_\nuc(X,\Lambda)
	\end{align*}
	preserves all small colimits. If $f = j$ is étale then $j_!$ is left adjoint to $j^*$ and if $f$ is proper then $f_! = f_*$.
\end{thmenum}
\end{theorem}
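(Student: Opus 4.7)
The plan is to extend the 6-functor formalism from fdcs maps (constructed in \cref{rslt:6-functor-formalism-for-diamonds}) to $\ell$-fine maps via a purely formal descent argument, following the blueprint laid out for the mod-$\ell$ and mod-$p$ stacky cases in \cite{mod-ell-stacky-6-functors, mod-p-stacky-6-functors}. The abstract input is an extension principle: if $\D$ is a 6-functor formalism defined on a class $E$ of edges and $E' \supset E$ is a larger class of edges that is stable under composition and base-change such that each $f \in E'$ is covered by some $g \in E$ with $f \comp g \in E$ and $g$ admitting universal !-codescent for $\D$, then $\D$ extends canonically to a 6-functor formalism on $E'$. I would apply this with $E$ the class of fdcs maps and $E'$ the class of $\ell$-fine maps in the 2-category $\vStacks$; the relevant stability properties are recorded in \cref{rslt:stabilities-of-ell-fine-maps}.

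The one nontrivial input to check is that, although \cref{def:ell-fine-maps} phrases universal $\ell$-codescent only with $\Z_\ell$-coefficients, the same codescent statement holds with coefficients in an arbitrary nuclear $\Z_\ell$-algebra $\Lambda$. I would deduce this from \cref{rslt:def-and-compatibility-of-Lambda-base-change-and-forget}: the forgetful functor $\D_\nuc(-, \Lambda) \to \D_\nuc(-, \Z_\ell)$ is conservative, preserves limits, and intertwines !-pullbacks (this last point because !-pullback along an fdcs map is built from proper pushforward and étale lower-shriek, each of which is $\Lambda$-linear by \cref{rslt:etale-lower-shriek-is-functorial-in-Lambda} and \cref{rslt:def-and-compatibility-of-Lambda-base-change-and-forget}). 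Hence the $\Z_\ell$-codescent equivalence over the Čech nerve of $g$ lifts to a $\Lambda$-codescent equivalence, giving the required hypothesis.

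With this in hand, the extension machinery produces the desired 6-functor formalism $\D_\nuc(-,\Lambda)\colon \Corr(\vStacks)_{lfine,all} \to \infcatinf$. Property (i) is immediate since the extension does not modify the underlying pullback functoriality on $\vStacks^\opp$. For property (ii), colimit preservation of $f_!$ and the special cases (the étale adjunction $j_! \dashv j^*$ and $f_! = f_*$ for proper $f$) pass through descent: one writes $f_!$ as a limit of shriek functors built from the Čech nerve of a chosen fdcs $\ell$-codescent cover, and each stage enjoys these properties by \cref{rslt:6-functor-formalism-for-diamonds}; colimit preservation and the two specializations are preserved under this limit construction. The principal technical burden is the assembly of all higher coherence data packaging base-change and the projection formula across the descent simplicial object, but this is exactly what the $\infty$-operadic extension machinery of \cite[\S A.5]{mann-mod-p-6-functors}, adapted to stacky maps in \cite{mod-ell-stacky-6-functors}, automates; no further input beyond the codescent hypothesis is needed.
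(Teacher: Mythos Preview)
Your overall strategy matches the paper's: extend the fdcs 6-functor formalism of \cref{rslt:6-functor-formalism-for-diamonds} to $\ell$-fine maps via the abstract extension principle \cite[Proposition A.5.14]{mann-mod-p-6-functors}, with the key input being that universal $\ell$-codescent (defined for $\Z_\ell$-coefficients) propagates to $\Lambda$-coefficients. However, your justification of this propagation has two problems. First, the parenthetical reasoning is confused: it is $f_!$, not $f^!$, that is built from proper pushforward and étale lower-shriek; $f^!$ is only defined as the right adjoint of $f_!$. The correct route to ``forget commutes with $f^!$'' is to observe that $f_!$ commutes with the base-change $-\tensor_{\Z_\ell}\Lambda$ (which is what \cref{rslt:etale-lower-shriek-is-functorial-in-Lambda} and the projection formula for proper pushforward actually give) and then pass to right adjoints. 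Second, and more substantively, knowing only that the forgetful functor is conservative, preserves limits, and intertwines $f^!$ does not by itself force the $\Lambda$-level comparison $\D_\nuc^!(X,\Lambda)\to\varprojlim_{n\in\Delta}\D_\nuc^!(Y_n,\Lambda)$ to be an equivalence. The paper closes this gap by bringing in the left adjoint $f_{\bullet!}\colon(\mathcal M_n)_n\mapsto\varinjlim_{n\in\Delta^\opp}f_{n!}\mathcal M_n$ and checking that the unit and counit of the resulting adjunction are isomorphisms; this reduction needs the forgetful functor to commute with $f_!$ as well (and with colimits), after which conservativity reduces everything to the $\Z_\ell$-case. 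The paper packages this as \cref{rslt:functoriality-of-shriek-functors-in-Lambda}, applied to fdcs maps.

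There is also a smaller omission. Universal $\ell$-codescent is only postulated over base stacks $X'$ that admit a map to a strictly totally disconnected space (this restriction is part of the definition). The paper therefore first restricts to the full subcategory $\mathcal C\subset\vStacks$ of such stacks via \cite[Proposition A.5.16]{mann-mod-p-6-functors}, applies \cite[Proposition A.5.14]{mann-mod-p-6-functors} on $\mathcal C$ with $E=fdcs$, $E'=lfine$, and $S\subset E$ the maps admitting universal $\ell$-codescent, and only then extends back to all of $\vStacks$. Your sketch does not account for this reduction.
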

\begin{proof}
We first prove the following claim: Let $f\colon Y \to X$ be a map of small v-stacks with Čech nerve $f_\bullet\colon Y_\bullet \to X$ and assume that $f$ admits universal $\ell$-codescent and $X$ admits a map to some strictly totally disconnected space; then the natural functor
\begin{align*}
	f_\bullet^!\colon \D_\nuc^!(X,\Lambda) \isoto \varprojlim_{n\in\Delta} \D_\nuc^!(Y_n,\Lambda)
\end{align*}
is an equivalence. By definition of universal $\ell$-codescent this holds for $\Lambda = \Z_\ell$, so we only need to reduce the general case to this case. Note that the functor $f_\bullet^!$ has a left adjoint
\begin{align*}
	f_{\bullet!}\colon \varprojlim_{n\in\Delta} \D_\nuc^!(Y_n,\Lambda) \to \D_\nuc^!(X,\Lambda), \qquad (\mathcal M_n)_n \mapsto \varinjlim_{n\in\Delta} f_{n!} \mathcal M_n.
\end{align*}
Thus in order to prove the desired equivalence it is enough to show that the natural maps $f_{\bullet!} f_\bullet^! \isoto \id$ and $\id \isoto f_\bullet^! f_{\bullet!}$ are isomorphisms. This follows immediately from the $\Z_\ell$-case together with the fact that both functors commute with the forgetful functor along $\Z_\ell \to \Lambda$, the proof of which we delay to the end of this section (see \cref{rslt:functoriality-of-shriek-functors-in-Lambda} below).

With the above claim at hand, the construction of the desired 6-functor formalism for $\ell$-fine maps is now completely formal (cf. the proof of \cite[Proposition 2.6]{mod-p-stacky-6-functors}): Our goal is to show that the 6-functor formalism from \cref{rslt:6-functor-formalism-for-diamonds} extends uniquely to a 6-functor formalism for all $\ell$-fine maps. To do that, let $\mathcal C \subset \vStacks$ denote the full subcategory spanned by those small v-stacks which admit a map to some strictly totally disconnected space. Then $\mathcal C$ is a basis of $\vStacks$, hence by \cite[Proposition A.5.16]{mann-mod-p-6-functors} it is enough to construct the desired extension of the 6-functor formalism on $\mathcal C$, i.e. we need to construct a 6-functor formalism $\D(-,\Lambda)\colon \Corr(\mathcal C)_{lfine,all} \to \infcatinf$ extending the one from \cref{rslt:6-functor-formalism-for-diamonds}. We now apply \cite[Proposition A.5.14]{mann-mod-p-6-functors} with $E = fdcs$, $E' = lfine$ and $S \subset E$ being the subset of those maps which admit universal $\ell$-codescent. Condition (a) is clear, condition (b) was proved above, condition (c) holds by definition of $\ell$-fine maps and condition (d) follows from \cref{rslt:properties-of-fdcs-maps} (see \cite[Remark A.5.15.(ii)]{mann-mod-p-6-functors}).
\end{proof}

Let us extract the shriek functors from \cref{rslt:6-functor-formalism}, thereby completing the collection of six functors for nuclear sheaves:

\begin{definition}
Let $f\colon Y \to X$ be an $\ell$-fine map of small v-stacks.
\begin{defenum}
	\item We define $f_!\colon \D_\nuc(Y,\Lambda) \to \D_\nuc(X,\Lambda)$ to be the functor $f_! := \D_\nuc([Y \xfrom{\id} Y \xto{f} X],\Lambda)$, where $\D(-,\Lambda)$ is the 6-functor formalism from \cref{rslt:6-functor-formalism}.

	\item We define $f^!\colon \D_\nuc(X,\Lambda) \to \D_\nuc(Y,\Lambda)$ to be the right adjoint of $f_!$.
\end{defenum}
\end{definition}

\begin{remark} \label{rmk:explicit-construction-of-shriek-functors}
The construction of the functor $f_!$ in \cref{rslt:6-functor-formalism} is not very explicit, so we provide a more direct description:
\begin{enumerate}[1.]
	\item Suppose that $f\colon Y \to X$ is quasicompact, compactifiable and representable in spatial diamonds with finite $\dimtrg$. Then we define
	\begin{align*}
		f_! := (\overline f^{/X})_* \comp j_!,
	\end{align*}
	where $j$ denotes the open immersion $j\colon Y \injto \overline Y^{/X}$ and $j_!$ is the functor from \cref{rslt:properties-of-etale-lower-shriek}.

	\item Suppose that $f\colon Y \to X$ is an fdcs map of locally spatial diamonds. Let $\mathcal I$ be the category of open subsets $V \subset Y$ which are quasicompact and compactifiable over $X$. Then we have $\D_\nuc(Y,\Lambda) = \varinjlim_{V\in\mathcal I} \D_{\nuc,!}(V,\Lambda)$ in the $\infty$-category of presentable $\infty$-categories and colimit preserving functors, where $\D_{\nuc,!}$ is the functor mapping an inclusion $j\colon V \injto V'$ to $j_!$. We thus need to define $f_!$ as the functor
	\begin{align*}
		f_! \mathcal M := \varinjlim_{V\in\mathcal I} f_{V!} (\restrict{\mathcal M}V),
	\end{align*}
	where $f_V\colon V \to X$ denotes the composition $V \injto Y \to X$. The functors $f_{V!}$ were defined in the previous step.

	\item Suppose that $f\colon Y \to X$ is an arbitrary fdcs map of small v-stacks. Choose a hypercover $X_\bullet \to X$ such that all $X_n$ are locally spatial diamonds and let $f_\bullet\colon Y_\bullet \to X_\bullet$ be the base-change of $f$. Then we have $\D_\nuc(X,\Lambda) = \varprojlim_{n\in\Delta} \D_\nuc(X_n,\Lambda)$ and $\D_\nuc(Y,\Lambda) = \varprojlim_{n\in\Delta} \D_\nuc(Y_n,\Lambda)$ and with this representation we define
	\begin{align*}
		f_!(\mathcal M_n)_n := (f_{n!} \mathcal M_n)_n,
	\end{align*}
	where each $f_{n!}$ was defined in the previous step. This definition is possible because all $f_{n!}$ satisfy arbitrary base-change.

	\item Suppose that $f\colon Y \to X$ is an $\ell$-fine map such that $X$ admits a map to some strictly totally disconnected space. Pick an fdcs map $g\colon Z \to Y$ which admits universal $\ell$-codescent such that $f \comp g$ is fdcs and let $g_\bullet\colon Z_\bullet \to Y$ denote the Čech nerve of $g$. Then for every $\mathcal M \in \D_\nuc(Y,\Lambda)$ we have $\mathcal M = \varinjlim_{n\in\Delta} g_{n!} g_n^! \mathcal M$ and hence we must define
	\begin{align*}
		f_! \mathcal M := \varinjlim_{n\in\Delta} (f \comp g_n)_! g_n^! \mathcal M,
	\end{align*}
	where the functors $(f \comp g_n)_!$ and $g_n^!$ were defined in the previous step.

	\item Suppose that $f\colon Y \to X$ is a general $\ell$-fine map of small v-stacks. Then we can use the same descent technique as in step 3 to reduce the definition of $f_!$ to the previous step.
\end{enumerate}
Note that one could attempt to carry out the above construction directly instead of relying on the theory of abstract 6-functor formalisms from \cite[\S A.5]{mann-mod-p-6-functors}. This is possible to some extent, e.g. steps 1, 2 and 3 were carried out in \cite[\S22]{etale-cohomology-of-diamonds}. This has two downsides though: Firstly, the direct construction will not provide all the higher homotopies one expects the shriek functors to satisfy, so that thorough proofs involving the shriek functors require a lot of diagram checking; secondly, it seems very hard to carry out steps 4 and 5 using this approach.
\end{remark}

We have constructed the full 6-functor formalism for nuclear sheaves over a fixed nuclear $\Z_\ell$-algebra $\Lambda$. Sometimes it is useful to also understand how this 6-functor formalism behaves under a change of $\Lambda$. For the following result we denote by $\Ring_{\Z_\ell,\nuc}$ the $\infty$-category of nuclear $\Z_\ell$-algebras.

\begin{proposition} \label{rslt:6-functor-formalism-functorial-in-Lambda}
There is a 6-functor formalism
\begin{align*}
	\D_\nuc\colon \Corr(\vStacks \cprod \Ring_{\Z_\ell,\nuc}^\opp)_{lfine,all} \to \infcatinf, \qquad (X,\Lambda) \mapsto \D_\nuc(X,\Lambda),
\end{align*}
where $lfine$ denotes the class of those maps $(Y,\Lambda') \to (X,\Lambda)$ where the map $Y \to X$ is an $\ell$-fine map of small v-stacks and the map $\Lambda \to \Lambda'$ is an isomorphism. This 6-functor formalism has the following properties:
\begin{propenum}
	\item For every small v-stack $X$ and any map $\Lambda \to \Lambda'$ of nuclear $\Z_\ell$-algebras, the induced pullback functor $\D_\nuc(X,\Lambda) \to \D_\nuc(X,\Lambda')$ is $- \tensor_\Lambda \Lambda'$.

	\item For every nuclear $\Z_\ell$-algebra $\Lambda$, the restriction of $\D_\nuc$ to $\Corr(\vStacks \cprod \{ \Lambda \})_{lfine,all}$ coincides with the 6-functor formalism $\D_\nuc(-,\Lambda)$ from \cref{rslt:6-functor-formalism}.
\end{propenum}
\end{proposition}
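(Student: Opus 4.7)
The plan is to redo the construction of \cref{rslt:6-functor-formalism-for-diamonds,rslt:6-functor-formalism} relative to the base category $\Ring_{\Z_\ell,\nuc}^\opp$, treating $\Lambda$ as an additional parametrising variable. The starting input is the symmetric monoidal functor $(\vStacks \cprod \Ring_{\Z_\ell,\nuc}^\opp)^\opp \to \catPrL$ sending $(X,\Lambda) \mapsto \D_\nuc(X,\Lambda)$, where a morphism $(Y,\Lambda') \to (X,\Lambda)$ induces the composite $\D_\nuc(X,\Lambda) \to \D_\nuc(Y,\Lambda) \xto{- \tensor_\Lambda \Lambda'} \D_\nuc(Y,\Lambda')$. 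That v-pullback and coefficient base change commute and assemble into a symmetric monoidal functor is the content of \cref{rslt:def-and-compatibility-of-Lambda-base-change-and-forget}.

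I then re-run the extension arguments of \cite[Propositions A.5.10, A.5.14, A.5.16]{mann-mod-p-6-functors} in this relative setting, with the class of edges $lfine$ as defined in the statement. The only new ingredients needed beyond \cref{rslt:6-functor-formalism-for-diamonds,rslt:6-functor-formalism} are the functorialities of the constituent pieces in $\Lambda$: for étale $j$ this is \cref{rslt:etale-lower-shriek-is-functorial-in-Lambda}, and for proper fdcs $f$ with $f_{\solid*}$ of finite cohomological dimension it follows from the projection formula \cref{rslt:projection-formula-for-bounded-proper-pushforward} applied to $\mathcal M = \Lambda' \in \D_\nuc(X,\Lambda)$, using $f^*\Lambda' = \Lambda'$ to obtain the canonical isomorphism $f_*\mathcal N \tensor_\Lambda \Lambda' \isoto f_*(\mathcal N \tensor_\Lambda \Lambda')$. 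Compatibility of the base-change isomorphisms of \cref{rslt:base-change-for-unbounded-nuclear-pushforward} with $\Lambda$-base-change is then formal. For the passage from fdcs to $\ell$-fine maps, the proof of \cref{rslt:6-functor-formalism} already observed that the unit and counit of the $f_{\bullet!} \dashv f_\bullet^!$ adjunction commute with the forgetful along $\Z_\ell \to \Lambda$; running the same codescent argument uniformly in $\Lambda$ via \cite[Proposition A.5.14]{mann-mod-p-6-functors} upgrades the fdcs 6-functor formalism to the one on $lfine$.

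Properties (i) and (ii) are essentially built into the construction: (i) is the definition of the pullback along a ring map in the starting monoidal functor, and (ii) follows from the uniqueness clauses in the extension lemmas, which force the restriction to a fixed $\Lambda$ to coincide with \cref{rslt:6-functor-formalism}. The main obstacle is bookkeeping rather than substance: every base-change, projection-formula, and codescent statement invoked in the construction must be checked to be natural in $\Lambda$, but all of these ultimately reduce to the single fact that $f_!$ commutes with $- \tensor_\Lambda \Lambda'$ for $\ell$-fine $f$, which is assembled step by step from the étale and proper cases above together with the codescent procedure used to build $f_!$.
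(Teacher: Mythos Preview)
Your proposal is correct and follows essentially the same approach as the paper: construct the starting symmetric monoidal functor $(X,\Lambda) \mapsto \D_\nuc(X,\Lambda)$ in both variables, then rerun the extension machinery of \cite[\S A.5]{mann-mod-p-6-functors} exactly as in \cref{rslt:6-functor-formalism-for-diamonds,rslt:6-functor-formalism}, with the only new input being the compatibility of étale $j_!$ and proper $f_*$ with $-\tensor_\Lambda \Lambda'$, which you correctly source from \cref{rslt:etale-lower-shriek-is-functorial-in-Lambda} and the projection formula. The one point the paper makes more precise is the construction of the starting functor itself: rather than appealing to \cref{rslt:def-and-compatibility-of-Lambda-base-change-and-forget} (which gives the individual functors but not their coherent assembly), the paper invokes the generalized $\infty$-operad $\Mod(\mathcal C)^\tensor$ of \cite[Definition~4.5.1.1]{lurie-higher-algebra} applied to $\D_\nuc(-,\Z_\ell)$ to produce the functor $\vStacks \cprod \Ring_{\Z_\ell,\nuc}^\opp \to \infcatinf^\tensor$ with all higher coherences in place.
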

\begin{proof}
Using the functor $\D_\nuc(-,\Z_\ell)\colon \vStacks \to \infcatinf^\tensor$ and abstract nonsense involving the usual straightening and unstraightening techniques and the generalized $\infty$-operad $\Mod(\mathcal C)^\tensor$ from \cite[Definition 4.5.1.1]{lurie-higher-algebra} we can construct a functor
\begin{align*}
	\D_\nuc\colon \vStacks \cprod \Ring_{\Z_\ell,\nuc}^\opp \to \infcatinf^\tensor, \qquad (X,\Lambda) \mapsto \D_\nuc(X,\Lambda)
\end{align*}
which satisfies (i) and restricts to the functor from \cref{rslt:v-descent-for-nuclear-Lambda-modules} for fixed $\Lambda$ (here $\infcatinf^\tensor$ denotes the $\infty$-category of symmetric monoidal $\infty$-categories). Now the construction of the desired 6-functor formalism can be carried out in the same way as in \cref{rslt:6-functor-formalism}. The only difference occurs at the very beginning of the construction in \cref{rslt:6-functor-formalism-for-diamonds}, where we additionally need to verify the compatibility of proper pushforward and étale lower shriek with the base-change $- \tensor_\Lambda \Lambda'$; this follows immediately from the projection formula.
\end{proof}

\begin{corollary} \label{rslt:functoriality-of-shriek-functors-in-Lambda}
Let $\Lambda \to \Lambda'$ be a map of nuclear $\Z_\ell$-algebras and $f\colon Y \to X$ an $\ell$-fine map of small v-stacks.
\begin{corenum}
	\item The functor $f_!$ commutes naturally with the base-change and the forgetful functor along $\Lambda \to \Lambda'$.

	\item The functor $f^!$ commutes naturally with the forgetful functor along $\Lambda \to \Lambda'$.
\end{corenum}
\end{corollary}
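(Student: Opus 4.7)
The argument revolves around the product 6-functor formalism
\[
\D_\nuc \colon \Corr(\vStacks \cprod \Ring_{\Z_\ell,\nuc}^\opp)_{lfine,all} \to \infcatinf
\]
provided by \cref{rslt:6-functor-formalism-functorial-in-Lambda}, applied to the cartesian square
\[
\begin{tikzcd}
(Y,\Lambda') \arrow[r,"h'"] \arrow[d,"\tilde f"] & (Y,\Lambda) \arrow[d,"\tilde f"]\\
(X,\Lambda') \arrow[r,"h"] & (X,\Lambda)
\end{tikzcd}
\]
in the product category, where $\tilde f = (f, \mathrm{id}_\Lambda)$ is $\ell$-fine and the horizontal ``ring change'' arrows $h, h'$ have the identity on v-stacks and $\Lambda \to \Lambda'$ on coefficients, with induced functors $h^* = (-) \tensor_\Lambda \Lambda'$ and $h_* = \mathrm{Forget}$.

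For part (i) in the base change direction, applying the base change equivalence of the product 6-functor formalism to this square yields $h^* \tilde f_! \isoto \tilde f_! h'^*$, which unpacks to $(f_! \mathcal N) \tensor_\Lambda \Lambda' \cong f_!(\mathcal N \tensor_\Lambda \Lambda')$. Part (ii) then follows formally by passing to right adjoints of both sides of this isomorphism: the right adjoint of $h^* \tilde f_!$ is $\tilde f^! h_* = f^! \circ \mathrm{Forget}$, while that of $\tilde f_! h'^*$ is $h'_* \tilde f^! = \mathrm{Forget} \circ f^!$, which yields $f^! \circ \mathrm{Forget} \isoto \mathrm{Forget} \circ f^!$.

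The main obstacle is part (i) in the forgetful direction, where the Beck--Chevalley morphism $f_! \circ \mathrm{Forget} \to \mathrm{Forget} \circ f_!$ is not automatic from the 6-functor formalism alone. My plan is to reduce it to the base change case by a bar-resolution argument. Since $\mathrm{Forget}$ is conservative and preserves all small colimits (\cref{rslt:def-and-compatibility-of-Lambda-base-change-and-forget}), the $\infty$-categorical Barr--Beck theorem makes it monadic, so every $\mathcal N \in \D_\nuc(Y,\Lambda')$ is the geometric realization of a simplicial object whose $n$-th term has the form $\mathcal M_n \tensor_\Lambda \Lambda'$ for some $\mathcal M_n \in \D_\nuc(Y,\Lambda)$. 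Since both $f_!$ and $\mathrm{Forget}$ preserve geometric realizations, it suffices to verify the compatibility on objects of the form $\mathcal N = \mathcal M \tensor_\Lambda \Lambda'$. For these one has $\mathrm{Forget}(\mathcal N) = \mathcal M \tensor_\Lambda f^*\Lambda'_X$, where $\Lambda'_X$ denotes the constant $\Lambda$-algebra on $X$ corresponding to $\Lambda \to \Lambda'$, and the projection formula combined with part (i) for base change then gives
\[
f_!(\mathrm{Forget}(\mathcal M \tensor_\Lambda \Lambda')) = f_!(\mathcal M) \tensor_\Lambda \Lambda'_X = \mathrm{Forget}(f_!(\mathcal M) \tensor_\Lambda \Lambda') = \mathrm{Forget}(f_!(\mathcal M \tensor_\Lambda \Lambda')),
\]
as required.
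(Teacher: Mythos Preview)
Your proof is correct. For the base-change half of (i) and for (ii) your argument via the product 6-functor formalism and passage to right adjoints is exactly the paper's.

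For the forgetful half of (i) you take a genuinely different route. The paper also notes that once the base-change compatibility is in place the forgetful compatibility becomes a \emph{condition} rather than extra structure, but then checks this condition by tracing through the explicit construction of $f_!$ in \cref{rmk:explicit-construction-of-shriek-functors}: the five steps reduce everything to the case of an open immersion (handled by \cref{rslt:etale-lower-shriek-is-functorial-in-Lambda}) or a proper map (where $f_! = f_*$ and the claim follows by passing to left adjoints in \cref{rslt:def-and-compatibility-of-Lambda-base-change-and-forget}). Your bar-resolution argument instead uses monadicity of the forgetful functor to reduce to objects of the form $\mathcal M \tensor_\Lambda \Lambda'$, where the projection formula and the already-established base-change part finish the job. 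This is a uniformly categorical argument that avoids the recursive construction entirely and would work verbatim in any 6-functor formalism with a monadic change-of-coefficients adjunction. The paper's approach has the minor advantage of exhibiting the compatibility at each stage of the construction of $f_!$, which is occasionally useful, while yours is shorter and more transportable.

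One point worth making explicit: in your final chain of isomorphisms you are producing an identification of the two sides on each object $\mathcal M \tensor_\Lambda \Lambda'$, and you should say why this identification coincides with the Beck--Chevalley map you set out to invert (otherwise the bar-resolution step, which needs naturality with respect to the simplicial face maps, does not glue). This is the standard coherence between the projection formula and base-change that any 6-functor formalism encodes, so it is routine, but it deserves a sentence.
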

\begin{proof}
The fact that $f_!$ commutes with the base-change functor $- \tensor_\Lambda \Lambda'$ follows immediately from proper base-change in the 6-functor formalism from \cref{rslt:6-functor-formalism-functorial-in-Lambda} for the cartesian diagram
\begin{center}\begin{tikzcd}
	(Y,\Lambda') \arrow[r] \arrow[d] & (Y,\Lambda) \arrow[d]\\
	(X,\Lambda') \arrow[r] & (X,\Lambda)
\end{tikzcd}\end{center}
Note that it is really necessary to use \cref{rslt:6-functor-formalism-functorial-in-Lambda} here even though on underlying $\Lambda$-modules the desired commutation of functors reduces to the projection formula. Namely, without the 6-functor formalism from \cref{rslt:6-functor-formalism-functorial-in-Lambda} we do not even get a natural morphism between the functors $f_!(-) \tensor_\Lambda \Lambda'$ and $f_!(- \tensor_\Lambda \Lambda')$ from $\D_\nuc(Y,\Lambda)$ to $\D_\nuc(X,\Lambda')$ (only as functors to $\D_\nuc(X,\Lambda)$, but that is not enough). Claim (ii) follows immediately by passing to right adjoints.

It remains to see that $f_!$ commutes with the forgetful functor along $\Lambda \to \Lambda'$, which by the commutation of $f_!$ with $- \tensor_\Lambda \Lambda'$ is now a condition rather than an additional datum. Therefore this can be checked along all the steps in the direct computation of $f_!$ in \cref{rmk:explicit-construction-of-shriek-functors}, which ultimately reduces us to the case that either $f$ is an open immersion or proper. In the former case we apply \cref{rslt:etale-lower-shriek-is-functorial-in-Lambda}. In the latter case we have $f_! = f_*$, so that the claim follows by passing to left adjoints and using \cref{rslt:def-and-compatibility-of-Lambda-base-change-and-forget}.
\end{proof}

We finish this section by recording the following formal consequences of any 6-functor formalism, which are useful statements by themselves.

\begin{lemma}
Let $f\colon Y \to X$ be an $\ell$-fine map of small v-stacks.
\begin{lemenum}
	\item \label{rslt:IHom-adjunction-for-shriek-functors} For all $\mathcal M \in \D_\nuc(X, \Lambda)$ and $\mathcal N \in \D_\nuc(Y, \Lambda)$ there is a natural isomorphism
	\begin{align*}
		\IHom(f_! \mathcal N, \mathcal M) = f_* \IHom(\mathcal N, f^! \mathcal M).
	\end{align*}

	\item \label{rslt:upper-shriek-of-IHom-formula} For all $\mathcal M, \mathcal N \in \D_\nuc(X, \Lambda)$ there is a natural isomorphism
	\begin{align*}
		f^! \IHom(\mathcal N, \mathcal M) = \IHom(f^* \mathcal N, f^! \mathcal M).
	\end{align*}
\end{lemenum}
\end{lemma}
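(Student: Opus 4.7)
The plan is to derive both isomorphisms purely formally from the three ingredients available in any 6-functor formalism: the tensor-hom adjunction $(- \tensor \mathcal L) \dashv \IHom(\mathcal L, -)$, the shriek adjunction $f_! \dashv f^!$, the pullback-pushforward adjunction $f^* \dashv f_*$, and the projection formula from \cref{rslt:6-functor-formalism}. I would not try to inspect $f_!$ or $f^!$ explicitly; everything will come out of Yoneda in $\D_\nuc(X, \Lambda)$ or $\D_\nuc(Y, \Lambda)$.

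For (i), I would test both sides against an arbitrary $\mathcal L \in \D_\nuc(X, \Lambda)$. On the left, tensor-hom gives $\Hom(\mathcal L, \IHom(f_!\mathcal N, \mathcal M)) = \Hom(\mathcal L \tensor f_!\mathcal N, \mathcal M)$; applying the projection formula this becomes $\Hom(f_!(f^*\mathcal L \tensor \mathcal N), \mathcal M)$; then $f_! \dashv f^!$ turns it into $\Hom(f^*\mathcal L \tensor \mathcal N, f^!\mathcal M)$; tensor-hom again yields $\Hom(f^*\mathcal L, \IHom(\mathcal N, f^!\mathcal M))$; and $f^* \dashv f_*$ gives $\Hom(\mathcal L, f_*\IHom(\mathcal N, f^!\mathcal M))$. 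By Yoneda this identifies the two internal-hom objects. To upgrade this chain of bijections into an actual natural isomorphism of functors, I would package it as a composite of natural transformations between representable functors; this step is entirely formal in the 6-functor language, and in fact the identity can be extracted directly from the data of the $\infty$-operad structure on $\Corr(\vStacks)_{lfine,all}$.

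For (ii), the same strategy works, but now I test against $\mathcal L \in \D_\nuc(Y, \Lambda)$. The left side becomes $\Hom(f_!\mathcal L, \IHom(\mathcal N, \mathcal M)) = \Hom(f_!\mathcal L \tensor \mathcal N, \mathcal M)$ by $f_! \dashv f^!$ and tensor-hom; the projection formula rewrites this as $\Hom(f_!(\mathcal L \tensor f^*\mathcal N), \mathcal M)$; then shriek adjunction and tensor-hom give $\Hom(\mathcal L, \IHom(f^*\mathcal N, f^!\mathcal M))$. Yoneda in $\D_\nuc(Y, \Lambda)$ yields the claim.

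There is no real obstacle: both claims are standard consequences of the projection formula together with the two adjunctions, and the only minor nuisance is making the identities natural rather than just pointwise. Since \cref{rslt:6-functor-formalism} already provides the projection formula and the proper/open base change relations as coherent data of a symmetric monoidal functor out of $\Corr(\vStacks)_{lfine,all}$, all the required natural transformations are automatically encoded, and the argument reduces to chasing adjunctions, exactly as in \cite[\S A.5]{mann-mod-p-6-functors}.
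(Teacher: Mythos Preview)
Your proposal is correct and is precisely the formal Yoneda-plus-projection-formula argument that the paper has in mind; the paper's own proof just says ``This is formal, see e.g.\ \cite[Proposition 23.3]{etale-cohomology-of-diamonds}'' without spelling out the steps. You have simply written out what that citation contains.
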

\begin{proof}
This is formal, see e.g. \cite[Proposition 23.3]{etale-cohomology-of-diamonds}.
\end{proof}

\section{Perfect, Dualizable and Overconvergent Sheaves} \label{sec:perf}

Fix a prime $\ell \ne p$ and a nuclear $\Z_\ell$-algebra $\Lambda$. Having developed a full 6-functor formalism for nuclear $\Lambda$-sheaves on small v-stacks, we now want to study some particular special cases of nuclear sheaves.

We start with dualizable sheaves. The notion of dualizable objects can be defined in any symmetric monoidal $\infty$-category:

\begin{definition} \label{def:dualizable-object}
Let $\mathcal C$ be a symmetric monoidal $\infty$-category. An object $P \in \mathcal C$ is called \emph{dualizable} if there are an object $P^*$, called the \emph{dual} of $P$, and morphisms
\begin{align*}
	\ev_P\colon P^* \tensor P \to 1, \qquad i_P\colon 1 \to P \tensor P^*,
\end{align*}
called the \emph{evaluation map} and \emph{coevaluation map} respectively, such that there are homotopy coherent diagrams
\begin{center}
	\begin{tikzcd}
		P \arrow[dr,"\id"] \arrow[r,"i_P \tensor \id"] & P \tensor P^* \tensor P \arrow[d,"\id \tensor \ev_P"] \\
		& P
	\end{tikzcd}
	\qquad
	\begin{tikzcd}
		P^* \arrow[dr,"\id"] \arrow[r,"\id \tensor i_P"] & P^* \tensor P \tensor P^* \arrow[d,"\ev_P \tensor \id"] \\
		& P^*
	\end{tikzcd}
\end{center}
We denote by $\mathcal C_\dlb \subset \mathcal C$ the full subcategory spanned by the dualizable objects in $\mathcal C$.
\end{definition}

It is easy to see that if $P$ is dualizable then the dual $P^*$ is unique up to unique isomorphism. We will use that fact without further mention below.

Note that by design the notion of dualizability in a symmetric monoidal $\infty$-category $\mathcal C$ only depends on the underlying symmetric monoidal $1$-category. In this case one of the first instances where it appears in the literature is in \cite[\S1]{dold-puppe-duality}. If $\mathcal C$ is closed, i.e. admits an internal hom functor $\IHom$, then there is a different way of defining the dual of an object $P \in \mathcal C$ by letting $P^\vee := \IHom(P, 1)$, where $1 \in \mathcal C$ is the monoidal unit. We say that $P$ is \emph{reflexive} if the natural map $P \isoto P^{\vee\vee}$ is an isomorphism. A natural question is how this notion of duals behaves with respect to dualizability. Here is the answer:

\begin{lemma} \label{rslt:characterization-of-dualizable}
Let $\mathcal C$ be a closed symmetric monoidal $\infty$-category and $P \in \mathcal C$ an object. Then the following are equivalent:
\begin{lemenum}
	\item $P$ is dualizable.

	\item $P$ is reflexive and the natural composed map $P \tensor P^\vee \to P^{\vee\vee} \tensor P^\vee \to (P \tensor P^\vee)^\vee$ is an isomorphism.

	\item The natural map $P \tensor P^\vee \isoto \IHom(P, P)$ is an isomorphism.

	\item For all $M, N \in \mathcal C$ the natural map
	\begin{align*}
		\pi_0\Hom(M, N \tensor P^\vee) \isoto \pi_0\Hom(M \tensor P, N),
	\end{align*}
	given by sending $f\colon M \to N \tensor P^\vee$ into the composition
	\begin{align*}
		M \tensor P \xto{f \tensor \id} N \tensor P^\vee \tensor P \xto{\id \tensor \ev} N,
	\end{align*}
	is an isomorphism.
\end{lemenum}
If this is the case then $P^\vee$ is the dual of $P$ in the sense of \cref{def:dualizable-object} and for all $M \in \mathcal C$ the natural map
\begin{align*}
	P^\vee \tensor M \isoto \IHom(P, M)
\end{align*}
is an isomorphism.
\end{lemma}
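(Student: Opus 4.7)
The plan is to establish the equivalence by closing the loop $(i) \Rightarrow (iv) \Rightarrow (iii) \Rightarrow (i)$, which also delivers the concluding formula, and then to handle $(ii)$ by reducing it to $(iii)$.

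For $(i) \Rightarrow (iv)$, assume dualizability data $(P^*, \ev_P, i_P)$. A direct chase using the triangle identities shows that the maps
\begin{align*}
\phi\colon f \mapsto (\id_N \tensor \ev_P) \comp (f \tensor \id_P), \qquad \psi\colon g \mapsto (g \tensor \id_{P^*}) \comp (\id_M \tensor i_P)
\end{align*}
are mutually inverse bijections between $\pi_0\Hom(M, N \tensor P^*)$ and $\pi_0\Hom(M \tensor P, N)$, thereby promoting $(- \tensor P^*)$ to a right adjoint of $(- \tensor P)$ at the level of the homotopy category. Combined with the defining closed adjunction $(- \tensor P) \dashv \IHom(P, -)$, uniqueness of adjoints upgrades to a natural equivalence $- \tensor P^* \isoto \IHom(P, -)$ of right adjoints in $\mathcal C$; specializing to $M = 1$ gives $P^* \isom P^\vee$, which then yields $(iv)$ and simultaneously the concluding identification $P^\vee \tensor M \isoto \IHom(P, M)$.

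For $(iv) \Rightarrow (iii)$, observe that $(iv)$ directly identifies $- \tensor P^\vee$ with $\IHom(P, -)$ on the homotopy category, so taking the argument to be $P$ yields that the canonical map $P \tensor P^\vee \to \IHom(P, P)$ is an isomorphism in $h\mathcal C$ and hence in $\mathcal C$. For $(iii) \Rightarrow (i)$, define $i_P\colon 1 \to P \tensor P^\vee$ as the preimage of $\id_P \in \pi_0\Hom(P, P) = \pi_0\Hom(1, \IHom(P, P))$ under $(iii)$, let $\ev_P$ be the canonical counit of the closed adjunction, and verify both triangle identities; the first reduces tautologically to the definition of $i_P$, the second follows by a similar diagram chase (using the symmetric braiding to swap $P$ and $P^\vee$, and the fact that $\ev_P$ is characterized as corresponding to $\id_{P^\vee}$ under the adjunction).

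For $(i) \Leftrightarrow (ii)$, if $P$ is dualizable then so is $P^\vee$ with dual $P$ (giving reflexivity), and tensor products of dualizables are dualizable, yielding $(ii)$. Conversely, the chain of canonical identifications
\begin{align*}
(P \tensor P^\vee)^\vee = \IHom(P, \IHom(P^\vee, 1)) = \IHom(P, P^{\vee\vee}) \isom \IHom(P, P),
\end{align*}
where the final step uses reflexivity from $(ii)$, translates the canonical map of $(ii)$ into the canonical map of $(iii)$, so $(ii)$ implies $(iii)$.

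The main obstacle will be the bookkeeping of diagram chases: verifying the second triangle identity in $(iii) \Rightarrow (i)$ (which requires invoking the symmetric structure), and carefully checking that the canonical maps really do match under the double-dual identifications in $(ii) \Rightarrow (iii)$. The promotion of $\pi_0$-level data to honest $\infty$-categorical dualizability is not a serious issue, since dualizability is a property of the underlying symmetric monoidal homotopy category.
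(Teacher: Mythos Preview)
Your proposal is correct and follows essentially the same approach as the paper: both reduce to the homotopy category, establish $(i) \Leftrightarrow (iv)$ via the adjunction $(- \tensor P) \dashv (- \tensor P^\vee)$, prove $(iii) \Rightarrow (i)$ by building the coevaluation from $\id_P$ under $(iii)$, and link $(ii)$ to $(iii)$ via $(P \tensor P^\vee)^\vee \isom \IHom(P, P^{\vee\vee}) \isom \IHom(P, P)$. The only organizational difference is that the paper, following Dold--Puppe, also gives a direct diagram-chase proof of $(ii) \Rightarrow (i)$ (constructing the coevaluation as the dual of $\ev$ under the isomorphism in $(ii)$), whereas you route $(ii)$ through $(iii)$ instead; your route is slightly more economical and avoids that extra diagram.
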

\begin{proof}
Note that all the statements only depend on the underlying $1$-category of $\mathcal C$, so we can assume that $\mathcal C$ is a $1$-category. Then the equivalence of (i), (ii) and (iv) is shown in \cite[Theorem 1.3]{dold-puppe-duality}. It goes as follows: The equivalence of (i) and (iv) follows easily by observing that both statements are equivalent to the fact that the functors $- \tensor P$ and $- \tensor P^*$ (resp. $- \tensor P^\vee$) are adjoint with the obvious counit. This also proves that necessarily $P^* = P^\vee$ in (i). Note that this implies that condition (iv) equivalently holds with the roles of $P$ and $P^\vee$ swapped. With this known, it is straightforward to see that (iv) implies (ii). To prove that (ii) implies (i), note that we always have the evaluation map $\ev\colon P \tensor P^\vee \to 1$ and if the isomorphism in (ii) holds then we also get a coevaluation map $i\colon 1 = 1^\vee \to (P \tensor P^\vee)^\vee = P \tensor P^\vee$. To show that the necessary triangles commute, we consider the following diagram in $\mathcal C$:
\begin{center}\begin{tikzcd}
	P \arrow[r,"i \tensor \id"] \arrow[ddr,bend right,swap,"\alpha"] & P \tensor P^\vee \tensor P \arrow[r,"\id \tensor \ev"] \isoarrow[d] & P \isoarrow[d]\\
	& P^{\vee\vee} \tensor P^\vee \tensor P \arrow[r,"\id \tensor \ev"] \isoarrow[d] & P^{\vee\vee}\\
	& (P \tensor P^\vee)^\vee \tensor P \arrow[ur,swap,"\beta"]
\end{tikzcd}\end{center}
Here the vertical isomorphisms are induced by the isomorphisms in (ii) (where in the middle column the maps act as the identity on the right-hand $P$). The map $\alpha$ is the natural map induced by the dualized evaluation map $1 = 1^\vee \to((P \tensor P^\vee)^\vee$ (intuitively this singles out the element $A \in (P \tensor P^\vee)^\vee$ given as the map $P \tensor P^\vee \to 1$, $(x, f) \mapsto f(x)$), so that the left-hand triangle of the diagram commutes by definition of $i$. Also clearly the upper right square commutes. The map $\beta$ is the map which by adjunction corresponds to the map
\begin{align*}
	(P \tensor P^\vee)^\vee = \IHom(P \tensor P^\vee, 1) \isoto \IHom(P, P^{\vee\vee})
\end{align*}
(Intuitively it sends a map $A\colon P \cprod P^\vee \to 1$ and an element $x \in P$ to the element $f \mapsto A(x,f)$ in $P^{\vee\vee}$.) One checks immediately that the lower right triangle commutes. It follows that the whole diagram commutes. Moreover, we see that $\beta \comp \alpha$ is the natural map $P \to P^{\vee\vee}$. By the commutativity of the diagram and the fact that $P \isoto P^{\vee\vee}$ is an isomorphism, we deduce that the composition of the upper two horizontal maps is the identity, as desired. The second diagram in the definition of dualizable maps can be verified similarly (or note that everything is symmetric in $P$ and $P^\vee$).

We have now shown that (i), (ii) and (iv) are all equivalent. To show that (ii) implies (iii), we simply note that if (ii) holds then
\begin{align*}
	P \tensor P^\vee = (P^\vee \tensor P)^\vee = \IHom(P, P^{\vee\vee}) = \IHom(P, P),
\end{align*}
as desired. Finally, assume that (iii) is satisfied. Then additionally to the natural evaluation map $\ev\colon P \tensor P^\vee \to 1$ there is a coevaluation map $1 \to \IHom(P, P) = P \tensor P^\vee$ given by the identity on $P$. One checks that these two maps exhibit $P$ as dualizable with dual $P^\vee$: The left-hand triangle is obviously commutative; for the right-hand triangle we use the fact that for every $M, N \in \mathcal C$ the evaluation map $M^\vee \tensor M \tensor N^\vee \to N^\vee$ factors over $M^\vee \tensor \IHom(N, M)$, where the map from this object to $N^\vee$ is a special case of the general map
\begin{align*}
	\IHom(X, Y) \tensor \IHom(Y, Z) \to \IHom(X, Z)
\end{align*}
for $X, Y, Z \in \mathcal C$. Now apply this to $N = M = P$ to conclude.

We proved the equivalence of (i), (ii), (iii) and (iv). The final claim follows immediately from (iv).
\end{proof}

\begin{remark}
We put a lot of care in the proof of \cref{rslt:characterization-of-dualizable}, even though the proof is rather straightforward and the results are not new. The reasons for this are twofold:
\begin{enumerate}[1.]
	\item We found it surprisingly hard to collect these results from the literature, and we were unable to find a reference where all displayed characterizations of dualizability are stated as cleanly as here.

	\item We have repeatedly been confused about what is true and what not. For example, we found a reference where it is suggested that the isomorphism $P \tensor P^\vee \isoto \IHom(P, P)$ is not enough to deduce dualizability, but it certainly is. Moreover, when characterizing dualizability via adjoint functors, one has to be careful. In particular, criterion (iv) of \cite[Lemma 3.7.4]{mann-mod-p-6-functors} seems to be false in general.
\end{enumerate}
\end{remark}

We also get the following abstract properties of dualizable objects. The first property will help us prove v-descent, while the second property is a sanity check.

\begin{lemma} \label{rslt:dualizable-preserved-by-symm-monoidal-functor}
Let $F\colon \mathcal C \to \mathcal D$ be a symmetric monoidal functor between symmetric monoidal $\infty$-categories and let $P \in \mathcal C$ be dualizable. Then $F(P)$ is dualizable. Moreover, if both $\mathcal C$ and $\mathcal D$ are closed then for all $M \in \mathcal C$ the natural map
\begin{align*}
	F(\IHom(P, M)) \isoto \IHom(F(P), F(M))
\end{align*}
is an isomorphism.
\end{lemma}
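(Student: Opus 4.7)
The plan is to reduce both claims to the characterizations of dualizability established in \cref{rslt:characterization-of-dualizable}, exploiting only that $F$ preserves tensor products and the monoidal unit up to coherent isomorphism.

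First I would show that $F(P)$ is dualizable in $\mathcal D$ with dual $F(P^*)$. Since $F$ is symmetric monoidal, applying it to the evaluation and coevaluation maps of $P$, and composing with the natural isomorphisms $F(P^* \tensor P) \isom F(P^*) \tensor F(P)$, $F(P \tensor P^*) \isom F(P) \tensor F(P^*)$ and $F(1_{\mathcal C}) \isom 1_{\mathcal D}$, produces candidate evaluation and coevaluation maps for $F(P)$ in $\mathcal D$. The triangle identities in $\mathcal D$ are obtained simply by applying $F$ to the corresponding commutative diagrams in $\mathcal C$, since symmetric monoidal functors preserve coherent diagrams.

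For the internal hom compatibility, assume that both categories are closed. By \cref{rslt:characterization-of-dualizable} the natural map $P^\vee \tensor M \isoto \IHom(P, M)$ is an isomorphism in $\mathcal C$, so applying $F$ gives an isomorphism $F(P^\vee) \tensor F(M) \isoto F(\IHom(P, M))$ in $\mathcal D$. The first part shows that $F(P)$ is dualizable with dual $F(P^\vee)$, and applying the same characterization in $\mathcal D$ yields an isomorphism $F(P^\vee) \tensor F(M) \isoto \IHom(F(P), F(M))$. Composing these two isomorphisms produces the desired identification.

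The main bookkeeping obstacle is to verify that the composite isomorphism constructed above agrees with the canonical natural transformation $F(\IHom(P, M)) \to \IHom(F(P), F(M))$ defined via the adjoint of the composite $F(\IHom(P, M)) \tensor F(P) \isom F(\IHom(P, M) \tensor P) \xto{F(\ev)} F(M)$. Unwinding definitions, this comes down to the fact that under the identification $\IHom(P, M) = P^\vee \tensor M$, the counit $\IHom(P, M) \tensor P \to M$ corresponds to $\ev_P \tensor \id_M$ (up to the symmetry isomorphism), and the corresponding statement in $\mathcal D$ uses the evaluation map of $F(P)$, which by construction is obtained from $F(\ev_P)$. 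Since, as emphasized in the excerpt, dualizability depends only on the underlying homotopy 1-category, the higher coherences needed to promote this 1-categorical check to an $\infty$-categorical one are automatic.
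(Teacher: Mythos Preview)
Your proposal is correct and follows essentially the same approach as the paper. The paper's proof is extremely terse—it simply says that dualizability of $F(P)$ ``follows immediately from the definitions'' and that the second claim holds because ``by \cref{rslt:characterization-of-dualizable} both sides can be identified with $F(P)^\vee \tensor F(M)$''—so your version is a more careful unpacking of the same argument, including the bookkeeping step (that the constructed isomorphism agrees with the canonical comparison map) which the paper leaves implicit.
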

\begin{proof}
It follows immediately from the definitions that $F(P)$ is dualizable. The second claim follows because by \cref{rslt:characterization-of-dualizable} both sides can be identified with $F(P)^\vee \tensor F(M)$.
\end{proof}

\begin{lemma} \label{rslt:dualizable-objects-stable-under-finite-lim}
Let $\mathcal C$ be a closed symmetric monoidal stable $\infty$-category. Then the subcategory $\mathcal C_\dlb$ of dualizable objects is stable under finite (co)limits and retracts.
\end{lemma}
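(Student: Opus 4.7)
The plan is to reduce every case to the characterizations of dualizability in Lemma~\ref{rslt:characterization-of-dualizable}. The most convenient form is the combination of criterion~(iii) with the final assertion of that lemma: $P \in \mathcal C$ is dualizable if and only if the natural map $N \tensor P^\vee \to \IHom(P, N)$ is an equivalence for every $N \in \mathcal C$ (the ``only if'' direction is the cited consequence; the ``if'' direction follows by specializing to $N = P$ and invoking criterion~(iii)). Since $\mathcal C$ is stable, every finite limit and finite colimit is built from the zero object, finite direct sums (which coincide with finite products), and cofibers. Moreover $1[1]$ is invertible in $\mathcal C$ with inverse $1[-1]$, hence dualizable, so tensoring shows that shifts preserve dualizability; together with closure under cofibers this yields closure under fibers via $\fib(f) = \cofib(f)[-1]$. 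Thus it suffices to treat the zero object, finite direct sums, cofibers, and retracts.

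The zero object is self-dual. For a direct sum $P \oplus Q$ of dualizables, I would observe that $\tensor$ and $\IHom$ both distribute over finite direct sums in each variable (using stability to identify finite products and coproducts), identify $(P \oplus Q)^\vee \isom P^\vee \oplus Q^\vee$, and decompose the comparison map $(P \oplus Q) \tensor (P \oplus Q)^\vee \to \IHom(P \oplus Q, P \oplus Q)$ into four summands of the form $A \tensor B^\vee \to \IHom(B, A)$ with $A, B \in \{P, Q\}$; each is an equivalence by the characterization above applied to $P$ and $Q$. For a cofiber $R := \cofib(P \to Q)$ of dualizables, set $R^\vee := \fib(Q^\vee \to P^\vee)$. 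In a stable $\infty$-category $-\tensor N$ preserves finite limits, so $N \tensor R^\vee \isom \fib(N \tensor Q^\vee \to N \tensor P^\vee)$; likewise $\IHom(R, N) \isom \fib(\IHom(Q, N) \to \IHom(P, N))$ since $\IHom$ sends colimits in the first variable to limits. The comparison map on fibers is induced by two equivalences, so is itself an equivalence.

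The retract case is where the only actual computation lies, and I consider this the main obstacle, though it remains completely formal. Given $P \xto{i} Q \xto{r} P$ with $r \comp i = \id_P$ and $Q$ dualizable, dualizing gives $P^\vee \xto{r^\vee} Q^\vee \xto{i^\vee} P^\vee$ with $i^\vee \comp r^\vee = \id_{P^\vee}$. Naturality of the comparison map $\alpha_{(-)} \colon N \tensor (-)^\vee \to \IHom(-, N)$ in the dualized slot factors $\alpha_P$ through $\alpha_Q$ as $\alpha_P = \IHom(i, N) \comp \alpha_Q \comp (\id_N \tensor r^\vee)$, and the candidate $\beta := (\id_N \tensor i^\vee) \comp \alpha_Q^{-1} \comp \IHom(r, N)$ is verified to be a two-sided inverse by a short diagram chase using naturality of $\alpha_Q$ under the idempotent $i \comp r \in \End(Q)$ together with the splittings $i^\vee \comp r^\vee = \id$ and $r \comp i = \id$. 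This establishes dualizability of $P$ and completes the proof.
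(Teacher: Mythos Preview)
Your proof is correct and follows essentially the same approach as the paper: both rely on the characterization from \cref{rslt:characterization-of-dualizable} that $P$ is dualizable iff the natural map $N \tensor P^\vee \to \IHom(P, N)$ is an equivalence for all $N$, and then observe that this condition is closed under finite (co)limits and retracts in $P$ because both sides, as functors of $P$, preserve these. The paper compresses this into a single sentence (``both $P \tensor Q^\vee$ and $\IHom(Q, P)$ commute with finite (co)limits and retracts in $P$ and $Q$''), whereas you unpack each case separately; your retract argument via an explicit inverse $\beta$ is more hands-on than necessary (it suffices to note that a retract of an isomorphism in the arrow category is an isomorphism), but it is correct.
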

\begin{proof}
This follows easily from \cref{rslt:characterization-of-dualizable} by noting that both $P \tensor Q^\vee$ and $\IHom(Q, P)$ commute with finite (co)limits and retracts in $P$ and $Q$.
\end{proof}

Let us now come to the geometric setting. In the setting of categories of sheaves on geometric objects, one can usually characterize dualizability in terms of some ``perfectness'' condition. The same happens for our $\infty$-category of nuclear sheaves, so let us introduce the relevant definitions. We start with the notion of overconvergence, which naturally comes up when studying perfect sheaves:

\begin{definition}
Let $X$ be a strictly totally disconnected space.
\begin{defenum}
	\item We define
	\begin{align*}
		\D_\nuc(\pi_0(X),\Lambda) \subset \D_\solid(\pi_0(X),\Lambda)_{\omega_1} \subset \D(\pi_0(X)_\proet,\Lambda)
	\end{align*}
	in the obvious sense (e.g. view $\pi_0(X)$ as a strictly totally disconnected space by realizing it over some $\Spa C$).

	\item \label{def:map-to-pi-0} We denote by
	\begin{align*}
		\pi\colon X_\proet \to \pi_0(X)_\proet
	\end{align*}
	the natural map of sites.

	\item We denote $\Z_\ell(X) = \cts(X, \Z_\ell)$, which is a static $\Z_\ell$-Banach algebra. We further denote $\Lambda(X) := \Lambda \tensor_{\Z_\ell} \Z_\ell(X)$, which is a nuclear $\Z_\ell$-algebra.
	\end{defenum}
\end{definition}

\begin{lemma} \label{rslt:prpoerties-of-pi-0-push-pull}
Let $X$ be a strictly totally disconnected space.
\begin{lemenum}
	\item The functor $\pi_*\colon \D_\solid(X,\Lambda)_{\omega_1} \to \D_\solid(\pi_0(X),\Lambda)_{\omega_1}$ is bounded and preserves all small limits and colimits.

	\item The functor $\pi^*\colon \D_\solid(\pi_0(X),\Lambda)_{\omega_1} \injto \D_\solid(X,\Lambda)_{\omega_1}$ is $t$-exact and fully faithful and preserves all small colimits and countable limits.

	\item Both $\pi_*$ and $\pi^*$ preserve nuclear sheaves.

	\item \label{rslt:overconvergent-sheaves-on-std-space-are-modules} There is a natural equivalence of symmetric monoidal $\infty$-categories
	\begin{align*}
		\D_\nuc(\pi_0(X),\Lambda) = \D_\nuc(\Lambda(X)),
	\end{align*}
	where $\D_\nuc(\Lambda(X))$ denotes the $\infty$-category of nuclear $\Lambda(X)$-modules on $*_\proet$.
\end{lemenum}
\end{lemma}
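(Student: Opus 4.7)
The plan is to prove (i)--(iv) in order. The key geometric input is that every connected component of a strictly totally disconnected $X$ is a strictly local affinoid perfectoid space with trivial higher étale cohomology, so $\pi$ is close to an equivalence on étale sheaves; only the solid/pro-étale subtleties require genuine work, and they can be handled by the $\omega_1$-solid machinery of \cref{sec:w1-solid}.

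For (i), the main content is boundedness of $\pi_*$ on static $\omega_1$-solid sheaves. By \cref{rslt:compact-generators-of-w1-solid-sheaves}, every compact $\omega_1$-solid sheaf has the form $\varprojlim_n \mathcal P_n$ with $\mathcal P_n$ compact étale, and every static $\omega_1$-solid sheaf is a filtered colimit of such objects. The étale pushforward $\pi_* \mathcal P_n$ is concentrated in degree $0$ by vanishing of higher étale cohomology on strictly local components, while the countable limit contributes at most one additional degree of shift. This yields cohomological amplitude $\le 1$ on the heart. Preservation of limits is automatic for a right adjoint, and preservation of colimits follows from boundedness via the Postnikov argument used in \cref{rslt:pushforward-for-w1-solid-sheaves}.

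Part (ii) is largely formal. The functor $\pi^*$ is $t$-exact and preserves small colimits by its construction as a topos pullback. Fully faithfulness amounts to $\pi_* \pi^* = \id$, which we check on the compact generators $\Z_{\ell,\solid}[U]$ for basic $U \in \pi_0(X)_\proet$ by noting that sections of $\pi^*\mathcal M$ over $\pi^{-1}(U) \in X_\proet$ agree with those of $\mathcal M$ over $U$. Preservation of countable limits then follows from boundedness of $\pi_*$ via adjunction. For (iii), $\pi^*$ preserves étaleness tautologically and $\ell$-adic completeness by $t$-exactness and limit-preservation (after reducing to left-bounded objects), hence preserves Banach sheaves and, by commuting with filtered colimits, nuclear sheaves. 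For $\pi_*$ the analogous preservations follow from the equivalence $X_\et \simeq \pi_0(X)_\et$ combined with boundedness and commutation with countable limits from (i).

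For (iv), which is the main substantive step, view $\pi_0(X)$ as a profinite set realized over some $\Spa C$ and note that $\Z_\ell(X) = \cts(\pi_0(X),\Z_\ell)$, so $\Lambda(X) \cong \cts(\pi_0(X),\Lambda)$ as a nuclear $\Z_\ell$-algebra. The strategy is to match $\omega_1$-compact generators on both sides: on $\D_\nuc(\pi_0(X),\Lambda)$ these are the basic nuclear sheaves $\Lambda_\solid[U]$ for $U = \varprojlim_n U_n$ given by a sequence of clopen covers (\cref{rslt:nuclear-Lambda-modules-are-w1-compactly-generated}), and on $\D_\nuc(\Lambda(X))$ the corresponding objects are $\Lambda(X)$-modules of the form $\varprojlim_n \Lambda(X)/I_n$ for the ideals $I_n$ cut out by the clopen decompositions. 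Construct the comparison symmetric monoidal functor on compact generators and pass to $\Ind$-categories, then verify that nuclearity matches on both sides because trace-class maps are characterized in both settings by factoring through étale-mod-$\ell$ objects. The principal obstacle is the compatibility of symmetric monoidal structures: this should reduce to the tensor product formula $\Lambda_\solid[U] \tensor \Lambda_\solid[V] = \Lambda_\solid[U \cprod V]$ from the proof of \cref{rslt:stability-of-w1-solid-sheaves}, which on the module side corresponds to the tensor product $\Lambda(X)/I_n \tensor_{\Lambda(X)} \Lambda(X)/J_m = \Lambda(X)/(I_n + J_m)$.
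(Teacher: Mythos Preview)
Your treatment of (i)--(iii) is essentially in line with the paper, which mostly defers these points to earlier arguments and an external reference. One minor wrinkle in (i): you invoke preservation of filtered colimits to reduce boundedness to the compact generators, but preservation of \emph{derived} filtered colimits is usually deduced from boundedness rather than the other way around. The clean order is: first $\pi$ is qcqs so $\pi_*$ preserves filtered colimits of static sheaves in the heart; check the cohomological bound there on compact generators; then conclude preservation of all derived colimits via the Postnikov argument.

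The genuine gap is in (iv). You claim the $\omega_1$-compact generators of $\D_\nuc(\pi_0(X),\Lambda)$ are ``the basic nuclear sheaves $\Lambda_\solid[U]$'', but $\Lambda_\solid[U]$ for basic pro-étale $U$ is a compact object of $\D_\solid(\pi_0(X),\Lambda)_{\omega_1}$, not a basic nuclear sheaf: the latter are sequential colimits of such objects along trace-class maps (see \cref{rslt:properties-of-nuclear-subcategory}). The same misidentification occurs on the module side. More seriously, ``construct the comparison functor on compact generators and pass to $\Ind$'' does not produce a functor: matching objects by inspection does not yield a functor between $\infty$-categories, and without one you cannot even formulate the claim that nuclearity ``matches''.

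The paper instead uses the natural map of ringed sites $\alpha\colon (\pi_0(X)_\proet,\Z_\ell) \to (*_\proet,\Z_\ell(X))$, which immediately furnishes an adjoint pair $(\alpha^*,\alpha_*)$. The key computational trick is that precomposing $\alpha^*$ with $-\tensor_{\Z_\ell}\Z_\ell(X)$ gives the constant-sheaf pullback $s^*$ along $\pi_0(X)\to *$, so $\alpha^*$ can be computed explicitly on generators of the form $M\tensor_{\Z_\ell}\Z_\ell(X)$. From this one shows that $\alpha^*$ preserves right-bounded $\ell$-adically complete objects (hence Banach and nuclear objects), and then checks $\alpha_*\alpha^* = \id$ on nuclear objects by reducing modulo $\ell$ to the discrete case. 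Essential surjectivity follows by writing any nuclear sheaf as a colimit of bounded $\ell$-adically complete ones (via \cref{rslt:properties-of-nuclearization}) and observing that $\alpha^*$ is surjective onto étale sheaves. Your ``factoring through étale-mod-$\ell$'' heuristic is morally related to this reduction, but it only becomes a proof once the functor $\alpha^*$ and its completeness-preservation properties are in hand.
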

\begin{proof}
All of the claims commute with the forgetful functor along $\Z_\ell \to \Lambda$, so that we can assume $\Lambda = \Z_\ell$. The claims (i) and (ii) are easy, see e.g. \cite[Lemma 5.7]{mann-werner-simpson} and the arguments in \cref{rslt:pushforward-for-w1-solid-sheaves}. Claim (iii) follows immediately by checking on generators.

We now prove (iv). Consider the natural morphism $\alpha\colon (\pi_0(X)_\proet,\Z_\ell) \to (*_\proet,\Z_\ell(X))$ of ringed sites. Again, it is easy to see that $\alpha_*$ preserves colimits and countable limits of $\omega_1$-solid sheaves and hence restricts to a functor $\alpha_*\colon \D_\solid(\pi_0(X),\Z_\ell)_{\omega_1} \to \D_\solid(\Z_\ell(X))_{\omega_1}$. This restricted functor is $t$-exact (check on countable limits of qcqs étale sheaves, reducing to the case of étale sheaves, where it is clear) and preserves all small limits (this can be checked after applying the forgetful functor to $\D_\solid(\Z_\ell)_{\omega_1}$, where it is clear). It thus admits a left adjoint $\alpha^*\colon \D_\solid(\Z_\ell(X))_{\omega_1} \to \D_\solid(\pi_0(X),\Z_\ell)_{\omega_1}$. Note that precomposing $\alpha^*$ with the functor $- \tensor_{\Z_\ell} \Z_\ell(X)$ gives the functor $s^*\colon \D_\solid(\Z_\ell)_{\omega_1} \to \D_\solid(\pi_0(X),\Z_\ell)_{\omega_1}$, where $s\colon \pi_0(X)_\proet \to *_\proet$ is the natural projection; this allows us to compute $\alpha^*$. First of all it implies that $\alpha^*$ maps discrete $\Lambda(X)$-modules to étale $\Z_\ell$-sheaves on $\pi_0(X)$, because the discrete $\Lambda$-modules are generated under colimits by the objects $M \tensor_{\Z_\ell} \Z_\ell(X)$ for discrete $\Z_\ell$-modules $M$. We can similarly show that $\alpha^*$ preserves right-bounded $\ell$-adically complete modules: We know that $\alpha^*$ is right $t$-exact, so by the usual arguments (see the proof of \cref{rslt:IHom-tr-preserves-completeness}) we can reduce this claim to countable products of a set of compact generators. But compact generators can be chosen as $\prod_I \Z_\ell \tensor_{\Z_\ell} \Z_\ell(X)$ and they get sent via $\alpha^*$ to $s^* \prod_I \Z_\ell = \prod_I \Z_\ell$ (where on the right hand side we view it as a sheaf on $\pi_0(X)$).

We now claim that $\alpha^*$ preserves nuclearity and is fully faithful on nuclear modules. The first claim follows from the fact that for every basic nuclear $M \in \D_\nuc(\Z_\ell)$, $\alpha^*(M \tensor_{\Z_\ell} \Z_\ell(X)) = s^* M$ is nuclear. For the second claim, we need to show that $\alpha_* \alpha^* N = N$ for all nuclear $\Z_\ell(X)$-modules $N$. Since everything commutes with colimits we can again assume that $N = M \tensor_{\Z_\ell} \Z_\ell(X)$ for some basic nuclear $\Z_\ell$-module $M$. Writing $M$ as a colimit of bounded Banach modules we can further reduce to the case that $M$ is bounded and $\ell$-adically complete. Since both $\alpha^*$ and $\alpha_*$ preserve (right-bounded) $\ell$-adically complete objects, the claimed isomorphism can be checked after reducing modulo $\ell$. But on discrete objects it is easy to see that $\alpha_* \alpha^* = \id$ (use again the trick via $s^*$). We have now shown that $\alpha^*$ induces an embedding
\begin{align*}
	\alpha^*\colon \D_\nuc(\Z_\ell(X)) \injto \D_\nuc(\pi_0(X),\Z_\ell).
\end{align*}
It remains to show that $\alpha^*$ is essentially surjective, i.e. contains every $\mathcal M \in \D_\nuc(\pi_0(X),\Z_\ell)$. By writing $\mathcal M$ as the colimit of the nuclearizations of compact objects (see \cref{rslt:properties-of-nuclearization}) we can assume that $\mathcal M$ is bounded and $\ell$-adically complete. But $\alpha^*$ is clearly essentially surjective onto étale sheaves, hence every $\mathcal M/\ell^n\mathcal M$ lies in the image of $\alpha^*$, i.e. $\mathcal M/\ell^n\mathcal M = \alpha^*\alpha_*(\mathcal M/\ell^n\mathcal M)$. Since $\alpha^*$ preserves right-bounded $\ell$-adically complete objects we deduce $\mathcal M = \alpha^*\alpha_* \mathcal M$, as desired.
\end{proof}

\begin{definition}
Let $X$ be a small v-stack and $\mathcal M \in \D_\nuc(X,\Lambda)$. We say that $\mathcal M$ is \emph{overconvergent} if for every map $f\colon Y \to X$ from a strictly totally disconnected space $Y$ the pullback $f^* \mathcal M$ lies in the essential image of the embedding
\begin{align*}
	\pi^* \D_\nuc(\Lambda(Y)) \injto \D_\nuc(Y,\Lambda)
\end{align*}
from \cref{rslt:prpoerties-of-pi-0-push-pull}. We denote by
\begin{align*}
	\D_\nuc(X,\Lambda)^\oc \subset \D_\nuc(X,\Lambda)
\end{align*}
the full subcategory spanned by the overconvergent sheaves.
\end{definition}

\begin{lemma}
\begin{lemenum}
	\item Let $X$ be a small v-stack. Then $\D_\nuc(X,\Lambda)^\oc$ is stable under the symmetric monoidal structure, all colimits, all pullbacks, all $\ell$-adic completions and under the forgetful and base-change functors for any map $\Lambda \to \Lambda'$ of nuclear $\Z_\ell$-algebras.

	\item The assignment $X \mapsto \D_\nuc(X,\Lambda)^\oc$ defines a hypercomplete sheaf of $\infty$-category on the v-site of small v-stacks.

	\item Let $X$ be a small v-stack and $\mathcal M \in \D_\nuc(X,\Lambda)$. Then $\mathcal M$ is overconvergent if and only if for every pro-étale map $Y' \to Y$ of strictly totally disconnected spaces over $X$ such that $\pi_0(Y') \isoto \pi_0(Y)$ is an isomorphism, the induced map $\Gamma(Y, \mathcal M) \isoto \Gamma(Y', \mathcal M)$ is an isomorphism.

	\item \label{rslt:dual-of-compact-is-overconvergent} Let $X$ be an $\ell$-bounded spatial diamond and $\mathcal P \in \D_\solid(X,\Lambda)_{\omega_1}$ compact. Then $\IHom_\solid(\mathcal P, \Lambda)$ is nuclear and overconvergent.
\end{lemenum}
\end{lemma}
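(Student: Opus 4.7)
My plan is to reduce through a series of standard steps to an explicit computation with a single generator.

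Step 1 (reduce to a generator). By \cref{rslt:compact-generators-of-w1-solid-Lambda-sheaves} the compact objects of $\D_\solid(X,\Lambda)_{\omega_1}$ are built under finite (co)limits and retracts from $\Lambda_\solid[U] := \Z_{\ell,\solid}[U] \tensor_{\Z_\ell} \Lambda$ with $U = \varprojlim_n U_n$ basic. The functor $\IHom_\solid(-,\Lambda)$ turns finite colimits in the first argument into finite limits, and in our stable ambient category finite limits are obtained from finite colimits by shifts; since the nuclear and overconvergent subcategories are closed under all small colimits and retracts (\cref{rslt:properties-of-nuclear-subcategory} and part (i) of the present lemma), they are in particular closed under finite (co)limits and retracts, and it suffices to treat $\mathcal P = \Lambda_\solid[U]$.

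Step 2 (reduce to $\Z_\ell$-coefficients and Banach $\Lambda$). The base-change adjunction $(- \tensor_{\Z_\ell} \Lambda) \dashv \mathrm{Forget}$ yields the identification $\IHom_{\solid,\Lambda}(\Lambda_\solid[U], \Lambda) = \IHom_{\solid,\Z_\ell}(\Z_{\ell,\solid}[U], \Lambda)$ with $\Lambda$-structure inherited from the target, and I will check nuclearity and overconvergence of this as a $\Z_\ell$-sheaf: for nuclearity this is the definition, and for overconvergence it follows from the compatibility of the forgetful functor with $\pi^*$ and its right adjoint (using \cref{rslt:def-and-compatibility-of-Lambda-base-change-and-forget} and conservativity of forget). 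Writing $\Lambda = \varinjlim_i \Lambda_i$ as a filtered colimit of Banach $\Z_\ell$-modules (since $\Lambda$ is nuclear) and using that $\Z_{\ell,\solid}[U]$ is compact with compacts stable under tensor product by \cref{rslt:stability-of-w1-solid-sheaves}, one sees that $\IHom_\solid(\Z_{\ell,\solid}[U], -)$ preserves filtered colimits, reducing to the case of $\Lambda$ Banach.

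Step 3 (computation and overconvergence). For Banach $\Lambda$, $\ell$-adic completeness and preservation of limits by $\IHom_\solid$ in the second argument give
\begin{align*}
    \mathcal N := \IHom_\solid(\Z_{\ell,\solid}[U], \Lambda) = \varprojlim_m \IHom_\solid(\Z_{\ell,\solid}[U], \Lambda/\ell^m\Lambda).
\end{align*}
By induction along the fiber sequences $\Lambda/\ell \to \Lambda/\ell^m \to \Lambda/\ell^{m-1}$, each $\Lambda/\ell^m\Lambda$ is étale and $\ell^m$-torsion, and for such a torsion étale target the Breen-resolution argument from the proof of \cref{rslt:compact-generators-of-w1-solid-sheaves} gives
\begin{align*}
    \IHom_\solid(\Z_{\ell,\solid}[U], \Lambda/\ell^m\Lambda) = \varinjlim_n j_{n*}(\restrict{(\Lambda/\ell^m\Lambda)}{U_n}),
\end{align*}
where $j_n\colon U_n \to X$ is the étale structure map; each term is étale by \cite[Corollary 16.8.(ii)]{etale-cohomology-of-diamonds}. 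Hence $\mathcal N/\ell^m \mathcal N$ is étale for every $m$, so $\mathcal N$ is a Banach sheaf and thus nuclear. For overconvergence, every étale sheaf on a strictly totally disconnected space is overconvergent (it descends along $\pi$), and by part (i) of the present lemma overconvergent sheaves are closed under $\ell$-adic completions, so the Banach sheaf $\mathcal N$, being the $\ell$-adic completion of its étale layers, is overconvergent as well. The main technical step is the colimit-against-limit swap in this last display, expressing $\IHom_\solid$ against the $\Pro$-object $\Z_{\ell,\solid}[U] = \varprojlim_n \Z_\ell[U_n]$ as a filtered colimit of étale pushforwards; this is precisely the Breen-resolution input already exploited in the description of the compact generators, and no new difficulty beyond tracking $\ell$-adic completions is expected.
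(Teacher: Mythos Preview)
Your argument for (iv) is correct but proceeds differently from the paper. The paper makes the one-line identification $\IHom_\solid(\Lambda_\solid[U],\Lambda)=f_*\Lambda$ for the pro-étale structure map $f\colon U\to X$ (from $\mathcal P=f_\natural\Lambda$ and the adjunction $f_\natural\dashv f^*$), then reads off nuclearity from the fact that such a pushforward preserves Banach and hence nuclear sheaves (\cref{rslt:pushforward-for-w1-solid-sheaves}), and proves overconvergence by computing $\Gamma(V,f_*\Lambda)=\Gamma(\pi_0(V)\cprod_{\pi_0(X)}\pi_0(U),\Lambda)$ directly and invoking part (iii). Your route---reducing to Banach $\Lambda$ and unpacking the mod-$\ell^m$ layers as colimits $\varinjlim_n j_{n*}$ of étale pushforwards---computes the same object layer by layer; your displayed formula is in fact just $f_*(\Lambda/\ell^m)$ written out on sections, so no genuinely new input beyond the paper's is needed (the ``Breen-resolution argument'' you cite is really just the elementary fact that sections of étale sheaves turn cofiltered limits of spaces into filtered colimits). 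The paper's identification is cleaner and avoids the Banach reduction entirely, while your approach makes the Banach structure of the dual more explicit.

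One small wrinkle: your final clause ``$\mathcal N$, being the $\ell$-adic completion of its étale layers, is overconvergent'' invokes closure under $\ell$-adic completion from (i), but literally $\mathcal N=\varprojlim_m\mathcal N/\ell^m$ is a countable limit of overconvergent sheaves, not the completion of a single one. The repair is immediate: either observe that overconvergent sheaves are closed under countable limits since $\pi^*$ preserves them (\cref{rslt:prpoerties-of-pi-0-push-pull}), or note that $\mathcal N=\widehat{\mathcal N_\et}$ is genuinely the $\ell$-adic completion of its étalification and then apply (i).
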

\begin{proof}
Part (i) follows easily from \cref{rslt:prpoerties-of-pi-0-push-pull}. Part (ii) can be checked on the v-site of strictly totally disconnected spaces, where it follows from the v-descent of nuclear sheaves (see \cref{rslt:v-descent-for-nuclear-Lambda-modules}) on profinite sets.

We now prove (iii), so let $X$ and $\mathcal M$ be given. By (i) we can assume that $\Lambda = \Z_\ell$. We first show that the given condition on $\Gamma(Y,\mathcal M) \isoto \Gamma(Y',\mathcal M)$ implies overconvergence, so assume that $\mathcal M$ satisfies this condition. We immediately reduce to the case that $X$ is strictly totally disconnected, in which case we want to show that the natural map $\pi^* \pi_* \mathcal M \isoto \mathcal M$ is an isomorphism. Note that for every pro-étale $U \to X$ we have
\begin{align*}
	\Gamma(U, \pi^* \pi_* \mathcal M) = \Gamma(\pi_0(U), \pi_* \mathcal M) = \Gamma(X \cprod_{\pi_0(X)} \pi_0(U), \mathcal M)
\end{align*}
(the first identity holds for all $\omega_1$-solid sheaves $\mathcal N$ in place of $\mathcal M$, which can be checked on compact generators and thus on qcqs étale sheaves, where it is obvious). But the map $U \to X \cprod_{\pi_0(X)} \pi_0(U)$ is clearly an isomorphism on $\pi_0$, so we deduce $\Gamma(U, \pi^* \pi_* \mathcal M) = \Gamma(U, \mathcal M)$, which implies $\pi^* \pi_* \mathcal M = \mathcal M$. By reversing the argument, we see that if $\mathcal M$ is overconvergent then $\Gamma(U, \mathcal M)$ depends only on $\pi_0(U)$. This proves (iii).

It remains to prove (iv), so let $X$ and $\mathcal P$ be given. By \cref{rslt:compact-generators-of-w1-solid-Lambda-sheaves,rslt:compact-generators-of-w1-solid-sheaves} we can assume that $\mathcal P = \Z_{\ell,\solid}[U] \tensor_{\Z_\ell} \Lambda$ for some basic $U = \varprojlim_n U_n$ in $X_\proet$. If we denote $f\colon U \to X$ the structure map then $\mathcal P = f_\natural \Lambda$, hence $\IHom_\solid(\mathcal P, \Lambda) = f_* \Lambda$. By \cref{rslt:pushforward-for-w1-solid-sheaves} this is indeed nuclear. It remains to show that this sheaf is overconvergent, for which we can w.l.o.g. assume that $X$ is strictly totally disconnected. Then for every affinoid pro-étale $V \to X$ we have
\begin{align*}
	\Gamma(V, f_* \Lambda) = \Gamma(V \cprod_X U, \Lambda) = \Gamma(\pi_0(V \cprod_X U), \Lambda) = \Gamma(\pi_0(V) \cprod_{\pi_0(X)} \pi_0(U), \Lambda)
\end{align*}
(where in the last identity we use e.g. \cite[Lemma 5.8]{mann-werner-simpson}). By (iii) this implies that $f_* \Lambda$ is indeed overconvergent, as desired.
\end{proof}

With a good understanding of overconvergent sheaves at hand, we can now come to perfect and dualizable sheaves. We get the following:

\begin{definition}
Let $X$ be a small v-stack. A sheaf $\mathcal P \in \D_\nuc(X,\Lambda)$ is called \emph{perfect} if there is a v-cover $(f_i\colon Y_i \surjto X)_i$ by strictly totally disconnected spaces $Y_i$ and for each $Y_i$ a dualizable $\Lambda(Y_i)$-module $P_i \in \D_\nuc(\Lambda(Y_i))$ such that $f_i^*\mathcal P \isom \pi_i^* P_i$, where $\pi_i\colon Y_i \to \pi_0(Y_i)$ is the map from \cref{def:map-to-pi-0}. We denote by
\begin{align*}
	\D_\nuc(X,\Lambda)_\perf \subset \D_\nuc(X,\Lambda)^\oc
\end{align*}
the full subcategory spanned by the perfect sheaves.
\end{definition}

\begin{remark}
We do not know if a dualizable $\Lambda(Y_i)$-module is automatically perfect in the sense that it is generated under retracts and finite colimits from $\Lambda(Y_i)$. By \cite[Proposition 9.3]{condensed-complex-geometry} being dualizable is equivalent to being compact in $\D_\nuc(\Lambda(Y_i))$ and by the other results in \cite[\S9]{condensed-complex-geometry} one should be able to show that this is equivalent to being perfect in all cases of practical interest. For example, it holds if $\Lambda$ is ind-compact as a module over some $\ell$-adically complete nuclear $\Z_\ell$-algebra $\Lambda_0$: First reduce to the case $\Lambda = \Lambda_0$, in which case $\Hom(\Lambda,-)$ is easily seen to be conservative on compact objects -- because these are $\ell$-adically complete -- which implies that all compact objects are perfect.
\end{remark}

\begin{proposition} \label{rslt:properties-of-perfect-sheaves}
\begin{propenum}
	\item On every small v-stack $X$ we have $\D_\nuc(X,\Lambda)_\perf = \D_\nuc(X,\Lambda)_\dlb$, i.e. the perfect sheaves are precisely the dualizable objects.

	\item The assignment $X \mapsto \D_\nuc(X,\Lambda)_\perf$ defines a hypercomplete sheaf of $\infty$-categories on the v-site of small v-stacks.
\end{propenum}
\end{proposition}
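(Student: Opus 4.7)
The plan is to prove (i) by two inclusions and then deduce (ii) formally from (i) combined with hypercomplete v-descent.

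For $\D_\nuc(X,\Lambda)_\perf \subset \D_\nuc(X,\Lambda)_\dlb$, let $\mathcal P$ be perfect, witnessed by a v-cover $(f_i\colon Y_i \to X)_i$ by strictly totally disconnected spaces and dualizable $P_i \in \D_\nuc(\Lambda(Y_i))$ with $f_i^* \mathcal P \cong \pi_i^* P_i$. By \cref{rslt:overconvergent-sheaves-on-std-space-are-modules} and \cref{rslt:dualizable-preserved-by-symm-monoidal-functor}, each $\pi_i^* P_i$ is dualizable in $\D_\nuc(Y_i,\Lambda)$. By \cref{rslt:v-descent-for-nuclear-Lambda-modules}, $\D_\nuc(X,\Lambda)$ is the limit (in presentable symmetric monoidal $\infty$-categories) of the Čech nerve. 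Dualizability is reflected along such a limit: if the image of an object is dualizable at each level then the duality data can be assembled into a single dual with evaluation and coevaluation maps in the limit. Hence $\mathcal P$ is dualizable.

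For the converse $\D_\nuc(X,\Lambda)_\dlb \subset \D_\nuc(X,\Lambda)_\perf$, since dualizability is preserved by the symmetric monoidal pullback functors, we may pass to a v-cover by strictly totally disconnected spaces and reduce to showing that any dualizable $\mathcal P \in \D_\nuc(Y,\Lambda)$ on a strictly totally disconnected $Y$ is overconvergent. Granting this, the fully faithful strong symmetric monoidal embedding $\pi^*\colon \D_\nuc(\Lambda(Y)) \isoto \D_\nuc(Y,\Lambda)^\oc$ of \cref{rslt:overconvergent-sheaves-on-std-space-are-modules}, with image closed under tensor product, reflects dualizability, so $\mathcal P = \pi^* P$ for a dualizable $P \in \D_\nuc(\Lambda(Y))$, which is the definition of perfect.

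The heart of the argument is thus the overconvergence claim. The plan is to exploit \cref{rslt:dual-of-compact-is-overconvergent}: duals of compact $\omega_1$-solid sheaves are already overconvergent, and overconvergent sheaves are stable under colimits and retracts. Since $\mathcal P^\vee \tensor -$ is right adjoint to $\mathcal P \tensor -$ (hence preserves all colimits), both $\mathcal P$ and $\mathcal P^\vee$ are $\omega_1$-compact in $\D_\nuc(Y,\Lambda)$. Using the description of $\omega_1$-compact generators from \cref{rslt:nuclear-Lambda-modules-are-w1-compactly-generated} together with the identification $\mathcal P^\vee = \IHom(\mathcal P, \Lambda)$ from \cref{rslt:characterization-of-dualizable}, one presents $\mathcal P^\vee$ as a retract of a countable colimit of nuclearizations of duals $\IHom_\solid(\mathcal Q, \Lambda)$ for compact $\mathcal Q \in \D_\solid(Y,\Lambda)_{\omega_1}$; each of these is overconvergent by \cref{rslt:dual-of-compact-is-overconvergent}, so $\mathcal P^\vee$ is overconvergent. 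Applying the same argument to $\mathcal P^\vee$ (again dualizable) and invoking reflexivity $\mathcal P = (\mathcal P^\vee)^\vee$ from \cref{rslt:characterization-of-dualizable} yields that $\mathcal P$ is overconvergent. The main obstacle will be the explicit passage from the abstract $\omega_1$-compactness of $\mathcal P^\vee$ to a presentation in terms of the compact $\omega_1$-solid generators to which \cref{rslt:dual-of-compact-is-overconvergent} directly applies; this requires a careful use of the nuclearization functor (via \cref{rslt:properties-of-nuclearization,rslt:nuclearization-for-Lambda-modules-preserves-colimits}) and the compatibility of internal hom in $\D_\nuc$ and $\D_\solid$ on such generators.

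For (ii), by (i) the presheaf $X \mapsto \D_\nuc(X,\Lambda)_\perf$ is the full subpresheaf of dualizable objects in $\D_\nuc(-,\Lambda)$. Since dualizability is preserved by pullbacks (being a property formulated in the symmetric monoidal structure) and reflected along v-covers by the limit argument from the first inclusion, and since $\D_\nuc(-,\Lambda)$ is a hypercomplete v-sheaf (\cref{rslt:v-descent-for-nuclear-Lambda-modules}), the same holds for the subpresheaf of dualizable objects.
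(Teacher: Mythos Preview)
Your overall strategy matches the paper's, and the reductions (descent of dualizability, reduction to strictly totally disconnected $Y$, reflection of dualizability along the fully faithful symmetric monoidal $\pi^*$) are all fine. The gap is in the overconvergence step, specifically in the sentence where you ``present $\mathcal P^\vee$ as a retract of a countable colimit of nuclearizations of duals $\IHom_\solid(\mathcal Q, \Lambda)$ for compact $\mathcal Q$''. No such presentation is available: the $\omega_1$-compact generators of $\D_\nuc(Y,\Lambda)$ supplied by \cref{rslt:nuclear-Lambda-modules-are-w1-compactly-generated} are basic nuclear sheaves, not solid duals of compact objects, and there is no mechanism in the paper relating the two. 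So \cref{rslt:dual-of-compact-is-overconvergent} cannot be invoked the way you propose.

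The fix, which is what the paper does, is to notice that you are underselling the compactness you actually have. On a strictly totally disconnected $Y$ the functor $\Gamma(Y,-)$ preserves all small colimits of nuclear sheaves (since $\Z_\ell$ is compact in $\D_\solid(Y,\Z_\ell)_{\omega_1}$ and the inclusion of nuclear into $\omega_1$-solid sheaves preserves colimits). Hence $\Hom(\mathcal P,-)=\Gamma(Y,\mathcal P^\vee\tensor -)$ preserves all colimits, so $\mathcal P$ is genuinely compact in $\D_\nuc(Y,\Lambda)$, not merely $\omega_1$-compact. Because nuclearization preserves colimits (\cref{rslt:nuclearization-for-Lambda-modules-preserves-colimits}), the inclusion $\D_\nuc(Y,\Lambda)\injto\D_\solid(Y,\Lambda)_{\omega_1}$ sends compact objects to compact objects, so $\mathcal P$ and $\mathcal P^\vee$ are compact in $\D_\solid(Y,\Lambda)_{\omega_1}$. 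Now \cref{rslt:dual-of-compact-is-overconvergent} applies \emph{directly} to $\mathcal P^\vee$: the sheaf $\IHom_\solid(\mathcal P^\vee,\Lambda)$ is nuclear and overconvergent, and since it is already nuclear it agrees with the nuclear internal hom, i.e.\ with $\mathcal P^{\vee\vee}=\mathcal P$. No colimit presentation is needed.
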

\begin{proof}
Let us first argue why the assignment $X \mapsto \D_\nuc(X,\Lambda)_\dlb$ is a hypercomplete v-sheaf. To see this, let $f_\bullet\colon Y_\bullet \to X$ be any v-hypercover of small v-stacks. First observe that dualizable objects are stable under pullback by \cref{rslt:dualizable-preserved-by-symm-monoidal-functor}. Now suppose we have some $\mathcal P \in \D_\nuc(X,\Lambda)$ such that all $f_n^* \mathcal P$ are dualizable; we need to show that $\mathcal P$ is dualizable. Given any coCartesian section $\mathcal M_\bullet \in \D_\nuc(Y_\bullet,\Lambda)$, it follows immediately from \cref{rslt:dualizable-preserved-by-symm-monoidal-functor} that $\IHom(\mathcal P_\bullet, \mathcal M_\bullet)$ is again a coCartesian section in $\D_\nuc(Y_\bullet,\Lambda)$. If $\mathcal M \in \D_\nuc(X,\Lambda)$ is the sheaf corresponding to $\mathcal M_\bullet$, it follows that $\IHom(\mathcal P, \mathcal M)$ is the sheaf corresponding to $\IHom(\mathcal P_\bullet, \mathcal M_\bullet)$. In particular, $f_0^* \IHom(\mathcal P, \mathcal M) = \IHom(f_0^* \mathcal P, f_0^* \mathcal M)$. Now \cref{rslt:characterization-of-dualizable} lets us easily deduce that $\mathcal P$ is dualizable as desired.

Now suppose that $X$ is a strictly totally disconnected space and fix any $\mathcal P \in \D_\nuc(X,\Lambda)$. Assume that $\mathcal P$ is dualizable. Then $\Hom(\mathcal P, -) = \Gamma(X, \IHom(\mathcal P, -)) = \Gamma(X, \mathcal P^\vee \tensor -)$ by \cref{rslt:characterization-of-dualizable}, which preserves all small colimits. Hence $\mathcal P$ is compact in $\D_\nuc(X,\Lambda)$ and since the nuclearization functor preserves small colimits, $\mathcal P$ is also compact in $\D_\solid(X,\Lambda)_{\omega_1}$. Applying the same argument to $\mathcal P^\vee$ and using that $\mathcal P = \mathcal P^{\vee\vee}$ we deduce from \cref{rslt:dual-of-compact-is-overconvergent} that $\mathcal P$ and $\mathcal P^\vee$ are overconvergent and hence lie in the essential image of $\pi^*$. Since $\pi^*$ is fully faithful, we deduce immediately from the definition of dualizable objects that $\mathcal P$ is dualizable as an object of $\D_\nuc(\Lambda(X))$. Altogether we see that if $\mathcal P$ is dualizable then it is of the form $\mathcal P = \pi^* P$ for some dualizable nuclear $\Lambda(X)$-module $P$. The converse of this statement is obviously true as well. By combining this observation with v-descent for nuclear modules we easily deduce (i) and (ii).
\end{proof}

\section{Relatively Dualizable Sheaves} \label{sec:reldual}

Fix a prime $\ell \ne p$ and a nuclear $\Z_\ell$-algebra $\Lambda$. In the previous section we studied dualizable sheaves and identified them with the perfect ones. We now want to introduce a more general version of \emph{relatively dualizable} sheaves. This concept is not new: It is known as universally locally acyclic sheaves in \cite{fargues-scholze-geometrization} (for discrete $\Lambda$) and has also previously been studied in the realm of algebraic geometry. Although the original motivation and intuition for these objects comes from geometry, Lu-Zheng \cite{lu-zhen-ula} have recently found an abstract way of describing them, which was also adopted in \cite[Theorem IV.2.23]{fargues-scholze-geometrization}. As we will see in the following, this abstract definition can be carried out with great results in any 6-functor formalism and in particular produces an extremely powerful tool to study smoothness (see \cref{sec:smoothness}). On top of that, in the realm of representation theory the relatively dualizable sheaves will be precisely the admissible representations (see \cref{rslt:properties-of-admissible-reps}) which makes them very interesting for applications to the Langlands program.

Without further ado, let us start with the definition of relatively dualizable sheaves. It relies on the following magical 2-category (cf. \cite[\S IV.2.3.3]{fargues-scholze-geometrization}):

\begin{definition} \label{def:magical-2-category}
Given any small v-stack $S$ we denote by $\mathcal C_S$ the following 2-category: The objects of $\mathcal C_S$ are the $\ell$-fine maps $X \to S$. For any two objects $X, Y \to S$ in $\mathcal C_S$ we define the category $\Fun_{\mathcal C_S}(X, Y)$ as
\begin{align*}
	\Fun_{\mathcal C_S}(X, Y) := \D_\nuc(Y \cprod_S X, \Lambda),
\end{align*}
where on the right-hand side we implicitly take the underlying 1-category of $\D_\nuc$. Given three objects $X, Y, Z \to S$ in $\mathcal C_S$, the composition functor
\begin{align*}
	\Fun_{\mathcal C_S}(Y, Z) \cprod \Fun_{\mathcal C_S}(X, Y) \to \Fun_{\mathcal C_S}(X, Z)
\end{align*}
is defined to be the functor
\begin{align*}
	\D_\nuc(Z \cprod_S Y,\Lambda) \cprod \D_\nuc(Y \cprod_S X,\Lambda) &\to \D_\nuc(Z \cprod_S X,\Lambda),\\
	(\mathcal N, \mathcal M) & \mapsto \mathcal N \star \mathcal M := \pi_{13!}(\pi_{12}^* \mathcal N \tensor \pi_{23}^* \mathcal M),
\end{align*}
where $\pi_{ij}$ denote the various projections of $Z \cprod_S Y \cprod_S X$. It follows from the projection formula that $\mathcal C_S$ is indeed a 2-category. For every $X \to S$ in $\mathcal C_S$ the identity functor on $X$ is given by $\Delta_!\Lambda$, where $\Delta\colon X \to X \cprod_S X$ is the diagonal.
\end{definition}

We also recall the definition of adjoint morphisms in a 2-category. Applied to the 2-category of categories, this recovers the usual notion of adjoint functors.

\begin{definition}
Let $\mathcal C$ be a 2-category. Then a morphism $f\colon X \to Y$ in $\mathcal C$ is \emph{left adjoint} to a morphism $g\colon Y \to X$ if there are a \emph{unit} $\varepsilon\colon \id_X \to gf$ and a \emph{counit} $\eta\colon fg \to \id_Y$ such that the composites
\begin{align*}
	f \xto{f\varepsilon} fgf \xto{\eta f} f, \qquad g \xto{\varepsilon g} gfg \xto{g\eta} g
\end{align*}
are the identity. In this case $g$ is uniquely determined up to unique isomorphism.
\end{definition}

\begin{example} \label{ex:dualizable-equiv-adjoint-functor}
The following example is a very enlightening special case of what is to come: Let $\mathcal D$ be a symmetric monoidal $\infty$-category. We associate to it the 2-category $\mathcal C$ which has only one object $*$ such that $\Fun_{\mathcal C}(*,*) = \D$ (viewed as the underlying 1-category) and such that the composition is given by the tensor product. Then an object $P \in \mathcal D$ is dualizable if and only if it is a left adjoint when viewed as a morphism $* \to *$ in $\mathcal C$. In this case the dual of $P$ is its right adjoint. In fact, the evaluation and coevaluation map for $P$ translate directly to the counit and unit map for the corresponding adjunction.
\end{example}

\begin{remark}
It may seem a bit awkward that in the definition of $\mathcal C_S$ we only work with the 1-categorical version of $\D_\nuc$, thereby throwing away the $\infty$-enrichment (and in particular any sensible notion of limits and colimits). One may also attempt to construct $\mathcal C_S$ as an $(\infty,2)$-category, but this seems hard to do. However, similar to how dualizable objects in a symmetric monoidal $\infty$-category only depend on the underlying 1-category (and still satisfy nice properties in the $\infty$-enrichment, see \cref{rslt:dualizable-objects-stable-under-finite-lim}), for all our applications it is completely sufficient to work with the 2-category (instead of a potential $(\infty,2)$-category) $\mathcal C_S$. This is further illustrated by \cref{ex:dualizable-equiv-adjoint-functor}.
\end{remark}

With the above preparations at hand, we can finally come to the definition of relatively dualizable nuclear sheaves:

\begin{definition}
Let $f\colon X \to S$ be an $\ell$-fine map of small v-stacks. A sheaf $\mathcal P \in \D_\nuc(X,\Lambda)$ is called \emph{$f$-dualizable} if it is a left adjoint when viewed as a morphism $X \to S$ in $\mathcal C_S$. We denote by
\begin{align*}
	\D_\nuc(X,\Lambda)_{\dlb_f} \subset \D_\nuc(X,\Lambda)
\end{align*}
the full subcategory spanned by the $f$-dualizable sheaves.
\end{definition}

For example, if $X = S$ and $f = \id_S$ then by \cref{ex:dualizable-equiv-adjoint-functor} an $\id$-dualizable sheaf on $S$ is the same as a dualizable sheaf on $S$. Somewhat surprisingly, it turns out that relatively dualizable objects have very similar formal properties as dualizable objects, which we collect in the following. The results tend to get rather convoluted due to many pullback and shriek functors flying around -- we encourage the reader in each statement to first try to understand the case that all geometric maps are the identity, in which case one always obtains a basic property of dualizable sheaves like the ones proved in \cref{sec:perf}.

\begin{definition}
Let $f\colon X \to S$ be an $\ell$-fine map of small v-stacks. Then for every $\mathcal M \in \D_\nuc(X,\Lambda)$ we denote
\begin{align*}
	D_f(\mathcal M) := \IHom(\mathcal M, f^! \Lambda)
\end{align*}
and call it the \emph{$f$-dual} of $\mathcal M$.
\end{definition}

\begin{proposition} \label{rslt:characterization-of-f-dualizable-sheaves}
Let $f\colon X \to S$ be an $\ell$-fine map of small v-stacks and $\mathcal P \in \D_\nuc(X,\Lambda)$. Then the following are equivalent:
\begin{propenum}
	\item $\mathcal P$ is $f$-dualizable.

	\item \label{rslt:simple-iso-criterion-for-f-dualizable-sheaves} The natural map
	\begin{align*}
		\pi_1^* D_f(\mathcal P) \tensor \pi_2^* \mathcal P \isoto \IHom(\pi_1^* \mathcal P, \pi_2^! \mathcal P)
	\end{align*}
	is an isomorphism, where $\pi_1, \pi_2\colon X \cprod_S X \to X$ are the two projections.

	\item \label{rslt:functor-isomorphisms-for-f-dualizable-sheaves} For every map $g\colon S' \to S$ of small v-stacks with associated pullback square
	\begin{center}\begin{tikzcd}
		X' \arrow[r,"g'"] \arrow[d,"f'"] & X \arrow[d,"f"]\\
		S' \arrow[r,"g"] & S
	\end{tikzcd}\end{center}
	and base-change $\mathcal P' := g'^* \mathcal P$ the following natural maps of functors are isomorphisms:
	\begin{align*}
		D_{f'}(\mathcal P') \tensor f'^* &\isoto \IHom(\mathcal P', f'^!),\\
		g'^* \IHom(\mathcal P, f^!) &\isoto \IHom(\mathcal P', f'^! g^*).
	\end{align*}
	In particular the natural map $g'^* D_f(\mathcal P) \isoto D_{f'}(g'^* \mathcal P)$ is an isomorphism.
\end{propenum}
If this is the case then also $D_f(\mathcal P)$ is $f$-dualizable and the natural map $\mathcal P \isoto D_f(D_f(\mathcal P))$ is an isomorphism.
\end{proposition}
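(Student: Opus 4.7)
The plan is to prove (i) $\iff$ (iii) via the 2-categorical adjunction unpacked using the 6-functor formalism, then (iii) $\implies$ (ii) as a special case with (ii) $\implies$ (i) by explicit construction of the unit and counit, and finally the moreover statement by exploiting a natural 1-opposite symmetry of $\mathcal C_S$. First I would explicitly compute composition in $\mathcal C_S$: for $\mathcal P\colon X \to S$, $\mathcal M\colon T \to X$ (i.e.\ $\mathcal M \in \D_\nuc(X \cprod_S T,\Lambda)$), and $\mathcal N\colon T \to S$, one gets $\mathcal P \star \mathcal M = f_{T!}(\mathcal P_T \tensor \mathcal M)$ and, for a candidate right adjoint $\mathcal Q\colon S \to X$, $\mathcal Q \star \mathcal N = \mathcal Q_T \tensor f_T^* \mathcal N$, where $f_T\colon X_T := X \cprod_S T \to T$ is the base change and $\mathcal P_T, \mathcal Q_T$ are the pullbacks along $g'\colon X_T \to X$. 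Combining the $f_{T!} \dashv f_T^!$ and $\IHom$ adjunctions, the required bijection $\Hom(\mathcal P \star \mathcal M, \mathcal N) = \Hom(\mathcal M, \mathcal Q \star \mathcal N)$ becomes the functorial isomorphism $\mathcal Q_T \tensor f_T^* \mathcal N \isoto \IHom(\mathcal P_T, f_T^! \mathcal N)$. For (i) $\implies$ (iii), proper base change ensures that if $\mathcal P \dashv \mathcal Q$ in $\mathcal C_S$, then $g'^* \mathcal P \dashv g'^* \mathcal Q$ in $\mathcal C_{S'}$ for any $g\colon S' \to S$; by uniqueness of right adjoints $g'^* \mathcal Q = D_{f'}(\mathcal P')$, and the displayed isomorphism yields the first iso in (iii), while the second iso follows from the first applied to both $f$ and $f'$ together with $g'^* f^* = f'^* g^*$. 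Conversely, (iii) applied over each $T \to S$ assembles into the 2-adjunction giving (i).

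For (ii) $\iff$ (iii): the direction (iii) $\implies$ (ii) is the special case $g = f$ (so $f' = \pi_1$, $g' = \pi_2$, $\mathcal P' = \pi_2^* \mathcal P$) evaluated at $\mathcal N = \mathcal P$, using $\pi_2^* D_f(\mathcal P) = D_{\pi_1}(\pi_2^* \mathcal P)$ from the base-change part. For (ii) $\implies$ (i), I would construct the counit $f_!(\mathcal P \tensor D_f(\mathcal P)) \to \Lambda$ from the tautological evaluation $\mathcal P \tensor D_f(\mathcal P) \to f^! \Lambda$ composed with $f_! f^! \to \id$, and the unit $\Delta_! \Lambda \to \pi_1^* D_f(\mathcal P) \tensor \pi_2^* \mathcal P$ via the $\Delta_! \dashv \Delta^!$-adjunction: by (ii) and \cref{rslt:upper-shriek-of-IHom-formula},
\begin{align*}
    \Delta^! (\pi_1^* D_f(\mathcal P) \tensor \pi_2^* \mathcal P) = \Delta^! \IHom(\pi_1^* \mathcal P, \pi_2^! \mathcal P) = \IHom(\mathcal P, \mathcal P),
\end{align*}
and one takes the map $\Lambda \to \IHom(\mathcal P, \mathcal P)$ corresponding to $\id_\mathcal P$. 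The zig-zag identities then follow by a direct diagram chase (or equivalently by combining with base change to recover the setup of (iii) over each $T$).

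For the moreover part, I would construct a 2-functor $\sigma\colon \mathcal C_S \to \mathcal C_S^{1\text{-op}}$ (reversing 1-morphisms, preserving objects and 2-morphisms) using on hom-categories the canonical equivalence $\D_\nuc(Y \cprod_S X,\Lambda) \isom \D_\nuc(X \cprod_S Y,\Lambda)$ from the swap of fiber products; the compatibility $\sigma(\mathcal N \star \mathcal M) = \sigma(\mathcal M) \star \sigma(\mathcal N)$ is immediate from the symmetry of the triple fiber product and commutativity of $\tensor$. Applying $\sigma$ to the adjunction $\mathcal P \dashv \mathcal Q$ yields an adjunction $\sigma(\mathcal Q) \dashv \sigma(\mathcal P)$ in $\mathcal C_S$; since $\sigma(\mathcal Q) = D_f(\mathcal P)$ as an object of $\D_\nuc(X,\Lambda)$ now viewed as a morphism $X \to S$ and $\sigma(\mathcal P) = \mathcal P$ viewed as $S \to X$, this shows $D_f(\mathcal P)$ is $f$-dualizable and, by uniqueness of right adjoints, $D_f(D_f(\mathcal P)) = \mathcal P$. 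The main obstacle throughout will be the bookkeeping of indices on triple fiber products and the verification that $g^*$ and $\sigma$ are genuinely 2-functorial with the zig-zag identities holding coherently; this reduces to standard manipulations with proper base change, the projection formula, and the compatibility of $\IHom$ with pullback, but the explicit identifications are notationally heavy.
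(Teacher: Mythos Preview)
Your proposal is correct and follows essentially the same approach as the paper: both construct the unit and counit explicitly for (ii)$\Rightarrow$(i) using the evaluation pairing and the identity on $\mathcal P$ via $\Delta_! \dashv \Delta^!$, specialize to $g=f$ for (iii)$\Rightarrow$(ii), and invoke the swap equivalence $\mathcal C_S \isom \mathcal C_S^{\opp}$ for the moreover part. The only organizational difference is that for (i)$\Rightarrow$(iii) the paper passes through the 2-functor $\mathcal C_S \to \mathrm{Cat}$ sending $\mathcal M \mapsto \pi_{2!}(\mathcal M \tensor \pi_1^*(-))$ and appeals to uniqueness of adjoints, whereas you unpack the 2-categorical Hom-bijection directly and apply Yoneda; these are equivalent packagings of the same computation (and your direct (iii)$\Rightarrow$(i) is then redundant given the cycle closes through (ii)).
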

\begin{proof}
We first prove that (ii) implies (i), which is a more elaborate version of the observation that if $P \tensor P^\vee \isoto \IHom(P, P)$ is an isomorphism for some object $P$ in a closed symmetric monoidal $\infty$-category, then $P$ is dualizable (see \cref{rslt:characterization-of-dualizable}). Namely, as in the proof of \cref{rslt:characterization-of-dualizable} we explicitly construct the counit (the analog of the evaluation map) and unit (the analog of the coevaluation map) for an adjunction between $\mathcal P$ and $D_f(\mathcal P)$. With this analog in mind, it is not surprising that the counit is easy to construct and does not require (ii); it is the map
\begin{align*}
	\mathcal P \star D_f(\mathcal P) = f_!(D_f(\mathcal P) \tensor \mathcal P) \to \Lambda
\end{align*}
which is adjoint to the canonical pairing $D_f(\mathcal P) \tensor \mathcal P \to f^!\Lambda$. On the other hand, the unit is obtained by inverting the isomorphism in (ii), namely it is the map
\begin{align*}
	\Delta_!\Lambda \to D_f(\mathcal P) \star \mathcal P = \pi_1^* D_f(\mathcal P) \tensor \pi_2^* \mathcal P = \IHom(\pi_1^* \mathcal P, \pi_2^! \mathcal P)
\end{align*}
which is given via adjunction by the canonical map (see \cref{rslt:IHom-adjunction-for-shriek-functors})
\begin{align*}
	\Lambda \to \Delta^! \IHom(\pi_1^*\mathcal P, \pi_2^! \mathcal P) = \IHom(\Delta^* \pi_1^* \mathcal P, \Delta^! \pi_2^! \mathcal P) = \IHom(\mathcal P, \mathcal P)
\end{align*}
induced by the identity on $\mathcal P$; here $\Delta\colon X \to X \cprod_S X$ denotes the diagonal. To prove that these maps indeed define an adjunction one can argue similar to \cref{rslt:characterization-of-dualizable}; we leave the details to the reader.

We now prove that (i) implies (iii). First of all, note that condition (i) is stable under every base-change: Given a map $g\colon S' \to S$, consider the functor of 2-categories $\mathcal C_S \to \mathcal C_{S'}$ given by mapping $X \to S$ to $X' := X \cprod_S S' \to S'$ and acting via pullbacks on the morphisms. Since functors of 2-categories obviously preserve adjoint functors, it follows immediately that condition (i) is indeed stable under base-change. Therefore for proving the first isomorphism in (iii) we can from now on assume $S' = S$. Then the claim is an analog of the observation that if $P$ is a dualizable object in a closed symmetric monoidal $\infty$-category then the map $P^\vee \tensor - \isoto \IHom(P, -)$ is an isomorphism of functors (see \cref{rslt:characterization-of-dualizable}). In fact, we can apply a similar proof strategy: Consider the functor from $\mathcal C_S$ to the 2-category of stable $\infty$-categories (viewed as 1-categories by forgetting the $\infty$-enhancement) which maps $X$ to $\D_\nuc(X,\Lambda)$ and $\mathcal M \in \Fun_{\mathcal C_S}(X, Y)$ to the functor $\pi_{2!}(\mathcal M \tensor \pi_1^*)$. By assumption $\mathcal P$ is left adjoint to some object $\mathcal Q \in \D_\nuc(X,\Lambda)$ which by the just constructed functor of 2-categories results in the fact that the functor $f_! (\mathcal P \tensor -)$ is left adjoint to the functor $\mathcal Q \tensor f^*$. But the right adjoint of the former functor is also given by $\IHom(\mathcal P, f^!)$, which produces a uniquely determined 1-categorical isomorphism of functors
\begin{align*}
	\mathcal Q \tensor f^* \isom \IHom(\mathcal P, f^!).
\end{align*}
Plugging in $\Lambda$ yields $\mathcal Q \isom D_f(\mathcal P)$. Note furthermore that the adjunction is induced by a map $\mathcal P \star \mathcal Q = f_!(\mathcal P \tensor \mathcal Q) \to \Lambda$, i.e. a pairing $\mathcal P \tensor \mathcal Q \to f^! \Lambda$. It follows from this observation that under the identification $\mathcal Q \isom D_f(\mathcal P)$ the above 1-categorical isomorphism of functors is the first one in (iii) (and in particular it is an $\infty$-categorical isomorphism). To prove the second isomorphism, note that by applying the first isomorphism on both sides we end up with proving that the natural map $g'^* D_f(\mathcal P) \isoto D_{f'}(g'^* \mathcal P)$ is an isomorphism. But this follows easily from the fact that the pullback functor $\mathcal C_S \to \mathcal C_{S'}$ preserves right adjoints and that these right adjoints are uniquely determined up to unique isomorphism.

We now prove that (iii) implies (ii), so assume that (iii) satisfied. Then after base-change along $X \to S$ we deduce that the natural map of functors
\begin{align*}
	D_{\pi_2}(\pi_1^*\mathcal P) \tensor \pi_2^* \isoto \IHom(\pi_1^* \mathcal P, \pi_2^!)
\end{align*}
is an isomorphism. By evaluating both sides on $\mathcal P$ and using that $D_{\pi_2}(\pi_1^*\mathcal P) = \pi_1^*D_f(\mathcal P)$ (by the second isomorphism in (iii)) we get the desired identity in (ii).

The final claim follows from the above identification of $D_f(\mathcal P)$ as the right adjoint of $\mathcal P$ in $\mathcal C_S$ and the fact that by the equivalence $\mathcal C_S \isom \mathcal C_S^\opp$ it is also a left adjoint of $\mathcal P$.
\end{proof}

Using \cref{rslt:functor-isomorphisms-for-f-dualizable-sheaves} we can show that the notion of relative dualizability satisfies v-descent, in the following sense.

\begin{corollary} \label{rslt:v-descent-for-rel-dualizable-sheaves}
Let
\begin{center}\begin{tikzcd}
	X' \arrow[r,"g'"] \arrow[d,"f'"] & X \arrow[d,"f"]\\
	S' \arrow[r,"g"] & S
\end{tikzcd}\end{center}
be a cartesian square of small v-stacks such that $f$ is $\ell$-fine and let $\mathcal P \in \D_\nuc(X,\Lambda)$ be given.
\begin{corenum}
	\item If $\mathcal P$ is $f$-dualizable then $g'^* \mathcal P$ is $f'$-dualizable.
	\item If $g'^* \mathcal P$ is $f'$-dualizable and $g$ is a v-cover then $\mathcal P$ is $f$-dualizable.
\end{corenum}
\end{corollary}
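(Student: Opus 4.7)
Part (i). Base change along $g$ extends to a 2-functor $g_{\mathcal C}^*\colon \mathcal C_S \to \mathcal C_{S'}$: on objects, $(X \to S) \mapsto (X \cprod_S S' \to S')$, which is again $\ell$-fine by \cref{rslt:stabilities-of-ell-fine-maps}; on 1-morphisms, $\mathcal M \in \D_\nuc(Y \cprod_S X, \Lambda)$ is pulled back along the induced base-change map of fibre products $(Y \cprod_S S') \cprod_{S'} (X \cprod_S S') \to Y \cprod_S X$. Compatibility with the composition $\star$ reduces to proper base change and the projection formula supplied by the 6-functor formalism of \cref{rslt:6-functor-formalism}, and preservation of the identity 1-morphisms $\Delta_!\Lambda$ follows from proper base change applied to the cartesian square of diagonals. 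Since any 2-functor preserves adjunctions, $\mathcal P$ being a left adjoint in $\mathcal C_S$ forces $g'^*\mathcal P$ to be a left adjoint in $\mathcal C_{S'}$, which is (i).

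Part (ii). My plan is to reconstruct the right adjoint of $\mathcal P$ in $\mathcal C_S$ by v-descent from the right adjoints on the Čech nerve of $g$. Let $g_\bullet\colon S_\bullet \to S$ be the Čech nerve of $g$, form the base changes $f_n\colon X_n := X \cprod_S S_n \to S_n$ with projections $h_n\colon X_n \to X$, and set $\mathcal P_n := h_n^*\mathcal P$. By hypothesis $\mathcal P_0 = \mathcal P'$ is $f'$-dualizable; iterating part (i) along the face maps, each $\mathcal P_n$ is $f_n$-dualizable with canonical right adjoint $\mathcal Q_n := D_{f_n}(\mathcal P_n)$, unit $\varepsilon_n$ and counit $\eta_n$ in $\mathcal C_{S_n}$.

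Because adjoints in a 2-category are unique up to canonical isomorphism, the face and degeneracy maps of $S_\bullet$ induce canonical comparison isomorphisms between the various pullbacks of the data $(\mathcal Q_n, \varepsilon_n, \eta_n)$, equipping the family with v-descent data. Hypercomplete v-descent for $\D_\nuc(-,\Lambda)$ (\cref{rslt:v-descent-for-nuclear-Lambda-modules}), applied both to objects and to 1- and 2-morphisms in the relevant $\Hom$-categories of $\mathcal C_S$, then assembles this data into an object $\mathcal Q \in \D_\nuc(X,\Lambda)$ together with morphisms $\varepsilon\colon \Delta_!\Lambda \to \mathcal Q \star \mathcal P$ and $\eta\colon \mathcal P \star \mathcal Q \to \Lambda$ in $\mathcal C_S$ whose pullbacks to $S_n$ recover $(\varepsilon_n,\eta_n)$. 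The triangle identities are equalities of morphisms that hold on each $S_n$ and therefore descend to $S$, exhibiting $\mathcal P$ as a left adjoint in $\mathcal C_S$, i.e.\ $f$-dualizable. (As a sanity check, one can compare this with the direct approach of applying \cref{rslt:simple-iso-criterion-for-f-dualizable-sheaves} and pulling back the comparison map $\alpha$ along the v-cover $X' \cprod_{S'} X' \to X \cprod_S X$; the reason that approach is not purely formal is precisely that pullback does not commute a priori with $\IHom$, $(-)^!$, or $D_f(-)$, and the comparisons such as $g'^* D_f(\mathcal P) \cong D_{f'}(\mathcal P')$ become available only after one has already descended the adjunction datum.)

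The main obstacle is the cosimplicial coherence of the right-adjoint datum $(\mathcal Q_n, \varepsilon_n, \eta_n)$ in the 2-categories $\mathcal C_{S_n}$: one must track how pullback along the face and degeneracy maps of $S_\bullet$ interacts with the canonical units and counits, and verify that the resulting comparison isomorphisms satisfy the cocycle conditions needed to trigger \cref{rslt:v-descent-for-nuclear-Lambda-modules}. The uniqueness of adjoints up to canonical isomorphism essentially forces the higher coherences once the 0- and 1-cells of $S_\bullet$ are handled, but the bookkeeping in the 2-category of correspondences underlying $\mathcal C_S$ remains the delicate point, and is where the Lu-Zheng framework genuinely earns its keep.
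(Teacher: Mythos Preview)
Your part (i) is correct and matches the paper exactly.

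For part (ii), your approach differs from the paper's, and ironically the paper's route is precisely the one you dismiss in your parenthetical remark. You try to descend the adjunction datum $(\mathcal Q_n, \varepsilon_n, \eta_n)$ through the Čech nerve and then appeal to uniqueness of adjoints to force the cosimplicial coherences. This can be made to work, but as you yourself admit, the coherence bookkeeping is delicate, and your appeal to ``uniqueness of adjoints up to canonical isomorphism'' is a 1-categorical statement that does not by itself furnish the homotopy-coherent cosimplicial object required by $\infty$-categorical v-descent. You would need to argue via the specific natural comparison maps of \cref{rslt:characterization-of-f-dualizable-sheaves}(iii) rather than abstract uniqueness.

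The paper sidesteps this entirely by verifying criterion (iii) of \cref{rslt:characterization-of-f-dualizable-sheaves} instead of descending adjunction data. The key observation is that the levelwise functor $\mathcal M_\bullet \mapsto \IHom(\mathcal P'_\bullet, f'^!_\bullet \mathcal M_\bullet)$ preserves coCartesian sections (this is exactly the second isomorphism in (iii), applied at each level where $\mathcal P'_n$ is already known to be $f'_n$-dualizable). It therefore descends to a functor $\D_\nuc(S,\Lambda)\to\D_\nuc(X,\Lambda)$, and comparing left adjoints identifies this descended functor with $\IHom(\mathcal P, f^!-)$. This yields the isomorphism $g'^*\IHom(\mathcal P,f^!-)\isoto\IHom(\mathcal P',f'^!g^*-)$ \emph{without} first descending $\mathcal Q$; then the first isomorphism of (iii) follows by checking after $g'^*$. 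Since the whole argument is stable under further base change, $\mathcal P$ satisfies (iii) and is $f$-dualizable. Your claim that ``the comparisons such as $g'^* D_f(\mathcal P) \cong D_{f'}(\mathcal P')$ become available only after one has already descended the adjunction datum'' is thus incorrect: the paper obtains them by descending a \emph{functor} rather than an \emph{object with structure}, which is what makes the direct approach go through cleanly.
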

\begin{proof}
Part (i) is easy, e.g. use the pullback functor $\mathcal C_S \to \mathcal C_{S'}$ constructed in the proof of \cref{rslt:characterization-of-f-dualizable-sheaves}. The proof of (ii) is very similar to the proof of the descent of dualizable objects in \cref{rslt:properties-of-perfect-sheaves}. Assume that $g$ is a v-cover and that $\mathcal P' := g'^* \mathcal P$ is $f'$-dualizable. Fix any v-hypercover $g_\bullet\colon S'_\bullet \to S$ extending $g$ and let $g'_\bullet\colon X'_\bullet \to X$ be the base-change. Let $\mathcal P'_\bullet \in \D_\nuc(X'_\bullet,\Lambda)$ denote the coCartesian section given by $\mathcal P$ (i.e. $\mathcal P'_n = g'^*_n \mathcal P$) and let $f'_n\colon X'_n \to S'_n$ denote the base-change of $f$. Then for all $n$, $\mathcal P'_n$ is $f'_n$-dualizable by (i), hence from the second isomorphism of functors in \cref{rslt:functor-isomorphisms-for-f-dualizable-sheaves} the functor
\begin{align*}
	\D_\nuc(S'_\bullet,\Lambda) \to \D_\nuc(X'_\bullet,\Lambda), \qquad \mathcal M_\bullet \mapsto \IHom(\mathcal P'_\bullet, f'^!_\bullet \mathcal M_\bullet)
\end{align*}
preserves coCartesian sections and hence restricts to a functor $\D_\nuc(S,\Lambda) \to \D_\nuc(X,\Lambda)$ which necessarily coincides with the functor $\mathcal M \mapsto \IHom(\mathcal P, f^! \mathcal M)$ (look at the left adjoints). It follows that the natural morphism of functors
\begin{align*}
	g'^* \IHom(\mathcal P, f^!) \isoto \IHom(\mathcal P', f'^! g^*)
\end{align*}
is an isomorphism. We can deduce that also the natural morphism $D_f(\mathcal P) \tensor f^* \isoto \IHom(\mathcal P, f^!)$ is an isomorphism, because this can be checked after applying $g'^*$ where by the just proved isomorphism of functors it transforms to the corresponding statement for $\mathcal P'$ and $f'$, which follows from \cref{rslt:characterization-of-f-dualizable-sheaves}. But note that the whole argument still works after any base-change, so we deduce that $\mathcal P$ satisfies \cref{rslt:characterization-of-f-dualizable-sheaves} and is therefore $f$-dualizable.
\end{proof}

We also get the following analog of \cref{rslt:dualizable-objects-stable-under-finite-lim}, showing that the notion of $f$-dualizable sheaves is compatible with the $\infty$-categorical enhancement on $\D_\nuc$ (even though we ignored this enhancement in the definition of $f$-dualizability).

\begin{corollary}
Let $f\colon X \to S$ be an $\ell$-fine map of small v-stacks. Then the subcategory $\D_\nuc(X,\Lambda)_{\dlb_f} \subset \D_\nuc(X,\Lambda)$ of $f$-dualizable sheaves is stable under retracts and finite (co)limits.
\end{corollary}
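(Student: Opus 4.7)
The plan is to exploit the characterization of $f$-dualizability given by \cref{rslt:functor-isomorphisms-for-f-dualizable-sheaves}: $\mathcal P$ is $f$-dualizable if and only if, for every base change $g\colon S' \to S$ (with resulting pullback square and $\mathcal P' := g'^* \mathcal P$), the natural morphism of functors
\begin{align*}
	\alpha_{\mathcal P'}\colon D_{f'}(\mathcal P') \tensor f'^*(-) \longrightarrow \IHom(\mathcal P', f'^!(-))
\end{align*}
from $\D_\nuc(S',\Lambda)$ to $\D_\nuc(X',\Lambda)$ is an isomorphism. Since isomorphisms of natural transformations can be checked pointwise, and since pullback functors preserve finite (co)limits and retracts (so the class of diagrams we need to test survives base change), it suffices to prove that for a fixed $\ell$-fine $f\colon X\to S$ and a fixed $\mathcal N \in \D_\nuc(S,\Lambda)$ the locus of $\mathcal P$ for which $\alpha_{\mathcal P}(\mathcal N)$ is an isomorphism is closed in $\D_\nuc(X,\Lambda)$ under finite (co)limits and retracts.

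The key observation is that both sides of $\alpha_{\mathcal P}(\mathcal N)$, viewed as functors of $\mathcal P$, are exact and contravariant: $D_f(\mathcal P) = \IHom(\mathcal P, f^!\Lambda)$ sends colimits to limits, and in a closed symmetric monoidal stable $\infty$-category $\IHom(-,-)$ and $\tensor$ are each exact in every variable, so in particular they send fiber/cofiber sequences in $\mathcal P$ to fiber/cofiber sequences. Thus $\alpha(-)(\mathcal N)$ is a natural transformation between two exact contravariant functors $\D_\nuc(X,\Lambda) \to \D_\nuc(X',\Lambda)$.

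From here the argument is completely formal. For a finite colimit diagram $\mathcal P = \colim_i \mathcal P_i$ with each $\alpha_{\mathcal P_i}(\mathcal N)$ an isomorphism, the map $\alpha_{\mathcal P}(\mathcal N)$ is computed as the corresponding finite limit of the $\alpha_{\mathcal P_i}(\mathcal N)$, which is therefore an isomorphism. The case of finite limits is analogous, using that in a stable $\infty$-category every finite (co)limit is built from the zero object, biproducts, and (co)fiber sequences, together with the two-out-of-three property for isomorphisms along fiber sequences. The retract case is immediate since retracts of isomorphisms are isomorphisms. I do not anticipate any genuine obstacle here: the statement is a formal consequence of \cref{rslt:characterization-of-f-dualizable-sheaves} and the exactness of the six functors $\tensor$, $\IHom$, $f^*$, $f^!$ in the stable closed symmetric monoidal setting.
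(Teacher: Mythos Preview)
Your proposal is correct and follows the same route as the paper: both appeal to the characterization in \cref{rslt:functor-isomorphisms-for-f-dualizable-sheaves} and observe that all the functors involved are exact in $\mathcal P$, so the locus where the natural transformation is an isomorphism is closed under finite (co)limits and retracts. The only minor omission is that criterion (iii) involves \emph{two} natural isomorphisms (not just the one you call $\alpha_{\mathcal P'}$), but the second one, $g'^* \IHom(\mathcal P, f^! -) \to \IHom(g'^*\mathcal P, f'^! g^* -)$, also has $\mathcal P$ appearing exactly once and contravariantly on each side, so your exactness argument applies verbatim.
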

\begin{proof}
This follows immediately from \cref{rslt:functor-isomorphisms-for-f-dualizable-sheaves} because all the involved functors commute with retracts and finite (co)limits in $\mathcal P$.
\end{proof}

We have the following base-change result, generalizing smooth base-change to a relatively dualizable version:

\begin{proposition} \label{rslt:rel-dualizable-base-change}
Let
\begin{center}\begin{tikzcd}
	X' \arrow[r,"g'"] \arrow[d,"f'"] & X \arrow[d,"f"]\\
	S' \arrow[r,"g"] & S
\end{tikzcd}\end{center}
be a cartesian square of small v-stacks and assume that $g$ is $\ell$-fine. Then for every $g$-dualizable $\mathcal P \in \D_\nuc(S',\Lambda)$ the natural morphism of functors
\begin{align*}
	\mathcal P \tensor g^* f_* \isoto f'_* (f'^* \mathcal P \tensor g'^*)
\end{align*}
is an isomorphism.
\end{proposition}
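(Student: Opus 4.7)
The plan is to pass to left adjoints on both sides and reduce to the proper base-change map for $g_!$ from \cref{rslt:6-functor-formalism}(iv). Although tensoring with $\mathcal P$ is not a left adjoint in general, the assumption of $g$-dualizability ensures that $\mathcal P \tensor g^*(-)$ has a well-behaved left adjoint, and this is what unlocks the argument.

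Specifically, the final claim of \cref{rslt:characterization-of-f-dualizable-sheaves} tells us that $D_g(\mathcal P)$ is itself $g$-dualizable with bidual $\mathcal P$. Applying the first isomorphism of \cref{rslt:functor-isomorphisms-for-f-dualizable-sheaves} to $D_g(\mathcal P)$ gives a natural equivalence $\mathcal P \tensor g^*(-) \isom \IHom(D_g(\mathcal P), g^!(-))$, which manifestly has left adjoint $\mathcal N \mapsto g_!(D_g(\mathcal P) \tensor \mathcal N)$ (using the $(g_!,g^!)$-adjunction and the tensor-hom adjunction). Composing with $f^* \dashv f_*$, I would conclude that the LHS functor $\mathcal P \tensor g^* f_*(-)$ has left adjoint $\mathcal N \mapsto f^* g_!(D_g(\mathcal P) \tensor \mathcal N)$.

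An entirely parallel analysis handles the RHS. The identity $D_{g'}(f'^* \mathcal P) = f'^* D_g(\mathcal P)$ from the end of \cref{rslt:functor-isomorphisms-for-f-dualizable-sheaves} (which in particular asserts that $f'^* \mathcal P$ is $g'$-dualizable, using that $g'$ is $\ell$-fine by \cref{rslt:stabilities-of-ell-fine-maps}) lets the same argument show that $f'^* \mathcal P \tensor g'^*(-)$ has left adjoint $\mathcal L \mapsto g'_!(f'^* D_g(\mathcal P) \tensor \mathcal L)$. Since $f'^*$ is symmetric monoidal and left adjoint to $f'_*$, the full RHS $f'_*(f'^*\mathcal P \tensor g'^*(-))$ has left adjoint $\mathcal N \mapsto g'_! f'^*(D_g(\mathcal P) \tensor \mathcal N)$.

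The last step is to identify the natural transformation of the statement, transposed via these two adjunctions, with the proper base-change map $g'_! f'^* \isoto f^* g_!$ for the given Cartesian square, evaluated on $D_g(\mathcal P) \tensor \mathcal N$. The statement map is built from the unit $\id \to f'_* f'^*$ and the counit $f^* f_* \to \id$, and a routine diagram chase shows that under the above adjunctions it corresponds exactly to the base-change transformation. Since $g$ is $\ell$-fine by hypothesis, this map is an isomorphism by \cref{rslt:6-functor-formalism}(iv) (applied to the transposed square), and passing back through Yoneda gives the desired isomorphism. I expect the main (modest) obstacle to be precisely this final bookkeeping identification of the two transformations; everything else is a formal consequence of the adjunction machinery and the dualizability input.
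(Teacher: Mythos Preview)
Your argument is correct and uses the same key input as the paper, namely the identification $\mathcal P \tensor g^*(-) \isom \IHom(D_g(\mathcal P), g^!(-))$ from \cref{rslt:functor-isomorphisms-for-f-dualizable-sheaves} together with proper base-change. The only difference is one of packaging: the paper stays on the right-adjoint side throughout, writing
\[
\mathcal P \tensor g^* f_* \;=\; \IHom(D_g(\mathcal P), g^! f_*) \;=\; \IHom(D_g(\mathcal P), f'_* g'^!) \;=\; f'_* \IHom(f'^* D_g(\mathcal P), g'^!) \;=\; f'_*(f'^*\mathcal P \tensor g'^*),
\]
using the right-adjoint form $g^! f_* = f'_* g'^!$ of base-change directly, whereas you transpose to left adjoints and invoke $g'_! f'^* \isom f^* g_!$. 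The paper's route avoids the final bookkeeping step you flag (matching the given transformation with the base-change map under adjunction), but the two arguments are otherwise formally equivalent.
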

\begin{proof}
We apply the first isomorphism in \cref{rslt:functor-isomorphisms-for-f-dualizable-sheaves} for $D_g(\mathcal P)$ and use the reflexivity $\mathcal P = D_g(D_g(\mathcal P))$ in order to obtain the natural isomorphism
\begin{align*}
	\mathcal P \tensor g^* f_* = \IHom(D_g(\mathcal P), g^! f_*).
\end{align*}
By passing to right adjoints in proper base-change we obtain $g^! f_* = f'_* g'^!$, so that we can transform the right-hand side to
\begin{align*}
	\IHom(D_g(\mathcal P), f'_* g'^!) = f'_* \IHom(f'^* D_g(\mathcal P), g'^!).
\end{align*}
Apply the isomorphisms in \cref{rslt:functor-isomorphisms-for-f-dualizable-sheaves} again to arrive at the desired identity.
\end{proof}

The next statement tells us that relatively dualizable sheaves are stable under ``relatively dualizable pullback''. An important special case is that relatively dualizable sheaves are stable under smooth pullback (see \cref{rslt:rel-dualizable-stable-under-smooth-pullback}).

\begin{proposition} \label{rslt:rel-dualizable-stable-under-rel-dualizable-pullback}
Let $f\colon X \to S$ and $g\colon Y \to X$ be $\ell$-fine maps of small v-stacks and let $\mathcal P \in \D_\nuc(X,\Lambda)$ be $f$-dualizable and $\mathcal Q \in \D_\nuc(Y,\Lambda)$ be $g$-dualizable. Then $g^* \mathcal P \tensor \mathcal Q$ is $(f \comp g)$-dualizable and the natural map
\begin{align*}
	g^* D_f(\mathcal P) \tensor D_g(\mathcal Q) \isoto D_{f\comp g}(g^* \mathcal P \tensor \mathcal Q)
\end{align*}
is an isomorphism.
\end{proposition}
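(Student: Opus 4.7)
The proof will take place entirely inside the 2-categorical framework of \cref{def:magical-2-category}, exploiting two facts: adjoints compose, and functors of 2-categories preserve adjunctions. The plan is to express $\mathcal R := g^*\mathcal P \tensor \mathcal Q$ as a 2-categorical composite $\mathcal P \star \tilde{\mathcal Q}$ of two left adjoints in $\mathcal C_S$, and read off the right adjoint as the corresponding composite of right adjoints.

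First I set up the bookkeeping. Let $\Gamma\colon Y \to X \cprod_S Y$ be the graph of $g$, and define $\tilde{\mathcal Q} := \Gamma_! \mathcal Q \in \D_\nuc(X \cprod_S Y, \Lambda) = \Fun_{\mathcal C_S}(Y, X)$. The projection formula gives $\pi_X^* \mathcal P \tensor \Gamma_! \mathcal Q = \Gamma_!(g^*\mathcal P \tensor \mathcal Q)$, and combined with $\pi_Y \comp \Gamma = \id_Y$ this yields, via the composition formula in $\mathcal C_S$,
\begin{align*}
\mathcal P \star \tilde{\mathcal Q} = \pi_{Y!}(\pi_X^*\mathcal P \tensor \tilde{\mathcal Q}) = g^*\mathcal P \tensor \mathcal Q = \mathcal R
\end{align*}
in $\Fun_{\mathcal C_S}(Y, S) = \D_\nuc(Y, \Lambda)$. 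An entirely analogous computation with $\Gamma' := (\id_Y, g)\colon Y \to Y \cprod_S X$ and $\widetilde{D_g(\mathcal Q)} := \Gamma'_! D_g(\mathcal Q) \in \Fun_{\mathcal C_S}(X, Y)$ shows that $\widetilde{D_g(\mathcal Q)} \star D_f(\mathcal P) = g^*D_f(\mathcal P) \tensor D_g(\mathcal Q)$.

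The heart of the proof is the claim that $\tilde{\mathcal Q}$ is left adjoint to $\widetilde{D_g(\mathcal Q)}$ in $\mathcal C_S$. For this I would construct a functor of 2-categories $F\colon \mathcal C_X \to \mathcal C_S$ which on objects sends $(Z \to X)$ to $(Z \to X \to S)$, and which on a morphism $\mathcal M \in \Fun_{\mathcal C_X}(Y_1, Y_2) = \D_\nuc(Y_2 \cprod_X Y_1, \Lambda)$ acts as $h_!\mathcal M \in \Fun_{\mathcal C_S}(Y_1, Y_2)$ along the natural map $h\colon Y_2 \cprod_X Y_1 \to Y_2 \cprod_S Y_1$. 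Compatibility with units holds because $h \comp \Delta^X = \Delta^S$; compatibility with compositions reduces, via base change and the projection formula, to the cartesian square relating $Y_3 \cprod_X Y_2 \cprod_X Y_1$ with $Y_3 \cprod_S Y_2 \cprod_S Y_1$. Since $F$ sends the adjunction $\mathcal Q \dashv D_g(\mathcal Q)$ in $\mathcal C_X$ (provided by the $g$-dualizability of $\mathcal Q$) to an adjunction $F(\mathcal Q) \dashv F(D_g(\mathcal Q))$ in $\mathcal C_S$, and one checks immediately from the definitions that $F(\mathcal Q) = \tilde{\mathcal Q}$ and $F(D_g(\mathcal Q)) = \widetilde{D_g(\mathcal Q)}$, the claim follows.

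Combining the two adjunctions $\mathcal P \dashv D_f(\mathcal P)$ and $\tilde{\mathcal Q} \dashv \widetilde{D_g(\mathcal Q)}$ in $\mathcal C_S$, the composite $\mathcal R = \mathcal P \star \tilde{\mathcal Q}$ is left adjoint to $\widetilde{D_g(\mathcal Q)} \star D_f(\mathcal P) = g^*D_f(\mathcal P) \tensor D_g(\mathcal Q)$. Thus $\mathcal R$ is $(f \comp g)$-dualizable, and the uniqueness of right adjoints up to unique isomorphism in $\mathcal C_S$ identifies this right adjoint with $D_{f\comp g}(\mathcal R)$ via an isomorphism which, traced through the construction, coincides with the natural map in the statement. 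The main obstacle is the rigorous construction of the functor of 2-categories $F$, i.e.\ writing down the coherence data and verifying the associativity and unit constraints; this is routine but tedious. One can bypass it, at the cost of some explicit diagram chasing, by directly producing unit and counit maps $\Delta^Y_!\Lambda \to \widetilde{D_g(\mathcal Q)} \star \tilde{\mathcal Q}$ and $\tilde{\mathcal Q} \star \widetilde{D_g(\mathcal Q)} \to \Delta^X_!\Lambda$ from the corresponding unit and counit of $\mathcal Q \dashv D_g(\mathcal Q)$ in $\mathcal C_X$ by applying appropriate shriek functors along $\Gamma$ and $\Gamma'$ and invoking proper base change, and then checking the two triangle identities.
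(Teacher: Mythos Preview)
Your proposal is correct and follows essentially the same approach as the paper: construct the 2-functor $F\colon \mathcal C_X \to \mathcal C_S$ via $h_!$ along the natural maps $Y_2 \cprod_X Y_1 \to Y_2 \cprod_S Y_1$, use it to transport the adjunction $\mathcal Q \dashv D_g(\mathcal Q)$ into $\mathcal C_S$, and then compose with $\mathcal P \dashv D_f(\mathcal P)$, computing the composite $\mathcal P \star i_!\mathcal Q$ exactly as you do. The paper treats the coherence verification for $F$ as routine and omits it, just as you suggest.
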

\begin{proof}
There is a functor $\mathcal C_X \to \mathcal C_S$ which sends $[Z \to X] \in \mathcal C_X$ to $[Z \to S] \in \mathcal C_S$ and a morphism $\mathcal M \in \D_\nuc(Z' \cprod_X Z,\Lambda) = \Fun_{\mathcal C_X}(Z,Z')$ to $i_!\mathcal M \in \D_\nuc(Z' \cprod_S Z,\Lambda) = \Fun_{\mathcal C_S}(Z,Z')$, where $i\colon Z' \cprod_X Z \to Z' \cprod_S Z$ is the obvious map: This follows easily from repeated application of the projection formula; we leave the details to the reader. With this functor at hand, the claim is now easy: By assumption $\mathcal Q$ is a left adjoint in $\mathcal C_X$, hence so is its image in $\mathcal C_S$ (as a morphism from $Y$ to $X$ over $S$). If we denote $i\colon Y \to X \cprod_S Y$ the canonical map then this image of $\mathcal Q$ is $i_! \mathcal Q$. Denoting $\pi_X$ and $\pi_Y$ the two projections on $X \cprod_S Y$ we can compute the composition of $i_!\mathcal Q$ with $\mathcal P$ in $\mathcal C_S$ as
\begin{align*}
	&\mathcal P \star i_! \mathcal Q = \pi_{Y!} (i_!\mathcal Q \tensor \pi_X^* \mathcal P) = \pi_{Y!} (i_! i^* \pi_Y^* \mathcal Q \tensor \pi_X^* \mathcal P) = \pi_{Y!}(\pi_Y^* \mathcal Q \tensor i_! \Lambda \tensor \pi_X^* \mathcal P) =\\&\qquad= \mathcal Q \tensor \pi_{Y!}(a_!\Lambda \tensor \pi_X^* \mathcal P) = \mathcal Q \tensor \pi_{Y!} a_! (a^* \pi_X^* \mathcal P) = \mathcal Q \tensor g^* \mathcal P.
\end{align*}
This proves that $g^* \mathcal P \tensor \mathcal Q$ is a left adjoint in $\mathcal C_S$ and hence $(f\comp g)$-dualizable. The claim about the duals follows from the symmetry of the situation and the fact that adjoints are unique up to unique isomorphism.
\end{proof}

Next up we want to prove that relatively dualizable sheaves are stable under proper pushforward. In fact this holds for pushforward along maps which are only cohomologically proper, which roughly means that $f_! = f_*$. Defining this notion thoroughly requires some effort and will be carried out in \cref{sec:properness} (see in particular \cref{rslt:rel-dualizable-stable-under-proper-pushforward}). For now, we will work with an even more general version of maps which are ``cohomologically proper up to a twist''.

\begin{definition} \label{def:f-proper-sheaves}
Let $f\colon X \to S$ be an $\ell$-fine map of small v-stacks. We say that a sheaf $\mathcal P \in \D_\nuc(X,\Lambda)$ is \emph{$f$-proper} if it is a right adjoint when viewed as a morphism from $X$ to $S$ in $\mathcal C_S$. The \emph{$f$-proper dual} $P_f(\mathcal P) \in \D_\nuc(X,\Lambda)$ is the corresponding left adjoint.
\end{definition}

\begin{proposition} \label{rslt:rel-dualizable-stable-under-f-proper-pushforward}
Let $f\colon X \to S$ and $g\colon Y \to X$ be $\ell$-fine maps of small v-stacks. Suppose that $\mathcal P \in \D_\nuc(Y,\Lambda)$ is $(f \comp g)$-dualizable and that $\mathcal Q \in \D_\nuc(Y,\Lambda)$ is $g$-proper. Then $g_*\IHom(\mathcal Q, \mathcal P)$ is $f$-dualizable.
\end{proposition}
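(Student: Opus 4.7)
The strategy is to realize $g_*\IHom(\mathcal Q,\mathcal P)$ as a composition of left adjoints in the magical 2-category $\mathcal C_S$, so that it is automatically $f$-dualizable.

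\emph{Step 1 (the secondary adjunction).} Since $\mathcal Q$ is $g$-proper, we have an adjunction $\mathcal Q^L \dashv \mathcal Q$ in $\mathcal C_X$, where $\mathcal Q^L := P_g(\mathcal Q) \in \D_\nuc(Y,\Lambda)$ is viewed as a morphism $X \to Y$ and $\mathcal Q$ as a morphism $Y \to X$. I claim that there is also a \emph{secondary} adjunction $\mathcal Q \dashv \mathcal Q^L$ in $\mathcal C_X$ with $\mathcal Q$ now viewed as a morphism $X \to Y$ and $\mathcal Q^L$ as a morphism $Y \to X$. Applying the 2-functor $\mathcal C_X \to \infcatinf$ to such a secondary adjunction would yield the functor adjunction $\mathcal Q \tensor g^*(-) \dashv g_!(\mathcal Q^L \tensor -)$, which, after combining with the tensor-Hom and $g^* \dashv g_*$ adjunctions and Yoneda, is equivalent to the identity
\begin{equation*}
  g_*\IHom(\mathcal Q, \mathcal B) = g_!(\mathcal Q^L \tensor \mathcal B) \qquad \text{for all } \mathcal B \in \D_\nuc(Y,\Lambda). \tag{$\ast$}
\end{equation*}
The secondary adjunction is a consequence of the canonical self-duality of $\mathcal C_X$, i.e.\ a 2-equivalence $\mathcal C_X \simeq \mathcal C_X^{1\opp}$ induced by the swap isomorphism $Z' \cprod_X Z \cong Z \cprod_X Z'$ of fiber products. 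Under this self-duality, left and right adjoints interchange, so the primary adjunction $\mathcal Q^L \dashv \mathcal Q$ (directions $X \to Y,\ Y \to X$) transports to the secondary adjunction $\mathcal Q \dashv \mathcal Q^L$ (directions $X \to Y,\ Y \to X$).

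\emph{Step 2 (composition in $\mathcal C_S$).} Apply the 2-functor $\mathcal C_X \to \mathcal C_S$ from the proof of \cref{rslt:rel-dualizable-stable-under-rel-dualizable-pullback} to the primary adjunction $\mathcal Q^L \dashv \mathcal Q$. This produces a left-adjoint morphism $\tilde{\mathcal Q}^L := i_!\mathcal Q^L \colon X \to Y$ in $\mathcal C_S$, where $i\colon Y = Y \cprod_X X \hookrightarrow Y \cprod_S X$ is the canonical inclusion. Since $\mathcal P$ is $(fg)$-dualizable, $\mathcal P$ is a left-adjoint morphism $Y \to S$ in $\mathcal C_S$. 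Hence $\mathcal P \star \tilde{\mathcal Q}^L \colon X \to S$ is a composition of left adjoints, therefore a left adjoint in $\mathcal C_S$, and hence represents an $f$-dualizable element of $\D_\nuc(X,\Lambda)$. A straightforward application of the projection formula (mirroring the analogous computation in the proof of \cref{rslt:rel-dualizable-stable-under-rel-dualizable-pullback}) then yields
\begin{equation*}
  \mathcal P \star \tilde{\mathcal Q}^L = g_!(\mathcal P \tensor \mathcal Q^L).
\end{equation*}
Combining this with ($\ast$) applied to $\mathcal B = \mathcal P$ gives $\mathcal P \star \tilde{\mathcal Q}^L = g_*\IHom(\mathcal Q, \mathcal P)$, and the proposition follows.

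\emph{Main obstacle.} The crux is the secondary adjunction in Step 1, equivalently the identity ($\ast$). While the self-duality of $\mathcal C_X$ is morally evident, its rigorous construction requires verifying that the swap of fiber products is compatible with the threefold convolution composition, which reduces to a diagram chase using symmetry of the tensor product and proper base-change on the triple product $Z \cprod_X Z' \cprod_X Z''$. If one prefers to avoid invoking this self-duality in full generality, one can construct the unit and counit of the secondary adjunction directly from those of $\mathcal Q^L \dashv \mathcal Q$ by applying the diagonal $\Delta\colon Y \to Y \cprod_X Y$ and proper base-change, in the spirit of the explicit calculations in the proof of \cref{rslt:characterization-of-f-dualizable-sheaves}.
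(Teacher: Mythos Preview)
Your proposal is correct and takes essentially the same approach as the paper. Both arguments realize $g_*\IHom(\mathcal Q,\mathcal P)$ as the composite $\mathcal P \star i_! P_g(\mathcal Q)$ in $\mathcal C_S$ (a composition of left adjoints), and both identify $g_!(P_g(\mathcal Q)\tensor -) \cong g_*\IHom(\mathcal Q,-)$ by combining the self-duality $\mathcal C_X \simeq \mathcal C_X^\opp$ with the realization functor $\mathcal C_X \to \infcatinf$; the only difference is that you package the self-duality as a ``secondary adjunction'' up front, whereas the paper first deduces $g_*\IHom(P_g(\mathcal Q),-) \cong g_!(\mathcal Q\tensor -)$ from the primary adjunction and then swaps the roles of $\mathcal Q$ and $P_g(\mathcal Q)$ via the self-duality.
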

\begin{proof}
Let $i\colon Y \to Y \cprod_S X$ be the natural map. Then the functor $\mathcal C_X \to \mathcal C_S$ from \cref{rslt:rel-dualizable-stable-under-rel-dualizable-pullback} sends $\mathcal Q$ to $i_!\mathcal Q$ and $P_g(\mathcal Q)$ to $i_!P_g(\mathcal Q)$. By assumption $P_g(\mathcal Q)$ is a left adjoint in $\mathcal C_X$ when viewed as a morphism from $X$ to $Y$. Consequently the same is still true in $\mathcal C_S$. Since left adjoints are stable under composition, we deduce that the following morphism from $X$ to $S$ is a left adjoint:
\begin{align*}
	\mathcal P \star i_! P_g(\mathcal Q) = \pi_{X!} (\pi_Y^* \mathcal P \tensor i_! P_g(\mathcal Q)) = \pi_{X!} i_! (P_g(\mathcal Q) \tensor i^* \pi_Y^* \mathcal P) = g_!(P_g(\mathcal Q) \tensor \mathcal P).
\end{align*}
Now consider the functor from $\mathcal C_X$ to the category of (underlying 1-categories of) stable $\infty$-categories mapping $\mathcal M$ to $\pi_{2!}(\mathcal M \tensor \pi_1^*)$ (as considered in the proof of \cref{rslt:characterization-of-f-dualizable-sheaves}). This functor preserves adjoint morphisms so that we deduce that the morphism $g_!(\mathcal Q \tensor -)$ is 1-categorically right adjoint to the functor $P_g(\mathcal Q) \tensor g^*$. But we know that the latter functor also has the right adjoint $g_* \IHom(P_g(\mathcal Q), -)$, which produces a 1-categorical isomorphism of functors $g_* \IHom(P_g(\mathcal Q), -) \isom g_!(\mathcal Q \tensor -)$. Reversing the roles of $\mathcal Q$ and $P_g(\mathcal Q)$ via the natural equivalence $\mathcal C_X \isom \mathcal C_X^\opp$ we also deduce that there is a 1-categorical isomorphism of functors $g_*\IHom(\mathcal Q, -) \isom g_!(P_g(\mathcal Q) \tensor -)$. In particular it follows that $\mathcal P \star i_! P_g(\mathcal Q) \isom g_*\IHom(\mathcal Q, \mathcal P)$, as desired.
\end{proof}

Let us also discuss how the notion of relatively dualizable sheaves behaves under a change of the nuclear $\Z_\ell$-algebra $\Lambda$.

\begin{proposition} \label{rslt:rel-dualizable-under-change-of-Lambda}
Let $f\colon X \to S$ be an $\ell$-fine map of small v-stacks and $\mathcal P \in \D_\nuc(X,\Lambda)$.
\begin{propenum}
	\item If $\mathcal P$ is $f$-dualizable then $\mathcal P \tensor_\Lambda \Lambda' \in \D_\nuc(X,\Lambda')$ is also $f$-dualizable.

	\item Suppose that $\Lambda = \Z_\ell$ and that $\mathcal P$ is locally bounded and $\ell$-adically complete. Then $\mathcal P$ is $f$-dualizable if and only if $\mathcal P/\ell\mathcal P \in \D_\nuc(X,\Fld_\ell)$ is $f$-dualizable.
\end{propenum}
\end{proposition}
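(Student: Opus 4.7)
For part (i), the plan is to observe that for any map $\Lambda \to \Lambda'$ of nuclear $\Z_\ell$-algebras and any small v-stack $S$, the symmetric monoidal base-change functor $- \tensor_\Lambda \Lambda'\colon \D_\nuc(-, \Lambda) \to \D_\nuc(-, \Lambda')$ commutes with pullback (obvious) and with $f_!$ for $\ell$-fine $f$ (by \cref{rslt:functoriality-of-shriek-functors-in-Lambda}). These compatibilities upgrade the assignment to a 2-functor $\Phi\colon \mathcal C_S^\Lambda \to \mathcal C_S^{\Lambda'}$ between the magical 2-categories of \cref{def:magical-2-category}: on objects $\Phi$ is the identity, on morphisms it is $\mathcal M \mapsto \mathcal M \tensor_\Lambda \Lambda'$, it preserves the composition $\mathcal N \star \mathcal M = \pi_{13!}(\pi_{12}^* \mathcal N \tensor \pi_{23}^* \mathcal M)$ by the above compatibilities together with the projection formula, and it preserves identities via $\Delta_! \Lambda \tensor_\Lambda \Lambda' = \Delta_! \Lambda'$. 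Since any 2-functor preserves adjunctions, $\mathcal P \tensor_\Lambda \Lambda'$ is $f$-dualizable whenever $\mathcal P$ is.

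The forward direction of (ii) is immediate from (i) applied to $\Lambda' = \Fld_\ell$, so the work lies in the converse. Assume $\mathcal P/\ell \in \D_\nuc(X,\Fld_\ell)$ is $f$-dualizable. By \cref{rslt:simple-iso-criterion-for-f-dualizable-sheaves}, to prove that $\mathcal P$ is $f$-dualizable it suffices to show that the natural morphism
\begin{align*}
	\alpha\colon \pi_1^* D_f(\mathcal P) \tensor \pi_2^* \mathcal P \to \IHom(\pi_1^* \mathcal P, \pi_2^! \mathcal P)
\end{align*}
on $X \cprod_S X$ is an isomorphism. The strategy is to check this modulo $\ell$, using that $\ell$-adically complete sheaves are detected by their reductions mod $\ell$. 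Both sides of $\alpha$ are $\ell$-adically complete: the right-hand side because $\pi_2^!$ preserves limits and hence $\pi_2^! \mathcal P$ inherits completeness from $\mathcal P$, and $\IHom$ preserves completeness in its second argument; the left-hand side via $D_f(\mathcal P) = \IHom(\mathcal P, f^! \Z_\ell)$ being complete together with \cref{rslt:solid-tensor-product-preserves-complete-sheaves}, which requires the tensor factors to be locally right-bounded. Local boundedness of $\mathcal P$ is assumed; for $D_f(\mathcal P)$ one argues via a Nakayama-type trick: $D_f(\mathcal P)/\ell \isom D_f^{\Fld_\ell}(\mathcal P/\ell)$ is locally bounded (as the $f$-dual of the locally bounded $f$-dualizable $\Fld_\ell$-sheaf $\mathcal P/\ell$), and combined with $\ell$-adic completeness this forces $D_f(\mathcal P)$ itself to be locally bounded.

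Once completeness of both sides is established, one identifies $\alpha/\ell$ with the corresponding morphism $\alpha_{\Fld_\ell}$ for $\mathcal P/\ell$ over $\Fld_\ell$. Concretely, three compatibilities enter: $D_f(\mathcal P)/\ell = D_f^{\Fld_\ell}(\mathcal P/\ell)$, which follows from $f^! \Z_\ell / \ell = f^! \Fld_\ell$ (by exactness of $f^!$ and \cref{rslt:functoriality-of-shriek-functors-in-Lambda}) together with the base-change adjunction $\IHom_{\Z_\ell}(-, N)/\ell = \IHom_{\Fld_\ell}(-/\ell, N/\ell)$ when $N/\ell$ is an $\Fld_\ell$-module; $\pi_2^!(\mathcal P)/\ell = \pi_2^!(\mathcal P/\ell)$, immediate from exactness of $\pi_2^!$; and the analogous identity for the $\IHom$ on the right-hand side. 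Then $\alpha_{\Fld_\ell}$ is an isomorphism by \cref{rslt:simple-iso-criterion-for-f-dualizable-sheaves} applied to the $f$-dualizable sheaf $\mathcal P/\ell$, completing the argument.

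The main obstacle is the verification of local boundedness of $D_f(\mathcal P)$, since without it one cannot invoke \cref{rslt:solid-tensor-product-preserves-complete-sheaves} to conclude that the left-hand side of $\alpha$ is $\ell$-adically complete. The bootstrapping via $D_f(\mathcal P)/\ell = D_f^{\Fld_\ell}(\mathcal P/\ell)$ is natural but depends on the (likely straightforward) fact that $f$-dualizable locally bounded étale $\Fld_\ell$-sheaves have locally bounded duals; once that input is in place, all other compatibilities are formal consequences of the 6-functor formalism and the exactness properties of its functors.
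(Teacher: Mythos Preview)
Your argument for part (i) and the overall strategy for part (ii) match the paper's exactly: construct the $2$-functor $\mathcal C_S^\Lambda \to \mathcal C_S^{\Lambda'}$, then for the converse in (ii) verify criterion \cref{rslt:simple-iso-criterion-for-f-dualizable-sheaves} by checking both sides of $\alpha$ are $\ell$-adically complete and reducing modulo $\ell$.

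The divergence is in how you establish $\ell$-adic completeness of the left-hand side $\pi_1^* D_f(\mathcal P) \tensor \pi_2^* \mathcal P$. You try to show that \emph{both} tensor factors are locally bounded, which forces you into the Nakayama detour and the claim that $D_f^{\Fld_\ell}(\mathcal P/\ell)$ is locally bounded. You flag this last step as ``likely straightforward'', but it is not: for a general $\ell$-fine map there is no reason that $f^!\Fld_\ell$ is locally bounded, so $\IHom(\mathcal P/\ell, f^!\Fld_\ell)$ need not be either, and the $f$-dualizability of $\mathcal P/\ell$ alone does not obviously force boundedness of its $f$-dual.

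The paper sidesteps this entirely. It observes that \cref{rslt:solid-tensor-product-preserves-complete-sheaves} extends to the situation where only \emph{one} factor is locally bounded: since the solid tensor product has finite Tor dimension (by \cite[Proposition~VII.2.3]{fargues-scholze-geometrization}), tensoring with the bounded $\ell$-complete sheaf $\pi_2^*\mathcal P$ preserves $\ell$-adic completeness of the possibly unbounded $\pi_1^* D_f(\mathcal P)$. Concretely, each $\pi_k$ of the tensor product only sees a bounded truncation of the unbounded factor, to which \cref{rslt:solid-tensor-product-preserves-complete-sheaves} applies directly. This needs no information about $D_f(\mathcal P)$ beyond its $\ell$-completeness (which is immediate since $f^!\Z_\ell$ is $\ell$-complete and $\IHom$ into a complete object is complete). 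Replacing your Nakayama paragraph with this one-line observation closes the gap and shortens the proof.
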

\begin{proof}
Part (i) follows immediately from the fact that base-change along $\Lambda \to \Lambda'$ invokes a functor on the respective versions of $\mathcal C_S$. This also implies the ``only if'' part of (ii), so it remains to prove the ``if'' part. We therefore assume that $\Lambda = \Z_\ell$ and that $\mathcal P$ is locally bounded and $\ell$-adically complete with $\mathcal P/\ell \mathcal P$ being $f$-dualizable as an $\Fld_\ell$-module. Note that pullback and upper shriek functors preserve $\ell$-adically complete sheaves and that if $\mathcal M$ is $\ell$-adically complete then so is $\IHom(\mathcal N, \mathcal M)$ for \emph{any} $\mathcal N$. Moreover, tensoring with a fixed $\ell$-adically complete and locally bounded sheaf preserves $\ell$-adically complete sheaves (this follows from \cref{rslt:solid-tensor-product-preserves-complete-sheaves} by observing that the statement also holds if one of the sheaves is bounded and the other unbounded because the tensor product has finite Tor dimension by \cite[Proposition VII.2.3]{fargues-scholze-geometrization}). Altogether this implies that both $\pi_1^* D_f(\mathcal P) \tensor \pi_2^* \mathcal P$ and $\IHom(\pi_1^* \mathcal P, \pi_2^! \mathcal P)$ are $\ell$-adically complete, where $\pi_i$ denote the projections from $X \cprod_S X$. By \cref{rslt:characterization-of-f-dualizable-sheaves} we need to check that the natural morphism $\pi_1^* D_f(\mathcal P) \tensor \pi_2^* \mathcal P \isoto \IHom(\pi_1^* \mathcal P, \pi_2^! \mathcal P)$ is an isomorphism, which by $\ell$-adic completeness can be checked modulo $\ell$. But one checks immediately that this reduces the claim to the similar statement for $\mathcal P/\ell \mathcal P$ as an $\Fld_\ell$-module, where the claim holds by assumption and \cref{rslt:characterization-of-f-dualizable-sheaves}.
\end{proof}

\section{Cohomological Smoothness} \label{sec:smoothness}

Fix a prime $\ell \ne p$ and a nuclear $\Z_\ell$-algebra $\Lambda$. We now introduce $\ell$-cohomologically smooth maps of small v-stacks, which are those that satisfy a strong form of Poincaré duality. With the magic of relatively dualizable sheaves, the definition of cohomologically smooth maps is rather simple:

\begin{definition} \label{def:cohom-smooth-maps}
An $\ell$-fine map $f\colon Y \to X$ of small v-stacks is called \emph{$\ell$-cohomologically smooth} if the constant sheaf $\Fld_\ell \in \D_\et(Y,\Fld_\ell)$ is $f$-dualizable and its $f$-dual $D_f(\Fld_\ell)$ is invertible.
\end{definition}

Our definition of cohomological smoothness is a bit unorthodox, but we believe it to be the ``right'' one from a formal standpoint. In the specific 6-functor formalism at hand, we recover the previously defined notion of $\ell$-cohomologically smooth maps:

\begin{lemma} \label{rslt:smoothness-criteria}
Let $f\colon Y \to X$ be an $\ell$-fine map of small v-stacks. Then the following are equivalent:
\begin{lemenum}
	\item $f$ is $\ell$-cohomologically smooth.

	\item The sheaf $f^!\Fld_\ell \in \D_\et(Y,\Fld_\ell)$ is invertible and its formation commutes with base-change along $f$.
\end{lemenum}
If $f$ is fdcs then these conditions are also equivalent to:
\begin{enumerate}[(i)]
	\setcounter{enumi}{2}
	\item For every map $X' \to X$ from a strictly totally disconnected space $X'$ with base-change $f'\colon Y' \to X'$, the sheaf $f'^!\Fld_\ell \in \D_\et(Y',\Fld_\ell)$ is invertible and the natural transformation of functors $f'^!\Fld_\ell \tensor f'^* \isoto f'^!$ is an isomorphism.
\end{enumerate}
\end{lemma}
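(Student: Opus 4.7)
The equivalence (i) $\Leftrightarrow$ (ii) I would obtain by specializing \cref{rslt:characterization-of-f-dualizable-sheaves} to $\mathcal P = \Fld_\ell$. Since $D_f(\Fld_\ell) = \IHom(\Fld_\ell, f^!\Fld_\ell) = f^!\Fld_\ell$, invertibility of $D_f(\Fld_\ell)$ translates directly to invertibility of $f^!\Fld_\ell$. The iso in \cref{rslt:simple-iso-criterion-for-f-dualizable-sheaves} simplifies to $\pi_1^* f^!\Fld_\ell \isoto \pi_2^! \Fld_\ell$ on $Y \cprod_X Y$, which is precisely the base-change isomorphism for $f^!\Fld_\ell$ along the cartesian square of $f$ with itself. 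Given (i), the full base-change $g'^* f^!\Fld_\ell \isoto f'^!\Fld_\ell$ for arbitrary $g\colon X' \to X$ falls out of the second natural iso in \cref{rslt:functor-isomorphisms-for-f-dualizable-sheaves}; conversely, (ii) applied to the self-cartesian square of $f$ provides exactly the iso needed by \cref{rslt:simple-iso-criterion-for-f-dualizable-sheaves} to upgrade invertibility of $f^!\Fld_\ell$ to $f$-dualizability of $\Fld_\ell$.

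The implication (i) $\Rightarrow$ (iii) would be immediate from \cref{rslt:functor-isomorphisms-for-f-dualizable-sheaves}: the first natural iso there reads $D_{f'}(\Fld_\ell) \tensor f'^* \isoto \IHom(\Fld_\ell, f'^!) = f'^!$, yielding Poincaré duality for $f'$, and $D_{f'}(\Fld_\ell) = g'^* D_f(\Fld_\ell) = f'^!\Fld_\ell$ is invertible as the pullback of the invertible $f^!\Fld_\ell$.

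The harder direction is (iii) $\Rightarrow$ (i) under the fdcs hypothesis. The plan is first to use \cref{rslt:v-descent-for-rel-dualizable-sheaves} to reduce to the case where $X$ is strictly totally disconnected: fixing a v-cover $X' \to X$ from a strictly totally disconnected space $X'$, the hypothesis (iii) transfers from $f$ to the base-change $f'$, since any map $X'' \to X'$ with $X''$ strictly totally disconnected composes with $X' \to X$ to give a map $X'' \to X$ of the same type, and the two associated base-changes of $f$ and $f'$ coincide. Thus $f$-dualizability of $\Fld_\ell$ on $Y$ is equivalent to $f'$-dualizability on $Y'$, and I may assume $X$ itself is strictly totally disconnected. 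Now (iii) applied to $\id_X$ directly gives invertibility of $f^!\Fld_\ell$ and the Poincaré iso $f^!\Fld_\ell \tensor f^* \isoto f^!$; by \cref{rslt:simple-iso-criterion-for-f-dualizable-sheaves} it only remains to verify $\pi_1^* f^!\Fld_\ell \isoto \pi_2^!\Fld_\ell$ on $W := Y \cprod_X Y$.

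For this final step I would pick a v-cover $j\colon Z \to Y$ with $Z$ strictly totally disconnected (possible since $Y$ is a locally spatial diamond) and pass to the induced v-cover $Y \cprod_X Z \isom W \cprod_Y Z \to W$. The base-change of $\pi_2$ along $j$ is the second projection $\sigma\colon Y \cprod_X Z \to Z$, which is simultaneously the base-change of $f$ along the map $f \comp j\colon Z \to X$ with strictly totally disconnected source. Hypothesis (iii) therefore applies to $\sigma$, giving invertibility of $\sigma^!\Fld_\ell$ and Poincaré duality for $\sigma$. Combining Poincaré for $f$ on $Y$ and for $\sigma$ on $Y \cprod_X Z$ with the base-change natural transformations, I would identify the pullbacks of $\pi_1^* f^!\Fld_\ell$ and $\pi_2^!\Fld_\ell$ from $W$ to $Y \cprod_X Z$ as the same invertible étale $\Fld_\ell$-sheaf, and then invoke v-descent of étale $\Fld_\ell$-sheaves to conclude the iso on $W$. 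The hardest part will be making this v-descent fully rigorous: one must check that the base-change natural transformations on both sides glue coherently over the v-cover, which ultimately reduces to the classical v-descent of $\ell$-cohomological smoothness for discrete coefficients treated in \cite{etale-cohomology-of-diamonds}.
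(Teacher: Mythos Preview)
Your treatment of (i)$\Leftrightarrow$(ii) and (i)$\Rightarrow$(iii) is correct and matches the paper's argument exactly.

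For (iii)$\Rightarrow$(i), your direct attempt has a real gap. After reducing to $X$ strictly totally disconnected and passing to the v-cover $Y\cprod_X Z \to W = Y\cprod_X Y$, you need to identify the pullback of $\pi_2^!\Fld_\ell$ with $\sigma^!\Fld_\ell$. But this is a base-change statement for upper shriek along a cartesian square whose base $j\colon Z\to Y$ has target $Y$, which is \emph{not} strictly totally disconnected; so hypothesis (iii) gives you nothing here. Knowing Poincar\'e duality for $\sigma$ and for $f$ separately does not let you compute the pullback of $\pi_2^!\Fld_\ell$, and merely identifying two invertible sheaves abstractly would not suffice anyway---you must show the specific natural transformation becomes an isomorphism. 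You seem to recognize this and defer to \cite{etale-cohomology-of-diamonds} in your last sentence, which is in effect the whole argument.

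The paper handles this much more directly: after reducing to separated $f$, it simply observes that condition (iii) is \emph{literally} the definition of $\ell$-cohomological smoothness in \cite[Definition 23.8]{etale-cohomology-of-diamonds}, and then the results of \cite[\S23]{etale-cohomology-of-diamonds} (notably the full base-change for $f^!\Fld_\ell$ along arbitrary maps) yield \cref{rslt:functor-isomorphisms-for-f-dualizable-sheaves} for $\mathcal P = \Fld_\ell$. The paper's remark immediately following the lemma explicitly notes that this implication does not seem to be formal---a setting-specific input from \cite{etale-cohomology-of-diamonds} is genuinely needed---so your attempt to close the loop internally was never going to succeed without that citation.
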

\begin{proof}
The equivalence of (i) and (ii) follows immediately from \cref{rslt:characterization-of-f-dualizable-sheaves} by observing that $\pi_1^* D_f(\Fld_\ell) \tensor \pi_2^* \Fld_\ell = \pi_1^* f^! \Fld_\ell$ and $\IHom(\pi_1^* \Fld_\ell, \pi_2^! \Fld_\ell) = \pi_2^! \Fld_\ell$, so that the equivalence of these two sheaves amounts precisely to the condition that the formation of $f^!\Fld_\ell$ commutes with base-change along $f$.

The implication that (i) implies (iii) follows immediately from \cref{rslt:functor-isomorphisms-for-f-dualizable-sheaves} (here we do not need $f$ to be fdcs). For the converse, assume that $f$ is fdcs and satisfies (iii). We can assume that $f$ is separated. Then $f$ is $\ell$-cohomologically smooth in the sense of \cite[Definition 23.8]{etale-cohomology-of-diamonds} and so by the results in \cite[\S23]{etale-cohomology-of-diamonds} we see that $f$ satisfies \cref{rslt:functor-isomorphisms-for-f-dualizable-sheaves} for $\mathcal P = \Fld_\ell$.
\end{proof}

\begin{remark}
In \cite[Definition 23.8]{etale-cohomology-of-diamonds} a notion of $\ell$-cohomological smoothness for separated fdcs maps is introduced. It follows from \cref{rslt:smoothness-criteria} that this notion coincides with the one in \cref{def:cohom-smooth-maps}. It follows that our definition of $\ell$-cohomological smoothness is also equivalent to the one in \cite{mod-ell-stacky-6-functors} and to the definition of $\ell$-cohomological smooth maps of Artin v-stacks in \cite{fargues-scholze-geometrization} (all maps in the references for which $\ell$-cohomological smoothness is defined are $\ell$-fine in our sense; the converse is probably false). In particular all results in \cite{etale-cohomology-of-diamonds,fargues-scholze-geometrization} which show that certain maps of small v-stacks are $\ell$-cohomologically smooth also apply in our setting.
\end{remark}

\begin{remark}
We were unable to show that the last criterion in \cref{rslt:smoothness-criteria} implies $\ell$-cohomological smoothness without the fdcs assumption. In particular, this criterion does not seem to capture the correct notion of smoothness in the abstract setup -- there is always a non-formal argument specific to the setting required to make this definition work.
\end{remark}

With the ambiguity of definitions out of the way, we can now deduce all expected properties of nuclear $\Lambda$-modules along smooth maps. Note that everything is completely formal.

\begin{proposition}
\begin{propenum}
	\item \label{rslt:Poincare-duality-for-smooth-maps} Let $f\colon Y \to X$ be an $\ell$-fine and $\ell$-cohomologically smooth map of small v-stacks. Then $f^!\Lambda$ is invertible and the natural morphism
	\begin{align*}
		f^!\Lambda \tensor f^* \isoto f^!
	\end{align*}
	is an isomorphism of functors $\D_\nuc(X,\Lambda) \to \D_\nuc(Y,\Lambda)$.

	\item Let
	\begin{center}\begin{tikzcd}
		Y' \arrow[r,"g'"] \arrow[d,"f'"] & Y \arrow[d,"f"]\\
		X' \arrow[r,"g"] & X
	\end{tikzcd}\end{center}
	be a cartesian square of small v-stacks.
	\begin{enumerate}[(a)]
		\item Assume that $f$ is $\ell$-fine and that either $f$ or $g$ is $\ell$-fine and $\ell$-cohomologically smooth. Then the natural morphism
		\begin{align*}
			g'^* f^! \isoto f'^! g^*
		\end{align*}
		is an isomorphism of functors $\D_\nuc(X,\Lambda) \to \D_\nuc(Y',\Lambda)$.

		\item Assume that $g$ is $\ell$-fine and $\ell$-cohomologically smooth. Then the natural morphism
		\begin{align*}
			g^* f_* \isoto f'_* g'^*
		\end{align*}
		is an isomorphism of functors $\D_\nuc(Y,\Lambda) \to \D_\nuc(X',\Lambda)$.
	\end{enumerate}

	\item Let $f\colon Y \to X$ be an $\ell$-fine and $\ell$-cohomologically smooth map of small v-stacks. Then for all $\mathcal M, \mathcal N \in \D_\nuc(X,\Lambda)$ the natural map
	\begin{align*}
		f^* \IHom(\mathcal M, \mathcal N) \isoto \IHom(f^* \mathcal M, f^* \mathcal N)
	\end{align*}
	is an isomorphism.
\end{propenum}
\end{proposition}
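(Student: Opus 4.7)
The plan is to establish (i) by upgrading the $f$-dualizability of $\Fld_\ell$, which is built into the definition of $\ell$-cohomological smoothness, to $f$-dualizability of $\Lambda$ over $\Lambda$; parts (ii) and (iii) then follow almost formally from this dualizability and the results of \cref{sec:reldual}.

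For (i), by \cref{rslt:rel-dualizable-under-change-of-Lambda}(ii) the sheaf $\Z_\ell$ is locally bounded and $\ell$-adically complete with mod-$\ell$ reduction the $f$-dualizable $\Fld_\ell$, hence $\Z_\ell$ is $f$-dualizable over $\Z_\ell$. Then \cref{rslt:rel-dualizable-under-change-of-Lambda}(i) applied to $\Z_\ell\to\Lambda$ gives $f$-dualizability of $\Lambda$ over $\Lambda$, and \cref{rslt:functor-isomorphisms-for-f-dualizable-sheaves} (with $g = \id_X$ and $\mathcal P = \Lambda$) immediately yields $f^!\Lambda\tensor f^*\isoto f^!$. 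It remains to show $f^!\Lambda$ is invertible. Poincaré duality over $\Z_\ell$ applied to $\Lambda$ viewed as a $\Z_\ell$-module, together with \cref{rslt:functoriality-of-shriek-functors-in-Lambda}, gives $f^!\Lambda = f^!\Z_\ell\tensor_{\Z_\ell}\Lambda$, so one reduces to $\Lambda = \Z_\ell$. Here $f^!\Z_\ell$ is $\ell$-adically complete (since $f^!$ preserves limits and $\Z_\ell = \varprojlim_n \Z_\ell/\ell^n$ with each $f^!(\Z_\ell/\ell^n)$ killed by $\ell^n$), and its mod-$\ell$ reduction is $f^!\Fld_\ell$, invertible by hypothesis. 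The main obstacle is lifting this invertibility from $\Fld_\ell$ to $\Z_\ell$; since invertibility is v-local and an invertible $\Fld_\ell$-sheaf is pro-étale locally a shift of $\Fld_\ell$, this reduces to the local statement that an $\ell$-adically complete $\Z_\ell$-sheaf whose mod-$\ell$ reduction is $\Fld_\ell[n]$ is isomorphic to $\Z_\ell[n]$, which follows from an elementary deformation argument using the fiber sequences $\mathcal M\xto{\ell^k}\mathcal M\to\mathcal M/\ell^k$.

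Part (ii)(b) is immediate from \cref{rslt:rel-dualizable-base-change} applied to the $g$-dualizable sheaf $\mathcal P = \Lambda$. For part (ii)(a), the case where $f$ is smooth follows instantly from the second displayed isomorphism of \cref{rslt:functor-isomorphisms-for-f-dualizable-sheaves} with $\mathcal P = \Lambda$: it gives $g'^*f^! = g'^*\IHom(\Lambda, f^!)\isoto\IHom(\Lambda, f'^!g^*) = f'^!g^*$. In the case where only $g$ is smooth, smoothness is stable under base-change so both $g$ and $g'$ are smooth; using Poincaré duality for both, functoriality $g'^!f^! = (fg')^! = (gf')^! = f'^!g^!$, and the projection formula $f'^!(\mathcal L\tensor-) = f'^*\mathcal L\tensor f'^!(-)$ for invertible $\mathcal L$ (which follows from \cref{rslt:upper-shriek-of-IHom-formula} by rewriting $\mathcal L\tensor - = \IHom(\mathcal L^{-1}, -)$), one computes
\begin{align*}
	g'^*f^! = (g'^!\Lambda)^{-1}\tensor g'^!f^! = (g'^!\Lambda)^{-1}\tensor f'^!(g^!\Lambda\tensor g^*) = (g'^!\Lambda)^{-1}\tensor f'^*g^!\Lambda\tensor f'^!g^*.
\end{align*}
The twist cancels because the ``in particular'' statement of \cref{rslt:functor-isomorphisms-for-f-dualizable-sheaves}, applied to $g$ in place of $f$ and base-change along $f$, gives $f'^*g^!\Lambda = g'^!\Lambda$.

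Part (iii) is formal: by Poincaré duality $f^* = (f^!\Lambda)^{-1}\tensor f^!$, and \cref{rslt:upper-shriek-of-IHom-formula} yields $f^*\IHom(\mathcal M,\mathcal N) = (f^!\Lambda)^{-1}\tensor\IHom(f^*\mathcal M, f^!\mathcal N)$. On the other hand, invertibility of $f^!\Lambda$ lets one pull it outside the internal hom, so $\IHom(f^*\mathcal M, f^*\mathcal N) = \IHom(f^*\mathcal M, (f^!\Lambda)^{-1}\tensor f^!\mathcal N) = (f^!\Lambda)^{-1}\tensor\IHom(f^*\mathcal M, f^!\mathcal N)$, and the two expressions coincide.
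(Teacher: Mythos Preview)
Your proof is correct and follows essentially the same strategy as the paper: first upgrade $f$-dualizability of $\Fld_\ell$ to that of $\Lambda$ via \cref{rslt:rel-dualizable-under-change-of-Lambda}, then read everything off from \cref{rslt:characterization-of-f-dualizable-sheaves} and \cref{rslt:rel-dualizable-base-change}. Where the paper defers (ii)(a) in the case of smooth $g$ and (iii) to external references, you supply explicit computations; these are the expected unwindings and match what those references do. One small remark: your deformation sketch for the invertibility of $f^!\Z_\ell$ is a bit terse --- a cleaner way to avoid the pro-\'etale trivialization step is to check directly that the evaluation map $f^!\Z_\ell \tensor (f^!\Z_\ell)^\vee \to \Z_\ell$ is an isomorphism by reducing modulo $\ell$ (both sides being bounded and $\ell$-adically complete), where it becomes the evaluation for the invertible sheaf $f^!\Fld_\ell$; this is closer in spirit to the paper's one-line justification.
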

\begin{proof}
Suppose that $f\colon Y \to X$ is an $\ell$-fine and $\ell$-cohomologically smooth map of small v-stacks. Then it follows from \cref{rslt:rel-dualizable-under-change-of-Lambda} that the constant sheaf $\Lambda \in \D_\nuc(Y,\Lambda)$ is $f$-dualizable. Also, $f^! \Lambda$ is invertible: In the case that $\Lambda = \Z_\ell$ this easily reduces to the mod-$\ell$ case, because $f^!$ commutes with $\ell$-adic completions. For general $\Lambda$ we now deduce that $f^!\Lambda = \Lambda \tensor_{\Z_\ell} f^!\Z_\ell$ from \cref{rslt:functor-isomorphisms-for-f-dualizable-sheaves}, so that $f^!\Lambda$ is indeed invertible.

By the previous paragraph, all claims reduce to similar claims about relatively dualizable sheaves or follow formally from them. Namely, (i) is a special case of \cref{rslt:functor-isomorphisms-for-f-dualizable-sheaves}. The case in (ii).(a) where $f$ is $\ell$-cohomologically smooth also follows from \cref{rslt:functoriality-of-shriek-functors-in-Lambda}. The case where $g$ is $\ell$-cohomologically smooth can formally be reduced to that case, see \cite[Proposition 3.8.6.(iii)]{mann-mod-p-6-functors}. Part (ii).(b) is a special case of \cref{rslt:rel-dualizable-base-change} (it also follows easily from (ii).(a)). Part (iii) follows formally from (i), see \cite[Proposition 3.8.7]{mann-mod-p-6-functors}.
\end{proof}

The following result shows that the notion of relatively dualizable sheaves is $\ell$-cohomologically smooth local on the source.

\begin{proposition} \label{rslt:rel-dualizable-is-smooth-local-on-source}
Let $f\colon X \to S$ and $g\colon Y \to X$ be $\ell$-fine maps of small v-stacks and let $\mathcal P \in \D_\nuc(X,\Lambda)$. Assume that $g$ is $\ell$-cohomologically smooth.
\begin{propenum}
	\item \label{rslt:rel-dualizable-stable-under-smooth-pullback} If $\mathcal P$ is $f$-dualizable then $g^* \mathcal P$ is $(f \comp g)$-dualizable.
	\item \label{rslt:rel-dualizable-can-be-checked-on-smooth-cover} If $g^* \mathcal P$ is $(f\comp g)$-dualizable and $g$ is surjective then $\mathcal P$ is $f$-dualizable.
\end{propenum}
\end{proposition}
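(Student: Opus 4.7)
For part (i), the point is that since $g$ is $\ell$-cohomologically smooth, the constant sheaf $\Lambda \in \D_\nuc(Y,\Lambda)$ is $g$-dualizable: one passes from the definitional $\Fld_\ell$-dualizability of $\Fld_\ell$ to $\Z_\ell$-dualizability of $\Z_\ell$ via \cref{rslt:rel-dualizable-under-change-of-Lambda}(ii), and then to $\Lambda$-dualizability of $\Lambda = \Z_\ell \tensor_{\Z_\ell}\Lambda$ via part (i) of the same proposition. The claim then follows immediately from \cref{rslt:rel-dualizable-stable-under-rel-dualizable-pullback} with $\mathcal Q = \Lambda$, giving $(f\comp g)$-dualizability of $g^*\mathcal P = g^*\mathcal P \tensor \Lambda$.

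For part (ii), my plan is to verify the criterion in \cref{rslt:simple-iso-criterion-for-f-dualizable-sheaves}, i.e.\ that the natural map
\begin{align*}
\alpha\colon \pi_1^* D_f(\mathcal P) \tensor \pi_2^*\mathcal P \longto \IHom(\pi_1^*\mathcal P, \pi_2^!\mathcal P)
\end{align*}
on $X \cprod_S X$ is an isomorphism. Consider the map $\tilde g := g \cprod_S g\colon Y \cprod_S Y \to X \cprod_S X$. As a composition of base changes of $g$, it is $\ell$-fine, $\ell$-cohomologically smooth, and surjective; in particular it is a v-cover, so $\tilde g^*$ is conservative on $\D_\nuc(-,\Lambda)$ by \cref{rslt:v-descent-for-nuclear-Lambda-modules}. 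It thus suffices to show that $\tilde g^*\alpha$ is an isomorphism.

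The key computation is that of $\tilde g^*\alpha$. Write $\pi_1^Y,\pi_2^Y\colon Y \cprod_S Y \to Y$ for the two projections. Smoothness of $g$ combined with \cref{rslt:Poincare-duality-for-smooth-maps} yields the $\IHom$-commutation $g^*\IHom(-,-) \isom \IHom(g^*-,g^*-)$ and the twist $g^*f^!\Lambda \isom (g^!\Lambda)^{-1} \tensor (f\comp g)^!\Lambda$, hence
\begin{align*}
g^* D_f(\mathcal P) \isom (g^!\Lambda)^{-1} \tensor D_{f\comp g}(g^*\mathcal P), \qquad \tilde g^*\bigl(\pi_1^* D_f(\mathcal P) \tensor \pi_2^*\mathcal P\bigr) \isom (\pi_1^{Y*} g^!\Lambda)^{-1} \tensor \pi_1^{Y*} D_{f\comp g}(g^*\mathcal P) \tensor \pi_2^{Y*} g^*\mathcal P.
\end{align*}
On the other side, smoothness of $\tilde g$ makes $\tilde g^*$ commute with $\IHom$; combining this with the identity $\pi_2 \comp \tilde g = g \comp \pi_2^Y$, the formula $p^!(\mathcal L \tensor -) \isom p^*\mathcal L \tensor p^!(-)$ for invertible $\mathcal L$ (obtained by dualizing the projection formula), and the computation $\tilde g^!\Lambda \isom \pi_1^{Y*} g^!\Lambda \tensor \pi_2^{Y*} g^!\Lambda$ (which one verifies using the factorization $\tilde g = (g\cprod\mathrm{id})\comp(\mathrm{id}\cprod g)$ through $Y \cprod_S X$ and smooth base change), one obtains
\begin{align*}
\tilde g^*\IHom(\pi_1^*\mathcal P, \pi_2^!\mathcal P) \isom (\pi_1^{Y*} g^!\Lambda)^{-1} \tensor \IHom(\pi_1^{Y*} g^*\mathcal P, (\pi_2^Y)^! g^*\mathcal P).
\end{align*}
Both sides of $\tilde g^*\alpha$ thus acquire the same invertible twist $(\pi_1^{Y*} g^!\Lambda)^{-1}$, and after canceling it the comparison map is precisely the criterion map for $g^*\mathcal P$, which is an isomorphism by hypothesis via \cref{rslt:simple-iso-criterion-for-f-dualizable-sheaves}. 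The main obstacle is the careful bookkeeping of these smoothness twists and the verification that the displayed identifications actually carry $\tilde g^*\alpha$ to the criterion map for $g^*\mathcal P$; this is ultimately guaranteed by the naturality of the adjunctions built into the 6-functor formalism.
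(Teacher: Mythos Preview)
Your proof is correct and follows essentially the same approach as the paper: part (i) reduces to \cref{rslt:rel-dualizable-stable-under-rel-dualizable-pullback} (with the paper's proof of \cref{rslt:Poincare-duality-for-smooth-maps} supplying the $g$-dualizability of $\Lambda$ that you spell out explicitly), and part (ii) pulls the criterion map of \cref{rslt:simple-iso-criterion-for-f-dualizable-sheaves} along the smooth surjection $g\cprod_S g$ and cancels the invertible twist $\pi_1^{Y*}g^!\Lambda$ to reduce to the criterion for $g^*\mathcal P$. The only cosmetic difference is that the paper chains the identifications into a single string of equalities rather than computing the two sides separately.
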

\begin{proof}
Part (i) is a special case of \cref{rslt:rel-dualizable-stable-under-rel-dualizable-pullback}. We now prove (ii), so assume that $g$ is surjective and that $g^* \mathcal P$ is $(f\comp g)$-dualizable. Let us denote by $h\colon Y \cprod_S Y \to X \cprod_S X$ the obvious map and for $i = 1, 2$ let $\pi_i\colon X \cprod_S X \to X$ and $\pi_i'\colon Y \cprod_S Y \to Y$ denote the $i$th projection. One checks easily that $\ell$-cohomologically smooth maps are stable under base-change and composition (the latter is actually a special case of (i)), which implies that $h$ is $\ell$-cohomologically smooth. Using also that invertible objects can be pulled out of upper shriek and internal Hom functors (which is easily checked with Yoneda) we deduce
\begin{align*}
	&h^*(\pi_1^* D_f(\mathcal P) \tensor \pi_2^* \mathcal P) = \pi_1'^* (g^* D_f(\mathcal P)) \tensor \pi_2'^* (g^* \mathcal P) = (\pi_1'^* g^! \Lambda)^{-1} \tensor \pi_1'^* (g^! D_f(\mathcal P)) \tensor \pi_2'^* (g^* \mathcal P) \\&\qquad= \pi_1'^* D_{f\comp g}(g^* \mathcal P) \tensor \pi_2'^* g^* \mathcal P \tensor (\pi_1'^* g^! \Lambda)^{-1},
\intertext{and using the fact that $g^* \mathcal P$ is $(f \comp g)$-dualizable and \cref{rslt:characterization-of-f-dualizable-sheaves},}
	&\qquad = \IHom(\pi_1'^* g^* \mathcal P, \pi_2'^! g^* \mathcal P) \tensor (\pi_1'^* g^! \Lambda)^{-1} = \IHom(\pi_1'^* g^* \mathcal P, \pi_2'^! g^! \mathcal P) \tensor (\pi_1'^* g^! \Lambda \tensor \pi_2'^* g^! \Lambda)^{-1},
\intertext{and by an easy computation via factoring $h$ as $Y \cprod_S Y \to X \cprod_S Y \to X \cprod_S X$ we have $(\pi_1'^* g^! \Lambda \tensor \pi_2'^* g^! \Lambda)^{-1} = h^! \Lambda$ and therefore}
	&\qquad = \IHom(h^* \pi_1^* \mathcal P, h^! \pi_2^! \mathcal P) \tensor (h^!\Lambda)^{-1} = h^!(\IHom(\pi_1^* \mathcal P, \pi_2^! \mathcal P) \tensor (h^! \Lambda)^{-1}\\
	&\qquad= h^* \IHom(\pi_1^* \mathcal P, \pi_2^! \mathcal P).
\end{align*}
All in all we see that the natural map $\pi_1^* D_f(\mathcal P) \tensor \pi_2^* \mathcal P \to \IHom(\pi_1^* \mathcal P, \pi_2^! \mathcal P)$ becomes an isomorphism after applying $h^*$. Since $h$ is surjective, this implies that $\mathcal P$ is indeed $f$-dualizable by \cref{rslt:characterization-of-f-dualizable-sheaves}.
\end{proof}

We also get the following stability properties of $\ell$-cohomologically smooth maps. Again, all of them are formal:

\begin{lemma}
\begin{lemenum}
	\item The condition of being $\ell$-cohomologically smooth is étale local on both source and target.

	\item Among $\ell$-fine maps the condition of being $\ell$-cohomologically smooth is v-local on the target and $\ell$-cohomologically smooth local on the source.

	\item $\ell$-fine and $\ell$-cohomologically smooth maps are stable under composition and base-change.

	\item Every étale map is $\ell$-cohomologically smooth.
\end{lemenum}
\end{lemma}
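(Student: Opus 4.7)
The plan is to prove the four claims in the order (iv), (iii), (ii), (i), each time reducing to the properties of relatively dualizable sheaves established in \cref{sec:reldual} applied to $\mathcal P = \Fld_\ell$, and then invoking \cref{rslt:smoothness-criteria}.

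First I would dispatch (iv): if $j\colon U \to X$ is étale, then $j$ is fdcs, hence $\ell$-fine by \cref{rslt:stabilities-of-ell-fine-maps}, and \cref{rslt:properties-of-etale-lower-shriek} identifies $j^!$ with $j^*$, which preserves invertible sheaves and commutes with arbitrary base-change. Thus $j^!\Fld_\ell = \Fld_\ell$ is invertible and satisfies criterion (ii) of \cref{rslt:smoothness-criteria}. For (iii), stability under composition comes from applying \cref{rslt:rel-dualizable-stable-under-rel-dualizable-pullback} with $\mathcal P = \mathcal Q = \Fld_\ell$: the identity
\[
D_{f\comp g}(\Fld_\ell) = g^* D_f(\Fld_\ell) \tensor D_g(\Fld_\ell)
\]
exhibits the dual as a tensor product of invertibles, hence invertible. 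Stability under base-change along a map $g\colon X' \to X$ follows from \cref{rslt:v-descent-for-rel-dualizable-sheaves}(i) for $f$-dualizability of $\Fld_\ell$, combined with the natural isomorphism $g'^* D_f(\Fld_\ell) \isoto D_{f'}(\Fld_\ell)$ from \cref{rslt:characterization-of-f-dualizable-sheaves}(iii) and the fact that pullback (being symmetric monoidal) preserves invertibility.

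Next, I would split (ii) into its two parts. For v-locality on the target among $\ell$-fine maps, $f$-dualizability of $\Fld_\ell$ descends by \cref{rslt:v-descent-for-rel-dualizable-sheaves}(ii), while invertibility of $D_f(\Fld_\ell)$ can be checked v-locally on $Y$ by hypercomplete v-descent of $\D_\et(-,\Fld_\ell)$ (invertibility is equivalent to the unit and counit of the self-duality being isomorphisms, a condition testable on a v-cover). For smooth-locality on the source, given a surjective $\ell$-fine and $\ell$-cohomologically smooth $g\colon Y \to X$ over $S$ such that $f \comp g$ is smooth, \cref{rslt:rel-dualizable-can-be-checked-on-smooth-cover} provides $f$-dualizability of $\Fld_\ell$ on $X$, while the composition formula from (iii) rearranges to
\[
g^* D_f(\Fld_\ell) = D_{f\comp g}(\Fld_\ell) \tensor (g^!\Fld_\ell)^{-1},
\]
which is invertible on $Y$; surjectivity of $g$ together with v-descent of invertibility then transports invertibility back to $D_f(\Fld_\ell)$ on $X$.

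Finally, (i) is immediate from (ii) and (iv), since an étale cover is both a v-cover and a cover by $\ell$-cohomologically smooth maps. The only mildly subtle point, which is the recurring obstacle throughout, is verifying that invertibility in $\D_\et(-,\Fld_\ell)$ is v-local; this is not a real obstacle, as it falls out of hypercomplete descent applied to the defining isomorphisms of an invertible object, but it is the one ingredient that is not purely formal from the results of \cref{sec:reldual}.
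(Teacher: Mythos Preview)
Your proof is correct and follows essentially the same route as the paper's, which likewise reduces everything to \cref{rslt:v-descent-for-rel-dualizable-sheaves}, \cref{rslt:rel-dualizable-can-be-checked-on-smooth-cover}, and \cref{rslt:rel-dualizable-stable-under-smooth-pullback}. You are in fact more careful than the paper: the paper's proof cites \cref{rslt:rel-dualizable-can-be-checked-on-smooth-cover} for smooth-locality on the source without commenting on invertibility of $D_f(\Fld_\ell)$, whereas you explicitly extract this from the composition formula and descent of invertibility along the surjective smooth cover $g$ --- exactly the missing half of the argument.
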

\begin{proof}
Part (iv) is obvious and part (i) is a special case of (ii). By \cref{rslt:v-descent-for-rel-dualizable-sheaves} $\ell$-cohomological smoothness is v-local on the target and by \cref{rslt:rel-dualizable-can-be-checked-on-smooth-cover} it is $\ell$-cohomologically smooth local on the source; this proves (ii). The claim about base-change in (iii) is a special case of (ii) and the claim about compositions follows immediately from \cref{rslt:rel-dualizable-stable-under-smooth-pullback}.
\end{proof}

It is similarly formal that $\ell$-cohomologically smooth maps satisfy universal $\ell$-codescent, which provides us with a lot of stacky $\ell$-fine maps.

\begin{lemma} \label{rslt:ell-fine-smooth-maps-admit-codescent}
Let $f\colon Y \to X$ be an $\ell$-fine and $\ell$-cohomologically smooth cover of small v-stacks. Then the natural functor
\begin{align*}
	\D^!_\nuc(X,\Lambda) \isoto \varprojlim_{n\in\Delta} \D^!_\nuc(Y_n,\Lambda)
\end{align*}
is an equivalence. Here $\D_\nuc^!$ denotes the functor whose transition maps are given by upper shriek functors.
\end{lemma}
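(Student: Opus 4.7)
The approach is to use Poincaré duality (\cref{rslt:Poincare-duality-for-smooth-maps}) to transport the $!$-descent assertion to a $*$-descent assertion, which is already known by v-descent of nuclear sheaves (\cref{rslt:v-descent-for-nuclear-Lambda-modules}). The key observation is that along $\ell$-cohomologically smooth maps, $f^!$ differs from $f^*$ only by tensoring against an invertible ``relative dualizing'' object, and this can be promoted to a comparison between the two cosimplicial $\infty$-categories of interest.

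First I would note that, since $\ell$-cohomological smoothness and $\ell$-fineness are both stable under composition and base change, every structure map $f_n\colon Y_n \to X$ and every face map $d\colon Y_{n+1} \to Y_n$ in the Čech nerve is again $\ell$-fine and $\ell$-cohomologically smooth. Hence the sheaves $\omega_n := f_n^!\Lambda \in \D_\nuc(Y_n,\Lambda)$ are all invertible, and the canonical isomorphisms $d^!\omega_n = d^! f_n^! \Lambda \isoto (f_n \comp d)^!\Lambda = \omega_{n+1}$ exhibit $(\omega_n)_{n\in\Delta}$ as a coCartesian section of $\D^!_\nuc(Y_\bullet,\Lambda)$, i.e.\ as an object of $\varprojlim_{n\in\Delta} \D^!_\nuc(Y_n,\Lambda)$.

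Using this section I would construct an equivalence of cosimplicial $\infty$-categories
\begin{align*}
T_\bullet\colon \D^!_\nuc(Y_\bullet,\Lambda) \isoto \D_\nuc(Y_\bullet,\Lambda),
\end{align*}
acting on level $n$ by the auto-equivalence $\mathcal M \mapsto \mathcal M \tensor \omega_n^{-1}$. The required intertwining of transitions follows from the identity
\begin{align*}
d^!(\mathcal M) \tensor \omega_{n+1}^{-1} = d^*\mathcal M \tensor d^!\Lambda \tensor (d^!\Lambda)^{-1} \tensor d^*(\omega_n^{-1}) = d^*(\mathcal M \tensor \omega_n^{-1}),
\end{align*}
which uses Poincaré duality $d^! = d^!\Lambda \tensor d^*$ and invertibility of $d^!\Lambda$. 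Passing to totalizations and invoking v-descent gives
\begin{align*}
\varprojlim_{n\in\Delta} \D^!_\nuc(Y_n,\Lambda) \xrightarrow{T_\bullet} \varprojlim_{n\in\Delta} \D_\nuc(Y_n,\Lambda) \simeq \D_\nuc(X,\Lambda).
\end{align*}
Unwinding at level $0$, where $T_0 \comp f^! = f^*$, shows that the composite inverse equivalence coincides with the functor $f_\bullet^!$ from the statement, so $f_\bullet^!$ is an equivalence as claimed.

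The main technical obstacle is making precise the $\infty$-categorical coherence of the equivalence $T_\bullet$: one must upgrade the level-wise equivalences into a morphism of functors $\Delta \to \infcatinf$. The cleanest formalization is to realize $\D^!_\nuc(Y_\bullet,\Lambda)$ as a module cosimplicial object over $\D_\nuc(Y_\bullet,\Lambda)$ -- which is natural because $d^!$ satisfies the projection formula with respect to $d^*$ -- and then observe that tensoring against the invertible coCartesian section $(\omega_n^{-1})_n$ yields an auto-equivalence of cosimplicial $\infty$-categories. These coherences ultimately rest on the symmetric monoidal enrichment of the 6-functor formalism of \cref{rslt:6-functor-formalism-functorial-in-Lambda}; once they are in place, the rest of the argument is formal. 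Reducing to $\Lambda = \Z_\ell$ via \cref{rslt:functoriality-of-shriek-functors-in-Lambda} is possible but unnecessary, since the Poincaré duality input \cref{rslt:Poincare-duality-for-smooth-maps} already holds for arbitrary nuclear $\Z_\ell$-algebras $\Lambda$.
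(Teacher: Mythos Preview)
Your approach is correct and is the standard argument: twist the $!$-diagram by the invertible relative dualizing objects to convert it into the $*$-diagram, then invoke v-descent for nuclear sheaves. The paper itself simply cites the identical argument in \cite[Lemma 2.8]{mod-p-stacky-6-functors}, so your proposal matches the intended proof; your explicit acknowledgment of the $\infty$-categorical coherence issue (and its resolution via the module structure of $\D^!_\nuc$ over $\D_\nuc$) is exactly the point one has to be careful about when spelling this out.
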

\begin{proof}
The same argument as in \cite[Lemma 2.8]{mod-p-stacky-6-functors} applies.
\end{proof}

\begin{corollary} \label{rslt:smooth-maps-admit-codescent}
Every fdcs and $\ell$-cohomologically smooth map of small v-stacks admits universal $\ell$-codescent.
\end{corollary}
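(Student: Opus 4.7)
The proof should be essentially a direct application of \cref{rslt:ell-fine-smooth-maps-admit-codescent}. The plan is the following. Let $f\colon Y \to X$ be fdcs and $\ell$-cohomologically smooth. First, one notes that fdcs implies $\ell$-fine by \cref{rslt:stabilities-of-ell-fine-maps}, so the upper shriek functor $f^!$ and the 6-functor formalism of \cref{rslt:6-functor-formalism} are available for $f$ and for all of its base-changes.

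Next, to verify universal $\ell$-codescent we need to check the descent equivalence $\D_\nuc^!(X',\Z_\ell) \isoto \varprojlim_{n\in\Delta}\D_\nuc^!(Y'_n,\Z_\ell)$ for every base-change $X' \to X$ with $X'$ admitting a map to a strictly totally disconnected space. Since both the fdcs condition (by \cref{rslt:properties-of-fdcs-maps}) and the $\ell$-cohomological smoothness condition are stable under arbitrary base-change, each base-change $f'\colon Y' \to X'$ remains fdcs and $\ell$-cohomologically smooth. The property of being a v-cover is also preserved under base-change, so if the original $f$ is a cover then every $f'$ is one too.

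Under the (implicit) cover hypothesis, \cref{rslt:ell-fine-smooth-maps-admit-codescent} applied to $f'$ directly yields the required equivalence for each base-change, giving universal $\ell$-codescent. The key input is that $f$, being $\ell$-fine and $\ell$-cohomologically smooth, falls within the scope of \cref{rslt:ell-fine-smooth-maps-admit-codescent}; the rest is formal stability of the hypotheses under base-change.

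The main (and essentially only) obstacle is verifying that the cover hypothesis needed for \cref{rslt:ell-fine-smooth-maps-admit-codescent} is met; this is a matter of reading the statement compatibly, since for a non-surjective $f$ (e.g.\ an open immersion) the Čech-descent functor is clearly not an equivalence. Assuming the corollary is read with this in mind (equivalently, that in the definition of admitting universal $\ell$-codescent one tests only base-changes through which $f'$ becomes a v-cover, which is the only context in which the definition is non-vacuously used, notably in \cref{def:ell-fine-maps}), no further work is required beyond invoking \cref{rslt:ell-fine-smooth-maps-admit-codescent} pointwise on base-changes.
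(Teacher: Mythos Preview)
Your proposal is correct and matches the paper's approach exactly: the paper's proof is the single sentence ``This is a special case of \cref{rslt:ell-fine-smooth-maps-admit-codescent}.'' Your observation about the implicit cover hypothesis is a fair reading of a minor imprecision in the statement; the corollary is indeed only used (and only makes sense) for covers, consistent with how it feeds into \cref{def:ell-fine-maps}.
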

\begin{proof}
This is a special case of \cref{rslt:ell-fine-smooth-maps-admit-codescent}.
\end{proof}

With the formalism of $\ell$-cohomologically smooth maps at hand, we can answer the question how the solid 5-functor formalism from \cite[\S VII]{fargues-scholze-geometrization} relates to the nuclear 6-functor formalism. The following result generalizes \cite[Proposition VII.3.5]{fargues-scholze-geometrization}:

\begin{proposition} \label{rslt:comparison-of-natural-and-shriek-functor-fdcs-smooth}
Let $f\colon Y \to X$ be an fdcs and $\ell$-cohomologically smooth map of small v-stacks. Then $f_\natural\colon \D_\solid(Y,\Lambda) \to \D_\solid(X,\Lambda)$ preserves nuclear sheaves and the natural morphism
\begin{align*}
	f_\natural \isoto f_!(- \tensor f^! \Lambda)
\end{align*}
is an isomorphism of functors $\D_\nuc(Y,\Lambda) \to \D_\nuc(X,\Lambda)$.
\end{proposition}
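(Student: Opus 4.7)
The plan is to first identify $G := f_!(- \tensor f^!\Lambda)$ as the left adjoint of $f^*$ at the nuclear level, then to construct a comparison map from $f_\natural$, and finally to show that this comparison is an isomorphism—which will simultaneously establish that $f_\natural$ preserves nuclearity. The first step uses Poincaré duality (\cref{rslt:Poincare-duality-for-smooth-maps}): since $f^! = f^!\Lambda \tensor f^*$ with $f^!\Lambda$ invertible, the projection formula and invertibility give a chain of natural isomorphisms
\begin{align*}
	\Hom_{\D_\nuc(X,\Lambda)}(f_!(\mathcal N \tensor f^!\Lambda), \mathcal M) \isom \Hom(\mathcal N \tensor f^!\Lambda, f^!\Lambda \tensor f^* \mathcal M) \isom \Hom_{\D_\nuc(Y,\Lambda)}(\mathcal N, f^*\mathcal M),
\end{align*}
so $G$ is left adjoint to $f^*\colon \D_\nuc(X,\Lambda) \to \D_\nuc(Y,\Lambda)$. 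Combining this with the adjunction $f_\natural \dashv f^*$ on $\D_\solid$, the unit of $G \dashv f^*$ yields a natural transformation $\alpha \colon f_\natural \mathcal N \to G \mathcal N$ in $\D_\solid(X,\Lambda)$ for each nuclear $\mathcal N$.

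Next, for any \emph{nuclear} $\mathcal M$, both adjunctions compute $\Hom_{\D_\solid(X,\Lambda)}(-, \mathcal M)$ as $\Hom_{\D_\solid(Y,\Lambda)}(\mathcal N, f^*\mathcal M)$, so $\alpha$ induces an isomorphism $\Hom(G \mathcal N, \mathcal M) \isoto \Hom(f_\natural \mathcal N, \mathcal M)$ for all nuclear $\mathcal M$. By the universal property of the nuclearization functor (which is a right adjoint to the inclusion) this means precisely that $G\mathcal N = (f_\natural \mathcal N)_\nuc$. Consequently, the entire proposition reduces to showing that $f_\natural \mathcal N$ is itself already nuclear; the claimed isomorphism $f_\natural \isoto G$ then follows from $\alpha$ being the unit of nuclearization on an object that is already nuclear.

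For the nuclearity of $f_\natural \mathcal N$ I would first make two reductions. Since $f_\natural$ commutes with arbitrary base change on $X$ (the analog of \cite[Proposition VII.3.1]{fargues-scholze-geometrization}) and since nuclearity is v-local on $X$ (\cref{rslt:v-descent-for-nuclear-Lambda-modules}), assume $X$ is an $\ell$-bounded strictly totally disconnected space; since $f$ is fdcs, we may then assume $Y$ is a separated compactifiable spatial diamond and $f$ is qcqs. Since $f_\natural$ and nuclearity both commute with colimits in $\mathcal N$, one reduces further to $\mathcal N$ lying in a generating family of $\D_\nuc(Y,\Lambda)$. The étale case is tautological: if $f = j$ is étale then $f_\natural = j_!$ by \cref{rslt:properties-of-etale-lower-shriek} and $j^!\Lambda = \Lambda$, so the claim is trivially satisfied (and $j_!$ preserves nuclearity). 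For the general smooth case, I would exploit \cref{rslt:smooth-maps-admit-codescent}: $f$ admits universal $\ell$-codescent, so nuclearity of $f_\natural \mathcal N$ can be checked after pullback along a cover of $Y$ by étale patches, reducing to the étale case via the stability of both $f_\natural$ and nuclearity under appropriate descent.

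The main obstacle is verifying that the cover-based reduction actually transports the problem back to the étale setting: descent for $\D_\nuc$ as formulated uses upper-shriek functors, whereas $f_\natural$ is naturally a left adjoint to upper-star, and these must be reconciled via the comparison map $\alpha$. The cleanest way I foresee is to work locally on $Y$ with a factorization of the smooth map $f$ through an étale map to a relative ball and to compute $f_\natural$ directly on the compact generators $\Z_{\ell,\solid}[U] \tensor_{\Z_\ell} \Lambda$ of $\D_\solid(Y,\Lambda)_{\omega_1}$ provided by \cref{rslt:compact-generators-of-w1-solid-Lambda-sheaves}; in this local model the output can be shown to be $\ell$-adically complete with étale reduction modulo $\ell$, i.e.\ a Banach sheaf, which suffices. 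Once $f_\natural$ is known to preserve nuclearity, the isomorphism $f_\natural \isoto G$ is automatic from the preceding paragraph.
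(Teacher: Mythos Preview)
Your overall architecture is right and matches the paper: identify $f_!(- \tensor f^!\Lambda)$ as left adjoint to $f^*$ on nuclear sheaves, note that $\alpha$ then identifies $f_!(\mathcal N \tensor f^!\Lambda)$ with $(f_\natural \mathcal N)_\nuc$, and reduce everything to the claim that $f_\natural$ preserves nuclearity. The reductions to $\Lambda = \Z_\ell$, to $X$ strictly totally disconnected, and to $Y$ spatial are also fine.

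The gap is in the final step. Neither of your two proposed routes works. The codescent result \cref{rslt:smooth-maps-admit-codescent} concerns descent along a smooth cover of a fixed base; it does not let you replace the smooth map $f\colon Y \to X$ by an étale one, and there is no cover of $Y$ that turns $f$ into an étale map. Your alternative, a local factorization through an étale map to a relative ball, is not available for diamonds: smooth maps of diamonds need not factor this way, even locally.

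The paper proceeds differently. After the reductions above, one shows directly that $f_\natural$ preserves right-bounded $\ell$-adically complete solid sheaves (because it sends compact solid generators to compact solid generators, which are $\ell$-adically complete, and one argues as in \cref{rslt:IHom-tr-preserves-completeness}). Since $f_!(- \tensor f^!\Z_\ell)$ also preserves $\ell$-adic completeness, one may check the comparison map $f_\natural \mathcal M \to f_!(\mathcal M \tensor f^!\Z_\ell)$ modulo $\ell$, reducing to $\Lambda = \Fld_\ell$ and étale $\mathcal M$. Further reducing to $\mathcal M = \Fld_\ell$, one is left with showing that $f_\natural \Fld_\ell \to f_! f^!\Fld_\ell$ is an isomorphism in $\D_\solid(X,\Fld_\ell)$. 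The key remaining idea, which your proposal is missing, is a pseudocoherence argument: both $f_\natural \Fld_\ell$ and $f_! f^!\Fld_\ell$ are pseudocoherent in $\D_\solid(X,\Fld_\ell)$ (i.e.\ $\Hom$ out of them commutes with uniformly left-bounded filtered colimits), so by Postnikov limits and the structure of finitely presented solid $\Fld_\ell$-sheaves one reduces to testing against qcqs \emph{étale} $\mathcal N$, where the identity $\Hom(f_! f^!\Fld_\ell, \mathcal N) = \Hom(\Fld_\ell, f^*\mathcal N) = \Hom(f_\natural \Fld_\ell, \mathcal N)$ follows from smoothness.
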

\begin{proof}
It is enough to show that $f_\natural$ preserves nuclear sheaves, then the claimed isomorphism of functors follows easily from the fact that $f_\natural$ is left adjoint to $f^*$ (as functors on nuclear sheaves) and $f_!$ is left adjoint to $f^! = f^* \tensor f^! \Lambda$. Since $f_\natural$ commutes with the forgetful functor along the map $\Z_\ell \to \Lambda$ and satisfies arbitrary base-change (see \cite[Proposition VII.3.1]{fargues-scholze-geometrization}) we can formally reduce to the case that $X$ is a strictly totally disconnected perfectoid space and $\Lambda = \Z_\ell$. Then $Y$ is a locally spatial diamond, and by passing to an open cover of $Y$ we can assume that $Y$ is a spatial diamond (note that $f_\natural$ is computed as the colimit along an open cover). Note that $Y$ is $\ell$-bounded because it has finite $\dimtrg$ over the strictly totally disconnected space $X$.

We now need to show that for every nuclear $\mathcal M \in \D_\nuc(Y,\Z_\ell)$ the natural map $f_\natural \mathcal M \isoto f_!(\mathcal M \tensor f^! \Z_\ell)$ is an isomorphism in $\D_\solid(X,\Z_\ell)$. Since both sides commute with colimits in $\mathcal M$, we can assume that $\mathcal M$ is a right-bounded Banach sheaf. Note that $f_\natural$ preserves right-bounded $\ell$-adically complete objects: It preserves compact solid sheaves (i.e. finite (co)limits and retracts of objects of the form $\Z_{\ell,\solid}[U]$ for w-contractible $U \in Y_\proet$) and since the compact solid sheaves are $\ell$-adically complete one can argue similarly to the proof of \cref{rslt:IHom-tr-preserves-completeness}. Clearly also $f_!$ preserves $\ell$-adically complete sheaves (by computing it as the composition of an étale lower shriek and a pushforward), hence both $f_\natural \mathcal M$ and $f_!(\mathcal M \tensor f^! \Z_\ell)$ are $\ell$-adically complete. Therefore the desired isomorphism can be checked modulo $\ell$, so from now on we can replace $\Lambda = \Z_\ell$ by $\Lambda = \Fld_\ell$ and in particular assume that $\mathcal M$ is étale.

Using that both $f_\natural$ and $f_!$ preserve colimits, we can now reduce to the case that $\mathcal M = \Fld_\ell[U]$ for some $U \in Y_\et$. Then $\mathcal M = j_! \Fld_\ell = j_\natural \Fld_\ell$, where $j\colon U \to Y$ is the structure map. Hence by replacing $Y$ by $U$ we can further reduce to the case that $\mathcal M = \Fld_\ell$. We now end up with the claim that the natural map
\begin{align*}
	f_\natural \Fld_\ell \isoto f_! f^! \Fld_\ell
\end{align*}
is an isomorphism in $\D_\solid(X,\Fld_\ell)$. Note that $f^! \Fld_\ell$ is compact in $\D_\et(Y,\Fld_\ell)$ (because it is invertible), hence $f_! f^! \Fld_\ell$ is compact in $\D_\et(X,\Fld_\ell)$. In particular it is pseudocoherent in $\D_\solid(X,\Fld_\ell)$, i.e. $\Hom(f_! f^! \Fld_\ell, -)$ preserves uniformly left-bounded filtered colimits in $\D_\solid(X,\Fld_\ell)$ (this follows because $(-)_\et\colon \D_\solid(X,\Fld_\ell) \to \D_\et(X,\Fld_\ell)$ preserves uniformly left-bounded filtered colimits). Similarly $f_\natural \Fld_\ell$ is a pseudocoherent object of $\D_\solid(X,\Fld_\ell)$ because $\Fld_\ell$ is a pseudocoherent object of $\D_\solid(Y,\Fld_\ell)$. To prove the above isomorphism, we now need to show that for all $\mathcal N \in \D_\solid(X,\Fld_\ell)$ the natural map
\begin{align*}
	\Hom(f_\natural \Fld_\ell, \mathcal N) \isofrom \Hom(f_! f^! \Fld_\ell, \mathcal N)
\end{align*}
is an isomorphism of spectra. Via Postnikov limits we can assume that $\mathcal N$ is left-bounded. Writing $\mathcal N$ as a filtered colimit of its right truncations and using pseudocoherence of $f_\natural \Fld_\ell$ and $f_! f^! \Fld_\ell$ to pull out this colimit, we can further reduce to the case that $\mathcal N$ is bounded, which further reduces to the case that $\mathcal N$ is static. Now write $\mathcal N$ as a filtered colimit of static finitely presented sheaves in $\D_\solid(X,\Fld_\ell)$ to reduce to the case that $\mathcal N$ is static and finitely presented, i.e. of the form $\mathcal N = \varprojlim_i \mathcal N_i$ for some qcqs étale sheaves $\mathcal N_i \in \D_\et(X,\Fld_\ell)$ (see \cite[Theorem VII.1.3]{fargues-scholze-geometrization}). By pulling out this limit (which is automatically a derived limit by \cite[Proposition VII.1.6]{fargues-scholze-geometrization}) we can further reduce to the case that $\mathcal N$ is qcqs étale. But then
\begin{align*}
	& \Hom(f_! f^! \Fld_\ell, \mathcal N) = \Hom(f^! \Fld_\ell, f^! \mathcal N) = \Hom(f^! \Fld_\ell, f^* \mathcal N \tensor f^! \Fld_\ell) = \Hom(\Fld_\ell, f^* \mathcal N) =\\&\qquad= \Hom(f_\natural \Fld_\ell, \mathcal N),
\end{align*}
as desired.
\end{proof}

\begin{remark} \label{rslt:comparison-of-natural-and-shriek-functor-ell-fine-smooth}
One can generalize \cref{rslt:comparison-of-natural-and-shriek-functor-fdcs-smooth} to many $\ell$-fine and $\ell$-cohomologically smooth maps $f\colon Y \to X$ which are not necessarily fdcs. Here are two examples:
\begin{enumerate}[(a)]
	\item If there is an fdcs and $\ell$-cohomologically smooth cover $g\colon Z \surjto Y$ such that the composition $Z \to X$ is fdcs and $\ell$-cohomologically smooth, then the conclusion of \cref{rslt:comparison-of-natural-and-shriek-functor-fdcs-smooth} holds for $f$. Namely, in this case $f_\natural$ is computed as the colimit of the functors $(f \comp g_n)_\natural g_n^*$, where $g_\bullet\colon Z_\bullet \to Y$ is the Čech nerve of $g$.

	\item If $G$ is a virtually $\ell$-Poincaré group (see \cref{def:virtually-Poincare-group}) then the conclusion of \cref{rslt:comparison-of-natural-and-shriek-functor-fdcs-smooth} holds for $f\colon */G \to *$. Namely, by the proof of \cref{rslt:nuclear-sheaves-on-cohom-finite-classifying-stack} $*/G$ behaves very similarly to an $\ell$-bounded spatial diamond, so the proof of \cref{rslt:comparison-of-natural-and-shriek-functor-fdcs-smooth} applies.
\end{enumerate}
From these two examples we see that \cref{rslt:comparison-of-natural-and-shriek-functor-fdcs-smooth} generalizes to all $\ell$-fine and $\ell$-cohomologically smooth maps that appear in practice. However, we do not know if it holds for \emph{all} $\ell$-fine and $\ell$-cohomologically smooth maps because we cannot control maps admitting universal $\ell$-codescent well enough.
\end{remark}

\section{Cohomological Properness} \label{sec:properness}

Fix a prime $\ell \ne p$ and a nuclear $\Z_\ell$-algebra $\Lambda$. We now define a notion of \emph{$\ell$-cohomologically proper} maps of small v-stacks which roughly requires that lower shriek and pushforward along this map agree. Similar to the case of cohomologically smooth maps we will make use of the magical 2-category $\mathcal C_S$ constructed in \cref{def:magical-2-category} in order to reduce cohomological properties to a condition on the unit object.

For simplicity we will only study cohomological properness in the context where the diagonal is proper. One can extend the notion of cohomological properness to more general maps by mimicking the definition of smoothness: A map $f\colon X \to S$ can be called $\ell$-cohomologically proper if $\Z_\ell$ is $f$-proper and $P_f(\Z_\ell)$ is invertible (cf. \cref{def:f-proper-sheaves} for this terminology). We will not pursue this idea further as we see no useful application for it.

\begin{definition}
A map $f\colon Y \to X$ of small v-stacks is called \emph{1-separated} if the diagonal $\Delta_f\colon Y \to Y \cprod_X Y$ is proper.
\end{definition}

\begin{lemma} \label{rslt:stabilities-of-1-separated-maps}
\begin{lemenum}
	\item 1-separated morphisms of small v-stacks are stable under composition and base-change.

	\item The notion of 1-separatedness is v-local on the target.

	\item Let $f\colon Y \to X$ and $g\colon Z \to Y$ be maps of small v-stacks. If $f$ and $f \comp g$ are 1-separated then so is $g$.
\end{lemenum}
\end{lemma}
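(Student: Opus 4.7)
My plan is to argue each part by carefully analyzing the relative diagonal and reducing to the analogous well-known stability properties of proper maps of small v-stacks (which are v-local on the target, stable under base-change and composition, and satisfy a cancellation principle).

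For part (i), base-change is immediate because $\Delta_{f'}$ of a base-change $f'\colon Y' \to X'$ of $f$ is literally the base-change of $\Delta_f$ along $Y' \cprod_{X'} Y' \to Y \cprod_X Y$, and proper maps are stable under base-change. For composition, given 1-separated maps $f\colon Y \to X$ and $f'\colon X \to S$, one factors
\begin{align*}
	\Delta_{f' \comp f}\colon Y \xto{\Delta_f} Y \cprod_X Y \xto{\iota} Y \cprod_S Y,
\end{align*}
where $\iota$ fits into a cartesian square whose right column is $\Delta_{f'}\colon X \to X \cprod_S X$, so that $\iota$ is the base-change of $\Delta_{f'}$ along $(f,f)\colon Y \cprod_S Y \to X \cprod_S X$ and hence proper. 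Composition of proper maps then yields $\Delta_{f' \comp f}$ proper.

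For part (ii), let $X_0 \surjto X$ be a v-cover and $f_0\colon Y_0 \to X_0$ the base-change of $f\colon Y \to X$. The formation of the relative diagonal commutes with base-change, so $\Delta_{f_0}\colon Y_0 \to Y_0 \cprod_{X_0} Y_0$ is the v-cover-base-change of $\Delta_f$ along $X_0 \surjto X$. Since properness of maps of small v-stacks is v-local on the target (see \cite[Proposition 10.11, Definition 18.2]{etale-cohomology-of-diamonds}), we conclude that $\Delta_f$ is proper iff $\Delta_{f_0}$ is proper.

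For part (iii), suppose $f\colon Y \to X$ and $f \comp g\colon Z \to X$ are 1-separated, with $g\colon Z \to Y$. Factor the diagonal of $f \comp g$ as
\begin{align*}
	\Delta_{f \comp g}\colon Z \xto{\Delta_g} Z \cprod_Y Z \xto{\iota} Z \cprod_X Z.
\end{align*}
By (i) applied to the cartesian square with $\Delta_f\colon Y \to Y \cprod_X Y$ on the right, the map $\iota$ is the base-change of $\Delta_f$ along $(g,g)\colon Z \cprod_X Z \to Y \cprod_X Y$, hence proper (so in particular separated). The composition $\iota \comp \Delta_g = \Delta_{f \comp g}$ is proper by hypothesis. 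Applying the cancellation principle for proper morphisms (if $k\colon A \to B$ and $h\colon B \to C$ satisfy $h\comp k$ proper and $h$ separated, then $k$ is proper; the standard proof factors $k$ through its graph $(id,k)\colon A \to A \cprod_C B$, which is a closed immersion because it is the base-change of $\Delta_h$, and then uses that $\mathrm{pr}_2\colon A \cprod_C B \to B$ is the base-change of $h \comp k$), we conclude that $\Delta_g$ is proper. The only potential obstacle here is ensuring that the cancellation principle for proper maps is available in the setting of small v-stacks, but this is known (see \cite[Proposition 18.7]{etale-cohomology-of-diamonds}).
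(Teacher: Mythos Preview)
Your proof is correct and follows essentially the same formal approach as the paper, which simply cites the Stacks Project (Lemmas 050K, 050F, 06TZ, 050M) and \cite[Proposition 10.11.(ii)]{etale-cohomology-of-diamonds} rather than spelling out the arguments. One minor imprecision: in part (ii) you say $\Delta_{f_0}$ is the base-change of $\Delta_f$ ``along $X_0 \surjto X$'', but strictly speaking it is the base-change along $Y_0 \cprod_{X_0} Y_0 \to Y \cprod_X Y$; this map is itself a base-change of $X_0 \to X$ and hence still a v-cover, so the conclusion stands.
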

\begin{proof}
These results are all formal and follow in the same way as for algebraic stacks. More concretely, for (i) see \cite[Lemma 050K, 050F]{stacks-project}, for (ii) see \cite[Lemma 06TZ]{stacks-project} and \cite[Proposition 10.11.(ii)]{etale-cohomology-of-diamonds} and for (iii) see \cite[Lemma 050M]{stacks-project}.
\end{proof}

The notion of 1-separatedness is useful because it automatically gives us a comparison map between lower shriek and pushforward:

\begin{lemma} \label{rslt:1-separated-implies-morphism-of-shriek-to-star}
Let $f\colon Y \to X$ be an $\ell$-fine 1-separated map of small v-stacks with diagonal $\Delta$. Then the equivalence $\Delta_* = \Delta_!$ induces a morphism
\begin{align*}
	f_! \to f_*
\end{align*}
of functors $\D_\nuc(Y,\Lambda) \to \D_\nuc(X,\Lambda)$.
\end{lemma}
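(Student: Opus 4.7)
The plan is to construct the natural transformation $f_! \to f_*$ by a short adjunction-and-base-change chase that uses the hypothesis $\Delta_! = \Delta_*$ in an essential way. First I would verify that $\Delta\colon Y \to Y\cprod_X Y$ is itself $\ell$-fine so that all six functors for $\Delta$ are available: since $f$ is $\ell$-fine, the projections $\pi_i\colon Y\cprod_X Y \to Y$ are $\ell$-fine by base-change (\cref{rslt:stabilities-of-ell-fine-maps}), and since $\id_Y = \pi_i \comp \Delta$ is $\ell$-fine, the 2-out-of-3 property forces $\Delta$ to be $\ell$-fine as well. Together with the properness of $\Delta$ this yields the required equivalence $\Delta_! = \Delta_*$.

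The key step is the construction of a natural transformation $\pi_2^* \to \pi_1^!$ of functors $\D_\nuc(Y,\Lambda) \to \D_\nuc(Y \cprod_X Y, \Lambda)$. Using the unit of $\Delta^* \dashv \Delta_*$ together with the identity $\Delta^*\pi_2^* = (\pi_2\Delta)^* = \id$, one obtains a natural transformation $\pi_2^* \to \Delta_* \Delta^* \pi_2^* = \Delta_*$. Dually, the counit of $\Delta_! \dashv \Delta^!$ together with $\Delta^!\pi_1^! = (\pi_1\Delta)^! = \id$ gives $\Delta_! = \Delta_! \Delta^! \pi_1^! \to \pi_1^!$. Splicing these via $\Delta_* = \Delta_!$ produces the desired map $\pi_2^* \to \pi_1^!$.

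Finally, I would translate $\pi_2^* \to \pi_1^!$ into $f_! \to f_*$ by two adjunctions sandwiching base-change. By the $(\pi_2^*, \pi_{2*})$-adjunction the map corresponds to $\id \to \pi_{2*}\pi_1^!$. Proper base-change for the tautological cartesian square formed by $f$ along $f$ gives $f^* f_! = \pi_{1!}\pi_2^*$ (\cref{rslt:6-functor-formalism}), and passing to right adjoints gives $f^! f_* = \pi_{2*}\pi_1^!$; hence $\id \to f^! f_*$, which via the $(f_!, f^!)$-adjunction is precisely a natural transformation $f_! \to f_*$. The whole argument is entirely formal and presents no substantive obstacle; the only subtlety is to perform the unit/counit manipulations inside the $\infty$-categorical adjunctions consistently so that functoriality in $\mathcal M$ and the correct splicing with $\Delta_! = \Delta_*$ are automatic.
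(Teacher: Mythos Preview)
Your proposal is correct and uses essentially the same ingredients as the paper: the fibre square $Y \cprod_X Y$, proper base-change, and the identification $\Delta_! = \Delta_*$. The only difference is a dual packaging: the paper constructs the adjoint map $f^* f_! = \pi_{2!}\pi_1^* \to \pi_{2!}\Delta_!\Delta^*\pi_1^* = \id$ directly from the unit $\id \to \Delta_*\Delta^* = \Delta_!\Delta^*$ and then invokes the $(f^*,f_*)$-adjunction, whereas you build $\pi_2^* \to \pi_1^!$ (using both a unit and a counit for $\Delta$) and pass through the $(f_!,f^!)$-adjunction; the paper's route is marginally shorter but the content is identical.
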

\begin{proof}
The desired morphism of functors is adjoint to a morphism $f^* f_! \to \id$ which can be constructed as follows: For $i = 1, 2$ let $\pi_i\colon Y \cprod_X Y \to Y$ denote the $i$th projection. Using proper base-change and the natural map $\id \to \Delta_* \Delta^* = \Delta_! \Delta^*$ we get the map
\begin{align*}
	f^* f_! = \pi_{2!} \pi_1^* \to \pi_{2!} \Delta_! \Delta^* \pi_1^* = \id,
\end{align*}
as desired.
\end{proof}

One can now define a 1-separated $\ell$-fine map $f\colon Y \to X$ to be $\ell$-cohomologically proper if the map of functors $f_! \to f_*$ is an isomorphism. However, this makes it hard to check cohomological properness in practice and it is also unclear why this notion is stable under base-change, so we prefer to work a little harder and use the magic of the 2-category from \cref{def:magical-2-category} to come up with a simpler definition. The crucial observation is the following (recall the notion of $f$-proper sheaves from \cref{def:f-proper-sheaves}):

\begin{lemma} \label{rslt:characterization-of-f-proper-sheaves-for-1-separated-map}
Let $f\colon Y \to X$ be a 1-separated $\ell$-fine map of small v-stacks and let $\mathcal P, \mathcal Q \in \D_\nuc(Y,\Lambda)$ be given. Suppose that the natural map $f_!(\mathcal P \tensor \mathcal Q) \isoto f_*(\mathcal P \tensor \mathcal Q)$ is an isomorphism. Then $\mathcal P$ is $f$-proper with $f$-proper dual $\mathcal Q$ if and only if $\mathcal P$ is dualizable with dual $\mathcal Q$.
\end{lemma}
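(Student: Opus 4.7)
The plan is to unpack both sides as adjunction data in their respective $2$-categories and show that the bijection between them matches triangle identities on the nose. The asymmetric hypothesis enters at exactly one place: it forces the canonical map $f^* f_!(\mathcal P \tensor \mathcal Q) \to \mathcal P \tensor \mathcal Q$ arising in the construction of \cref{rslt:1-separated-implies-morphism-of-shriek-to-star} to agree with the counit of $f^* \dashv f_*$.

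First I would unpack the data. Writing $\pi_1, \pi_2\colon Y \cprod_X Y \to Y$ for the projections and $\Delta\colon Y \to Y \cprod_X Y$ for the diagonal, the fact that $\mathcal Q$ is left adjoint to $\mathcal P$ in $\mathcal C_X$ amounts to a unit $\eta\colon \Lambda \to \mathcal P \star \mathcal Q = f_!(\mathcal P \tensor \mathcal Q)$ in $\D_\nuc(X,\Lambda)$ and a counit $\varepsilon\colon \mathcal Q \star \mathcal P = \pi_1^* \mathcal Q \tensor \pi_2^* \mathcal P \to \Delta_! \Lambda$ in $\D_\nuc(Y \cprod_X Y,\Lambda)$, subject to two triangle identities. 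Since $\Delta$ is proper we have $\Delta_! = \Delta_*$, so $\varepsilon$ corresponds bijectively under $\Delta^* \dashv \Delta_*$ to a mate $\ev\colon \mathcal Q \tensor \mathcal P \to \Lambda$. Using $f^* \dashv f_*$ together with the assumed identification $f_!(\mathcal P \tensor \mathcal Q) \isoto f_*(\mathcal P \tensor \mathcal Q)$, the unit $\eta$ corresponds bijectively to a map $i\colon \Lambda \to \mathcal P \tensor \mathcal Q$. These $\ev$ and $i$ are precisely the candidate evaluation and coevaluation witnessing that $\mathcal P$ is dualizable with dual $\mathcal Q$, so there is a canonical bijection of datum.

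Next I would match the triangle identities. By the projection formula and proper base change one computes
\[
    \mathcal P \star \mathcal Q \star \mathcal P = f^* f_!(\mathcal P \tensor \mathcal Q) \tensor \mathcal P,
\]
and tracing through the whiskerings the first triangle identity $(\mathcal P \star \varepsilon) \comp (\eta \star \mathcal P) = \id_\mathcal P$ factors as
\[
    \mathcal P \xto{f^*\eta \tensor \id} f^* f_!(\mathcal P \tensor \mathcal Q) \tensor \mathcal P \xto{\tilde c \tensor \id} \mathcal P \tensor \mathcal Q \tensor \mathcal P \xto{\id \tensor \ev} \mathcal P,
\]
where $\tilde c\colon f^* f_!(\mathcal P \tensor \mathcal Q) \to \mathcal P \tensor \mathcal Q$ is the canonical map $\pi_{2!}\pi_1^* \to \pi_{2!} \Delta_! \Delta^* \pi_1^* = \id$. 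By the construction in \cref{rslt:1-separated-implies-morphism-of-shriek-to-star}, under the hypothesis $\tilde c$ coincides with the counit of $f^* \dashv f_*$; hence $\tilde c \comp f^*\eta = i$ and the above composite simplifies to $(\id \tensor \ev) \comp (i \tensor \id)$, which is the first dualizability triangle. The second triangle identity is entirely symmetric, using the analogous formula $\mathcal Q \star \mathcal P \star \mathcal Q = \mathcal Q \tensor f^* f_!(\mathcal P \tensor \mathcal Q)$, so the adjunction triangles in $\mathcal C_X$ and the dualizability triangles are equivalent.

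The main obstacle is the careful bookkeeping in the whiskering computation: one has to verify that after the projection-formula identifications the maps $\eta \star \mathcal P$ and $\mathcal P \star \varepsilon$ really factor through $\mathcal P \tensor \mathcal Q \tensor \mathcal P$ in the correct order, with the middle arrow given by $\tilde c \tensor \id$. Once the equality of $\tilde c$ with the ordinary counit $c$ --- which is the precise content of \cref{rslt:1-separated-implies-morphism-of-shriek-to-star} under the hypothesis --- is established, the rest is a formal diagram chase using only the projection formula, proper base change, and the identification $\Delta_! = \Delta_*$.
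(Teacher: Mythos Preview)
Your proposal is correct and takes essentially the same approach as the paper: both set up the bijection between (unit, counit) data for the adjunction in $\mathcal C_X$ and (coevaluation, evaluation) data for dualizability via the adjunctions $f^*\dashv f_*$ and $\Delta^*\dashv\Delta_*$, and then verify that the triangle identities transfer by identifying the canonical map $\tilde c\colon f^*f_!\to\id$ from \cref{rslt:1-separated-implies-morphism-of-shriek-to-star} with the counit of $f^*\dashv f_*$ under the hypothesis. The paper draws out the commuting diagram for the first triangle explicitly (with the maps you call $\tilde c$ and $c$ appearing as $\alpha$ and $\gamma$, linked by the isomorphism $\beta$), while you phrase the same verification in words; the content is identical.
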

\begin{proof}
The property of $\mathcal P$ being $f$-proper with dual $\mathcal Q$ is captured by the existence of a unit and a counit map satisfying certain commuting diagrams. A similar description holds for the property of $\mathcal P$ being dualizable with dual $\mathcal Q$, so all we need to do is to show that these data are equivalent. Let us investigate $f$-properness: The unit of an adjunction between $\mathcal P$ and $\mathcal Q$ is a map
\begin{align*}
	\varepsilon\colon \Lambda \to \mathcal P \star \mathcal Q = f_!(\mathcal P \tensor \mathcal Q) = f_*(\mathcal P \tensor \mathcal Q),
\end{align*}
which by adjunction of $f_*$ and $f^*$ is the same as a map $i\colon \Lambda \to \mathcal P \tensor \mathcal Q$, i.e. the same as a coevaluation map. Similarly, the counit of an adjunction between $\mathcal P$ and $\mathcal Q$ is a map
\begin{align*}
	\eta\colon \mathcal Q \star \mathcal P = \pi_1^* \mathcal Q \tensor \pi_2^* \mathcal P \to \Delta_!\Lambda = \Delta_* \Lambda,
\end{align*}
which by adjunction between $\Delta_*$ and $\Delta^*$ is the same as a map $\ev\colon \mathcal Q \tensor \mathcal P = \Delta^* (\pi_1^* \mathcal Q \tensor \pi_2^* \mathcal P) \to \Lambda$, i.e. the same as an evaluation map. It remains to see that the required compatibilities are the same for $f$-properness and dualizability. For $f$-properness, the first requirement is that the following solid map is the identity:
\begin{center}\begin{tikzcd}
	\mathcal P \arrow[r,equal] & f^*\Lambda \tensor \mathcal P \arrow[d,"\delta"] \arrow[r,"f^*\varepsilon \tensor \id"] & f^*f_!(\mathcal P \tensor \mathcal Q) \tensor \mathcal P \arrow[r,equal] \arrow[dl,dashed,"\beta"] \arrow[dd,dashed,"\alpha"] & \pi_{2!}(\pi_1^* \mathcal P \tensor \pi_1^* \mathcal Q \tensor \pi_2^* \mathcal P) \arrow[d,"\pi_{2!}(\id \tensor \eta)"]\\
	& f^* f_* (\mathcal P \tensor \mathcal Q) \tensor \mathcal P \arrow[dr,dashed,"\gamma"] && \pi_{2!}(\pi_1^* \mathcal P \tensor \Delta_!\Lambda) \arrow[d,equal]\\
	&& \mathcal P \tensor \mathcal Q \tensor \mathcal P \arrow[r,dashed,"\id \tensor \ev"] & \mathcal P
\end{tikzcd}\end{center}
Note that there is a dashed map $\alpha$ induced by the map $f^* f_! \to \id$ and one checks easily that the right-hand square commutes. Moreover, there is a dashed isomorphism $\beta$ induced by the map $f_! \to f_*$ and a dashed map $\gamma$ induced by the counit $f^* f_* \to \id$ such that the triangle consisting of $\alpha$, $\beta$ and $\gamma$ also commutes. The dashed map $\delta$ is chosen such that the diagram commutes. One checks easily that $\gamma \comp \delta = i \tensor \id$. This shows that the first $f$-properness requirement for $\mathcal P$ and $\mathcal Q$ is indeed the same as the first dualizability requirement for these sheaves. One argues similarly with the second requirement.
\end{proof}

We now get a simple yet effective definition of $\ell$-cohomologically proper maps with all the expected properties.

\begin{definition}
We say that an $\ell$-fine 1-separated map $f\colon Y \to X$ is \emph{$\ell$-cohomologically proper} if the induced map $f_! \Z_\ell \isoto f_* \Z_\ell$ is an isomorphism.
\end{definition}

\begin{lemma} \label{rslt:ell-cohomologically-proper-equiv-Z-ell-is-f-proper}
Let $f\colon Y \to X$ be an $\ell$-fine 1-separated map. Then $f$ is $\ell$-cohomologically proper if and only if $\Z_\ell \in \D_\nuc(Y,\Z_\ell)$ is $f$-proper with invertible $f$-proper dual. If this is the case then also $\Lambda \in \D_\nuc(Y,\Lambda)$ is $f$-proper and its $f$-proper dual is $\Lambda$.
\end{lemma}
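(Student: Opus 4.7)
For the forward direction, I would apply \cref{rslt:characterization-of-f-proper-sheaves-for-1-separated-map} with $\mathcal P = \mathcal Q = \Z_\ell$: the cohomological-properness hypothesis is exactly the required $f_!(\Z_\ell \tensor \Z_\ell) \isoto f_*(\Z_\ell \tensor \Z_\ell)$, and $\Z_\ell$ is tautologically dualizable with dual $\Z_\ell$, so the lemma outputs that $\Z_\ell$ is $f$-proper with $f$-proper dual $\Z_\ell$ (which is $\tensor$-invertible).

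For the converse, the key step is to show that 1-separatedness makes the invertibility hypothesis on $\mathcal Q$ automatic: any $f$-proper dual of $\Z_\ell$ is forced to be $\Z_\ell$. The plan is a Yoneda computation applied to the 2-categorical adjunction $\mathcal Q \dashv \Z_\ell$ in $\mathcal C_X$. Post-composition on the hom-categories $\Hom_{\mathcal C_X}(Y, X) = \D_\nuc(Y, \Lambda)$ and $\Hom_{\mathcal C_X}(Y, Y) = \D_\nuc(Y \cprod_X Y, \Lambda)$ unwinds, via the $\star$-composition of \cref{def:magical-2-category}, to the explicit 1-functors $\mathcal M \mapsto \pi_1^* \mathcal Q \tensor \pi_2^* \mathcal M$ and $\mathcal N \mapsto \pi_{2!} \mathcal N$, the former being left adjoint to the latter. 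Specializing the resulting $\Hom$-isomorphism to $\mathcal N = \Delta_{f!}\mathcal N'$ and collapsing both sides via $\Delta_{f!} = \Delta_{f*}$ (from 1-separatedness) together with $\pi_i \comp \Delta_f = \id$ produces a natural isomorphism
\[
\Hom(\mathcal Q \tensor \mathcal M, \mathcal N') \cong \Hom(\mathcal M, \mathcal N'),
\]
and Yoneda in the homotopy category then yields $\mathcal Q \tensor (-) \cong \id$, hence $\mathcal Q \cong \Z_\ell$.

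Once $\mathcal Q \cong \Z_\ell$, the same 2-categorical argument applied instead on $\Hom_{\mathcal C_X}(X, -)$ unwinds to an adjunction $f^* \cong f^*(-) \tensor \mathcal Q \dashv f_!$. Combined with the standard adjunction $f^* \dashv f_*$ and uniqueness of right adjoints of $f^*$, this forces $f_! \cong f_*$, yielding $f_!\Z_\ell \cong f_*\Z_\ell$ and hence $\ell$-cohomological properness. For the last assertion about $\Lambda$, the same post-composition argument runs verbatim over $\Lambda$-coefficients; alternatively one combines the $\Z_\ell$-result with \cref{rslt:functoriality-of-shriek-functors-in-Lambda}, noting that $f_! = f_*$ now preserves colimits and therefore commutes with $- \tensor_{\Z_\ell} \Lambda$, to obtain $f_!\Lambda \cong f_*\Lambda$ and then reapply the forward direction.

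The step I expect to require the most care is matching the abstract isomorphism $f_! \cong f_*$ coming from uniqueness of right adjoints against the specific natural transformation $f_! \to f_*$ of \cref{rslt:1-separated-implies-morphism-of-shriek-to-star} (which is what the definition of $\ell$-cohomological properness refers to). Both maps ultimately descend from the unit $\id \to \Delta_{f!}\Delta_f^*$ of the closed-immersion adjunction, so they should coincide, but verifying this through the 2-categorical $\star$-composition is the delicate bookkeeping.
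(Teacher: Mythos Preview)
Your proposal is correct and follows the same overall strategy as the paper---exploit the $2$-categorical adjunction $\mathcal Q \dashv \Z_\ell$ in $\mathcal C_X$ and translate it into statements about the six functors---but your execution of the converse direction differs in an interesting way.

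The paper applies the functor $\mathcal C_X \to \{\text{stable }\infty\text{-categories}\}$, $\mathcal M \mapsto \pi_{2!}(\mathcal M \tensor \pi_1^*(-))$, to turn the $2$-adjunction into an ordinary adjunction $\mathcal Q \tensor f^*(-) \dashv f_!$, whence $f_! \cong f_*\IHom(\mathcal Q,-)$ by uniqueness of right adjoints. Running the same trick after base-change along $f$ and plugging in $\Delta_!\Z_\ell$ yields $\Z_\ell \cong \IHom(\mathcal Q,\Z_\ell)$, and \emph{only then} does the paper invoke the invertibility hypothesis to conclude $\mathcal Q \cong \Z_\ell$. Your route---post-composing on $\Hom_{\mathcal C_X}(Y,-)$, specializing to $\mathcal N = \Delta_!\mathcal N'$, and applying Yoneda in the homotopy category---cuts this short: you obtain $\Hom(\mathcal Q,\mathcal N') \cong \Hom(\Z_\ell,\mathcal N')$ naturally in $\mathcal N'$, hence $\mathcal Q \cong \Z_\ell$ directly, \emph{without} ever using invertibility. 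So your argument in fact shows that for $1$-separated $f$ the invertibility hypothesis in the lemma is redundant. That is a small but genuine improvement.

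Both approaches share the same residual subtlety you flagged: one must identify the abstract isomorphism $f_! \cong f_*$ (produced by uniqueness of right adjoints of $f^*$, or equivalently of $\mathcal Q \tensor f^*$) with the specific transformation of \cref{rslt:1-separated-implies-morphism-of-shriek-to-star}. The paper likewise notes that ``these isomorphisms `cancel'\,'' and leaves the check to the reader; as you say, the point is that every identification made (the $2$-categorical unit/counit, the step $\Delta_! = \Delta_*$, and the collapse $\pi_2 \Delta = \id$) is governed by the same unit $\id \to \Delta_!\Delta^*$.
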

\begin{proof}
If $f$ is $\ell$-cohomologically proper then by \cref{rslt:characterization-of-f-proper-sheaves-for-1-separated-map} $\Z_\ell$ is indeed $f$-proper and is its $f$-proper dual. It is clear that the same then holds for $\Lambda$ by considering the base-change functor along $\Z_\ell \to \Lambda$ between the associated versions of $\mathcal C_X$.

Conversely assume that $\Z_\ell$ is $f$-proper and $P_f(\Z_\ell)$ is invertible. Consider the functor from $\mathcal C_X$ to the category of (underlying 1-categories of) stable $\infty$-categories sending $\mathcal M$ to $\pi_{2!}(\mathcal M \tensor \pi_1^*)$ (as in the proof of \cref{rslt:characterization-of-f-dualizable-sheaves}). This functor preserves adjoint functors, which implies that the functor $f_! = f_!(\Z_\ell \tensor -)$ is 1-categorically right adjoint to the functor $P_f(\Z_\ell) \tensor f^*$. We deduce that there is a 1-categorical isomorphism of functors $f_!(\Z_\ell \tensor -) \isom f_* \IHom(P_f(\Z_\ell), -)$. By the magic of $\mathcal C_X$ the same still holds after pullback along $f$, i.e. if $\pi_i\colon Y \cprod_X Y \to Y$ denote the two projections then there is a 1-categorical isomorphism of functors $\pi_{2*} \IHom(\pi_1^* P_f(\Z_\ell), -) \isom \pi_{2!}(\pi_1^* \Z_\ell \tensor -)$. Plugging in $\Delta_!\Z_\ell$ yields
\begin{align*}
	&\Z_\ell = \pi_{2!}(\pi_1^* \Z_\ell \tensor \Delta_!\Z_\ell) \isom \pi_{2*} \IHom(\pi_1^* P_f(\Z_\ell), \Delta_! \Z_\ell) = \pi_{2*} \IHom(\pi_1^* P_f(\Z_\ell), \Delta_* \Z_\ell) \\&\qquad= \pi_{2*} \Delta_* \IHom(\Delta^* \pi_1^* P_f(\Z_\ell), \Z_\ell) = \IHom(P_f(\Z_\ell), \Z_\ell).
\end{align*}
Since $P_f(\Z_\ell)$ is invertible, this implies $P_f(\Z_\ell) \isom \Z_\ell$. In particular we obtain a 1-categorical isomorphism of functors $f_! \isom f_* \IHom(P_f(\Z_\ell), -) \isom f_*$. Both isomorphisms are induced from the adjunction of $\Z_\ell$ with $P_f(\Z_\ell)$ and the second isomorphism additionally used the fact $\Delta_! = \Delta_*$ to get the isomorphism $P_f(\Z_\ell) \isom \Z_\ell$. One checks that these isomorphisms ``cancel'', so that the obtained isomorphism $f_! \isom f_*$ is indeed given by the natural map $f_! \to f_*$. In particular it is an isomorphism of $\infty$-functors and plugging in $\Z_\ell$ we obtain that $f$ is $\ell$-cohomologically proper.
\end{proof}

In the following, when we require a map to be $\ell$-cohomologically proper then we always assume that it is additionally $\ell$-fine and 1-separated.

\begin{proposition}
Let $f\colon Y \to X$ be an $\ell$-cohomologically proper map of small v-stacks. Then the natural map $f_! \isoto f_*$ is an isomorphism of functors $\D_\nuc(Y,\Lambda) \to \D_\nuc(X,\Lambda)$.
\end{proposition}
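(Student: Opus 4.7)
The plan is to upgrade the final paragraph of the proof of \cref{rslt:ell-cohomologically-proper-equiv-Z-ell-is-f-proper} from $\Z_\ell$ to $\Lambda$ and to observe that, because $P_f(\Lambda) = \Lambda$ (not merely invertible), the resulting $1$-categorical isomorphism of functors is manifestly $f_! \isom f_*$, with no ``twist'' to cancel out.

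Concretely, I would first invoke \cref{rslt:ell-cohomologically-proper-equiv-Z-ell-is-f-proper} to conclude that $\Lambda \in \D_\nuc(Y,\Lambda)$ is $f$-proper with $f$-proper dual $P_f(\Lambda) = \Lambda$ itself. Then I would apply the same 2-functor from $\mathcal C_X$ to (underlying $1$-categories of) stable $\infty$-categories that was used in the proofs of \cref{rslt:characterization-of-f-dualizable-sheaves,rslt:ell-cohomologically-proper-equiv-Z-ell-is-f-proper}: it sends $Z \mapsto \D_\nuc(Z,\Lambda)$ and a morphism $\mathcal M \in \Fun_{\mathcal C_X}(Z,Z') = \D_\nuc(Z'\cprod_X Z,\Lambda)$ to the functor $\pi_{Z'!}(\mathcal M \tensor \pi_Z^*)$. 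Since $2$-functors preserve adjunctions, the adjunction $\Lambda \dashv \Lambda$ in $\mathcal C_X$ between the morphisms $X \to Y$ and $Y \to X$ gets sent to a $1$-categorical adjunction between the functors $\Lambda \tensor f^*(-) = f^*$ and $f_!(\Lambda \tensor -) = f_!$. But $f^* \dashv f_*$ already holds at the $\infty$-categorical level, so by uniqueness of right adjoints in a $1$-category we obtain a canonical natural isomorphism $f_! \isom f_*$ in the underlying homotopy $1$-category.

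It then remains to identify this isomorphism with the natural transformation $f_! \to f_*$ of \cref{rslt:1-separated-implies-morphism-of-shriek-to-star}. Both arise from the same piece of data, namely the counit $\Lambda \star \Lambda \to \Delta_!\Lambda$ of the 2-adjunction in $\mathcal C_X$, which under $\Delta_! = \Delta_*$ is the same as an evaluation map $\Lambda \tensor \Lambda \to \Lambda$; unwinding the diagram in \cref{rslt:characterization-of-f-proper-sheaves-for-1-separated-map} (where the only non-formal input was exactly the hypothesis that $f_!(\Lambda\tensor\Lambda) \isoto f_*(\Lambda\tensor\Lambda)$, which here is the definition of $\ell$-cohomological properness) shows this recovers the construction from \cref{rslt:1-separated-implies-morphism-of-shriek-to-star}. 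Finally, a natural transformation between exact functors of stable $\infty$-categories is an equivalence iff it is a pointwise isomorphism in the homotopy $1$-category, so the $\infty$-categorical morphism $f_! \to f_*$ is an equivalence.

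The main obstacle is the compatibility check in the previous paragraph: the $1$-categorical isomorphism produced abstractly via uniqueness of adjoints in $\mathcal C_X$ must be shown to agree with the natural transformation constructed concretely via proper base-change through the diagonal. This is a diagram chase very similar to the one performed in the proof of \cref{rslt:characterization-of-f-proper-sheaves-for-1-separated-map}, where one tracks the unit and counit of the $\Lambda \dashv \Lambda$ adjunction and observes that under the equivalences $\mathcal P \star \mathcal Q = f_!(\mathcal P \tensor \mathcal Q)$ and the $f^*{-}f_*$ adjunction these are exactly the data entering the construction of \cref{rslt:1-separated-implies-morphism-of-shriek-to-star}. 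No further input beyond what is already in the excerpt should be required.
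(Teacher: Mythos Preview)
Your proposal is correct and follows essentially the same route as the paper, which simply points back to the argument in the proof of \cref{rslt:ell-cohomologically-proper-equiv-Z-ell-is-f-proper}: you re-run that argument with $\Lambda$ in place of $\Z_\ell$, using that $P_f(\Lambda)=\Lambda$ is already established there. One small imprecision: the hypothesis $f_!(\Lambda\tensor_\Lambda\Lambda)\isoto f_*(\Lambda\tensor_\Lambda\Lambda)$ is not literally the definition of $\ell$-cohomological properness (which is stated for $\Z_\ell$), but you don't actually need it for the identification step---the translation of the $\mathcal C_X$-counit into the map $f^*f_! \to \id$ of \cref{rslt:1-separated-implies-morphism-of-shriek-to-star} only uses $\Delta_! = \Delta_*$, which holds by $1$-separatedness.
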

\begin{proof}
This was part of the proof of \cref{rslt:ell-cohomologically-proper-equiv-Z-ell-is-f-proper}.
\end{proof}

\begin{lemma}
\begin{lemenum}
	\item Every $\ell$-fine proper map of small v-stacks is $\ell$-cohomologically proper.

	\item $\ell$-cohomologically proper maps are stable under composition and base-change.

	\item Among $\ell$-fine maps, the condition of being $\ell$-cohomologically proper is v-local on the target.

	\item Let $f\colon Y \to X$ and $g\colon Z \to Y$ be maps of small v-stacks. If $f$ and $f \comp g$ are $\ell$-cohomologically proper then so is $g$.
\end{lemenum}
\end{lemma}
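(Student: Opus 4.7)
For (i), any proper map in the diamond setting is separated, so its diagonal is a closed immersion and in particular proper, showing that $f$ is 1-separated. The identity $f_! = f_*$ for $\ell$-fine proper $f$ is then immediate from \cref{rslt:6-functor-formalism}, hence the natural map $f_!\Z_\ell \to f_*\Z_\ell$ is trivially an isomorphism.

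For (ii), stability of 1-separatedness and $\ell$-fineness under composition and base change is already recorded in \cref{rslt:stabilities-of-1-separated-maps,rslt:stabilities-of-ell-fine-maps}. For cohomological properness under base change along $h\colon X' \to X$, I will use the 2-functor $\mathcal C_X \to \mathcal C_{X'}$ (pullback) constructed in the proof of \cref{rslt:characterization-of-f-dualizable-sheaves}; it preserves right adjoints and sends $\Z_\ell\colon Y \to X$ with invertible left adjoint $\Z_\ell$ to $\Z_\ell\colon Y' \to X'$ with invertible left adjoint $\Z_\ell$. For composition, I will apply the 2-functor $\mathcal C_Y \to \mathcal C_X$ from the proof of \cref{rslt:rel-dualizable-stable-under-rel-dualizable-pullback}, which also preserves right adjoints: composing the image $i_!\Z_\ell\colon Z \to Y$ of $\Z_\ell\colon Z \to Y$ with $\Z_\ell\colon Y \to X$ inside $\mathcal C_X$ yields a right-adjoint morphism $Z \to X$, and a direct calculation with the composition formula of \cref{def:magical-2-category} identifies this composition with $\Z_\ell \in \D_\nuc(Z)$ and its left adjoint (the composition of the left adjoints in reverse order) with $\Z_\ell$, which is invertible. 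Then \cref{rslt:ell-cohomologically-proper-equiv-Z-ell-is-f-proper} gives that $fg$ is $\ell$-cohomologically proper.

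For (iii), 1-separatedness is v-local on target by \cref{rslt:stabilities-of-1-separated-maps}. Using \cref{rslt:ell-cohomologically-proper-equiv-Z-ell-is-f-proper} to recast cohomological properness as the condition that $\Z_\ell$ is $f$-proper with invertible $f$-proper dual, I will prove v-descent for this condition along the lines of \cref{rslt:v-descent-for-rel-dualizable-sheaves}. From the proof of \cref{rslt:ell-cohomologically-proper-equiv-Z-ell-is-f-proper} one extracts a characterization of $f$-properness of $\mathcal P$ with dual $\mathcal Q$ as the assertion that the natural transformation $f_!(\mathcal Q \tensor -) \to f_* \IHom(\mathcal P, -)$ is an isomorphism of functors. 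Verifying this level-wise on a v-hypercover (where it holds by the base-change part of (ii)) and using that the functors involved preserve coCartesian sections of nuclear sheaves then yields the descent.

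For (iv), I will use the graph decomposition $g = \pi_2 \circ \Gamma_g$, where $\Gamma_g\colon Z \to Z \cprod_X Y$ is the graph $z \mapsto (z, g(z))$ and $\pi_2$ is the second projection. The graph $\Gamma_g$ fits into a cartesian square with $\Delta_f\colon Y \to Y \cprod_X Y$ via the map $g \cprod \id_Y\colon Z \cprod_X Y \to Y \cprod_X Y$, and since $\Delta_f$ is proper (by 1-separatedness of $f$) and $\ell$-fine (by \cref{rslt:stabilities-of-ell-fine-maps} applied to $\id_Y = \pi_1 \circ \Delta_f$), so is $\Gamma_g$; by (i) it is $\ell$-cohomologically proper. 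The projection $\pi_2$ is the pullback of $fg$ along $f$, hence $\ell$-fine and $\ell$-cohomologically proper by base change in (ii). The conclusion then follows from the composition statement in (ii). The main obstacle is the v-descent argument in (iii): while parallel to \cref{rslt:v-descent-for-rel-dualizable-sheaves} in structure, it requires carefully setting up the $f$-proper analog of the functor-level isomorphism and checking its compatibility with pullbacks.
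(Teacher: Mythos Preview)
Your arguments for (i) and (iv) match the paper's. For (ii), your base-change argument via the pullback 2-functor $\mathcal C_X \to \mathcal C_{X'}$ is exactly what the paper does. For composition, however, the paper takes a much shorter route: once you know (from \cref{rslt:ell-cohomologically-proper-equiv-Z-ell-is-f-proper} and its proof) that $\ell$-cohomological properness of $f$ implies the natural transformation $f_! \to f_*$ is an isomorphism of functors, composition is immediate from $(fg)_! = f_! g_!$ and $(fg)_* = f_* g_*$. Your 2-categorical argument via $\mathcal C_Y \to \mathcal C_X$ works but is unnecessary machinery here.

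The real issue is (iii). The ``characterization'' you extract from the proof of \cref{rslt:ell-cohomologically-proper-equiv-Z-ell-is-f-proper}---that $f$-properness of $\mathcal P$ with dual $\mathcal Q$ is equivalent to $f_!(\mathcal Q \tensor -) \to f_* \IHom(\mathcal P, -)$ being an isomorphism---is only established there as a one-way implication (and only 1-categorically). More importantly, to run the descent you need to know that both sides preserve coCartesian sections along the v-hypercover; for $f_!(\mathcal Q \tensor -)$ this is proper base-change, but for $f_* \IHom(\mathcal P, -)$ it is exactly the base-change property of $f_*$ that you are trying to establish. The paper sidesteps this by arguing directly: on each level $f'_n$ of the \v{C}ech nerve, $\ell$-cohomological properness (from (ii)) gives $f'_{n*} = f'_{n!}$ as functors, hence $f'_{n*}$ satisfies base-change; this forces $f_*$ itself to preserve coCartesian sections, and then the isomorphism $f_! \Z_\ell \to f_* \Z_\ell$ can be checked after pulling back along the v-cover. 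Your approach, once the circularity is resolved, collapses to exactly this---so drop the detour through the unestablished characterization and argue directly as the paper does.
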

\begin{proof}
Note that all of the stabilities are satisfied by 1-separated and by $\ell$-fine maps by \cref{rslt:stabilities-of-1-separated-maps,rslt:stabilities-of-ell-fine-maps}, which we will use without mention. Part (i) is clear. In part (ii), stability under base-change follows from \cref{rslt:ell-cohomologically-proper-equiv-Z-ell-is-f-proper} because $f$-proper sheaves are certainly stable under base-change (same as for $f$-dualizable sheaves). Stability under composition follows immediately from the definition. In part (iii), if a map is $\ell$-cohomologically proper on a v-cover, then the pushforward along the base-changes of this map preserves coCartesian edges in the associated Čech cover and therefore the pushforward along the map commutes with base-change; then it follows immediately that the map is $\ell$-cohomologically proper. To prove (iv) assume that $f$ and $g$ are given as in the claim and that both $f$ and $f \comp g$ are $\ell$-cohomologically proper. Then we can factor $g$ as $Z \to Z \cprod_X Y \to Y$. The second map is $\ell$-cohomologically proper because it is a base-change of $f \comp g$, so it remains to see that the first map is $\ell$-cohomologically proper. We can thus assume that $f \comp g = \id$, i.e. $g\colon X \to Y$ is a section of the $\ell$-cohomologically proper map $f\colon Y \to X$. We have a cartesian square of small v-stacks (cf. the proof of \cite[Lemma 050H]{stacks-project})
\begin{center}\begin{tikzcd}
	X = X \cprod_Y Y \arrow[r] \arrow[d] & Y = X \cprod_X Y \arrow[d]\\
	Y \arrow[r,"\Delta_f"] & Y \cprod_X Y
\end{tikzcd}\end{center}
The top map is evidently $g$ and the bottom map is proper by 1-separatedness of $f$. Thus $g$ is proper and in particular $\ell$-cohomologically proper.
\end{proof}

\begin{remark}
We see that the magical 2-category $\mathcal C_X$ allows us to do two things: Firstly we can check cohomological properness on the unit object and secondly we immediately see that cohomological properness is stable under base-change, which is not at all obvious.
\end{remark}

With a good notion of $\ell$-cohomologically proper maps at hand, we can now prove that relatively dualizable sheaves are stable under proper pushforward:

\begin{proposition} \label{rslt:rel-dualizable-stable-under-proper-pushforward}
Let $f\colon Y \to X$ and $g\colon Z \to Y$ be $\ell$-fine maps and $\mathcal P \in \D_\nuc(Z,\Lambda)$ such that $g$ is $\ell$-cohomologically proper and $\mathcal P$ is $(f\comp g)$-dualizable. Then $g_*\mathcal P$ is $f$-dualizable.
\end{proposition}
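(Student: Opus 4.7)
The plan is to recognize that this is essentially a direct consequence of \cref{rslt:rel-dualizable-stable-under-f-proper-pushforward} applied to the constant sheaf. The key observation is that under the hypothesis that $g$ is $\ell$-cohomologically proper, the unit object $\Lambda \in \D_\nuc(Z, \Lambda)$ plays the role of a distinguished $g$-proper sheaf.

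Concretely, I would proceed as follows. First, invoke \cref{rslt:ell-cohomologically-proper-equiv-Z-ell-is-f-proper}: since $g$ is $\ell$-cohomologically proper, the sheaf $\Lambda \in \D_\nuc(Z,\Lambda)$ is $g$-proper, and moreover its $g$-proper dual $P_g(\Lambda)$ is isomorphic to $\Lambda$ (the statement there gives $P_g(\Lambda) \isom \Lambda$ for any nuclear $\Z_\ell$-algebra $\Lambda$). Second, apply \cref{rslt:rel-dualizable-stable-under-f-proper-pushforward} with this choice of $\mathcal Q := \Lambda$ (and with the map roles $f, g$ as in the current statement). The hypotheses are met because $\mathcal P$ is $(f \comp g)$-dualizable by assumption and $\mathcal Q = \Lambda$ is $g$-proper by the first step. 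The conclusion of \cref{rslt:rel-dualizable-stable-under-f-proper-pushforward} then says that $g_* \IHom(\Lambda, \mathcal P)$ is $f$-dualizable, and since $\IHom(\Lambda, \mathcal P) = \mathcal P$ canonically, this is the desired statement that $g_* \mathcal P$ is $f$-dualizable.

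There is essentially no obstacle here; all the hard work was already done in setting up the two cited results, which in turn rely on the magical 2-category $\mathcal C_S$ to convert the geometric condition of $\ell$-cohomological properness into an abstract adjunction that composes well with the adjunction witnessing $(f \comp g)$-dualizability of $\mathcal P$. The only minor point worth spelling out in the writeup is the canonical identification $g_* \IHom(\Lambda, \mathcal P) = g_* \mathcal P$, which is immediate from the fact that $\Lambda$ is the monoidal unit.
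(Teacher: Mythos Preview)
Your proposal is correct and follows exactly the same approach as the paper's proof, which simply reads ``Combine \cref{rslt:rel-dualizable-stable-under-f-proper-pushforward} with \cref{rslt:ell-cohomologically-proper-equiv-Z-ell-is-f-proper}.'' Your write-up is a faithful expansion of this one-line reference, including the identification $g_*\IHom(\Lambda,\mathcal P) = g_*\mathcal P$.
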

\begin{proof}
Combine \cref{rslt:rel-dualizable-stable-under-f-proper-pushforward} with \cref{rslt:ell-cohomologically-proper-equiv-Z-ell-is-f-proper}.
\end{proof}

\section{Classifying Stacks and Representations} \label{sec:representations}

Fix a prime $\ell \ne p$ and a nuclear $\Z_\ell$-algebra $\Lambda$. Throughout this section we work with small v-stacks over $\Spec \overline\Fld_p$. Our goal is to apply the above theory of nuclear sheaves to the classifying stack of a locally profinite group $G$. This will give us an $\infty$-category of \emph{nuclear $G$-representations} together with a robust notion of admissible representations and a full 6-functor formalism.

Before we can start studying classifying stacks, we need to get one technicality out of the way: The final object $*$ is not representable in perfectoid spaces. We can use the same arguments as in the case of discrete coefficients to compute sheaves on $*$:

\begin{proposition} \label{rslt:solid-sheaves-invariant-under-base-change-of-fields}
Let $X \to S \from S'$ be a diagram of small v-stacks and assume that $S$ and $S'$ satisfy one of the following conditions:
\begin{propenum}
	\item Both $S$ and $S'$ are spectra of some algebrically closed discrete field of characteristic $p$.

	\item $S$ is the spectrum of some algebraically closed discrete field of characteristic $p$ and $S' = \Spa(C',C'^+)$ for some algebraically closed non-archimedean field $C'$ and an open and bounded valuation subring $C'^+$.

	\item $S = \Spa(C, C^+)$ and $S' = \Spa(C', C'^+)$ for algebraically closed non-archimedean fields and open and bounded valuation subrings $C^+$ and $C'^+$.
\end{propenum}
Let $X' := X \cprod_S S'$. Then the pullback functor
\begin{align*}
	\D_\solid(X,\Lambda) \injto \D_\solid(X',\Lambda)
\end{align*}
is fully faithful and hence also induces fully faithful pullback functors on nuclear and on $\omega_1$-solid sheaves.
\end{proposition}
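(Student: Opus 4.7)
The plan is to establish that the unit morphism $\mathcal M \to g_{\solid*} g^* \mathcal M$ is an isomorphism for all $\mathcal M \in \D_\solid(X,\Lambda)$, where $g \colon X' \to X$ denotes the base-change of $p \colon S' \to S$; the statements for $\D_\solid(-,\Lambda)_{\omega_1}$ and $\D_\nuc(-,\Lambda)$ then follow since these are full subcategories stable under pullback by \cref{rslt:pullback-for-w1-solid-sheaves} and by definition of nuclear sheaves.

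First I would reduce to the case $\Lambda = \Z_\ell$. The forgetful functor $\D_\solid(-,\Lambda) \to \D_\solid(-,\Z_\ell)$ is conservative, preserves all small limits, and commutes with pullback, so by the standard bar-construction argument for mapping spaces, fully faithfulness of $g^*$ over $\Z_\ell$ propagates to fully faithfulness over any nuclear $\Z_\ell$-algebra $\Lambda$. Next I would reduce to the case that $X$ is an affinoid perfectoid space, using v-descent of $\D_\solid$ (analogous to \cref{rslt:descent-for-w1-solid-sheaves}) on both sides along a v-hypercover by affinoid perfectoids; fully faithfulness of $g^*$ is detected on this cover since it amounts to checking isomorphy of a unit morphism. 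In all three cases, $p$ is qcqs (both $S$ and $S'$ being qcqs), so once $X$ is qcqs, its base change $g$ is likewise qcqs, and the base-change identity \cref{rslt:base-change-for-solid-pushforward} applies freely.

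With these reductions in place, the plan is to check the unit on a set of compact generators of $\D_\solid(X,\Z_\ell)_{\omega_1}$ (and then extend to the full category $\D_\solid(X,\Z_\ell)$). By \cref{rslt:compact-generators-of-w1-solid-sheaves}, such compact generators have the form $\mathcal P = \varprojlim_n \mathcal P_n$ with $\mathcal P_n \in \D_\et(X,\Z/\ell^n\Z)$ qcqs étale. By Yoneda, the unit $\mathcal N \to g_{\solid*} g^* \mathcal N$ is an isomorphism iff the map $\Hom(\mathcal P,\mathcal N) \to \Hom(g^* \mathcal P, g^* \mathcal N)$ is an isomorphism for all such $\mathcal P$. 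Since $g^*$ preserves countable limits by \cref{rslt:pullback-for-w1-solid-sheaves} and the mapping spectrum out of such $\mathcal P$ turns this countable limit into a filtered colimit of $\Hom(\mathcal P_n,-)$ (by the Breen-resolution argument used in the proof of \cref{rslt:compact-generators-of-w1-solid-sheaves}), one reduces termwise to checking that pullback is fully faithful on discrete étale sheaves. This is precisely the case handled by the classical fully-faithfulness statement for étale sheaves under base-changes of algebraically closed fields (\cite[Proposition 14.9]{etale-cohomology-of-diamonds} and its adic analogues for cases (ii) and (iii)), which is exactly the ``same arguments as in the case of discrete coefficients'' alluded to in the passage introducing the proposition.

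The main obstacle is the transition from the $\omega_1$-solid setting back to all of $\D_\solid$. For general $\mathcal N \in \D_\solid(X,\Z_\ell)$ which is not $\omega_1$-solid, testing on compact objects of $\D_\solid(X,\Z_\ell)_{\omega_1}$ alone is not enough; one must either enlarge the family of generators (e.g. to include $\Z_{\ell,\solid}[U]$ for arbitrary w-contractible $U \in X_\proet$) and verify the étale analogue directly for these, or exploit the embedding $\D_\solid(X,\Z_\ell)_{\omega_1} \injto \D_\solid(X,\Z_\ell)$ together with compatibility of the unit with colimits. A secondary subtlety is in cases (i) and (ii), where $S$ is a discrete scheme $\Spec \bar k$ rather than an adic space: one must interpret $\D_\solid(\Spec \bar k,\Z_\ell)$ via the v-topology (using that $\Spec \bar k$ has a v-cover by affinoid perfectoids over $\bar k$) and match the resulting solid categories across the discrete-to-adic divide when comparing with the pullback to $\Spa(C',C'^+)$.
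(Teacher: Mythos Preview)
Your reduction to $\Lambda = \Z_\ell$ and to $X$ strictly totally disconnected matches the paper's opening moves. But two genuine problems remain.

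\textbf{The map $g$ is not qcqs in case (ii).} You assert that $p\colon S' \to S$ is qcqs in all three cases, but in case (ii) the base $S = \Spec k$ is a discrete point while $S' = \Spa(C',C'^+)$ is an analytic adic space, and the fiber product $X' = X \cprod_{\Spec k} \Spa(C',C'^+)$ is typically \emph{not} quasicompact. The paper handles this by first using (iii) to reduce to $C'$ being the completed algebraic closure of $k((t))$, and then writing $X'$ as an \emph{increasing} union of affinoid perfectoid subspaces $X'_n = \{ \abs t \le \abs\pi \le \abs{t^{1/n}} \}$; the functor $f_{\vsite*} f^*$ is then the limit over the qcqs pieces $f_{n\vsite*} f_n^*$, and only at that stage can one invoke the qcqs argument. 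Your \cref{rslt:base-change-for-solid-pushforward} does not apply to $g$ directly here.

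\textbf{The Yoneda step does not reduce to étale sheaves.} Testing the unit $\mathcal N \to g_{\solid*} g^* \mathcal N$ via $\Hom(\mathcal P,-)$ for $\mathcal P = \varprojlim_n \mathcal P_n$ and resolving $\mathcal P$ into étale pieces $\mathcal P_n$ still leaves $\mathcal N$ an arbitrary solid sheaf: you end up needing $\Hom(\mathcal P_n, \mathcal N) \isoto \Hom(\mathcal P_n, g_{\solid*} g^* \mathcal N)$ for general solid $\mathcal N$, which is the original problem. The paper avoids Yoneda entirely and instead writes the \emph{target} sheaf $\mathcal M$ itself as a filtered colimit of finitely presented static solid sheaves (using that $f_{\vsite*}$ preserves uniformly left-bounded filtered colimits when $f$ is qcqs), then each finitely presented piece as a cofiltered limit of qcqs étale sheaves via \cite[Theorem VII.1.3]{fargues-scholze-geometrization}. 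Since both $f^*$ and $f_{\vsite*}$ preserve limits of solid sheaves, this genuinely reduces to the étale case, where \cite[Theorem 19.5]{etale-cohomology-of-diamonds} applies. This also sidesteps your acknowledged obstacle about $\omega_1$-solid versus all of $\D_\solid$: the argument works uniformly because finitely presented objects generate the heart of $\D_\solid$ under filtered colimits.
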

\begin{proof}
This is a generalization of \cite[Theorem 19.5]{etale-cohomology-of-diamonds} and for a large part we can argue very similarly. We start with some general observations which are valid in all cases. By taking a hypercover of $X$ in terms of disjoint unions of strictly totally disconnected spaces and using the fact that fully faithfulness is preserves under limits of $\infty$-categories, we can reduce to the case that $X = \Spa(A, A^+)$ is strictly totally disconnected. Now let $f\colon X' \to X$ denote the natural map; we need to show that for all $\mathcal M \in \D_\solid(X,\Lambda)$ the natural morphism $\mathcal M \isoto f_{\vsite*} f^* \mathcal M$ is an isomorphism. We can assume $\Lambda = \Z_\ell$ and by the usual Postnikov argument we can also assume that $\mathcal M$ is static.

We now come to the specific proofs of each claim. First we note that (i) follows from (ii) and (iii). Moreover, part (iii) is rather easy with the above preparations: Now $f$ is qcqs and hence $f_{\vsite*}$ preserves uniformly left-bounded filtered colimits. By writing $\mathcal M$ as a filtered colimit of finitely presented static solid sheaves, we can assume that $\mathcal M$ is finitely presented and hence a cofiltered limit of qcqs étale sheaves. Since both $f^*$ and $f_{\vsite*}$ preserve limits of solid sheaves, we end up with the case of a qcqs étale sheaf in $\D_\solid(X,\Z_\ell)$. This sheaf is killed by some power of $\ell$, so the claim follows from \cite[Theorem 19.5.(iii)]{etale-cohomology-of-diamonds} (more concretely from the final part of \cite[Theorem 16.1]{etale-cohomology-of-diamonds}).

It remains to prove (ii). By (iii) it is enough to prove it in the case that $C$ is the completed algebraic closure of $k((t))$, where $S = \Spec k$. Fix a pseudouniformizer $\pi \in A$, so that $X'$ can be written as the increasing union of the affinoid perfectoid subspaces
\begin{align*}
	X'_n = \{ \abs t \le \abs \pi \le \abs{t^{1/n}} \} \subset X'.
\end{align*}
Now $f_{\vsite*} f^*$ is the limit over the functors $f_{n\vsite*} f_n^*$, where $f_n\colon X'_n \to X$ is the natural map. It is therefore enough to show that for all $n$ the morphism $\mathcal M \isoto f_{n\vsite*} f_n^* \mathcal M$ is an isomorphism. But $f_n$ is qcqs, so by the same reasoning as in the proof of (iii) we can reduce to the case that $\mathcal M$ is étale and thus reduce to \cite[Theorem 19.5.(ii)]{etale-cohomology-of-diamonds}.
\end{proof}

\begin{corollary} \label{rslt:solid-sheaves-on-*}
Let $S$ be a locally profinite set and let $\underline S$ denote the associated small v-stack (so that if $S$ is a point then $\underline S = *$). Then there is a natural equivalence of $\infty$-categories
\begin{align*}
	\D_\solid(\underline S,\Lambda) = \D_\solid(S,\Lambda),
\end{align*}
where on the right-hand side we mean solid sheaves on the pro-étale site of $S$. The same is true for nuclear and $\omega_1$-solid $\Lambda$-sheaves.
\end{corollary}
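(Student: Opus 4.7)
The plan is to reduce to the case $S = \ast$ via pro-étale descent on $S$, and then to identify the two sides for $S = \ast$ using Proposition~\ref{rslt:solid-sheaves-invariant-under-base-change-of-fields} together with v-descent.

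For the reduction, both $\D_\solid(-,\Lambda)$ (on the pro-étale site of locally profinite sets) and $\D_\solid(\underline{-},\Lambda)$ are pro-étale sheaves of $\infty$-categories in $S$: the former by construction, the latter because v-descent on $\underline S$ refines pro-étale descent on $S$ (using that the functor $T \mapsto \underline T$ sends pro-étale covers of locally profinite sets to v-covers of v-stacks). The natural map of sites $S_\proet \to (\underline S)_\vsite$, $T \mapsto \underline T$, provides a comparison functor $\D_\solid(S,\Lambda) \to \D_\solid(\underline S,\Lambda)$ that is compatible with pro-étale pullbacks in $S$. Every locally profinite $S$ admits a pro-étale cover by a disjoint union of copies of $\ast$ (namely its underlying set as a discrete space), so it suffices to check the comparison is an equivalence when $S = \ast$; in this case the right-hand side is by definition the usual $\infty$-category $\D_\solid(\Lambda)$ of solid $\Lambda$-modules.

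For the case $S = \ast$, pick a v-cover $X_0 := \Spa(C,C^+) \surjto \Spec\overline\Fld_p = \ast$ with $C$ algebraically closed nonarchimedean of characteristic $p$. By v-descent,
\begin{align*}
	\D_\solid(\ast,\Lambda) = \varprojlim_{n \in \Delta} \D_\solid(X_0^{(n+1)},\Lambda),
\end{align*}
where $X_0^{(n+1)}$ denotes the $(n+1)$-fold self-product of $X_0$ over $\ast$. Each coface map $X_0^{(n+1)} \to X_0^{(n)}$ is the base change of $X_0 \to \ast$ along $X_0^{(n)} \to \ast$, and thus case (ii) of Proposition~\ref{rslt:solid-sheaves-invariant-under-base-change-of-fields} (applied with $X = X_0^{(n)}$, $S = \ast$, $S' = X_0$) shows that every coface map induces a fully faithful pullback on $\D_\solid$. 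Consequently the cosimplicial totalization collapses onto the full subcategory of $\D_\solid(X_0,\Lambda)$ consisting of those objects whose two pullbacks to $X_0 \cprod_\ast X_0$ agree, with all higher coherences automatic. One identifies this subcategory with $\D_\solid(\Lambda) \subset \D_\solid(X_0,\Lambda)$ via the natural pullback from the pro-étale site of $\pi_0(X_0) = \ast$, in analogy with \cref{rslt:overconvergent-sheaves-on-std-space-are-modules}.

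The main obstacle is the last step, namely matching the descent-theoretic equalizer inside $\D_\solid(X_0,\Lambda)$ with $\D_\solid(\Lambda)$. Concretely one must show that the condition ``the two pullbacks to $X_0 \cprod_\ast X_0$ agree'' is equivalent to being a pullback from $\D_\solid(\Lambda)$, i.e.\ to being independent of the choice of algebraically closed field $C$; this can be done by choosing two different such fields $C, C'$ and observing that the diagonal $\ast \to X_0 \cprod_\ast X_0$ (after further base change to a common enlargement) exhibits every equalizer object as pulled back from the pro-étale site of $\ast$. The statements for $\omega_1$-solid and nuclear sheaves follow formally: both are full subcategories of $\D_\solid$ characterized by conditions stable under pullback along v-covers, hence the equivalence restricts to them from the solid case.
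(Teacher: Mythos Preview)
Your reduction to $S = *$ does not work. The map $|S| \to S$ from the underlying discrete set is not a pro-étale cover: pro-étale covers of a profinite set $S$ are finite jointly surjective families of profinite sets over $S$, so an infinite profinite $S$ cannot be covered by singletons. Even if you enlarge the site to allow locally profinite objects, the map fails the quasi-compactness condition built into the pro-étale topology. Worse, descent along $|S| \to S$ is simply false for $\D_\solid(-,\Lambda)$: since $|S| \cprod_S |S| = |S|$, such descent would force $\D_\solid(S,\Lambda) \simeq \prod_{s \in S} \D_\solid(\Lambda)$, which is wrong for any infinite profinite $S$.

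The paper's proof is much more direct and avoids any reduction. One applies \cref{rslt:solid-sheaves-invariant-under-base-change-of-fields} with $X = \underline S$ itself (not just $X = *$): pullback along $\underline S \cprod \Spa C \to \underline S$ is fully faithful, and the target is $\D_\solid(S,\Lambda)$ since $\underline S \cprod \Spa C$ is just $S$ realized as a pro-étale space over $\Spa C$. The pullback along the map of sites $\underline S_\vsite \to S_\proet$ then provides a section of this fully faithful embedding, forcing it to be an equivalence. Your argument for $S = *$, once you observe that $\D_\solid(\Spa C,\Lambda) = \D_\solid(\Lambda)$ already, collapses to exactly this section trick; the Čech totalization and the hand-waved ``common enlargement'' step are an unnecessary detour.
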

\begin{proof}
Fix any algebraically closed non-archimedean field $C$. Then $\D_\solid(S,\Lambda) = \D_\solid(\underline S \cprod \Spa C, \Lambda)$, so by \cref{rslt:solid-sheaves-invariant-under-base-change-of-fields} the pullback functor induces an embedding $\D_\solid(\underline S,\Lambda) \injto \D_\solid(S,\Lambda)$. But this embedding has a section obtained by pullback along the map of sites $\underline S_\vsite \to S_\proet$.
\end{proof}

With the technicalities involving the final object $*$ out of the way, we can now come to the representation theory. Fix a locally profinite group $G$. In \cite[\S3.4]{mann-mod-p-6-functors} we constructed the $\infty$-category $\D_\solid(\Z_\ell)^{BG}$ of continuous $G$-representations on solid $\Z_\ell$-modules. It can be identified with the derived $\infty$-category of solid $\Z_{\ell,\solid}[G]$-modules. By passing to $\Lambda$-modules we similarly obtain the $\infty$-category $\D_\solid(\Lambda,\Z_\ell)^{BG}$ of continuous $G$-representations on $(\Lambda,\Z_\ell)_\solid$-modules (we write $(\Lambda,\Z_\ell)_\solid$ here to denote the solid structure induced from $\Z_\ell$; this is the analog of $\D_\solid(X,\Lambda)$ for small v-stacks $X$ by somewhat sloppy notation in the latter case). Also recall the definition of $\ell$-cohomological dimension of locally profinite groups in \cite[Definition 3.4.20]{mann-mod-p-6-functors} (see also \cite[Proposition 3.4.22]{mann-mod-p-6-functors}). We get the following interpretation of sheaves on classifying stacks:

\begin{lemma} \label{rslt:solid-sheaves-on-classifying-stacks}
Let $G$ be a locally profinite group. Then there is a natural equivalence of $\infty$-categories
\begin{align*}
	\D_\solid(*/G,\Lambda) = \D_\solid(\Lambda,\Z_\ell)^{BG}.
\end{align*}
Moreover, we have the following:
\begin{lemenum}
	\item Under the above equivalence, the pushforward functor along the natural projection $*/G \to *$ corresponds to the functor $\Gamma(G,-)\colon \D_\solid(\Lambda,\Z_\ell)^{BG} \to \D_\solid(\Lambda,\Z_\ell)$ computing (continuous) group cohomology.

	\item \label{rslt:finite-group-cohom-on-solid-sheaves} Suppose that $G$ is profinite and $\cd_\ell G < \infty$. Then the functor $\Gamma(G,-)$ has cohomological dimension $\le \cd_\ell G + 1$ and thus preserves all small colimits.
\end{lemenum}
\end{lemma}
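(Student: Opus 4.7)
The equivalence at the top of the statement will follow from v-descent applied to the surjection $* \to */G$. Its Čech nerve is the constant simplicial diagram of profinite sets $G^n$ (each viewed as a small v-stack over $\Spec \overline{\Fld}_p$), and by \cref{rslt:solid-sheaves-on-*} we get $\D_\solid(G^n, \Lambda) = \D_\solid((\Lambda, \Z_\ell)[G^n])$. Taking the totalization recovers exactly the definition of continuous $G$-representations on solid $(\Lambda, \Z_\ell)$-modules used in \cite[\S3.4]{mann-mod-p-6-functors}. For (i), under this descent description, the pushforward along $*/G \to *$ is given by the totalization of the bar cosimplicial object $n \mapsto \Cont(G^n, -)$, which is precisely the continuous cohomology functor $\Gamma(G, -)$.

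Part (ii) is the substantive content. I would first reduce to $\Lambda = \Z_\ell$ via the $t$-exact conservative forgetful functor along $\Z_\ell \to \Lambda$, which commutes with $\Gamma(G, -)$ because the latter is computed as a limit compatibly on both sides. Next I would split the argument into two steps. In the first step I aim to show that for a static $\ell$-torsion solid $\Z_\ell[G]$-module $M$ (equivalently a solid $\Fld_\ell[G]$-module), one has $H^i(G, M) = 0$ for $i > d := \cd_\ell G$. The plan is to present such $M$ as a colimit of compact solid $\Fld_\ell[G]$-modules, each of which arises as a pro-system of discrete $\Fld_\ell[G]$-modules, and then use the classical bound $\cd_\ell G \le d$ for discrete coefficients together with suitable exactness of $\Gamma(G, -)$ in the solid setting.

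In the second step I would lift from $\ell$-torsion to arbitrary static $M$. After reducing to the case that $M$ is $\ell$-adically complete (the $\Q_\ell$-linear part can be treated separately using that it is already a colimit of $\ell$-torsion-free pieces coming from the first step via inverting $\ell$), one writes $M = R\varprojlim_n M/\ell^n M$ and applies the Milnor exact sequence
\begin{align*}
0 \to \varprojlim\nolimits^1 H^{i-1}(G, M/\ell^n M) \to H^i(G, M) \to \varprojlim H^i(G, M/\ell^n M) \to 0.
\end{align*}
By the first step each $H^j(G, M/\ell^n M)$ vanishes for $j > d$, so the displayed sequence forces $H^i(G, M) = 0$ for $i > d+1$; the $+1$ is precisely the cohomological dimension of the countable limit. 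The vanishing of $H^i(G, -)$ in degrees $> d+1$ for all small colimits then follows formally, since $\Gamma(G, -)$ has finite cohomological dimension and hence commutes with filtered colimits of uniformly bounded sheaves.

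The main obstacle will be Step 1: the classical bound on $\cd_\ell G$ is phrased for discrete $G$-modules, and transferring it to solid $\Fld_\ell[G]$-modules requires care, in particular avoiding circular reasoning about the exactness of $\Gamma(G, -)$. I expect the cleanest approach is to exhibit an explicit resolution of the trivial module $\Fld_\ell$ of length $d$ by compact projective objects of the solid group ring $\Fld_\ell[G]^\solid$ (built from the discrete resolution together with its pro-finite refinement along open normal subgroups), thereby reducing the question about $\Gamma(G, M) = R\Hom_{\Fld_\ell[G]^\solid}(\Fld_\ell, M)$ to a bounded computation for any static $M$.
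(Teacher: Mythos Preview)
Your treatment of the main equivalence and of (i) is fine and close to the paper's (the paper reduces to the hearts via left-completeness and the $t$-exactness of the pullbacks in the Čech nerve, and proves (i) in one line by comparing left adjoints; your descent formulation is an equivalent phrasing).

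For (ii), however, there is a genuine gap. Your reduction in Step~2 from a general static $M$ to an $\ell$-adically complete one is not justified: solid $\Z_\ell$-modules do not decompose into a ``$\Q_\ell$-linear part'' and an $\ell$-complete part in any way that would let you treat them separately, so the parenthetical does not go through. Worse, your final sentence is circular: you invoke ``$\Gamma(G,-)$ has finite cohomological dimension'' to pass to colimits, but that finiteness is precisely what you are trying to prove, and at that point you have only established it on $\ell$-adically complete inputs. The same issue lurks in Step~1, where presenting $M$ as a colimit of compact $\Fld_\ell[G]$-modules is useless unless you already know $\Gamma(G,-)$ commutes with that colimit.

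The paper fixes all of this with a single observation you omit: since $G$ is profinite, the stack $*/G$ is \emph{qcqs}, and therefore the v-pushforward $\Gamma(G,-)$ automatically commutes with filtered colimits of static objects---this is a general property of qcqs maps and requires no a~priori bound on cohomological dimension. With this in hand, the cohomological dimension can be tested on the compact generators $\Z_{\ell,\solid}[G]$, which are $\ell$-adically complete; then your Milnor argument (pull out $\varprojlim_n$, pay $+1$, reduce mod $\ell^n$ to mod $\ell$ by cofiber sequences, invoke the definition of $\cd_\ell G$) finishes cleanly. Your explicit-resolution idea for Step~1 is a legitimate alternative in principle, but it is considerably heavier than needed and you would still face the Step~2 gap; the qcqs observation handles both steps at once.
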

\begin{proof}
For all claims we can assume $\Lambda = \Z_\ell$. To prove the claimed equivalence of $\infty$-categories we note that both $\infty$-categories admit natural left-complete $t$-structures and all the pullback functors in the Čech nerve of the cover $* \to */G$ are $t$-exact, hence by \cite[Proposition A.1.2.(ii)]{mann-mod-p-6-functors} the claim can be verified on the hearts. The heart of the left-hand category admits a simple description in terms of descent data (cf. \cite[Proposition A.1.2.(i)]{mann-mod-p-6-functors}), which one easily verifies to be the same as a continuous $G$-representation (here we implicitly use \cref{rslt:solid-sheaves-on-*}).

It remains to prove claims (i) and (ii). Claim (i) follows immediately by comparing the associated left adjoints. For claim (ii) we note that if $G$ is profinite then $*/G$ is qcqs, hence $\Gamma(G,-)$ commutes with filtered colimits of static objects. Thus the cohomological dimension of $\Gamma(G,-)$ can be checked on $\ell$-adically complete objects (recall that $\D_\solid(\Z_\ell)^{BG}$ is compactly generated by the $\ell$-adically complete objects $\Z_{\ell,\solid}[G]$), where we can pull the limit $\varprojlim_n \mathcal M/\ell^n \mathcal M$ out of the cohomology, so that the claim follows from the fact that countable limits have cohomological dimension $1$ and that mod $\ell^n$ the claim follows inductively from the definition and the standard cofiber sequences $M/\ell^{n-1}M \to M/\ell^n M \to M/\ell M$.
\end{proof}

With \cref{rslt:solid-sheaves-on-classifying-stacks} at hand we now study the full subcategory spanned by the nuclear sheaves. The corresponding representations are the continuous representations on nuclear modules:

\begin{definition}
Let $G$ be a locally profinite group. We denote by $\D_\nuc(\Lambda)^{BG} \subset \D_\solid(\Lambda,\Z_\ell)^{BG}$ the full subcategory spanned by those $G$-representations whose underlying $\Lambda$-module is nuclear. The objects of $\D_\nuc(\Lambda)^{BG}$ are called the \emph{nuclear $G$-representations} over $\Lambda$.
\end{definition}

\begin{lemma} \label{rslt:compute-nuclear-sheaves-on-classifying-stack}
Let $G$ be a locally profinite group. Then there is a natural equivalence of $\infty$-categories
\begin{align*}
	\D_\nuc(*/G,\Lambda) = \D_\nuc(\Lambda)^{BG}.
\end{align*}
Moreover, the $t$-structure on $\D_\solid(\Lambda,\Z_\ell)^{BG}$ restricts to a complete $t$-structure on $\D_\nuc(\Lambda)^{BG}$. The heart $\mathcal A := (\D_\nuc(\Lambda)^{BG})^\heartsuit$ is a Grothendieck abelian category which is stable under kernels, cokernels and extensions in the heart of $\D_\solid(\Lambda,\Z_\ell)^{BG}$. If $\Lambda$ is static then there is a natural equivalence $\D^+_\nuc(\Lambda)^{BG} = \D^+(\mathcal A)$.
\end{lemma}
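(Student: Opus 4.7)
The plan is to prove the four assertions in sequence.

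For the equivalence $\D_\nuc(*/G,\Lambda) = \D_\nuc(\Lambda)^{BG}$, start with the identification $\D_\solid(*/G,\Lambda) = \D_\solid(\Lambda,\Z_\ell)^{BG}$ from \cref{rslt:solid-sheaves-on-classifying-stacks}. The canonical composite $\Spa C \to * \to */G$ (invoking \cref{rslt:solid-sheaves-invariant-under-base-change-of-fields} to pass from $* = \Spec\overline{\Fld_p}$ to an algebraically closed perfectoid point $\Spa C$) is a v-cover by an $\ell$-bounded spatial diamond. By hypercomplete v-descent of nuclear sheaves (\cref{rslt:v-descent-for-nuclear-Lambda-modules}), $\mathcal M \in \D_\solid(*/G,\Lambda)$ is nuclear iff its pullback along this cover is nuclear, and this pullback corresponds under the above equivalence to the forgetful functor $\D_\solid(\Lambda,\Z_\ell)^{BG} \to \D_\solid(\Lambda,\Z_\ell)$, which matches exactly the defining condition of $\D_\nuc(\Lambda)^{BG}$.

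For the restriction of the $t$-structure, since the forgetful functor is $t$-exact and conservative, the issue reduces to showing that $\D_\nuc(\Lambda) \subset \D_\solid(\Lambda,\Z_\ell)$ is stable under truncations on a geometric point. The Warning following \cref{def:nuclear-module-on-spat-diamond} exhibits a failure of this property on a general spatial diamond, but the counterexample is constructed from an étale sheaf supported on a proper locally closed subset of a non-trivial pro-étale base, a phenomenon with no analogue on a single geometric point. Concretely, writing a nuclear $\mathcal M$ as a filtered colimit of Banach modules, using that truncations commute with filtered colimits, and analysing the Milnor exact sequences for $\ell$-adic completions of Banach modules on a point shows that truncations of a Banach module on a point remain nuclear. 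Completeness of the restricted $t$-structure is then inherited from completeness of the ambient $t$-structure together with closure of $\D_\nuc$ under arbitrary small colimits and countable limits, recorded in \cref{rslt:stability-of-w1-solid-sheaves}.

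For the heart $\mathcal A$, closure under kernels, cokernels, and extensions in $\mathcal B := (\D_\solid(\Lambda,\Z_\ell)^{BG})^\heartsuit$ is formal: $\D_\nuc(\Lambda)^{BG} \subset \D_\solid(\Lambda,\Z_\ell)^{BG}$ is stable under all finite limits and colimits (as a stable $\infty$-subcategory closed under all small colimits), while abelian kernels, cokernels, and extensions are computed via fiber and cofiber sequences followed by a truncation, which preserves nuclearity by the previous step. Grothendieck abelianness of $\mathcal A$ then follows from general principles for the heart of an accessible $t$-structure on a presentable stable $\infty$-category: the axiom AB5 is inherited from $\mathcal B$ via the agreement of filtered colimits under the colimit-preserving inclusion $\mathcal A \hookrightarrow \mathcal B$, and a generator is obtained from the $\omega_1$-compact generators of $\D_\nuc(\Lambda)^{BG}$.

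Finally, for static $\Lambda$, $\D_\solid(\Lambda,\Z_\ell)^{BG}$ is the derived $\infty$-category $\D(\mathcal B)$ of its Grothendieck abelian heart, so it suffices to show that the natural functor $\D^+(\mathcal A) \to \D^+(\mathcal B)$ is fully faithful with essential image $\D^+_\nuc(\Lambda)^{BG}$. Fully faithfulness is the classical theorem that extension-closedness of $\mathcal A$ in $\mathcal B$ implies fully faithfulness of the induced functor on bounded-below derived $\infty$-categories. The essential image consists of all $\mathcal N \in \D^+(\mathcal B)$ with every $\pi_i \mathcal N \in \mathcal A$; since $\D_\nuc$ is closed under countable limits in $\D_\solid$ and any such left-bounded $\mathcal N$ is a countable limit of the bounded pieces $\tau_{[k,n]}\mathcal N$ built by finite extensions from objects of $\mathcal A$, this essential image is precisely $\D^+_\nuc(\Lambda)^{BG}$. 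The principal obstacle is the truncation-stability claim in paragraph two, whose proof requires a careful analysis of the homology of $\ell$-adic completions of Banach modules on a point; once this is in hand, everything else proceeds by formal manipulations inside an accessible $t$-structure on a presentable stable $\infty$-category.
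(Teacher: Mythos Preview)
Your first three paragraphs track the paper's proof closely. For the $t$-structure step, the paper's argument is slightly sharper than your Milnor-sequence sketch: on a geometric point the discretization functor $(-)_\et$ is $t$-exact, so truncations of Banach $\Z_\ell$-modules remain $\ell$-complete with discrete reduction mod $\ell$, hence Banach. This is exactly what fails on a general spatial diamond, and it is the clean reason the $t$-structure restricts here.

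Your treatment of the final claim contains a genuine error. You assert that $\D_\nuc$ is closed under countable limits in $\D_\solid$, citing \cref{rslt:stability-of-w1-solid-sheaves}; but that result is about $\omega_1$-solid sheaves, not nuclear ones. Nuclear modules are \emph{not} closed under countable limits even on a point: a countable product of discrete $\Z_\ell$-modules is solid but not discrete, hence not nuclear. Consequently your essential-image argument, which writes a left-bounded $\mathcal N$ as a countable \emph{limit} of bounded truncations, cannot work. The correct direction is the opposite one: a left-bounded $\mathcal N$ is the filtered \emph{colimit} $\varinjlim_k \tau_{\ge -k}\mathcal N$ of its bounded truncations, and nuclear sheaves are closed under colimits. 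This is also how the paper concludes (``the essential image is stable under finite and filtered colimits and contains $\mathcal A$'').

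Two further gaps: you assume without proof that $\D_\solid(\Lambda,\Z_\ell)^{BG} = \D(\mathcal B)$ for static $\Lambda$; the paper establishes this via the flatness of $\Z_{\ell,\solid}[G\times S]$ over $\Z_{\ell,\solid}$, which ensures the compact projective generators $(\Lambda,\Z_\ell)_\solid[G\times S]$ are static. And your ``classical theorem'' that extension-closedness alone yields full faithfulness of $\D^+(\mathcal A)\to\D^+(\mathcal B)$ is not standard in that generality; the paper instead argues directly via the adjunction with the right-derived nuclearization functor $RG$, checking on static objects that the unit $\id\to RG\circ F$ is an isomorphism by comparing an injective resolution in $\mathcal B$ with the computation of $M_\nuc$ in $\D_\solid$.
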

\begin{proof}
The first claim follows immediately from \cref{rslt:solid-sheaves-on-classifying-stacks} because on both sides of the claimed equivalence the nuclear sheaves are characterized by those solid sheaves which become nuclear after pullback to $*$ (respectively, after forgetting the $G$-action).

That the $t$-structure on $\D_\solid(\Lambda,\Z_\ell)^{BG}$ restricts to a $t$-structure on nuclear representations can be checked on underlying $\Lambda$-modules, i.e. we need to see that the $t$-structure on $\D_\solid(\Lambda,\Z_\ell)$ restricts to a $t$-structure on $\D_\nuc(\Lambda)$. We can further assume $\Lambda = \Z_\ell$. Then the claim boils down to the observation that truncations of Banach $\Z_\ell$-modules are again Banach $\Z_\ell$-modules, which follows from the fact that $\ell$-adically complete objects are stable under truncations and that discreteness mod $\ell$ is stable under truncations because discretization is $t$-exact (the latter property fails on spatial diamonds, already on profinite sets). The completeness of the $t$-structure follows immediately from the completeness of the $t$-structure of the ambient $\infty$-category $\D_\solid(\Lambda,\Z_\ell)^{BG}$.

Now let $\mathcal A$ denote the heart of $\D_\nuc(\Lambda)^{BG}$. It is clear that $\mathcal A$ is stable under kernels, cokernels and extensions because all of these can be constructed using finite (co)limits and truncations in $\D_\solid(\Lambda,\Z_\ell)^{BG}$. It is also clearly stable under filtered colimits, hence filtered colimits in $\mathcal A$ are exact. It is formal that $\mathcal A$ is a Grothendieck abelian category: By descent, $\D_\nuc(\Lambda)^{BG}$ is presentable and clearly its $t$-structure is accessible in the sense of \cite[Proposition 1.4.4.13]{lurie-higher-algebra}; thus the claim follows from \cite[Remark 1.3.5.23]{lurie-higher-algebra}.

Now assume that $\Lambda$ is static. Then for all profinite sets $S$ the solid $G$-representation $(\Lambda,\Z_\ell)_\solid[G \cprod S] = \Lambda \tensor_{\Z_{\ell,\solid}} \Z_{\ell,\solid}[G \cprod S]$ is static because $\Z_{\ell,\solid}[G \cprod S]$ is flat over $\Z_{\ell,\solid}$ (by \cref{rslt:solid-tensor-product-preserves-complete-sheaves} this flatness reduces to the flatness of $\Fld_{\ell,\solid}[G \cprod S]$ over $\Fld_{\ell,\solid}$ which can be deduced easily from the fact that compact $\Fld_{\ell,\solid}$-modules are stable under kernels and cokernels, cf. the proof of \cite[Lemma 2.9.35]{mann-mod-p-6-functors}). But $\D_\solid(\Lambda,\Z_\ell)^{BG}$ is the $\infty$-category of modules over the associative analytic ring $(\Lambda,\Z_\ell)_\solid[G]$ and since all the compact projective generators of this ring are static it follows that $\D_\solid(\Lambda,\Z_\ell)^{BG}$ is the derived $\infty$-category of its heart. Now consider the natural functor
\begin{align*}
	F\colon \D^+(\mathcal A) \to \D^+_\solid(\Lambda,\Z_\ell)^{BG}.
\end{align*}
It admits a right adjoint $RG$ which is the right derived functor of the nuclearization functor $G$ on the hearts. We claim that the unit $\id \isoto RG \comp F$ is an isomorphism. This can be checked on static representations, i.e. for $M \in \mathcal A$ we need to see that $M \isoto RG(F(M))$ is an isomorphism. Let $M \to I^\bullet$ be an injective resolution of $M = F(M)$ in the heart of $\D_\solid(\Lambda,\Z_\ell)^{BG}$. Then $RG(F(M))$ is represented by the complex $G(I^\bullet)$. On the other hand, this complex clearly also computes $M_\nuc$ in $\D_\solid(\Lambda,\Z_\ell)^{BG}$. Since $M$ is nuclear this implies that this complex is indeed isomorphic to $M$ in either derived $\infty$-category. This proves that $F$ is fully faithful. The essential image is stable under finite and filtered colimits and contains $\mathcal A$ and is therefore precisely $\D^+_\nuc(\Lambda)^{BG}$.
\end{proof}

\begin{proposition} \label{rslt:nuclear-sheaves-on-cohom-finite-classifying-stack}
Let $G$ be a locally profinite group with locally finite $\ell$-cohomological dimension.
\begin{propenum}
	\item If $\Lambda$ is $\ell$-adically complete and $G$ is profinite then $\D_\nuc(\Lambda)^{BG}$ is generated under filtered colimits by $\ell$-adically complete nuclear $G$-representations.

	\item \label{rslt:group-cohom-preserves-nuclear-and-colim} If $G$ is profinite and $\cd_\ell G < \infty$ then the v-pushforward along $*/G \to *$ preserves nuclear sheaves and thus restricts to a colimit-preserving functor $\D_\nuc(*/G,\Lambda) \to \D_\nuc(\Lambda)$.

	\item If $\Lambda$ is static then $\D_\nuc(\Lambda)^{BG}$ is the derived $\infty$-category of its heart.
\end{propenum}
\end{proposition}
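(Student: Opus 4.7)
My plan for (i) is to exploit $\omega_1$-compact generation in the equivariant setting. By an analog of \cref{rslt:nuclear-Lambda-modules-are-w1-compactly-generated} applied to the free-forget adjunction, $\D_\nuc(\Lambda)^{BG}$ should be $\omega_1$-compactly generated by objects of the form $\Lambda_\solid[G] \tensor_\Lambda P$ for $P$ basic nuclear over $\Lambda$. Writing $P = \varinjlim_n P_n$ as a sequential colimit of compact $\omega_1$-solid $\Lambda$-modules (which by \cref{rslt:compact-generators-of-w1-solid-Lambda-sheaves} are $\ell$-adically complete and right-bounded) and using the identification $\Lambda_\solid[G] \tensor_\Lambda P_n = \Z_{\ell,\solid}[G] \tensor_{\Z_\ell} P_n$, it suffices to show that $\Z_{\ell,\solid}[G]$ is $\ell$-adically complete. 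But since $G$ is profinite, $\Z_{\ell,\solid}[G] = \varprojlim_H \Z_\ell[G/H]$ is a limit of $\ell$-adically complete $\Z_\ell$-modules and thus itself $\ell$-adically complete, so \cref{rslt:solid-tensor-product-preserves-complete-sheaves} finishes the argument. Each $\omega_1$-compact generator then appears as a filtered colimit of $\ell$-adically complete nuclear $G$-representations, and a standard cofinality rearrangement of the iterated $\omega_1$-filtered colimit $M = \varinjlim_\alpha \varinjlim_n(\dots)$ exhibits an arbitrary $M$ as a single filtered colimit of such objects.

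For (ii), I would reduce to the $\ell$-adically complete case by invoking (i) together with \cref{rslt:finite-group-cohom-on-solid-sheaves}, which gives that $\Gamma(G,-)$ has finite cohomological dimension and in particular preserves filtered colimits. So assume $M$ is $\ell$-adically complete, and write $M = \varprojlim_n M/\ell^n M$. Each $M/\ell^n M$ is a nuclear $\Lambda$-module killed by a power of $\ell$ and hence étale by \cref{rslt:nuclearization-of-complete-module}. Because $*/G \to *$ is qcqs for $G$ profinite, the v-pushforward of an étale sheaf is again étale by \cite[Corollary 16.8.(ii)]{etale-cohomology-of-diamonds}, so each $\Gamma(G, M/\ell^n M)$ is étale. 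The v-pushforward preserves limits, giving $\Gamma(G, M) = \varprojlim_n \Gamma(G, M/\ell^n M)$, which is therefore $\ell$-adically complete. Moreover, since $\Gamma(G,-)$ is exact and commutes with the cofiber sequence $M \xto{\ell} M \to M/\ell M$, the reduction $\Gamma(G, M)/\ell\Gamma(G, M)$ is $\Gamma(G, M/\ell M)$, which is étale. Thus $\Gamma(G, M)$ is a Banach sheaf on $*$, hence nuclear.

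For (iii), my plan is to extend the $\D^+$-equivalence from \cref{rslt:compute-nuclear-sheaves-on-classifying-stack} to the unbounded derived category via the standard characterization of $\D(\mathcal A)$ for Grothendieck $\mathcal A$. The target $\D_\nuc(\Lambda)^{BG}$ is presentable by \cref{rslt:v-descent-for-nuclear-Lambda-modules} and carries a complete $t$-structure whose heart is the Grothendieck abelian category $\mathcal A$ (by the preceding lemma). It is right-complete by presentability plus accessibility of the $t$-structure, and left-complete by the preceding lemma. Furthermore, it is generated under colimits by $\mathcal A$: every object equals the colimit of its Postnikov truncations, each of which lies in $\D^b_\nuc(\Lambda)^{BG}$ and hence, via the $+$-equivalence, in $\D^b(\mathcal A)$, which in turn is generated by $\mathcal A$ under finite colimits and shifts. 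These properties characterize $\D_\nuc(\Lambda)^{BG}$ as the derived $\infty$-category of its heart, identifying the canonical $t$-exact functor $\D(\mathcal A) \to \D_\nuc(\Lambda)^{BG}$ as an equivalence.

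The hardest step, in my view, is the cofinality manoeuvre in (i) together with a careful verification that the identifications $\Lambda_\solid[G] \tensor_\Lambda P_n = \Z_{\ell,\solid}[G] \tensor_{\Z_\ell} P_n$ and the boundedness hypotheses of \cref{rslt:solid-tensor-product-preserves-complete-sheaves} apply in the $\Lambda$-module / classifying-stack setting (as opposed to the $\Z_\ell$-module / $\ell$-bounded spatial diamond setting in which the cited results are originally phrased). A secondary technical obstacle in (iii) is the precise form of the universal property of $\D(\mathcal A)$ that is being invoked: one must be careful that the generation of $\D_\nuc(\Lambda)^{BG}$ under colimits by its heart follows cleanly from right completeness together with the $+$-equivalence, without any appeal to finite cohomological dimension of $\mathcal A$ itself, which may fail in general.
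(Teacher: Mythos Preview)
Your argument for (ii) is essentially the paper's, with slightly more detail; you should reduce to $\Lambda = \Z_\ell$ first (the forgetful functor along $\Z_\ell \to \Lambda$ commutes with v-pushforward), since (i) as stated only covers $\ell$-adically complete $\Lambda$.

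Part (i), however, has a genuine error. The objects $\Lambda_\solid[G] \tensor_\Lambda P$ are \emph{not} in $\D_\nuc(\Lambda)^{BG}$: for infinite profinite $G$ the module $\Z_{\ell,\solid}[G]$ is not nuclear, because its mod-$\ell$ reduction $\Fld_{\ell}[[G]] = \varprojlim_H \Fld_\ell[G/H]$ is a nondiscrete profinite $\Fld_\ell$-vector space and hence not étale (cf.\ \cref{rslt:nuclear-over-discrete-Lambda-equiv-etale}). So the free--forget adjunction on solid representations does not restrict to nuclear representations, and your proposed generators are not even objects of the category. The paper instead observes that when $\cd_\ell G < \infty$ the site of $*/G$ behaves exactly like that of an $\ell$-bounded spatial diamond, so the arguments of \cref{sec:w1-solid,sec:nuclear} go through verbatim: one gets $\omega_1$-compact generation by basic nuclear objects, i.e.\ sequential colimits along trace-class maps of \emph{compact $\omega_1$-solid sheaves on $*/G$} (built from $\Z_\ell[G/H]$ for open $H$, not from $\Z_{\ell,\solid}[G]$), and these compacts are $\ell$-adically complete. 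The case $\cd_\ell G = \infty$ is then handled by étale descent from an open subgroup.

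Part (iii) also has a gap, and it is precisely the one you were hoping to avoid. The properties you list --- presentable, left- and right-complete $t$-structure with heart $\mathcal A$, generated by $\mathcal A$ --- do \emph{not} characterise $\D(\mathcal A)$; they characterise its left completion $\widehat{\D}(\mathcal A)$. If $\D(\mathcal A)$ fails to be left-complete then the canonical functor $\D(\mathcal A) \to \D_\nuc(\Lambda)^{BG}$ is not an equivalence, even though the target satisfies everything you wrote. The paper closes this gap by proving that $\D(\mathcal A)$ \emph{is} left-complete: after passing to an open subgroup with $\cd_\ell < \infty$, countable products in $\D_\nuc(\Lambda)^{BG}$ have finite cohomological dimension because they are computed as the $\omega_1$-solid product followed by nuclearization, and both of these are bounded (the latter by the analogue of \cref{rslt:nuclearization-is-bounded} established in the proof of (i)). So the ``finite cohomological dimension'' input you flagged as a worry is in fact the crux of the argument.
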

\begin{proof}
We first prove (i), so assume that $G$ is profinite. Let us furthermore assume that $\cd_\ell G < \infty$. Then by \cref{rslt:finite-group-cohom-on-solid-sheaves} the étale site of the stack $*/G$ behaves in a very similar way as it does for an $\ell$-bounded spatial diamond. In particular by the same arguments as in \cref{sec:nuclear} we see that $\D_\nuc(\Lambda)^{BG}$ is $\omega_1$-compactly generated by the basic nuclear sheaves, which themselves are sequential colimits of $\ell$-adically complete sheaves. We also get a good description of the $\omega_1$-solid sheaves on $*/G$ in a similar fashion as in \cref{sec:w1-solid} and the nuclearization functor $(-)_\nuc\colon \D_\solid(\Lambda,\Z_\ell)_{\omega_1}^{BG} \to \D_\nuc(\Lambda)^{BG}$ preserves all small colimits and is bounded. In particular we deduce that every nuclear $G$-representation is a filtered colimit of $\ell$-adically complete nuclear $G$-representations.

To finish the proof of (i) we still need to treat the case that $\cd_\ell G = \infty$. But by assumption on $G$ there is an open compact subgroup $H \subset G$ such that $\cd_\ell H < \infty$. The conservative pullback along $*/H \surjto */G$ has a left adjoint (the lower shriek functor, see \cref{rslt:properties-of-etale-lower-shriek}) on nuclear sheaves. Therefore, since $\D_\nuc(\Lambda)^{BH}$ is $\omega_1$-compactly generated, the same follows for $\D_\nuc(\Lambda)^{BG}$ and the $\omega_1$-compact generators are the shriek pushforwards along $*/H \surjto */G$ of the $\omega_1$-compact generators of $\D_\nuc(\Lambda)^{BH}$. This implies (i).

We now prove (ii) so assume that $G$ is as in the claim. We can assume $\Lambda = \Z_\ell$. By \cref{rslt:finite-group-cohom-on-solid-sheaves} the v-pushforward along $*/G \to *$ preserves small colimits of solid sheaves, hence by (i) we only need to show that this v-pushforward preserves Banach sheaves. This in turn reduces to showing that it preserves étale sheaves, i.e. that the continuous group cohomology of a discrete representation is discrete. This can for example be checked by a direct computation of group cohomology, cf. \cite[Proposition 3.4.6]{mann-mod-p-6-functors}.

We now prove (ii), so assume that $\Lambda$ is static and let $\mathcal A$ be the heart of $\D_\nuc(\Lambda)^{BG}$. By \cref{rslt:compute-nuclear-sheaves-on-classifying-stack} we have $\D^+(\mathcal A) = \D^+_\nuc(\Lambda)^{BG}$ and $\D_\nuc(\Lambda)^{BG}$ is left-complete. It is therefore enough to show that $\D(\mathcal A)$ is left-complete. For this it is enough to show that countable products have finite cohomological dimension in $\D(\mathcal A)$ (e.g. by adapting the proof of \cite[Proposition 1.2.1.19]{lurie-higher-algebra}). Equivalently we need to show that countable products in $\D_\nuc(\Lambda)^{BG}$ have finite cohomological dimension. This can be checked after pullback along any étale cover of $*/G$ (such a pullback is $t$-exact and preserves limits of nuclear sheaves by the existence of the left adjoint lower shriek functor), so we can replace $G$ by any open subgroup. In particular we can assume that $G$ is profinite and $\cd_\ell G < \infty$. Limits in $\D_\nuc(\Lambda)^{BG}$ can be computed as the composition of the limit in $\D_\solid(\Lambda,\Z_\ell)_{\omega_1}^{BG}$ and the nuclearization functor. By the proof of (i) nuclearization has finite cohomological dimension, hence so do countable products in $\D_\nuc(\Lambda)^{BG}$.
\end{proof}

We have acquired a clear understanding of the connection of nuclear sheaves on classifying stacks and nuclear representations. We now study the geometry of these classifying stacks from an $\ell$-cohomological viewpoint. Our first goal is to show that essentially all maps of classifying stacks that appear in practice are $\ell$-fine.

\begin{proposition} \label{rslt:vanishing-of-Ext-for-complete-reps}
Let $G$ be a profinite group and let $M, N \in \D_\nuc(\Z_\ell)^{BG}$ be static nuclear $G$-representations such that $N$ is $\ell$-adically complete. Then
\begin{align*}
	\Ext^k(M, N) = 0 \qquad \text{for all $k > \cd_\ell G + 3$}.
\end{align*}
\end{proposition}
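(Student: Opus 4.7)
The inclusion $\D_\nuc(\Z_\ell) \hookrightarrow \D_\solid(\Z_\ell)$ is fully faithful, and this yields full faithfulness of $\D_\nuc(\Z_\ell)^{BG} \hookrightarrow \D_\solid(\Z_\ell)^{BG}$; hence $\Ext^k_{\D_\nuc(\Z_\ell)^{BG}}(M, N) = \Ext^k_{\D_\solid(\Z_\ell)^{BG}}(M, N)$. By \cref{rslt:solid-sheaves-on-classifying-stacks}, the latter is computed as $\Gamma(G, \IHom_\solid(M, N))$, where $\IHom_\solid$ is the internal hom in $\D_\solid(\Z_\ell)$ with diagonal $G$-action. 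By \cref{rslt:finite-group-cohom-on-solid-sheaves}, $\Gamma(G, -)$ has cohomological dimension at most $\cd_\ell G + 1$. It thus suffices to show that $\IHom_\solid(M, N)$ is concentrated in cohomological degrees $\le 2$.

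Since $N$ is static, nuclear, and $\ell$-adically complete, it is automatically Banach: writing $N = \varinjlim_\alpha B_\alpha$ as a filtered colimit of Banach sheaves, $N/\ell N = \varinjlim_\alpha B_\alpha/\ell B_\alpha$ is a filtered colimit of étale sheaves, hence étale. The decomposition $N = \varprojlim_n N/\ell^n N$ has each quotient étale; the sequential limit has cohomological dimension at most $1$, and a finite devissage via $0 \to N/\ell N \to N/\ell^n N \to N/\ell^{n-1} N \to 0$ reduces the problem to showing $\IHom_\solid(M, N')$ has cohomological dimension at most $1$, where $N' = N/\ell N$ is a discrete $\Fld_\ell$-module.

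For this, the tensor-hom adjunction gives $\IHom_\solid(M, N') = \IHom_{\Fld_\ell, \solid}(\bar M, N')$, where $\bar M := M \otimes^L_{\Z_\ell} \Fld_\ell$ is concentrated in homological degrees $[0, 1]$ with $\pi_0 \bar M = M/\ell M$ and $\pi_1 \bar M = M[\ell]$. Both $M/\ell M$ (the cofibre of $\ell \colon M \to M$) and $M[\ell]$ (the fibre of $\ell$) are nuclear, since the inclusion $\D_\nuc(\Z_\ell) \hookrightarrow \D_\solid(\Z_\ell)$ preserves all small colimits (\cref{rslt:properties-of-nuclear-subcategory}) and hence, being an exact functor between stable $\infty$-categories, also preserves finite limits. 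They are thus étale $\Fld_\ell$-modules by \cref{rslt:nuclear-over-discrete-Lambda-equiv-etale}. Since $\Fld_\ell$ is a field, $\IHom_{\Fld_\ell, \solid}(V, W)$ for two discrete $\Fld_\ell$-vector spaces is the static classical Hom, and the fibre sequence $\pi_0 \bar M \to \bar M \to \pi_1 \bar M[1]$ yields $\IHom_{\Fld_\ell, \solid}(\bar M, N')$ in cohomological degrees $[0, 1]$, as required.

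Assembling the bounds: $\IHom_\solid(M, N)$ has cohomological dimension at most $2$ (one from the $\ell$-adic limit on $N$, one from the $\Ext^1$ involving $\bar M$), so $\Gamma(G, \IHom_\solid(M, N))$ has cohomological dimension at most $\cd_\ell G + 3$, giving the desired vanishing $\Ext^k(M, N) = 0$ for $k > \cd_\ell G + 3$. The main subtlety is the verification that $M[\ell]$ is again nuclear, which uses the closure of $\D_\nuc \hookrightarrow \D_\solid$ under finite limits (in the stable sense) together with \cref{rslt:nuclear-over-discrete-Lambda-equiv-etale} to pass from nuclearity to the étaleness needed in the $R\Hom$-computation.
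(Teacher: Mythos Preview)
Your proof is correct and follows the same overall strategy as the paper: reduce to bounding the cohomological amplitude of $\IHom_\solid(M,N)$ by $2$, then apply \cref{rslt:finite-group-cohom-on-solid-sheaves}. The only difference is in the endgame after passing to the $\ell$-adic limit in $N$: the paper stops at $\Z/\ell^n\Z$-coefficients, identifies the (derived) quotient $M/\ell^n M$ with $M_0/\ell^n M_0$ for $M_0$ the underlying discrete abelian group, and uses a two-term free $\Z$-resolution of $M_0$; you instead devissage one step further to $\Fld_\ell$-coefficients and use that $\Fld_\ell$ is a field, so the discrete homotopy groups of $\bar M$ are free and $\IHom_{\Fld_\ell,\solid}$ of discrete vector spaces is static. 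Both routes give the same bound; yours is marginally cleaner in that it avoids introducing $M_0$ and the auxiliary $\Z$-resolution, while the paper's avoids the extra devissage in $n$. Your justification that $M[\ell]$ is nuclear (via stability of $\D_\nuc$) is exactly what is needed and is the one point that deserves the care you gave it.
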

\begin{proof}
We can assume that $\cd_\ell G < \infty$ because otherwise there is nothing to prove. Let $\IHom_\solid(M, N)$ denote the internal hom of $M$ and $N$ in $\D_\solid(\Z_\ell)^{BG}$. Then for the spectra-enriched Hom from $M$ to $N$ we have $\Hom(M, N) = \Gamma(G, \IHom_\solid(M, N))$. By \cref{rslt:finite-group-cohom-on-solid-sheaves} it is therefore enough to show that $\IExt^k_\solid(M, N) = 0$ for $k > 2$. Note that the solid internal hom is computed on the underlying $\Z_{\ell,\solid}$-modules (it agrees with the pro-étale internal hom and is thus preserved under pullback along the pro-étale map $* \to */G$). We can therefore ignore the group action from now on and simply assume that $M, N \in \D_\nuc(\Z_\ell)$ with $N$ being $\ell$-adically complete. By pulling out the limit $N = \varprojlim_n N/\ell^n N$ and using the fact that countable limits have cohomological dimension 1, we reduce to showing that $\IExt^k_\solid(M, N/\ell^n N) = 0$ for $i > 1$. In other words, from now on we can assume that $N$ is a discrete $\Z/\ell^n\Z$-module. Then $\IHom_\solid(M, N) = \IHom_{\Z/\ell^n\Z,\solid}(M/\ell^n M, N)$. Since $M/\ell^n M$ is discrete, we can equivalently write it as $M/\ell^n M = M_0/\ell^n M_0$, where $M_0$ is the underlying discrete abelian group of $M$. We can pick a short exact sequence $0 \to \bigdsum_J \Z \to \bigdsum_I \Z \to M_0 \to 0$ for some sets $I$ and $J$. Thus $M_0/\ell^n M_0$ admits a resolution of length 2 in terms of direct sums of copies of $\Z/\ell^n \Z$. Since products are exact in $\D_\solid(\Z/\ell^n \Z)$ it follows immediately that $\IHom_{\Z/\ell^n\Z,\solid}(M_0/\ell^n M_0, N)$ is concentrated in cohomological degrees $0$ and $1$, as desired.
\end{proof}

\begin{lemma} \label{rslt:map-from-*-to-*-mod-G-has-codescent}
Let $G$ be a profinite group with $\cd_\ell G < \infty$. Then the map $* \to */G$ is fdcs and admits universal $\ell$-codescent.
\end{lemma}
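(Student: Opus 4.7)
The fdcs assertion is essentially immediate. Pulling $g\colon * \to */G$ back along itself yields $\underline{G} \to *$, where $\underline{G}$ is the profinite set $G$ viewed as a small v-stack. Since $\underline{G}$ is a spatial diamond and the structure map is proper (profinite fibers) with $\dimtrg=0$, this base-change is fdcs. Because the fdcs property is v-local on the target (it can be checked after pulling back along the v-cover $g$ itself), $g$ is fdcs.

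For universal $\ell$-codescent, I take any map $Y \to */G$ with $Y$ admitting a structure map to a strictly totally disconnected space, and write $f\colon Y' \to Y$ for the base-change of $g$, which is a $G$-torsor. The Čech nerve is identified as $Y'_n \cong Y' \cprod \underline{G^n}$ over $Y$, and all face maps are proper with profinite fibers. The goal is to show that the natural functor
\begin{align*}
	\D_\nuc^!(Y,\Z_\ell) \longrightarrow \varprojlim_{n\in\Delta} \D_\nuc^!(Y'_n,\Z_\ell)
\end{align*}
is an equivalence. By \cref{rslt:v-descent-for-nuclear-Lambda-modules} the analogous functor with pullback transitions is already an equivalence, so the task is to bootstrap from the pullback v-descent to the upper-shriek codescent.

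The strategy is to use that each $\pi_n\colon Y'_n \to Y$ is proper, so $\pi_{n!} = \pi_{n*}$, and that the hypothesis $\cd_\ell G < \infty$ (via a relative version of \cref{rslt:group-cohom-preserves-nuclear-and-colim}, applied to the projection from a profinite fiber) forces $\pi_{n*}$ to preserve small colimits and have finite cohomological dimension on nuclear sheaves. In particular, the left adjoint of the shriek-descent functor is $(\mathcal{M}_\bullet) \mapsto \bigl|\pi_{\bullet!}\mathcal{M}_\bullet\bigr|$, and I would verify directly that this defines a two-sided inverse, using that both $\pi_{\bullet*}$ and the bar resolution computing the geometric realization converge well under the cohomological-dimension hypothesis.

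The main obstacle will be bridging the gap between the star-adjunction $\pi_n^* \dashv \pi_{n*}$ (which controls v-descent) and the shriek-adjunction $\pi_{n!} \dashv \pi_n^!$ (which controls codescent): even though the pushforwards coincide, the comparison between the two descent diagrams involves the dualizing data $\pi_n^!\Z_\ell$, and one must check that these twists assemble coherently across the Čech nerve of a $G$-torsor. The finite $\ell$-cohomological dimension of $G$ is what guarantees that all the relevant limits, colimits, and Postnikov truncations interchange as required, allowing the comparison to be carried out uniformly.
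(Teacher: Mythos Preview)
Your fdcs argument is fine, and you have correctly set up the codescent problem and identified its left adjoint. But the paragraph beginning ``The main obstacle'' is where your proposal ends rather than begins: you name the difficulty (comparing $*$-descent and $!$-codescent across the twists $\pi_n^!\Z_\ell$) without supplying any mechanism to resolve it. Saying that finite cohomological dimension makes ``limits, colimits, and Postnikov truncations interchange'' is not a proof; the isomorphism $|\pi_{\bullet!}\pi_\bullet^!\mathcal M|\isoto \mathcal M$ does not follow from any such interchange, because the colimit side is $\varinjlim_{n\in\Delta}\IHom(\pi_{n!}\Z_\ell,\mathcal M)$ and there is no reason for a geometric realization to commute with $\IHom(-,\mathcal M)$ in the first variable. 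Your reduction to v-descent is also a red herring: the $*$- and $!$-diagrams have different transition maps, and there is no comparison functor between the two cosimplicial categories.

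The paper supplies exactly the missing idea. After reducing (via Lurie's Beck--Chevalley criterion) to showing that $q^!$ is conservative and preserves $q^!$-split realizations, one observes that $q_*q^!=\IHom(q_*\Z_\ell,-)$ and that both conditions follow once the identity functor lies in the thick subcategory generated under composition, retracts, and finite (co)limits by $q_*q^!$. This is precisely Mathew's notion that $q_*\Z_\ell$ is a \emph{descendable} algebra in $\D_\nuc(Y,\Z_\ell)$. Since descendability pulls back along symmetric monoidal functors, one reduces to $Y=*/G$, i.e.\ to showing $\cts(G,\Z_\ell)$ is descendable in $\D_\nuc(\Z_\ell)^{BG}$. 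Here the hypothesis $\cd_\ell G<\infty$ finally enters in a sharp way: via \cref{rslt:vanishing-of-Ext-for-complete-reps} the connecting map from the cofiber of $\Z_\ell\to\Tot_d(\cts(G,\Z_\ell)^{\tensor\bullet+1})$ back to $\Z_\ell[1]$ vanishes for $d\gg0$, so $\Z_\ell$ is a retract of a finite partial totalization and descendability follows. This descendability step is the genuine content you are missing.
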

\begin{proof}
It is clear that the map $* \to */G$ is fdcs because it is proper and pro-étale. To prove universal $\ell$-codescent we follow our argument in \cite[Lemma 3.11]{mod-p-stacky-6-functors}, so the reader is encouraged to have a look at loc. cit. for more details. Pick any small v-stack $X$ with a map $X \to */G$ and let $Y := X \cprod_{*/G} *$. We denote $q\colon Y \to X$ the base-change of $* \to */G$ and $q_\bullet\colon Y_\bullet \to X$ the associated Čech nerve. Then we need to show that the natural functor
\begin{align*}
	\D_\nuc^!(X,\Z_\ell) \isoto \varprojlim_{n\in\Delta} \D_\nuc^!(Y_n,\Z_\ell)
\end{align*}
is an equivalence. By employing Lurie's Beck-Chevalley condition (see \cite[Corollary 4.7.5.3]{lurie-higher-algebra}) this reduces to the following claim:
\begin{enumerate}[(a)]
	\item The functor $q^!\colon \D_\nuc(X,\Z_\ell) \to \D_\nuc(Y,\Z_\ell)$ is conservative and preserves geometric realizations of $q^!$-split simplicial objects in $\D_\nuc(X,\Z_\ell)$.
\end{enumerate}
To prove this we apply ideas of Mathew \cite{akhil-galois-group-of-stable-homotopy}. We first note that it is enough to show the following claim:
\begin{enumerate}[(b)]
	\item Let $\langle q_* q^! \rangle \subset \Fun(\D_\nuc(X,\Z_\ell), \D_\nuc(X,\Z_\ell))$ be the full subcategory generated by $q_* q^!$ under finite (co)limits, compositions and retracts; then $\langle q_* q^! \rangle$ contains the identity functor.
\end{enumerate}
One checks easily that (b) implies (a): It follows easily from (b) that the functor $q_* q^!$ is conservative, hence so is $q^!$. Moreover, if $\mathcal M_\bullet$ is any $q^!$-split simplicial object in $\D_\nuc(X,\Lambda)$ then it is also $q_* q^!$-split and by (b) it follows that it is split; then of course its geometric realization commutes with $q^!$.

It remains to prove (b). We compute $q_* q^! = \IHom(q_* \Z_\ell, -)$, so (b) reduces to the claim that $\Z_\ell \in \D_\nuc(X,\Z_\ell)$ can be generated using finite (co)limits, retracts and tensor products from $q_* \Z_\ell$. In other words, using Mathew's notion of descendable algebras (see \cite[Definition 3.18, Proposition 3.20]{akhil-galois-group-of-stable-homotopy}) the claim (b) boils down to:
\begin{enumerate}[(c)]
	\item The algebra object $q_* \Z_\ell \in \D_\nuc(X,\Z_\ell)$ admits descent.
\end{enumerate}
Let us denote $q_0\colon * \to */G$ the canonical projection, so that $q$ is a base-change of $q_0$ along the map $f\colon X \to */G$. The pullback functor $f^*\colon \D_\nuc(*/G,\Z_\ell) \to \D_\nuc(X,\Z_\ell)$ is symmetric monoidal and sends $q_{0*} \Z_\ell$ to $q_* \Z_\ell$ (by proper base-change), so it is enough to show (c) for $q_0$. Thus from now on we assume $X = */G$. By \cref{rslt:nuclear-sheaves-on-cohom-finite-classifying-stack} the proof of (c) reduces to the following claim:
\begin{enumerate}[(d)]
	\item The algebra object $\cts(G,\Z_\ell) \in \D_\nuc(\Z_\ell)^{BG}$ admits descent.
\end{enumerate}
Let $d := \cd_\ell G + 3$ and consider the cofiber sequence
\begin{align*}
	\Z_\ell \to \Tot_d(\cts(G,\Z_\ell)^{\tensor\bullet+1}) \to X
\end{align*}
in $\D_\nuc(\Z_\ell)^{BG}$, where $\Tot_d$ denotes the $d$-truncated totalization of a cosimplicial object. By descent we have $\Tot(\cts(G,\Z_\ell)^{\tensor\bullet+1})$ and thus $X$ is concentrated in cohomological degrees $\ge d$. But then it follows from \cref{rslt:vanishing-of-Ext-for-complete-reps} that the connecting map $X \to \Z_\ell[1]$ must be zero, which implies that $\Z_\ell$ is a retract of $\Tot_d(\cts(G,\Z_\ell)^{\tensor\bullet+1})$, as desired.
\end{proof}

\begin{proposition}
Let $G$ be a locally profinite group which has locally finite $\ell$-cohomological dimension. Then:
\begin{propenum}
	\item \label{rslt:classifying-stack-is-l-fine} The natural projection $*/G \to *$ is $\ell$-fine.
	\item \label{rslt:classifying-stack-is-proper-for-profinite-G} If $G$ is profinite and has finite $\ell$-cohomological dimension then the map $*/G \to *$ is $\ell$-cohomologically proper.
\end{propenum}
\end{proposition}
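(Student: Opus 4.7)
For part (i), my plan is to exhibit the canonical projection $g\colon * \to */G$ as the witness to $\ell$-fineness. Choose an open compact subgroup $H \subset G$ with $\cd_\ell H < \infty$, which exists by hypothesis, and factor $g$ as $* \to */H \to */G$. The first arrow is fdcs and admits universal $\ell$-codescent by \cref{rslt:map-from-*-to-*-mod-G-has-codescent}; the second is an étale surjection (since $G/H$ is discrete), hence fdcs and $\ell$-cohomologically smooth, and therefore admits universal $\ell$-codescent by \cref{rslt:smooth-maps-admit-codescent}. Composing two maps that admit universal $\ell$-codescent preserves the property via a double-Čech Beck--Chevalley argument analogous to the proof of \cref{rslt:map-from-*-to-*-mod-G-has-codescent}. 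The composite $g$ is itself fdcs because it is a pro-étale $G$-torsor with locally profinite fiber $G = \bigdunion_i g_i H$: each coset is quasicompact and profinite hence proper over $*$, yielding local compactifiability with $\dimtrg = 0$. Since $f \comp g = \mathrm{id}_*$ is trivially fdcs, $*/G \to *$ is $\ell$-fine.

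For part (ii), $\ell$-fineness follows from (i). The map $f\colon */G \to *$ is 1-separated: the diagonal $\Delta\colon */G \to */G \cprod */G$ is induced by the diagonal embedding $G \injto G \cprod G$ and thus has profinite (hence proper) fibers. Combining \cref{rslt:ell-cohomologically-proper-equiv-Z-ell-is-f-proper} with \cref{rslt:characterization-of-f-proper-sheaves-for-1-separated-map}, $\ell$-cohomological properness reduces to showing that the natural transformation $f_! \Z_\ell \to f_* \Z_\ell$ from \cref{rslt:1-separated-implies-morphism-of-shriek-to-star} is an isomorphism. The crucial observation is that because $G$ is profinite, the cover $q\colon * \to */G$ is itself proper (its base-change to $*$ is $G$, which is qcqs, separated, and universally closed); being fdcs and hence $\ell$-fine by \cref{rslt:stabilities-of-ell-fine-maps}, \cref{rslt:6-functor-formalism}(ii) gives $q_! = q_*$.

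Using the explicit Čech description in \cref{rmk:explicit-construction-of-shriek-functors}, every $\mathcal M \in \D_\nuc(*/G, \Z_\ell)$ may be written as $\mathcal M = \varinjlim_n q_{n!} q_n^! \mathcal M$; factoring $q_n\colon G^n \to */G$ as $q \comp \pi_n$ with $\pi_n\colon G^n \to *$ rewrites this as a colimit of objects of the form $q_! \mathcal N_n$. Thus $\D_\nuc(*/G, \Z_\ell)$ is generated under colimits by $q_!\D_\nuc(*, \Z_\ell)$, and on each generator $q_!\mathcal N$ we compute $f_! q_! \mathcal N = (fq)_! \mathcal N = \mathcal N$ and $f_* q_! \mathcal N = f_* q_* \mathcal N = (fq)_* \mathcal N = \mathcal N$, with the natural transformation $f_! \to f_*$ identifying -- via these functorial compatibilities and $q_! = q_*$ -- with $(fq)_! \to (fq)_*$ for $fq = \mathrm{id}_*$, which is manifestly the identity. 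Since both $f_!$ and $f_*$ preserve colimits (the latter by \cref{rslt:group-cohom-preserves-nuclear-and-colim}), we conclude $f_! \mathcal M \isoto f_* \mathcal M$ on all of $\D_\nuc(*/G, \Z_\ell)$. The main technical obstacle is verifying the compatibility of the 1-separated natural transformation with composition, so that $f_! \to f_*$ genuinely pulls back to the identity on $q_!$-generators; this is ultimately a formal property of the 6-functor formalism, following the explicit construction in the proof of \cref{rslt:1-separated-implies-morphism-of-shriek-to-star}, but requires careful bookkeeping.
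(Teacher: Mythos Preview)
For part (i), your approach works but is more complicated than necessary. The paper simply invokes the fact that $\ell$-fineness is \'etale local on the source (already established in \cref{rslt:stabilities-of-ell-fine-maps}) to replace $*/G$ by $*/H$ for a compact open $H \subset G$ with $\cd_\ell H < \infty$; then the map $* \to */H$ is the witness directly, by \cref{rslt:map-from-*-to-*-mod-G-has-codescent}. You instead try to exhibit $* \to */G$ itself as the witness for the original $G$, which forces you to prove that universal $\ell$-codescent is stable under composition of fdcs maps. That closure property is plausible and presumably follows from a Beck--Chevalley argument, but it is not established in the paper and your gesture toward a ``double-\v{C}ech'' argument is not a proof. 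Using \'etale-localness on the source sidesteps this entirely and is the cleaner route.

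For part (ii), your argument is essentially what the paper has in mind (it defers to the analogous argument in the $p$-adic companion paper): since $G$ is profinite, the cover $q\colon * \to */G$ is proper and $\ell$-fine, so $q_! = q_*$; the $\infty$-category $\D_\nuc(*/G,\Z_\ell)$ is generated under colimits by the essential image of $q_!$; and $f_*$ preserves colimits by \cref{rslt:group-cohom-preserves-nuclear-and-colim}. Hence checking $f_! \to f_*$ on generators of the form $q_!\mathcal N$ and using $f_! q_! = (fq)_! = \id = (fq)_* = f_* q_*$ suffices. Your closing worry about compatibility of the natural transformation $f_! \to f_*$ with composition is legitimate but, as you note, formal bookkeeping within the 6-functor formalism.
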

\begin{proof}
For (i) we can replace $G$ by any compact open subgroup because the property of being $\ell$-fine is étale local on the source (see \cref{rslt:stabilities-of-ell-fine-maps}). We can thus assume that $G$ is profinite with $\cd_\ell G < \infty$. In this case the map $*/G \to *$ is covered by the map $* \to */G$ which is fdcs and admits universal $\ell$-codescent by \cref{rslt:map-from-*-to-*-mod-G-has-codescent}, so that $* \to */G$ is $\ell$-fine by definition.

It remains to prove (ii) so assume that $G$ is as in the claim. First note that the diagonal of $f\colon */G \to *$ has fiber $G$ and is thus proper, so that $f$ is 1-separated. By \cref{rslt:group-cohom-preserves-nuclear-and-colim} $f_*$ preserves all small colimits, so we can argue as in \cite[Corollary 3.12]{mod-p-stacky-6-functors} to deduce that the natural map $f_! \to f_*$ is an isomorphism, implying $\ell$-cohomological properness of $f$.
\end{proof}

We have established the fact that for nice enough locally profinite groups $G$ the classifying stack $*/G$ is $\ell$-fine, so in particular we have the full 6-functor formalism for nuclear sheaves on classfying stacks at our disposal. In order to compute shriek functors, it is very useful to know that in practice the classifying stack $*/G$ is even $\ell$-cohomologically smooth. We have already worked this out for $p$-adic sheaves in \cite[\S3.2]{mod-p-stacky-6-functors} and it works the same way $\ell$-adically. For the convenience of the reader we present the main definitions and results.

\begin{definition}
\begin{defenum}
	\item \label{def:Poincare-group} Let $G$ be a profinite group with $\cd_\ell G < \infty$. We say that $G$ is \emph{$\ell$-Poincaré of dimension $d$} if it satisfies the following conditions:
	\begin{enumerate}[(i)]
		\item $H^k(G, M)$ is finite for all $k \ge 0$ and all $G$-representations $M$ on finite $\Fld_\ell$-vector spaces.

		\item The solid $\Fld_\ell$-vector space $\Gamma(G,\Fld_{\ell,\solid}[G])$ is invertible and concentrated in cohomological degree $d$.
	\end{enumerate}

	\item \label{def:virtually-Poincare-group} Let $G$ be a locally profinite group with locally finite $\ell$-cohomological dimension. We say that $G$ is \emph{virtually $\ell$-Poincaré of dimension $d$} if there is some compact open subgroup $H \subset G$ such that $\cd_\ell H < \infty$ and $H$ is $\ell$-Poincaré of dimension $d$.
\end{defenum}
\end{definition}

\begin{examples}
Let $G$ be a locally profinite group.
\begin{exampleenum}
	\item If $G$ is an $\ell$-adic Lie group of dimension $d$ then $G$ is virtually $\ell$-Poincaré of dimension $d$. This follows from results of Lazard \cite{lazard-monster}, see \cite[Theorem 3.18]{mod-p-stacky-6-functors}.

	\item If $G$ is locally pro-$p$ then it is virtually $\ell$-Poincaré of dimension $0$. Indeed, if $G$ is pro-$p$ then $\cd_\ell G = 0$ by \cite[Corollary III.3.3.7]{cohomology-of-number-fields}, i.e. $\Gamma(G,-)$ is acyclic on $\ell$-torsion representations of $G$; it follows easily that $G$ is $\ell$-Poincaré of dimension $0$.
\end{exampleenum}
\end{examples}

In the following when we say that a locally profinite group $G$ is virtually $\ell$-Poincaré then we implicitly always mean that $G$ has locally finite $\ell$-cohomological dimension.

\begin{lemma} \label{rslt:first-condition-of-Poincare-equiv-F-l-relatively-dualizable}
Let $G$ be a profinite group with $\cd_\ell G < \infty$. Then $G$ satisfies condition (i) of \cref{def:Poincare-group} if and only if $\Fld_\ell \in \D_\et(*/G,\Fld_\ell)$ is dualizable over $*$.
\end{lemma}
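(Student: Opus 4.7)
The plan is to apply the criterion \cref{rslt:simple-iso-criterion-for-f-dualizable-sheaves}: writing $f\colon */G \to *$ and $\omega := f^!\Fld_\ell$, and denoting by $\pi_1, \pi_2\colon */(G\cprod G) \to */G$ the two projections, $\Fld_\ell$ is $f$-dualizable if and only if the natural base-change map
\begin{align*}
    \alpha\colon \pi_1^*\omega \longrightarrow \pi_2^!\Fld_\ell
\end{align*}
is an isomorphism in $\D_\et(*/(G\cprod G),\Fld_\ell)$.

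For the direction ``$f$-dualizable $\Rightarrow$ (i)'', I would apply \cref{rslt:functor-isomorphisms-for-f-dualizable-sheaves} to get the Poincaré-style identity $f^! \cong \omega \tensor f^*$, so $f^!$ preserves small colimits and $f_!=R\Gamma(G,-)$ preserves compact objects. Since $\cd_\ell G<\infty$, any continuous representation $M$ of $G$ on a finite-dimensional $\Fld_\ell$-vector space is compact in $\D_\et(*/G,\Fld_\ell)$: its internal dual is the perfect complex $M^\vee$, and $\Hom(M,-)=R\Gamma(G,M^\vee\tensor -)$ preserves colimits as a composite of colimit-preserving functors. Hence $R\Gamma(G,M) = f_!M$ is perfect over $\Fld_\ell$, i.e.\ $H^k(G,M)$ is finite for all $k$ (and vanishes for large $k$); this is exactly condition (i).

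For the reverse direction, condition (i) together with $\cd_\ell G<\infty$ amounts to $f_!$ preserving compact objects, equivalently $f^!$ preserving colimits. The two colimit-preserving functors $\omega\tensor f^*$ and $f^!$ from $\D(\Fld_\ell)$ to $\D_\et(*/G,\Fld_\ell)$ agree on the monoidal unit $\Fld_\ell$ and hence agree everywhere, yielding the pointwise identity $f^! \cong \omega\tensor f^*$ on the nose. By \cref{rslt:classifying-stack-is-proper-for-profinite-G} the map $f$ is $\ell$-cohomologically proper, so $f_!=f_*$ and proper base-change holds unconditionally; the same preservation of compacts passes to the base-change $\pi_2$ of $f$ along itself, since Künneth-style arguments combined with condition (i) for $G$ show that $\pi_{2!}V$ is perfect over $\Fld_\ell$ for every compact $V$ on $*/(G\cprod G)$. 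To conclude the verification of $\alpha$, I would compare the two sides by Yoneda against compact test objects $V$: the right side becomes $\Hom(\pi_{2!}V,\Fld_\ell)$, the linear dual of a perfect complex, while the left side is rewritten using the Poincaré identity $f^!=\omega\tensor f^*$ together with the projection formula.

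The main obstacle will be making the Yoneda comparison of $\Hom(V,\pi_1^*\omega)$ with $\Hom(\pi_{2!}V,\Fld_\ell)$ explicit, since $\pi_1^*$ has no convenient left adjoint (the map $\pi_1$ is not $\ell$-cohomologically smooth without the Poincaré hypothesis (ii), which we are not assuming). One approach is to reduce to dualizable test objects $V$, where $V^\vee \tensor \pi_1^*\omega$ can be compared with $\pi_2^!(V^\vee \tensor \Fld_\ell)$ via the projection formulas for $\pi_1^*$ and $\pi_2^!$ together with the already-established Poincaré identity applied on both $*/G$ and its self-product. A cleaner alternative is to bypass the explicit verification by translating the desired $f$-dualizability back into the adjoint language of the 2-category $\mathcal{C}_*$ and constructing the unit and counit of the adjunction between $\Fld_\ell$ and $\omega$ directly from the data of condition (i), much in the Lu--Zheng style used throughout the paper; the zig-zag identities should then follow formally from the preservation of compacts together with $\ell$-cohomological properness of the diagonal $\Delta\colon */G \to */(G\cprod G)$.
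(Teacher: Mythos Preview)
Your forward direction (``$f$-dualizable $\Rightarrow$ (i)'') is correct and essentially the same phenomenon the paper exploits, though the paper does not split into two directions; it handles both at once via a single computation.

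For the converse, however, you correctly identify the obstacle and then do not overcome it. Knowing that $f^!$ preserves colimits gives you $f^! \cong \omega \tensor f^*$ only because $\D(\Fld_\ell)$ is generated under colimits by the unit. The analogous statement for $\pi_2$ fails: the target $\D_\et(*/G,\Fld_\ell)$ is generated by the $\Fld_\ell[G/H]$, not by $\Fld_\ell$ alone, so ``$\pi_{2!}$ preserves compacts'' does not by itself force $\pi_2^!\Fld_\ell \cong \pi_1^*\omega$. Your two proposed workarounds are too vague to count as a proof: the Yoneda comparison on dualizable $V$ unwinds into exactly the same base-change question you started with, and the ``construct unit and counit directly'' idea gives no indication of how the zig-zag identities would follow from condition (i).

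The paper's proof closes this gap by testing the map $\alpha\colon \pi_1^*\omega \to \pi_2^!\Fld_\ell$ against an explicit conservative family of functors. For each compact open $H \subset G$, let $h_H\colon */(H\cprod H) \to */(G\cprod G)$ and $g_H\colon */(H\cprod H) \to *$; the family $\{g_{H*}h_H^*\}_H$ is conservative on $\D_\et(*/(G\cprod G),\Fld_\ell)$. Using that $f$, $\pi_1$, $\pi_2$, $g_H$ are all $\ell$-cohomologically proper (so pushforward satisfies base-change and the projection formula), one computes
\[
g_{H*}h_H^*(\pi_1^*\omega) \;=\; (\Gamma(H,\Fld_\ell))^\vee \tensor \Gamma(H,\Fld_\ell), \qquad
g_{H*}h_H^*(\pi_2^!\Fld_\ell) \;=\; \IHom(\Gamma(H,\Fld_\ell),\Gamma(H,\Fld_\ell)),
\]
and the map between them is the canonical one. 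This is an isomorphism for all $H$ if and only if each $\Gamma(H,\Fld_\ell)$ is dualizable in $\D(\Fld_\ell)$, which (since the finite $G$-representations are generated by the $\Fld_\ell[G/H]$) is exactly condition (i). This computation is what your argument is missing; the abstract colimit-preservation statements are not enough to replace it.
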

\begin{proof}
We have a cartesian square
\begin{center}\begin{tikzcd}
	*/(G \cprod G) \arrow[r,"\pi_1"] \arrow[d,"\pi_2"] & */G \arrow[d,"f"]\\
	*/G \arrow[r,"f"] & *
\end{tikzcd}\end{center}
By \cref{rslt:characterization-of-f-dualizable-sheaves} the constant sheaf $\Fld_\ell \in \D_\et(*/G,\Fld_\ell)$ is $f$-dualizable if and only if the natural map $\pi_1^* f^! \Fld_\ell \to \pi_2^! \Fld_\ell$ is an isomorphism. For every compact open subgroup $H \subset G$ let $g_H\colon */(H \cprod H) \to *$ and $h_H\colon */(H \cprod H) \to */(G \cprod G)$ be the natural maps. Then the collection of functors $g_{H*} h_H^*\colon \D_\et(*/(G \cprod G),\Fld_\ell) \to \D_\et(*,\Fld_\ell)$ is conservative (this is a reformulation of the fact that a smooth $(G \cprod G)$-representation is determined by its $(H \cprod H)$-invariants for all $H$). Consequently $\Fld_\ell$ is $f$-dualizable if and only if for all $H$ the map
\begin{align}
	g_{H*} h_H^* \pi_1^* f^! \Fld_\ell \to g_{H*} h_H^* \pi_2^! \Fld_\ell \label{eq:isom-for-ula-of-classifying-stack}
\end{align}
is an isomorphism. We compute both sides in the case that $H = G$; for general $H$ one gets the same terms but with $G$ replaced by $H$. By \cref{rslt:classifying-stack-is-proper-for-profinite-G} all the appearing maps are $\ell$-cohomologically proper, so that in particular their pushforwards satisfy the projection formula and base-change. We get
\begin{align*}
	&g_{G*} h_G^* \pi_1^* f^! \Fld_\ell = f_* \pi_{1*} \pi_1^* f^! \Fld_\ell = f_* (f^! \Fld_\ell \tensor \pi_{1*} \Fld_\ell) = f_* (f^! \Fld_\ell \tensor f^* f_* \Fld_\ell) =\\&\qquad= f_* f^! \Fld_\ell \tensor f_* \Fld_\ell = (f_* \Fld_\ell)^\vee \tensor f_* \Fld_\ell,
\end{align*}
and similarly we have
\begin{align*}
	&g_{H*} h_H^* \pi_2^! \Fld_\ell = f_* \pi_{2*} \pi_2^! \Fld_\ell = f_* \IHom(\pi_{2*} \Fld_\ell, \Fld_\ell) = f_* \IHom(f^* f_* \Fld_\ell, \Fld_\ell) =\\&\qquad= \IHom(f_*\Fld_\ell, f_* \Fld_\ell).
\end{align*}
We deduce that \cref{eq:isom-for-ula-of-classifying-stack} is an isomorphism (for $H = G$) if and only if $f_* \Fld_\ell = \Gamma(G, \Fld_\ell)$ is dualizable. If we now let $H$ vary we find that $\Fld_\ell$ is $f$-dualizable if and only if $\Gamma(H, \Fld_\ell)$ is dualizable for all compact open subgroups $H \subset G$. Since every finite $G$-representation is a retract of a finite (co)limit of the generators $\Fld_\ell[G/H]$, we deduce that this condition is indeed equivalent to condition (i) of \cref{def:Poincare-group}.
\end{proof}

\begin{theorem} \label{rslt:classifying-stacks-are-smooth}
Let $G$ be a locally profinite group with locally finite $\ell$-cohomological dimension. Then the map $*/G \to *$ is $\ell$-cohomologically smooth (of pure dimension $d/2$) if and only if $G$ is virtually $\ell$-Poincaré (of dimension $d$).
\end{theorem}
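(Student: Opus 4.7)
First I would reduce to the case where $G$ is profinite with $\cd_\ell G < \infty$. Since $\ell$-cohomological smoothness is étale local on the source by \cref{rslt:rel-dualizable-is-smooth-local-on-source}, and the map $*/H \to */G$ is (finite) étale for any open compact subgroup $H \subset G$, smoothness of $f\colon */G \to *$ is equivalent to smoothness of the analogous map for $H$; on the representation-theoretic side, being virtually $\ell$-Poincaré is by definition the existence of such an $H$ that is $\ell$-Poincaré. So I can assume $G$ profinite with $\cd_\ell G < \infty$ and aim to prove: $f$ is $\ell$-cohomologically smooth of dimension $d$ if and only if $G$ is $\ell$-Poincaré of dimension $d$. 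Moreover, by \cref{rslt:first-condition-of-Poincare-equiv-F-l-relatively-dualizable}, condition~(i) of \cref{def:Poincare-group} is equivalent to $\Fld_\ell \in \D_\et(*/G,\Fld_\ell)$ being $f$-dualizable, which is part of the definition of smoothness. So both sides of the desired equivalence already force (i), and under this hypothesis the problem reduces to showing that $f^!\Fld_\ell$ is invertible and concentrated in cohomological degree $-d$ if and only if $\Gamma(G, \Fld_{\ell,\solid}[G])$ is invertible and concentrated in cohomological degree $d$.

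The central computation is a Frobenius/Shapiro identity: for any open $H \subset G$, the compact object $\Fld_\ell[G/H] = \Ind_H^G \Fld_\ell$ and the adjunction $f_! \dashv f^!$ (with $f_! = f_*$ by \cref{rslt:classifying-stack-is-proper-for-profinite-G}) give
\begin{align*}
	R\Gamma(H, f^!\Fld_\ell) \;=\; R\Hom_{\Fld_\ell}\bigl(f_! \Fld_\ell[G/H],\,\Fld_\ell\bigr) \;=\; R\Hom_{\Fld_\ell}\bigl(R\Gamma(H,\Fld_\ell),\,\Fld_\ell\bigr).
\end{align*}
Thus invertibility of $f^!\Fld_\ell$ in pure cohomological degree $-d$ (equivalent to $f^!\Fld_\ell = \chi[d]$ for a smooth character $\chi$) translates, on sufficiently small $H$ contained in $\ker\chi$, to the Poincaré-duality identity $R\Gamma(H,\Fld_\ell)[d] \cong R\Gamma(H,\Fld_\ell)^\vee$. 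Running the analogous analysis for the solid object $\Gamma(G, \Fld_{\ell,\solid}[G]) = f_{\solid*}\,e_\natural \Fld_\ell$ — using $\Fld_{\ell,\solid}[G] = e_\natural \Fld_\ell$ for the base-point $e\colon * \to */G$, the identity $f_\natural e_\natural = (f\comp e)_\natural = \id$, and the comparison of $f_\natural$ with $f_!(-\tensor f^!\Fld_\ell)$ from \cref{rslt:comparison-of-natural-and-shriek-functor-fdcs-smooth} extended to the classifying-stack setting via \cref{rslt:comparison-of-natural-and-shriek-functor-ell-fine-smooth}(b) — identifies $\Gamma(G, \Fld_{\ell,\solid}[G])$ with the $\Fld_\ell$-linear dual of $f^!\Fld_\ell$, up to the natural degree shift. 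Consequently ``invertible concentrated in degree $d$'' on the solid side matches ``invertible concentrated in degree $-d$'' on the nuclear side, with matching dimension parameter, yielding both directions of the theorem.

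The main obstacle is the bridge between the nuclear object $f^!\Fld_\ell$ and the solid object $\Gamma(G, \Fld_{\ell,\solid}[G])$: the two live in genuinely different formalisms, and the comparison in \cref{rslt:comparison-of-natural-and-shriek-functor-ell-fine-smooth}(b) that glues them formally presupposes the virtually Poincaré hypothesis. For the direction ``smooth $\Rightarrow$ virtually Poincaré'' one must therefore either avoid this comparison and read off condition~(ii) directly from the Frobenius/Shapiro identity by a group-cohomological computation of $\Gamma(G,\Fld_{\ell,\solid}[G])$ via a compact projective resolution of $\Fld_\ell$ over $\Fld_{\ell,\solid}[G]$, or show that $f$-dualizability of $\Fld_\ell$ already suffices to establish enough of the nuclear-solid comparison for the duality identification to go through. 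In either case, careful bookkeeping of shifts and of the residual right $G$-action — which supplies the smooth character $\chi$ on the nuclear side — is needed to close both implications.
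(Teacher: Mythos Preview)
Your overall architecture is exactly the paper's: reduce to profinite $G$ with $\cd_\ell G < \infty$, invoke \cref{rslt:first-condition-of-Poincare-equiv-F-l-relatively-dualizable} for condition~(i), and then match condition~(ii) with invertibility of $f^!\Fld_\ell$. The Frobenius/Shapiro identity
\[
  R\Gamma(H,f^!\Fld_\ell)\;=\;R\Gamma(H,\Fld_\ell)^\vee
\]
is correct and is precisely the input one needs.

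The gap is in how you bridge to condition~(ii). Your primary route feeds $e_\natural\Fld_\ell=\Fld_{\ell,\solid}[G]$ into the comparison $f_\natural\simeq f_!(-\tensor f^!\Fld_\ell)$, but that comparison (\cref{rslt:comparison-of-natural-and-shriek-functor-fdcs-smooth} and \cref{rslt:comparison-of-natural-and-shriek-functor-ell-fine-smooth}) is only established on \emph{nuclear} sheaves, and $\Fld_{\ell,\solid}[G]$ is not nuclear (for $\Fld_\ell$-coefficients, nuclear $=$ étale $=$ discrete). So the identification of $\Gamma(G,\Fld_{\ell,\solid}[G])$ with a twist of $f^!\Fld_\ell$ via $f_\natural e_\natural=\id$ does not go through as written, and this is independent of the circularity you flag: it blocks \emph{both} directions, not just ``smooth $\Rightarrow$ Poincaré''.

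The fix is already implicit in your Shapiro identity; one just has to assemble it without $f_\natural$. Since $f^!\Fld_\ell$ is a smooth $G$-representation, its underlying module is
\[
  e^*(f^!\Fld_\ell)\;=\;\varinjlim_H\,(f^!\Fld_\ell)^H\;=\;\varinjlim_H\,\Gamma(H,\Fld_\ell)^\vee.
\]
On the other hand $\Fld_{\ell,\solid}[G]=\varprojlim_H\Fld_\ell[G/H]$ and $\Gamma(G,-)$ commutes with limits, so Shapiro gives
\[
  \Gamma(G,\Fld_{\ell,\solid}[G])\;=\;\varprojlim_H\,\Gamma(H,\Fld_\ell),
\]
with corestriction transition maps, exactly dual to the colimit above. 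Under condition~(i) each $\Gamma(H,\Fld_\ell)$ is a perfect $\Fld_\ell$-complex, so the pro- and ind-systems are genuinely dual and one obtains $e^*(f^!\Fld_\ell)\cong\Gamma(G,\Fld_{\ell,\solid}[G])^\vee$. Now invertibility (in degree $-d$) of $f^!\Fld_\ell$ is detected on the cover $e$, hence is equivalent to invertibility (in degree $d$) of $\Gamma(G,\Fld_{\ell,\solid}[G])$; this handles both implications at once and is the computation the paper cites from \cite{mod-p-stacky-6-functors}.
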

\begin{proof}
By \cref{rslt:classifying-stack-is-l-fine} the map $f\colon */G \to *$ is $\ell$-fine so it makes sense to speak of $\ell$-cohomological smoothness. Since $\ell$-cohomological smoothness is étale local on the source we can assume that $G$ is profinite with $\cd_\ell G < \infty$. By \cref{rslt:first-condition-of-Poincare-equiv-F-l-relatively-dualizable} we can assume that $G$ satisfies condition (i) of \cref{def:Poincare-group} and we only need to show that then $G$ satisfies condition (ii) of that definition if and only if $f^! \Fld_\ell$ is invertible. This is a formal computation for which we refer the reader to our argument in \cite[Proposition 3.14.(ii)]{mod-p-stacky-6-functors}. 
\end{proof}

With a good understanding of the classifying stacks at hand, it now makes sense to introduce admissible representations in the following form:

\begin{definition} \label{def:admissible-representation}
Let $G$ be a locally profinite group with locally finite $\ell$-cohomological dimension. A nuclear $G$-representation $M \in \D_\nuc(\Lambda)^{BG}$ is called \emph{admissible} if it is relatively dualizable over $*$ (when viewed as a nuclear $\Lambda$-module on $*/G$).
\end{definition}

The above definition of admissible representations falls naturally out of the geometric setting, but if $\Lambda$ is not discrete then it seems to be a notion which has not been studied much in the literature yet. We do not attempt a full study of admissible nuclear representations, but we can at least state some easy formal consequences of the definition:

\begin{proposition} \label{rslt:properties-of-admissible-reps}
Let $G$ be a locally profinite virtually $\ell$-Poincaré group and let $M \in \D_\nuc(\Lambda)^{BG}$ be a nuclear $G$-representation.
\begin{propenum}
	\item Let $i_1, i_2\colon \D_\nuc(\Lambda)^{BG} \to \D_\nuc(\Lambda)^{B(G \cprod G)}$ denote the two inflation operators. Then $M$ is admissible if and only if the natural map
	\begin{align*}
		i_1(M^\vee) \tensor i_2(M) \to \IHom(i_1(M), i_2(M))
	\end{align*}
	is an isomorphism of nuclear $(G \cprod G)$-representations.

	\item If $M$ is admissible then it is reflexive, i.e. the map $M \isoto M^{\vee\vee}$ is an isomorphism.

	\item Let $H \subset G$ be an open subgroup. Then $M$ is admissible as a $G$-representation if and only if it is admissible as an $H$-representation.

	\item If $M$ is admissible then $\Gamma(H, M)$ is dualizable for every compact open subgroup $H \subset G$ with $\cd_\ell H < \infty$. If $\Lambda$ is discrete then the converse of this statement holds.

	\item Let $\Lambda \to \Lambda'$ be a map of nuclear $\Z_\ell$-algebras. If $M$ is admissible then $M \tensor_\Lambda \Lambda' \in \D_\nuc(\Lambda')^{BG}$ is admissible.

	\item Suppose that $\Lambda = \Z_\ell$ and that $M$ is bounded and $\ell$-adically complete. Then $M$ is admissible if and only if $M/\ell M \in \D_\nuc(\Lambda/\ell \Lambda)^{BG}$ is admissible.
\end{propenum}
\end{proposition}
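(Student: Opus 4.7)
The plan is to derive each part from the earlier general theory of relatively dualizable sheaves applied to the structure map $f\colon */G \to *$, which is $\ell$-fine by \cref{rslt:classifying-stack-is-l-fine} and $\ell$-cohomologically smooth by \cref{rslt:classifying-stacks-are-smooth}; in particular $f^!\Lambda \in \D_\nuc(*/G,\Lambda)$ is invertible, which will let me identify $D_f(M)$ with $f^!\Lambda \tensor M^\vee$ throughout.

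First I would handle (i) as a direct reformulation of the criterion in \cref{rslt:simple-iso-criterion-for-f-dualizable-sheaves} applied to the cartesian square built from the two projections $\pi_1,\pi_2\colon */(G\cprod G) \to */G$. Here $\pi_i^*$ is the inflation functor $i_i$, smoothness and smooth base-change give $\pi_2^!\Lambda \isom \pi_1^* f^!\Lambda$ and $\pi_2^!\mathcal{P} \isom \pi_2^*\mathcal{P} \tensor \pi_2^!\Lambda$, and the invertible factor $\pi_1^* f^!\Lambda$ then cancels from both sides of the criterion to yield exactly the stated map. Part (ii) is the reflexivity clause of \cref{rslt:characterization-of-f-dualizable-sheaves}: using invertibility of $f^!\Lambda$ one computes $D_f(D_f(M)) = M^{\vee\vee}$. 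For (iii) I would observe that $*/H \to */G$ is a surjective étale map (it parametrizes reductions of structure group and is the descent of the étale cover $G/H \to *$ along $f$), that the pullback of $M$ along it is just $M$ with its $H$-action, and then invoke \cref{rslt:rel-dualizable-is-smooth-local-on-source} in both directions.

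For the forward direction of (iv), given a compact open $H \subset G$ with $\cd_\ell H < \infty$, part (iii) shows $M$ is admissible as an $H$-representation, and $*/H \to *$ is $\ell$-cohomologically proper by \cref{rslt:classifying-stack-is-proper-for-profinite-G}, so \cref{rslt:rel-dualizable-stable-under-proper-pushforward} applied with $X = * = Y$, $Z = */H$, $g = f_H$ gives that $\Gamma(H, M) = f_{H*} M$ is dualizable over $*$. Parts (v) and (vi) are immediate instances of \cref{rslt:rel-dualizable-under-change-of-Lambda}.(i) and (ii) respectively, with (vi) relying crucially on $\ell$-adic completeness of the tensor products involved.

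The main obstacle is the converse in (iv) (for discrete $\Lambda$). My plan is to mimic the computation in the proof of \cref{rslt:first-condition-of-Poincare-equiv-F-l-relatively-dualizable}: since $\Lambda$ is discrete we have $\D_\nuc(\Lambda)^{BG} = \D_\et(*/G,\Lambda)$, and smooth $(G\cprod G)$-representations are jointly detected by the pushforwards $g_{H*} h_H^*$ for varying compact open $H$ of finite $\ell$-cohomological dimension, where $h_H\colon */(H\cprod H)\to */(G\cprod G)$ and $g_H\colon */(H\cprod H)\to *$. Applying these pushforwards to both sides of the criterion \cref{rslt:simple-iso-criterion-for-f-dualizable-sheaves} and using the projection formula together with $\ell$-cohomological properness of $g_H, h_H$ transforms the criterion into exactly the dualizability of $\Gamma(H, M)$ on $*$, giving the converse. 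The delicate point is making sure the resulting expressions really match $(f_{H*}M)^\vee \tensor f_{H*}M$ and $\IHom(f_{H*}M, f_{H*}M)$ respectively, which requires keeping track of the invertible twist by $f^!\Lambda$ carefully.
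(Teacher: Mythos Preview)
Your proposal is correct and follows essentially the same route as the paper's proof: each part is reduced to the corresponding general statement about relatively dualizable sheaves (\cref{rslt:characterization-of-f-dualizable-sheaves}, \cref{rslt:rel-dualizable-is-smooth-local-on-source}, \cref{rslt:rel-dualizable-stable-under-proper-pushforward}, \cref{rslt:rel-dualizable-under-change-of-Lambda}) using smoothness of $*/G \to *$ and properness of $*/H \to *$, and the converse in (iv) is handled exactly as in the proof of \cref{rslt:first-condition-of-Poincare-equiv-F-l-relatively-dualizable} via conservativity of the family $\Gamma(H,-)$ for discrete $\Lambda$. Your identification of the invertible twist $f^!\Lambda$ as the mechanism linking $D_f(M)$ to $M^\vee$ in (i) and (ii) is precisely what the paper leaves implicit.
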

\begin{proof}
For (i), note that $i_1$ and $i_2$ are just the pullbacks along the two projections $*/(G \cprod G) \to */G$. Using also the fact that both are $\ell$-cohomologically smooth by \cref{rslt:classifying-stacks-are-smooth} we easily reduce the claim to \cref{rslt:characterization-of-f-dualizable-sheaves}. Part (ii) is a special case of the last part of \cref{rslt:characterization-of-f-dualizable-sheaves}. For (iii), note that $*/H \to */G$ is an étale and hence $\ell$-cohomologically smooth cover, hence the claim is a special case of \cref{rslt:rel-dualizable-is-smooth-local-on-source}. For (iv), by \cref{rslt:classifying-stack-is-proper-for-profinite-G} the stack $*/H$ is $\ell$-cohomologically proper over $*$, hence the first part of the claim is a special case of \cref{rslt:rel-dualizable-stable-under-proper-pushforward}. If $\Lambda$ is discrete then the family of functors $\Gamma(H,-)\colon \D_\nuc(\Lambda)^{BG} \to \D_\nuc(\Lambda)$ is conservative, so one can argue as in the proof of \cref{rslt:first-condition-of-Poincare-equiv-F-l-relatively-dualizable} to get the second part of (iv). Claims (v) and (vi) are special cases of \cref{rslt:rel-dualizable-under-change-of-Lambda}.
\end{proof}

\bibliography{bibliography}

\begin{thebibliography}{10}

\bibitem{condensed-complex-geometry}
D.~Clausen and P.~Scholze.
\newblock Lectures on {Complex} {Geometry}, 2022.
\newblock \url{https://people.mpim-bonn.mpg.de/scholze/Complex.pdf}.

\bibitem{dold-puppe-duality}
A.~Dold and D.~Puppe.
\newblock Duality, trace and transfer.
\newblock {\em Proceedings of the Steklov Institute of Mathematics}, 154, 1984.

\bibitem{fargues-scholze-geometrization}
L.~Fargues and P.~Scholze.
\newblock Geometrization of the local langlands correspondence.
\newblock 2021.
\newblock \url{https://arxiv.org/abs/2102.13459}.

\bibitem{mod-ell-stacky-6-functors}
D.~Gulotta, D.~Hansen, and J.~Weinstein.
\newblock An enhanced six-functor formalism for diamonds and v-stacks.
\newblock 2022.
\newblock Preprint. Available at \url{https://arxiv.org/pdf/2202.12467.pdf}.

\bibitem{mod-p-stacky-6-functors}
D.~Hansen and L.~Mann.
\newblock $p$-adic sheaves on classifying stacks, and the $p$-adic
  jacquet-langlands correspondence.
\newblock 2022.
\newblock Preprint. Available at \url{https://arxiv.org/pdf/2207.04073.pdf}.

\bibitem{lazard-monster}
M.~Lazard.
\newblock Groupes analytiques {$p$}-adiques.
\newblock {\em Inst. Hautes \'{E}tudes Sci. Publ. Math.}, (26):389--603, 1965.

\bibitem{lu-zhen-ula}
Q.~Lu and W.~Zheng.
\newblock Categorical traces and a relative lefschetz–verdier formula.
\newblock {\em Forum of Mathematics, Sigma}, 10, 2022.

\bibitem{lurie-higher-algebra}
J.~Lurie.
\newblock {Higher Algebra}.
\newblock Available at
  \url{https://people.math.harvard.edu/~lurie/papers/HA.pdf}.

\bibitem{lurie-spectral-algebraic-geometry}
J.~Lurie.
\newblock Spectral algebraic geometry.
\newblock Available at
  \url{https://www.math.ias.edu/~lurie/papers/SAG-rootfile.pdf}, 2018.

\bibitem{mann-mod-p-6-functors}
L.~Mann.
\newblock A $p$-adic 6-functor formalism in rigid-analytic geometry.
\newblock 2022.
\newblock Preprint. Available at \url{https://arxiv.org/pdf/2206.02022.pdf}.

\bibitem{mann-werner-simpson}
L.~Mann and A.~Werner.
\newblock {Local Systems on Diamonds and $p$-Adic Vector Bundles}.
\newblock {\em International Mathematics Research Notices}, July 2022.

\bibitem{akhil-galois-group-of-stable-homotopy}
A.~Mathew.
\newblock The {Galois} group of a stable homotopy theory.
\newblock {\em Advances in Mathematics}, 291, 2016.

\bibitem{cohomology-of-number-fields}
J.~Neukirch, A.~Schmidt, and K.~Wingberg.
\newblock {\em {Cohomology of Number Fields}}.
\newblock Grundlehren der mathematischen Wissenschaften. Springer Berlin,
  Heidelberg, 2008.

\bibitem{perfectoid-spaces-survey}
P.~Scholze.
\newblock Perfectoid spaces: A survey.
\newblock In {\em Current Developments in Mathematics}, 2012.

\bibitem{etale-cohomology-of-diamonds}
P.~Scholze.
\newblock Étale cohomology of diamonds.
\newblock \url{https://www.math.uni-bonn.de/people/scholze/EtCohDiamonds.pdf},
  2018.

\bibitem{scholze-analytic-spaces}
P.~Scholze.
\newblock Lectures on {Analytic} {Geometry}, 2020.
\newblock \url{https://www.math.uni-bonn.de/people/scholze/Analytic.pdf}.

\bibitem{stacks-project}
{The Stacks Project Authors}.
\newblock \textit{Stacks Project}.
\newblock \url{http://stacks.math.columbia.edu}, 2018.

\bibitem{vigneras-banach-reps}
M.-F. Vignéras.
\newblock Banach $\ell$-adic representations of $p$-adic groups.
\newblock In L.~Berger, C.~Breuil, and P.~Colmez, editors, {\em
  Représentations $p$-adiques de groupes $p$-adiques II : Représentations de
  $\mathbf{GL}_2 (\mathbf{Q}_p)$ et $(\varphi, \Gamma)$-modules}, number 330 in
  Astérisque. Société mathématique de France, 2010.

\bibitem{zavyalov-revisiting-poincare-duality}
B.~Zavyalov.
\newblock Revisiting the poincaré duality.
\newblock In preparation.

\end{thebibliography}
\addcontentsline{toc}{section}{References}

\end{document}